\newtheorem{theorem}{Theorem}
\newtheorem{lemma}[theorem]{Lemma}
\newtheorem{claim}[theorem]{Claim}
\newtheorem{corollary}[theorem]{Corollary}
\newtheorem{definition}[theorem]{Definition}
\newcommand{\ip}[2]{\langle #1, #2\rangle }
\newcommand{\cut}[1]{}
\renewcommand{\hat}{\widehat}
\title{General Gaussian Noise Mechanisms and Their Optimality for Unbiased Mean Estimation}
\author{Aleksandar Nikolov\thanks{Department of Computer Science,
    University of Toronto, email: \url{anikolov@cs.toronto.edu}} \and Haohua Tang\thanks{Department of Computer Science, University of Toronto, email: \url{haohua.tang@mail.utoronto.ca}}}
\date{}
\begin{document}
\maketitle

\begin{abstract}
    We investigate unbiased high-dimensional mean estimators in differential privacy. We consider differentially private mechanisms whose expected output equals the mean of the input dataset, for every dataset drawn from a fixed bounded domain \(K\) in \(\R^d\). A classical approach to private mean estimation is to compute the true mean and add unbiased, but possibly correlated, Gaussian noise to it. In the first part of this paper, we study the optimal error achievable by a Gaussian noise mechanism for a given domain \(K\) when the error is measured in the \(\ell_p\) norm for some \(p \ge 2\). We give algorithms that compute the optimal covariance for the Gaussian noise for a given \(K\) under suitable assumptions, and prove a number of nice geometric properties of the optimal error. These results generalize the theory of factorization mechanisms from domains \(K\) that are symmetric and finite (or, equivalently, symmetric polytopes) to arbitrary bounded domains.

  In the second part of the paper we show that Gaussian noise mechanisms achieve nearly optimal error among all private unbiased mean estimation mechanisms in a very strong sense. In particular, for \emph{every input dataset}, an unbiased mean estimator satisfying concentrated differential privacy introduces approximately at least as much error as the best Gaussian noise mechanism. We extend this result to local differential privacy, and to approximate differential privacy, but for the latter the error lower bound holds either for a dataset or for a neighboring dataset, and this relaxation is necessary.
\end{abstract}

\section{Introduction}
Unbiased estimation is a classical topic in statistics, and an elegant
theory of the existence and optimality of unbiased estimators, and
methods for constructing unbiased estimators and proving lower bounds
on their variance have been developed over the last century. We refer
the reader to the monographs on this topic by Nikulin and
Voinov~\cite{VoinovNikulin-vol1,VoinovNikulin-vol2}, and other
standard statistical texts such as~\cite{MathStats09}. One highlight
of this theory is the existence of uniformly minimum variance unbiased
estimators (UMVUE), i.e., estimators whose variance is smaller than
any other unbiased estimator for \emph{every} value of the
parameter. Such estimators can be derived via the Rao-Blackwell and
Lehmann-Scheff\'e theorems.  In addition to their mathematical
tractability, unbiased estimators also have the nice property that
averaging several estimators decreases the mean squared
error. Moreover, in some cases unbiased estimators are also minimax
optimal, e.g., the empirical mean in many settings.

In this paper we consider the basic problem of mean estimation under
the additional constraint that the privacy of the data must be
protected. In particular, we study estimators computed by a randomized
algorithm \(\mech\) (a mechanism), that satisfies \((\eps,
\delta)\)-differential privacy~\cite{DworkMNS06}. 
We focus on the setting where the dataset \(X = (x_1, \ldots, x_n)\) consists of a sequence
of \(n\) data points from a domain \(K \subseteq \R^d\), and our goal
is to estimate the mean \(\mu(X) := \frac1n \sum_{i = 1}^n x_i\) using an \emph{unbiased} differentially private
mechanism \(\mech\). Formally, we use the following definition of
unbiased mechanisms.
\begin{definition}
  A mechanism \(\mech\) that takes as input datasets of \(n\) points
  in some domain \(K\subseteq \R^d,\) and
  outputs a vector in \(\R^d\) is unbiased 
  over \(K\subseteq \R^d\) (for the empirical mean) if, for every dataset \(X \in K^n\), we have
  \(
  \E[\mech(X)] = \mu(X),
  \)
  where the expectation is over the randomness of \(\mech\).
\end{definition}
We note that this definition captures being \emph{empirically}
unbiased, i.e., unbiased with respect to the dataset. This is a
desirable property when multiple differentially private analyses are
performed on the same dataset, as being unbiased allows decreasing the
mean squared error by averaging. Moreover, bias has been raised as a
concern when publishing official statistics such as census data. The
bias of differentially private mean estimation algorithms in this
context was examined, for example, in~\cite{ZhuHF21,ZFHDT23}. It is
also possible to define unbiased algorithms in a distributional
setting, by assuming that \(X\) contains i.i.d.~samples from an
unknown distribution \(P\), and \(\E[\mech(X)]\) equals the mean of
\(P\), with the expectation taken over the randomness in choosing
\(X\), and the randomness used by \(\mech\).  We give such a
definition in Section~\ref{sect:dist}, and extend our results to
this setting as well.

The prototypical example of an unbiased differentially private
mechanisms is given by oblivious, or noise-adding mechanisms, i.e.,
mechanisms that output \(\mech(X) := \mu(X) + Z\) for some mean \(0\)
random variable \(Z \in \R^d\) drawn from a fixed distribution that's
independent of \(X\). A particularly important oblivious mechanism is
the Gaussian noise mechanism~\cite{DworkMNS06,DworkKMMN06}, for which
\(Z\) is a mean \(0\) Gaussian random variable in \(\R^d\) with
covariance matrix proportional to the identity, and scaled
proportionally to the \(\ell_2\) diameter of the domain \(K\). Another family of
oblivious mechanisms is given by the matrix mechanism from~\cite{MM},
and, more generally, factorization
mechanisms~\cite{NTZ,factormech}. Factorization mechanisms are defined
for a finite domain \(K := \{\pm w_1, \ldots, \pm w_N\} \subseteq \R^d\), where a
dataset $X$ can be represented by a histogram vector $h \in [0,1]^N$,
defined by letting $h_i$ equal the difference between the fraction of
points in $X$ equal to $w_i$ and the fraction of points equal to
$-w_i$. Then \(\mu(X)\) can be written as $Wh$, where \(W\) is the
$d\times N$ matrix with columns $w_1, \ldots, w_N$. A
factorization mechanism chooses (usually by solving an optimization
problem) matrices $L$ and $R$ for which $W = LR$, and outputs
$L(Rh + Z) = \mu(X) + LZ$ for a mean \(0\) Gaussian noise random
variable \(Z\) with covariance matrix proportional to the
identity, and scaled proportionally to the maximum \(\ell_2\) norm of
a column of the matrix \(R\). Thus, factorization mechanisms can be seen as either a
post-processing of the Gaussian noise mechanism, or a method of
achieving privacy by adding correlated mean \(0\) Gaussian
noise. Factorization mechanisms have received a lot of attention in
differential privacy, both from the viewpoint of theoretical
analysis~\cite{MM,NTZ,factormech,HU22,HUU22,MRT22}, and from a more
applied and empirical viewpoint~\cite{HDMM,CMRT22}.

There are, also, natural and widely used non-oblivious unbiased
mechanisms. For example, each iteration of the differentially private
stochastic gradient descent algorithm for convex
minimization~\cite{BassilyST14} uses an unbiased mechanism to compute
a private unbiased estimate of the current gradient. The mechanism
first samples a subset of the data points, and then adds Gaussian
noise to the mean of the sample. The subsampling effectively adds
noise to the mean in a way that's not independent from the dataset
\(X\).  Another example of a non-oblivious unbiased mechanism is
randomized response~\cite{Warner65}. In its simplest form, randomized
response is defined for the domain \(K := \{0,1\}\), and involves
releasing, for each data point \(x_i\) in \(X\), \(x_i\) with
probability \(\frac{e^\eps}{1+e^\eps}\) and \(1-x_i\) with probability
\(\frac{1}{1 + e^\eps}\). It is easy to compute an unbiased estimator
of \(\mu(X)\) from these released points, but the additive error of
this estimator is not independent of \(X\). There are also other
mechanisms based on randomized response used for higher-dimensional
mean estimation in the local model of differential privacy, see
e.g.~\cite{DJW,cdp,factormech}. They are, likewise, unbiased but not
oblivious.

Among the mechanisms mentioned above, in high dimensional settings the
mechanisms that add (possibly correlated) Gaussian noise, i.e., the
Gaussian noise mechanism and factorization mechanisms based on it,
tend to give the lowest error for a given dataset size. Let us call
mechanisms that add unbiased but potentially correlated Gaussian noise
general Gaussian noise mechanisms. This paper is
motivated by the following questions.
\begin{enumerate}
\item What is the best error achievable by a general Gaussian noise mechanism
for a given domain \(K\) and a given measure of error? Relatedly,
  can this error, and the optimal covariance matrix for the noise be
  computed efficiently?
  
\item Are general Gaussian noise mechanisms indeed optimal among all unbiased
  private mean estimators? Can their error be improved on some
  ``nice'' datasets?
\end{enumerate}

To answer the first question, we study the best \(\ell_p\) error
achievable by a differentially private Gaussian noise mechanism, i.e., a mechanism
\(\mech_\Sigma(X) := \mu(X) + Z\) for \(Z \sim N(0, \Sigma)\), when
\(X \in K^n\) for a bounded domain \(K\). Here, \(N(\mu,\Sigma)\) is
the Gaussian distribution with mean $\mu$ and covariance matrix
\(\Sigma\). 
Let us denote the unit Euclidean ball in \(\R^d\) by
\(B_2^d := \{x \in \R^d: \|x\|_2 \le 1\}\), where \(\|x\|_p :=
(|x_1|^p + \ldots + |x_d|^p)^{1/p}\) is the standard \(\ell_p\) norm
on \(\R^d\). For a positive semidefinite matrix $M\in \mathbb{R}^{d\times d}$ and a
real number $p \ge 1$, we
define $\trp{p}(M):=(\sum_i^d{M_{ii}^p})^{1/p}.$ We define
\(\trp{\infty}(M) := \max_{i=1}^d M_{ii}\). Note that
$\trp{p}(M)$ is simply the $\ell_p$-norm of the diagonal entries of
$M$. Finally, for sets $K$ and $L$, let us write
$$K\shinclu L \iff \exists v\in \R^d, K+v\subseteq L$$
We now define the following key
quantity.

\begin{definition}
For a bounded set ${K}\subseteq \mathbb{R}^d$ and \(p \in [2, \infty]\), we define
\[
  \Gamma_p({K}):=\inf\left\{\sqrt{\trp{p/2}(AA^T)}: K\shinclu AB_2^d\right\},
\]
where the infimum is over \(d\times d\) matrices \(A\), and 
\end{definition}

The definition of \(\Gamma_p\) is motivated by the next theorem. Its proof
is given in Section~\ref{sect:gengauss}
\begin{theorem}\label{thm:ub}
  For any \(p \in [2, \infty]\), any \(\eps >0\), any
  \(\delta \le e^{-\eps}\), and any bounded set \(K \subseteq \R^d\),
  there exists a mechanism \(\mech\) that is unbiased over \(K\), and,
  for any \(X \in K^n\) achieves
  \[
    (\E\|\mech(X) - \mu(X)\|_p^2)^{1/2} \lesssim
    \frac{\sqrt{\min\{p,\log(2d)\}\log(1/\delta)}\ \Gamma_p(K)}{\eps n},
  \]
  and satisfies \((\eps, \delta)\)-differential privacy. The mechanism
  outputs \(\mu(X) + Z\), where \(Z\) is a mean \(0\) Gaussian random
  variable with covariance matrix proportional to $AA^T$, for a matrix
  \(A\) such that \(K\shinclu AB_2^d\) and \(\sqrt{\trp{p/2}(AA^T)}
  \lesssim \Gamma_p(K).\)
\end{theorem}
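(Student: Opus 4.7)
The plan is to realize the desired mechanism as a post-processing of the standard (spherical) Gaussian mechanism applied after a linear change of coordinates. First I would fix a near-optimizer of the infimum defining \(\Gamma_p(K)\): pick a matrix \(A \in \R^{d\times d}\) and a translation \(v\in\R^d\) such that \(K+v\subseteq AB_2^d\) and \(\sqrt{\trp{p/2}(AA^T)}\le 2\,\Gamma_p(K)\). The mechanism will output \(\mech(X)=\mu(X)+Z\) with \(Z\sim N(0,\sigma^2 AA^T)\), for \(\sigma \asymp \sqrt{\log(1/\delta)}/(\eps n)\).

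For privacy and unbiasedness, let \(A^+\) denote the Moore--Penrose pseudoinverse and define \(g(X):=A^+(\mu(X)+v)\). For neighboring \(X,X'\) differing in one point, \(\mu(X)-\mu(X')=\tfrac1n(x-x')\) for some \(x,x'\in K\); since \(x+v,x'+v\in AB_2^d\), we can write \(x-x'=A(u_1-u_2)\) with \(\|u_1-u_2\|_2\le 2\). Because \(A^+A\) is the orthogonal projection onto the row space of \(A\), this gives the \(\ell_2\)-sensitivity bound \(\|g(X)-g(X')\|_2\le 2/n\). Applying the standard Gaussian mechanism to \(g\) with noise \(Z_0\sim N(0,\sigma^2 I)\) and the above choice of \(\sigma\) yields \((\eps,\delta)\)-differential privacy (this is where \(\delta\le e^{-\eps}\) is used to absorb the classical analysis constants). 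I then post-process by the data-independent map \(y\mapsto Ay-v\), outputting
\[
  \mech(X) := A\bigl(g(X)+Z_0\bigr)-v = AA^+(\mu(X)+v) + AZ_0 - v.
\]
Since \(\mu(X)+v\) lies in the convex hull of \(K+v\subseteq AB_2^d\), it belongs to the column space of \(A\), so \(AA^+(\mu(X)+v)=\mu(X)+v\) and the output simplifies to \(\mu(X)+AZ_0\). Thus \(\mech\) adds mean-zero noise \(Z:=AZ_0\sim N(0,\sigma^2 AA^T)\), is unbiased, and inherits \((\eps,\delta)\)-DP by post-processing.

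For accuracy, the coordinates of \(Z\) satisfy \(Z_i\sim N(0,\sigma^2(AA^T)_{ii})\), and I split the \(\ell_p\) moment bound into two regimes. When \(p\le\log(2d)\), Lyapunov's inequality gives \((\E\|Z\|_p^2)^{1/2}\le (\E\|Z\|_p^p)^{1/p}\); combined with the Gaussian moment estimate \(\E|Z_i|^p\lesssim (p\sigma^2(AA^T)_{ii})^{p/2}\) this yields
\[
  (\E\|Z\|_p^2)^{1/2}\lesssim \sigma\sqrt{p}\cdot \sqrt{\trp{p/2}(AA^T)}.
\]
When \(p>\log(2d)\), I use \(\|z\|_p\le e\|z\|_\infty\) (valid for \(p\ge\log d\)) together with the standard second-moment bound for the maximum of \(d\) centered Gaussians, \(\E\|Z\|_\infty^2\lesssim \log(2d)\cdot\max_i\sigma^2(AA^T)_{ii}\), and the elementary fact \(\max_i M_{ii}\le \trp{p/2}(M)\) (the \(\ell_{p/2}\)-norm of the diagonal dominates its \(\ell_\infty\)-norm), to get the same bound with \(\sqrt{p}\) replaced by \(\sqrt{\log(2d)}\). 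Taking the better of the two and substituting the chosen \(\sigma\) and \(A\) gives the stated estimate.

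The main obstacle is threading the moment/concentration inequalities cleanly so that the final prefactor comes out as \(\sqrt{\min\{p,\log(2d)\}}\) rather than the naive \(\sqrt{p}\), which requires being careful with the transition between the two regimes and with the relation between \(\trp{p/2}(AA^T)\), the coordinate variances \((AA^T)_{ii}\), and the Gaussian tail at dimension \(d\). The privacy accounting and unbiasedness, once the coordinate change via \(A^+\) is set up, are routine.
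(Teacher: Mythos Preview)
Your proposal is correct and follows essentially the same approach as the paper: realize the mechanism as a post-processing of the spherical Gaussian mechanism after the linear change of coordinates \(x\mapsto A^+(x+v)\), then bound the \(\ell_p\) error by splitting into the regimes \(p\le\log(2d)\) and \(p>\log(2d)\). The only cosmetic differences are that the paper routes the privacy argument through zCDP (Theorem~\ref{thm:gen-gaussian} combined with Lemma~\ref{lm:cdp-to-adp}) rather than invoking the \((\eps,\delta)\)-DP guarantee of the Gaussian mechanism directly, and that for large \(p\) it reduces to the \(q=\ln d\) case of the small-\(p\) moment bound instead of passing through the \(\ell_\infty\) maximum as you do.
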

In addition, it is not hard to also show that this bound on \(
(\E\|\mech(X) - \mu(X)\|_p^2)^{1/2}\) is also tight up to the
\(\sqrt{\min\{p,\log(2d)\}}\) factor (see the proof of
Lemma~\ref{lm:err-trace}). Thus, \(\Gamma_p(K)\) nearly captures the
best $\ell_p$ error we can achieve by a general Gaussian noise mechanism for
mean estimation over \(K\).  

In the special case when \(p \in \{2,\infty\}\) and \(K\) is a
finite set symmetric around \(0\), the general Gaussian mechanism
above is equivalent to a factorization mechanism, as we show in
Section~\ref{sect:fact}. Factorization mechanisms, as defined, for
example, in~\cite{MM,factormech}, can be suboptimal for non-symmetric
\(K\). A trivial example is a singleton \(K = \{w\}\) for some \(w\neq
0\), for which any
factorization mechanism would add non-zero noise, but the optimal
mechanism just outputs \(\mu(X) := w\) with no noise. This issue is
the reason we allow \(K\) to be shifted in the definition of
\(\Gamma_p(K)\), and one can similarly modify factorization mechanisms
by allowing a shift of \(K\), i.e., shifting the columns of the matrix
\(W\) by a fixed vector. It is also not difficult to extend standard
factorization mechanism formulations from minimizing \(\ell_p\) error
for \(p \in \{2, \infty\}\), to minimizing \(\ell_p\) error for any
\(p \ge 2\). Independently from our work, this was also done in a
recent paper by Xiao, He, Zhang, and Kifer, who, in addition, also considered error measures that are convex
functions of the per-coordinate variances~\cite{XHZK23}. These modification
allow deriving factorization mechanisms equivalent to the
general Gaussian mechanism achieving error \(\Gamma_p(K)\) when \(K\)
is finite, as shown in Theorem~\ref{thm:Gammap-as-factorization}
below.

Extending factorization mechanisms to infinite \(K\), or \(K\)
 of size exponential in the dimension \(d\), however, is more
challenging. A fundamental issue is that the matrices \(W\) and \(R\)
involved in the definition of a factorization mechanism are infinite
or exponentially sized in these cases. Heuristic solutions tailored to
specific structured \(K\) have been proposed, for example
in~\cite{HDMM,XHZK23}. With the definition of general Gaussian mechanisms,
and of \(\Gamma_p(K)\), we take a different approach, moving away from
factorizations and instead focusing on optimizing the covariance
matrix of the noise. This is the role of the matrix \(A\) in the definition of
\(\Gamma_p(K)\): it can be seen as a proxy for the covariance matrix of
the noise, which is, per Theorem~\ref{thm:ub}, proportional to
\(AA^T\). Equivalently, our definition of
\(\Gamma_p(K)\) can be thought of as formulating a factorization
mechanism only in terms of the left matrix of the factorization,
without explicitly writing the right matrix. The benefit in this approach is that the
covariance matrix has size \(d \times d\), independently of the size
of \(K\). Of course, optimizing the covariance matrix may still be
computationally expensive, depending on how complicated \(K\)
is. Nevertheless, we show that finding an optimal \(A\) approximately
achieving \(\Gamma_p(K)\) can be done in polynomial time under natural
conditions. In particular, we show that \(\Gamma_p(K)\) equals the
value of the convex optimization problem
\begin{align*}
  \Gamma_p(K)^2 = &\min \trp{p/2}(M)\\
                  &\text{s.t.}\notag\\
                  &(x + v)^T M^{-1} (x+v) \le 1 \ \ \forall x \in K,\\
                  &M \succ 0, v \in \R^d.
\end{align*}
In Section~\ref{sect:alg} we prove this equivalence, and show that
this optimization problem can be approximately solved in polynomial
time using the ellipsoid method, assuming the existence of an oracle
that approximately solves the quadratic maximization problem
\(\max_{x \in K}(x + v)^T M^{-1} (x+v)\) for a given \(v \in \R^d\)
and a given positive definite matrix \(M\). (The notation
\(M \succ 0\) above means that \(M\) is positive definite.)  Beyond
finite \(K\), such oracles exist for many classes of \(K\), e.g.,
affine images of \(\ell_p\) and Schatten-\(p\) balls when \(p \ge 2\),
and affine images of other symmetric norms and unitarily invariant
matrix norms: see~\cite{BLN21} and Section~\ref{sect:alg} for more
information. As one example, we can compute \(\Gamma_p(K)\) over
zonotopes: sets of the type \(K := [u_1, v_1] + \ldots + [u_N, v_N]\)
where \(u_1, \ldots, u_N \in \R^d\), and \(v_1, \ldots, v_N \in \R^d\)
are given explicitly, and \([u_i, v_i]\) is the line segment joining
\(u_i\) and \(v_i\). Such sets are simply affine images of the
$\ell_\infty$ ball $[-1,+1]^N$, and for them the maximization problem
\(\max_{x \in K}(x + v)^T M^{-1} (x+v)\) can be solved approximately using
algorithmic versions of Grothendieck's
inequality~\cite{grothendieck,AlonN04}.  These computational results
are the first general theoretical guarantees that allow optimizing the
Gaussian noise covariance matrix for domains \(K\) that are not
explicitly presented finite sets or polytopes in vertex
representation.

In addition to computational tractability, we also prove a number of properties of
\(\Gamma_p(K)\) which significantly generalize known facts about
factorization norms and factorization mechanisms. Let us highlight
some of these properties, whose proofs are given in Section~\ref{sect:Gammap}:
\begin{itemize}
\item \(\Gamma_p(K)\) behaves like a norm on convex sets: it is
  monotone under inclusion, absolutely homogeneous under scaling, i.e., \(\Gamma_p(tK) = |t|\Gamma_p(K)\), and satisfies the
  triangle inequality with respect to Minkowski sum, i.e.,
  $\Gamma_{p}(K+L)\leq \Gamma_{p}(K)+\Gamma_{p}(L)$.

\item \(\Gamma_p(K)\) admits a nice dual characterization. For
  example, for \(p=2\) we have
  \[
    \Gamma_2(K) = \sup\{\tr(\cov(P)^{1/2}): P \in \Delta(K)\}.
  \]
  Above, \(\Delta(K)\) is the set of probability
  measures supported on \(K\), \(\cov(P)\) is the covariance matrix of
  a probability distribution \(P\), and \(\cov(P)^{1/2}\) is its
  positive semidefinite square root of \(\cov(P)\). This characterization (and its
  generalization to \(p > 2\), both given in
  Section~\ref{sect:duality}) is particularly useful for proving lower
  bounds on \(\Gamma_p(K)\).
\end{itemize}

Next we turn to the question of the optimality of general Gaussian noise
mechanisms. Our main result is the following theorem. 
\begin{theorem}\label{thm:main}
  Let \(c > 0\) be a small enough absolute constant, let \(p \in [2,
  \infty]\), and let $\mech$ be an
  $(\eps, \delta)$-differentially private mechanism that is unbiased
  over a bounded set \(K \subseteq \R^d\). If \(\eps \le c\), and
  \(\delta \le \min\left\{\frac{c}{n},\frac{c\eps^2}{d^2}\right\}\),
  then the following holds. For any dataset \(X \in K^n\), there exists a
  neighboring dataset \(X' \in K^n\) (which may equal \(X\)) for which
  \[
    \sqrt{\E\left[\norm{\mech(X')-\mu(X')}_p^2\right]} \gtrsim
    \frac{\Gamma_p(K)}{n{\eps}}.
  \]
\end{theorem}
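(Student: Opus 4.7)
The plan is to prove the theorem via a fingerprinting-style lower bound, driven by the dual characterization of $\Gamma_p(K)$ from Section~\ref{sect:duality}. The dual representation supplies a ``hard'' probability measure $P$ on $K$: for $p=2$ it gives $\tr(\cov(P)^{1/2}) \gtrsim \Gamma_2(K)$, and an analogous matrix-valued witness exists for general $p \in [2,\infty]$. The strategy is first to establish an expected lower bound over $X \sim P^n$, and then promote it to the per-dataset statement via an $(\eps, \delta)$-DP stability argument.

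The expected lower bound is obtained from a correlation fingerprint. Unbiasedness together with iid sampling yields the identity
\[
\sum_{i=1}^n \E\bigl[(\mech(X) - \mu_P)(X_i - \mu_P)^T\bigr] = \Sigma_P,
\]
where $\mu_P$ and $\Sigma_P$ are the mean and covariance of $P$. Replacing $X_i$ by an independent copy $\tilde X_i \sim P$ produces a neighboring dataset $X^{(i)}$ for which the analogous correlation vanishes by independence. Comparing the two via an $(\eps,\delta)$-DP inequality in R\'enyi form (with the $\delta$-slack absorbed using $\delta \le c\eps^2/d^2$, after a truncation that removes an event of measure $O(\delta n)$, which explains the $\delta \le c/n$ hypothesis) controls each term by a product of $O(\eps)$, the standard deviation of a directional projection of $\mech(X)$, and a factor depending on $\Sigma_P$. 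Optimizing the fingerprint direction against the dual of $\trp{p/2}$ --- so that it aligns with the primal optimal covariance $M^*$, which by complementary slackness is proportional to $\Sigma_P^{1/2}$ rather than to $\Sigma_P$ --- and invoking the elementary inequality $\E\|Y\|_p^2 \ge \trp{p/2}(\cov(Y))$ (valid for mean-zero $Y$ and $p \ge 2$ by Jensen applied to the norm $\|\cdot\|_{p/2}$) then produces the expected bound
\[
\E_{X \sim P^n}\E\|\mech(X) - \mu(X)\|_p^2 \gtrsim \Gamma_p(K)^2/(n\eps)^2.
\]

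The final step converts this into the per-dataset claim. If the conclusion failed at some $X$ and at every one-step neighbor $X'$ of $X$, then iterating an $(\eps,\delta)$-DP stability argument along $O(n)$ edit steps --- with $\delta \le c/n$ preventing accumulation of $\delta$-slack --- would propagate small error to a subset of $K^n$ of positive $P^n$-mass, contradicting the expected bound above. The relaxation ``$X'$ may equal $X$'' is genuinely necessary under approximate DP, since an unbiased mechanism can output $\mu(X)$ exactly at a specific $X$ while leaking error onto a neighbor; the abstract of the paper already flags this necessity.

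I expect this last step to be the principal obstacle. Pointwise lower bounds under approximate DP require care because the $\delta$-term permits rare ``bad'' events that can mask the error at any fixed $X$, and tuning the stability propagation so that the resulting bound scales sharply as $\Gamma_p(K)/(n\eps)$ --- rather than picking up additive $\delta$-dependent slack --- is the central difficulty. A secondary challenge is tightness at $p > 2$: the fingerprint must be aligned with the primal optimum $M^* \propto \Sigma_P^{1/2}$ and not with $\Sigma_P$ itself, since a naive trace-level argument would extract only $\sqrt{\tr \Sigma_P}/(n\eps)$, which can be smaller than $\Gamma_p(K)/(n\eps)$ by a factor as large as $\sqrt{d}$.
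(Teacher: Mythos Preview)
The proposal's final step---promoting the expected lower bound over $X\sim P^n$ to a per-dataset statement via ``stability propagation along $O(n)$ edit steps''---does not work, and this is the central gap. The negation of the theorem's conclusion at a fixed $X$ tells you only that the error is small at $X$ and at its one-step neighbors; it says nothing about distance-$2$ neighbors, let alone about a set of positive $P^n$-measure. More fundamentally, $(\eps,\delta)$-differential privacy constrains output \emph{distributions}, not their second moments: even if the laws of $\mech(X)$ and $\mech(X')$ are $(\eps,\delta)$-close, the $\ell_p$ error at $X'$ can be arbitrarily larger than at $X$ whenever $\delta>0$, as the construction in Section~\ref{sect:counterexample} already shows. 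So there is no mechanism by which ``small error at $X$ and all its neighbors'' propagates outward to contradict an averaged bound. Your own identification of this step as the principal obstacle is correct, but the sketch you give does not overcome it.

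The paper's proof takes an entirely different route that is intrinsically per-dataset and never averages over inputs. For a fixed $X$ and each direction $\theta$, it constructs a specific neighbor $X'$ by moving one point far in direction $\theta$ (Lemma~\ref{lm:neighboring}) and applies the Hammersley--Chapman--Robins bound (Lemma~\ref{lm:hcr}), which lower-bounds $\var[\theta^T\mech(X)]$ by $|\theta^T(\mu(X)-\mu(X'))|^2$ divided by the $\chi^2$-divergence between the two output distributions. Because $\chi^2$ can be infinite under approximate DP, the paper surgically modifies one of the two distributions on the low-probability ``bad'' set (Lemma~\ref{lm:phat}) and runs a win-win: either the modification barely shifts the mean, so HCR with the bounded $\chi^2$ of Lemma~\ref{lm:epsdelta-to-chi2} gives the directional variance bound at $X$; or the modification shifts the mean substantially, which via Cauchy--Schwarz forces error of order $w_K(\theta)/(n\sqrt{\delta'})$ at $X$ or at $X'$ (Lemma~\ref{lm:ADP-1d-bound}). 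If the first alternative holds for \emph{every} $\theta$, the containment argument of Lemma~\ref{lm:1-to-d} yields the $\Gamma_p$ bound at $X$ itself. If the second alternative triggers for some $\theta$, a projection trick (Lemma~\ref{lm:proj}) guarantees the relevant width satisfies $w_K(\theta)\gtrsim \Gamma_2(K)/d$, and this is exactly where the hypothesis $\delta\lesssim \eps^2/d^2$ enters, converting $1/(n\sqrt{\delta'})$ into $\gtrsim 1/(n\eps)$. The general $\ell_p$ case is then reduced to $\ell_2$ via the diagonal rescaling of Lemma~\ref{lm:Gammap-via-Gamma2}. No fingerprinting identity and no hard distribution over datasets is used.
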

Above, the notation \(A \gtrsim B\)
for two quantities \(A\) and \(B\) is used to mean that there exists
an absolute constant \(c > 0\) such that \(A \ge cB\). 

Theorem~\ref{thm:main}, together with Theorem~\ref{thm:ub}, shows that, for any \(\ell_p\) norm for \(p \ge
2\), and any
unbiased mechanism \(\mech\) over any domain \(K\), there is a
general Gaussian noise mechanism that has
\(\ell_p\) error not much larger than that of \(\mech\). Moreover,
this is true in an \emph{instance optimal} sense: every dataset \(X\)
has a neighbor \(X'\) for which the correlated Gaussian noise
mechanism has smaller error (up to small factors).  This somewhat
complicated version of instance optimality is necessary, since, for
any dataset \(X\), there is an unbiased \((0, \delta)\)-differentially
private mechanism that has error \(0\) on \(X\). Roughly speaking,
this mechanism outputs \(\mu(X)\) on \(X\), and on any other dataset
outputs \(\mu(X)\) with probability \(1-\delta\) and some other output
with probability \(\delta\), chosen to make the mechanism
unbiased. The exact construction is given in Section~\ref{sect:counterexample}. This illustrates the key difficulty in proving a result such
as Theorem~\ref{thm:main}: one has to rule out the possibility that a
mechanism can ``cheat'' by hard-coding the true answer for one
dataset, thus outperforming an ``honest'' mechanism on that
dataset. The construction in Section~\ref{sect:counterexample} shows
that this type of cheating is possible even for unbiased mechanisms.

We note that Theorem~\ref{thm:main} is a simplification of our main
result, and what we prove is actually stronger: we show that either
the error of \(\mech\) on \(X\) is comparable to that of a general
Gaussian noise mechanism, or there is a neighboring dataset \(X'\) for which
the error grows with \(\frac{1}{\sqrt{\delta}}\). Since usually
\(\delta\) is chosen to be very small, this means that, on every
input, \(\mech\) is either approximately dominated by a general Gaussian noise mechanism, or has huge
error in the neighborhood of the input. It is worth also  noting that
in the case of $d=1$, adding Laplace noise gives smaller error than
Gaussian noise. In higher dimensions, however, the Laplace noise
mechanism, or any mechanism that satisfies pure differential priacy
(i.e., $\delta = 0$), typically has much larger error than the
Gaussian noise mechanism.

Theorem~\ref{thm:main} strengthens results showing the optimality of
Gaussian noise mechanisms among oblivious mechanisms~\cite{factormech}
to the more general class of unbiased mechanisms. As mentioned above,
some natural unbiased mechanisms fail to be oblivious. We also
consider the class of unbiased mechanisms more natural and robust than
the class of oblivious mechanisms. It is worth noting further that Gaussian
noise mechanisms are known to be approximately optimal \emph{in the worst
  case} for sufficiently large datasets among all differentially
private mechanisms. This follows, for example, by reductions from
general mechanisms to oblivious mechanisms
in~\cite{BhaskaraDKT12,factormech}.

Independently from our work, unbiased mechanisms for private mean
estimation were also recently investigated by Kamath, Mouzakis,
Regehr, Singhal, Steinke, and Ullman~\cite{trilemma23}. They focus on
mechanisms that take as input independent samples from an unknown one-dimensional
distribution in some family, e.g., distributions with
bounded \(k\)-th moments, and study the trade-off between the bias of
the mechanism with respect to the distribution's mean, and its
variance. Their results are incomparable to ours: on the one hand, they study
one-dimensional mean estimation and prove worst-case (i.e., minimax)
lower bounds, rather than instance per instance lower bounds; on the
other hand, they give a tight trade-off between bias and variance in
their setting, whereas we only consider unbiased mechanisms. They also
show that for Gaussian mean estimation no purely private (i.e.,
\((\eps, \delta)\)-differentially private with \(\delta = 0\))
unbiased mechanism can achieve finite variance.  

The kind of instance optimality guarantee shown in
Theorem~\ref{thm:main} is reminiscent of the theory of uniformly
minimum variance unbiased estimators in statistics. Our result is also
related to results by Asi and Duchi~\cite{asi20optimal,AsiD20}, and by
Huang, Liang, and Yi~\cite{HuangLY21} who also study instance
optimality for unbiased differentially private algorithms. Asi and
Duchi also focus on notions of being unbiased defined with respect to
the dataset, but only treat pure differential privacy, i.e., the
\(\delta = 0\) setting.  The results in~\cite{asi20optimal} are
tailored to one-dimensional estimation, and the lower bounds proved
there are not sufficiently strong to prove the result in
Theorem~\ref{thm:main}. The results in~\cite{AsiD20} do extend to
higher-dimensional problems, but use a non-standard definition of
unbiased mechanisms that is incomparable with the more standard
definition we use. Their lower bounds also rely on some strong
regularity assumptions that we do not make. Finally, the work of
Huang, Liang, and Yi~\cite{HuangLY21} is not restricted to unbiased
mechanisms, and, similarly to our results, considers mean estimation and
optimality for the  neighborhood of every dataset (in fact only
considering datasets resulting from removing points). In their results,
however, optimality is proved only up to a factor of at least the
square root of the dimension, making them less interesting in high
dimensions. 

Let also mention that there are other approaches
to instance optimality in differential privacy. One approach is based on
local minimax rates, initiated by Ruan and
Duchi~\cite{DuchiRuan18}, and Asi and Duchi
in~\cite{asi20optimal}, and explored further
in~\cite{DongYi22,MSU22}. The local minimax rates framework is
not suitable for proving strong instance optimality for high
dimensional problems, as noted in~\cite{MSU22}. Another approach,
similar to that of~\cite{HuangLY21}, based on optimality with respect
to subsets of a dataset, was considered in~\cite{DKSS23}. Their
results are also restricted to one-dimensional problems. In general,
existing instance optimality results tend to only be meaningful in low
dimensional settings, and our work is a rare example of instance
optimality up to small factors for a high-dimensional problem.

In addition to Theorem~\ref{thm:main}, we also show analogous, and, in
fact, stronger results for other variants of differential privacy. For
concentrated differential privacy~\cite{DworkR16,BunS16}, our result
holds for \emph{every} dataset \(X \in K^n\). This is also the case
for local differential privacy~\cite{EvfimievskiGS03,KLNRS}, recalled
in the next definition.

\begin{definition}
  A (non-interactive) locally differentially private mechanism
  \(\mech\) is defined by a tuple of randomized algorithms \(\mathcal{A}, \mathcal{R}_1,
  \ldots, \mathcal{R}_n\), where each \(\mathcal{R}_i\), called a
  local randomizer, receives a single data point from a domain \(K\),
  and is \((\eps, 0)\)-differentially private with respect to that
  point, and the algorithm's output on a dataset \(X := (x_1, \ldots,
  x_n)\) is defined by
  \[
    \mech(X) := \mathcal{A}(\mathcal{R}_1(x_1), \ldots, \mathcal{R}_n(x_n)),
  \]
  i.e., the postprocessing of the outputs of the randomizers
  \(\mathcal{R}_i\) by the aggregator \(\mathcal{A}\).
  Moreover, each algorithm uses independent randomness. 
\end{definition}
Local differential privacy captures settings in which there is no
trusted central authority, and instead every data owner ensures the
privacy of their own data. Mean estimation algorithms in local
differential privacy are typically based on randomized response, and
are not oblivious even when they are unbiased. The recent work~\cite{asi2022optimal} studies unbiased
mechanisms for mean estimation in this model when \(K = B_2^d\), and
gives a tight characterization of algorithms that achieve optimal
error in \(\ell_2\). Here we extend their results to arbitrary bounded
domains \(K\), and to error measured in other norms, albeit with a
somewhat less tight characterization. Our characterization shows that
that the instance optimal unbiased locally private mechanism for mean
estimation  has every local agent add a linear transformation of (non-oblivious)
subgaussian noise. The precise results are given in Section~\ref{sect:ldp}.

As mentioned above, we also prove a result analogous to
Theorem~\ref{thm:main} for mechanisms that are unbiased in a
distributional sense, i.e., where the expectation of the mechanism's
output, given an input dataset drawn i.i.d.~from some
distribution over the domain \(K\), matches the mean of the
distribution. In that case we show that, for every distribution \(P\)
on \(K\), the \(\ell_p\) error of every distributionally unbiased
mechanism is at least \(\gtrsim \frac{\Gamma_p(K)}{n{\eps}}\) either
on \(P\), or on some
distribution \(Q\) that is at distance at most \(\frac1n\) from \(P\)
in total variation distance. Our result thus shows that, in the distributional setting,
too, unbiased mechanisms are dominated by general Gaussian noise mechanisms in
the neighborhood of every input distribution. The precise results are
given in Section~\ref{sect:dist}.

Using the properties of the \(\Gamma_p\) function that we establish,
we can prove lower bounds on \(\Gamma_p(K)\) for domains \(K\) that
naturally appear in applications. Together with
Theorem~\ref{thm:main}, these lower bounds imply concrete lower bounds
on the \(\ell_p\) error of any unbiased mechanism that hold in the
neighborhood of every dataset. Analogous results would for
distributionally unbiased mechanisms and unbiased locally
differentially private mechanisms also follow via the appropriate
variant of Theorem~\ref{thm:main}. First, we state one such lower
bound for estimating moment tensors.
\begin{theorem}\label{thm:lb-tensor}
  Let \(c > 0\) be a small enough absolute constant, let \(p \in [2,
  \infty]\) and let \(\ell\) be a positive integer. Let $\mech$ be an
  $(\eps, \delta)$-differentially private mechanism that takes as
  input datasets in \((B_2^d)^n\). Suppose that for every dataset
  \(X := (x_1, \ldots, x_n)\),
  \[
    \E[\mech(X)] = M_\ell(X) := \frac1n \sum_{i=1}^n {x_i^{\otimes \ell}}.
  \]
  If \(\eps \le c\), and
  \(\delta \le \min\left\{\frac{c}{n},\frac{c\eps^2}{d^{2\ell}}\right\}\),
  then, for any dataset \(X \in (B_2^d)^n\), there exists a
  neighboring dataset \(X' \in (B_2^d)^n\) (which may equal \(X\)) for which
  \[
    \sqrt{\E\left[\norm{\mech(X')-M_\ell(X')}_p^2\right]} \gtrsim
    \frac{1}{\eps n} \left(\frac{d}{\ell}\right)^{\ell/p}.
  \]
\end{theorem}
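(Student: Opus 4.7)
The plan is to reduce moment-tensor estimation to private mean estimation over
\[
K \;:=\; \{x^{\otimes \ell} : x \in B_2^d\} \;\subseteq\; \R^{d^\ell},
\]
apply Theorem~\ref{thm:main} to $K$, and then lower bound $\Gamma_p(K) \gtrsim (d/\ell)^{\ell/p}$ using the dual characterization developed in Section~\ref{sect:duality}.

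For any $X = (x_1, \ldots, x_n) \in (B_2^d)^n$, the lifted dataset $\tilde X := (x_1^{\otimes\ell}, \ldots, x_n^{\otimes\ell})$ lies in $K^n$ and $M_\ell(X) = \mu(\tilde X)$. The mechanism $\mech$ thus induces a mechanism $\mech'$ on $K^n$ (directly for $\ell$ odd, and for $\ell$ even after a short symmetrization over the $\pm$ ambiguity in the preimage, which preserves privacy and unbiasedness) that is $(\eps,\delta)$-differentially private and unbiased over $K$. The ambient dimension of $K$ is $d^\ell$, so the hypothesis $\delta \le c\eps^2/d^{2\ell}$ is exactly what Theorem~\ref{thm:main} requires. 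Applying Theorem~\ref{thm:main} to $\mech'$, and pulling the neighbor $\tilde X'$ it produces in $K^n$ back to a neighbor $X'$ in $(B_2^d)^n$ by replacing one $x_i$ with a preimage of $\tilde x'_i$, yields $\sqrt{\E[\|\mech(X') - M_\ell(X')\|_p^2]} \gtrsim \Gamma_p(K)/(\eps n)$.

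It remains to lower bound $\Gamma_p(K)$. I would test the dual characterization against the distribution $P$ of $x^{\otimes \ell}$ where $x := \ell^{-1/2}\sum_{i \in S} \xi_i\, e_i$ for a uniformly random size-$\ell$ subset $S \subseteq [d]$ and independent Rademacher signs $\xi_i$ (so $\|x\|_2 = 1$ almost surely). A direct expansion shows that, on the subspace spanned by the distinct-entry tensor basis, $\cov(P)$ is block-diagonal with one rank-$1$ block per subset $S \in \binom{[d]}{\ell}$---the cross-subset couplings and (for $\ell$ even) the mean correction $\E[x^{\otimes\ell}]\E[x^{\otimes\ell}]^\top$ vanish by sign symmetry---with single nonzero eigenvalue $\ell!/(\ell^\ell\binom{d}{\ell})$ per block. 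Summing square-roots over blocks gives
\[
\tr(\cov(P)^{1/2}) \;\ge\; \binom{d}{\ell}\sqrt{\ell!/(\ell^\ell\binom{d}{\ell})} \;=\; \sqrt{d(d-1)\cdots(d-\ell+1)/\ell^\ell} \;\gtrsim\; (d/\ell)^{\ell/2},
\]
which settles the $p=2$ case via the dual formula stated in the introduction. For general $p \in [2,\infty]$, I would invoke the $p$-version of the dual---a supremum over nonnegative diagonal reweightings $D$ with $\|D\|_{p/(p-2)} \le 1$ of $\Gamma_2(\sqrt{D}K)$, obtained by Lagrangian duality applied to the convex program for $\Gamma_p(K)^2$---with $D$ uniform on the $d(d-1)\cdots(d-\ell+1)$ distinct-entry tensor coordinates; since $\cov(P)$ is already supported on these coordinates, the computation passes through and yields $\Gamma_p(K) \gtrsim (d/\ell)^{\ell/p}$.

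The main obstacle is the spectral/combinatorial lower bound on $\Gamma_p(K)$ itself: because $\trp{p/2}$ depends on the \emph{standard} coordinate basis of $\R^{d^\ell}$, one cannot directly exploit the $O(d)$-harmonic decomposition that makes the uniform sphere measure easy to analyze. The random-subset-with-signs $P$ above is chosen precisely so that $\cov(P)$ is already block-diagonal in the standard basis, which makes the match between the reweighting $D$ and $\cov(P)$ transparent. Converting from the $p = 2$ dual to the $p$-dual also requires verifying that the Lagrangian duality produces the clean form used above and that the optimum concentrates on the distinct-entry tensor coordinates; both of these should follow from the general development in Section~\ref{sect:duality}.
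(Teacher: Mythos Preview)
Your reduction to mean estimation over $K=\{x^{\otimes\ell}:x\in B_2^d\}$ and subsequent appeal to Theorem~\ref{thm:main} is exactly what the paper does (implicitly, via the query-release reduction discussed in the introduction). The difference lies entirely in how you lower bound $\Gamma_p(K)$.

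The paper does \emph{not} compute a dual witness directly. Instead, it first proves that $\Gamma_p$ is multiplicative under Kronecker products (Lemma~\ref{lm:tensoring}) and monotone under coordinate projections (Lemma~\ref{lm:proj-Gammap-lb}). It then observes that a coordinate projection of $\{z^{\otimes\ell}:z\in\{-1,+1\}^d\}$ contains the full tensor product $(\{-1,+1\}^{d/\ell})^{\otimes\ell}$, whose $\Gamma_p$ factors as $\Gamma_p([-1,+1]^{d/\ell})^\ell=(d/\ell)^{\ell/2+\ell/p}$; finally it passes from the cube to the ball via $\{-1,+1\}^d\subseteq\sqrt d\,B_2^d$. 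Your route is more direct: plug the explicit distribution $x=\ell^{-1/2}\sum_{i\in S}\xi_ie_i$ into the dual formula of Theorem~\ref{thm:duality}. This works, and your block-diagonal claim is correct --- one should note (you glossed over it) that the distinct-entry block is orthogonal not just to other distinct-entry blocks but also to all repeated-index coordinates, since any monomial $\prod_i\xi_i^{1+m_i(b)}$ with $\sum m_i(b)=\ell$ over $\ell$ indices must have some even exponent unless all $m_i(b)=1$. With that, $\cov(P)$ genuinely splits and $\tr(\cov(P)^{1/2})$ dominates the sum over blocks.

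Each approach buys something. The paper's tensoring lemma is a reusable structural tool (it is used again for marginals in Theorem~\ref{thm:marginals-K}), and the proof is clean once that lemma is in hand. Your argument avoids proving multiplicativity of $\Gamma_p$ altogether, at the cost of a bespoke moment computation; it also makes the dual witness completely explicit. Both proofs lose the same $\ell$-dependent factor against the clean bound $(d/\ell)^{\ell/p}$, so neither is quantitatively sharper. Your worry about the $p>2$ step is unfounded: Lemma~\ref{lm:Gammap-via-Gamma2} gives exactly the reweighted form you need, and since your $\cov(P)$ is already supported on the distinct-entry coordinates, the uniform choice of $D$ on those coordinates goes through without further work.
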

This lower bound implies that estimating the \(\ell\)-th moment tensor
of a dataset in \(B_2^d\) via an \((\eps, \delta)\)-differentially
private unbiased mechanism requires \(\ell_2\) error at least on the
order \(\frac{d^{\ell/2}}{\eps n}\) for small enough \(\eps\) and
\(\delta\) and constant \(\ell\). This lower bound is nearly matched
by the basic Gaussian noise mechanism, which adds independent
Gaussian noise to each coordinate of the tensor. At the same time, the
projection mechanism~\cite{NTZ} allows \(\ell_2\) error on the scale of
\(\frac{d^{1/4}\log(1/\delta)^{1/4}}{\sqrt{\eps n}}\) for any constant
\(\ell\), which is much smaller for \(n \ll d^{\ell - \frac12}\). This
follows from an analysis similar to that in~\cite{conjunctions}: see
the recent paper~\cite{DLY22} for the argument in the \(\ell=2\)
case. Theorem~\ref{thm:lb-tensor} thus illustrates the cost of using
private unbiased mechanisms: while they produce answers that are
accurate in expectation, they can incur much more error than biased
algorithms, and this is true in the neighborhood of every input. We
note that Theorem~\ref{thm:lb-tensor} can easily be extended to unbiased estimates of the
covariance, and also to a distributional setting, and show a similar
gap between biased and unbiased mechanisms.

Analogous techniques also imply tight upper and lower bounds on
estimating \(\ell\)-way marginals on \(d\)-dimensional binary data
when \(\ell = O(1)\). Before we state these bounds, let us recall the
general connection between query release and mean estimation. Suppose
that $Q = (q_1, \ldots, q_k)$ is a sequence of statistical queries on
a universe \(\uni\), also known as a workload. This means that each
\(q_i\) is specified by a function \(q_i: \uni \to \R\), and,
overloading notation, its value on a dataset
\(\ds := (x_1, \ldots, x_n) \in \uni^n\) is defined by
\( q_i(\ds) := \frac1n \sum_{i =1}^n q_i(\ds).  \) Overloading
notation again, we can define
\(Q(\ds) := (q_1(\ds), \ldots, q_k(\ds))\) to be the sequence of true
answers to the queries in \(Q\) on the dataset \(X\). When \(n=1,\)
i.e., \(X\) consists of the single data point \(x \in \uni,\) we write
\(Q(x)\) rather than \(Q((x)).\) The problem of privately releasing an
approximation to \(Q(\ds)\) is equivalent to mean estimation over the
set \(K_Q := \{Q(x): x \in \uni\}\) in the following sense. Given a
dataset \(X  = (x_1, \ldots, x_n) \in \uni^n\), we can construct a
dataset \(f(X) := (Q(x_1), \ldots, Q(x_n)) \in K_Q^n\). Clearly,
\(\mu(f(X)) = Q(X)\), and any differentially private algorithm \(\mech\) for
(unbiased) mean estimation over \(K_Q\) gives an (unbiased) differentially
private algorithm for releasing \(Q(X)\), simply by running
\(\mech(f(X))\). We can also choose an inverse \(g\) of \(f\) by choosing, for
each \(y \in K_Q\), some \(x \in \uni\) such that \(Q(x) = y\), and
defining \(g(Y)\) for \(Y = (y_1, \ldots, y_n) \in K_Q^n\) as the
function that replaces each \(y_i\) with the chosen \(x_i\) giving
\(Q(x_i) = y_i\). This shows, in turn, that an (unbiased) differentially private
mechanism \(\mech\) that releases \(Q(X)\) gives an (unbiased) private mean
estimation algorithm \(\mech(f(Y))\). These reductions preserve the
privacy parameters, the property of being unbiased, and the error. 

Let us now specialize this discussion to releasing \(\ell\)-way
marginal queries. Let \(\queries^{\text{marg}}_{d,\ell}\) be the
statistical queries over the universe \(\uni := \{0,1\}^d\) where each
query \(\query_{s, \beta}\) in \(\queries^{\text{marg}}_{d,\ell}\) is
defined by a sequence of \(\ell\) indices
\(s := (i_1, \ldots, i_\ell) \in [d]^\ell\) and a sequence of \(\ell\)
bits \(\beta := (\beta_1, \ldots, \beta_\ell) \in \{0,1\}^\ell\), and has
value
\(\query_{s,\beta}(x) = \prod_{j = 1}^\ell |x_{i_j} - \beta_{j}|\) on
every \(x \in \uni\). For an easier to read notation, let us write \(K^{\text{marg}}_{d,\ell}
:= K_{\queries^{\text{marg}}_{d,\ell}}\). By analyzing
\(\Gamma_p(K^{\text{marg}}_{d,\ell})\), we derive the following bound
on the error necessary to release unbiased estimates of the
\(\ell\)-way marginal queries.

\begin{theorem}\label{thm:marginals}
  Let \(c > 0\) be a small enough absolute constant, let \(p \in [2,
  \infty]\) and let \(\ell\) be a positive integer. Let $\mech$ be an
  $(\eps, \delta)$-differentially private mechanism that takes as
  input datasets in \((\{0,1\}^d)^n\). Suppose that for every dataset
  \(X := (x_1, \ldots, x_n)\),
  \[
    \E[\mech(X)] = \queries^{\text{marg}}_{d,\ell}(X).
  \]
  If \(\eps \le c\), and
  \(\delta \le \min\left\{\frac{c}{n},\frac{c\eps^2}{(2d)^{2\ell}}\right\}\),
  then, for any dataset \(X \in (\{0,1\}^d)^n\), there exists a
  neighboring dataset \(X' \in (\{0,1\}^d)^n\) (which may equal \(X\)) for which
  \[
    \sqrt{\E\left[\norm{\mech(X')-\queries^{\text{marg}}_{d,\ell}(X')}_p^2\right]} \gtrsim
    \frac{d^{\frac{\ell}{2} + \frac{\ell}{p}}}{(2\sqrt{2} \ell)^\ell}.
  \]
\end{theorem}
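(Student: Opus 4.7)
The plan is to reduce the claim to a geometric lower bound on $\Gamma_p(K^{\text{marg}}_{d,\ell})$ via Theorem~\ref{thm:main}, and then prove that geometric bound by applying the dual characterization of $\Gamma_p$ from Section~\ref{sect:duality} to a carefully chosen test distribution.

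For the reduction, the standard query-release-to-mean-estimation equivalence described immediately before the theorem identifies any $(\eps,\delta)$-DP unbiased release mechanism for $\queries^{\text{marg}}_{d,\ell}$ with an $(\eps,\delta)$-DP unbiased mean estimator over $K^{\text{marg}}_{d,\ell}\subseteq \R^{D}$, where $D = (2d)^\ell$ is the total number of queries, and this equivalence preserves error on every dataset. The hypothesis $\delta \le c\eps^2/(2d)^{2\ell}$ matches exactly the $\delta \le c\eps^2/D^2$ assumption required by Theorem~\ref{thm:main} in ambient dimension $D$, so invoking that theorem produces, for every $X$, a neighbor $X'$ with
\[
\sqrt{\E\|\mech(X') - \queries^{\text{marg}}_{d,\ell}(X')\|_p^2} \gtrsim \frac{\Gamma_p(K^{\text{marg}}_{d,\ell})}{n\eps}.
\]
Hence, modulo the $1/(n\eps)$ factor implicit in the stated target, it suffices to prove the geometric inequality $\Gamma_p(K^{\text{marg}}_{d,\ell}) \gtrsim d^{\ell/2 + \ell/p}/(2\sqrt{2}\ell)^\ell$.

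For the geometric estimate, I would take $P$ to be the push-forward of the uniform measure on $\{0,1\}^d$ under $x\mapsto \queries^{\text{marg}}_{d,\ell}(x)$ and diagonalize $\cov(P)$ by Fourier analysis on $\{\pm 1\}^d$. Writing $z_i := 1-2x_i$ and using $|x_i-\beta| = \tfrac{1}{2}(1+(-1)^{1-\beta}z_i)$, each query expands as
\[
\query_{s,\beta}(x) = 2^{-\ell}\sum_{T\subseteq s}\chi_T(z)\prod_{j: i_j\in T}(-1)^{1-\beta_j},
\]
where $\chi_T(z) := \prod_{i\in T}z_i$. Orthonormality of $\{\chi_T\}$ under uniform $z$ yields an orthogonal rank-one decomposition $\cov(P) = \sum_{\emptyset \neq T, |T|\le \ell} w_Tw_T^T$, and a direct count of the $(s,\beta)$ pairs with $T\subseteq s$ gives $\|w_T\|^2 \asymp 2^{-\ell}\,\ell!\,\binom{d-|T|}{\ell-|T|}$ (the $\ell!$ arising because each underlying query is counted $\ell!$ times by the ordered tuples $s\in[d]^\ell$, producing $\ell!\times \ell!$ rank-one blocks in $\cov(P)$). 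Consequently, the nonzero spectrum of $\cov(P)$ consists, for each $k=1,\ldots,\ell$, of eigenvalue $\lambda_k\asymp 2^{-\ell}\ell!\binom{d-k}{\ell-k}$ with multiplicity $\binom{d}{k}$, and for $d\gg\ell$ the top layer $k=\ell$ (multiplicity $\binom{d}{\ell}$, eigenvalue $\asymp \ell!/2^\ell$) dominates.

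Plugging this spectrum into the dual characterization of $\Gamma_p$ closes the proof. For $p = 2$, the bound $\Gamma_2(K)\ge \tr(\cov(P)^{1/2})$ gives $\Gamma_2(K^{\text{marg}}_{d,\ell}) \gtrsim \binom{d}{\ell}\sqrt{\ell!/2^\ell}$, and Stirling together with $\binom{d}{\ell} \ge d^\ell/(2^\ell \ell!)$ reduces this to $d^\ell/(2\sqrt{2}\ell)^\ell$. For $p\in(2,\infty]$ the dual from Section~\ref{sect:duality} is a Schatten-type functional of $\cov(P)^{1/2}$ interpolating between the nuclear norm at $p=2$ and the trace at $p=\infty$; applied to the top Fourier layer it produces a bound of the form $\Gamma_p(K^{\text{marg}}_{d,\ell}) \gtrsim \binom{d}{\ell}^{1/2+1/p}\sqrt{\ell!/2^\ell}$, which by the same Stirling and binomial estimates reduces to $d^{\ell/2+\ell/p}/(2\sqrt{2}\ell)^\ell$. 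The main technical subtlety is verifying that the top Fourier layer is indeed the dominant contributor for every $p \in (2,\infty]$: lower layers $k < \ell$ have larger multiplicity $\binom{d}{k}$ but proportionally much smaller eigenvalues $\binom{d-k}{\ell-k}/2^\ell$, so one must check that the $\ell_{p/(p+2)}$-type balance underlying the general-$p$ dual is maximized at $|T|=\ell$. Once that is established, the remainder is routine Stirling and binomial estimation.
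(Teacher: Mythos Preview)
Your reduction via Theorem~\ref{thm:main} and the query-release/mean-estimation correspondence is correct and matches the paper exactly.

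For the geometric bound on $\Gamma_p(K^{\text{marg}}_{d,\ell})$, your $p=2$ approach---pushing forward the uniform law on $\{0,1\}^d$ and diagonalizing $\cov(P)$ via Fourier analysis---is a genuine alternative to the paper's argument. The paper instead (Theorem~\ref{thm:marginals-K}) exhibits an orthogonal projection followed by a coordinate projection taking $K^{\text{marg}}_{d,\ell}$ onto $\frac{1}{2^\ell}K^\ell_{d,\infty}$, and then combines Lemma~\ref{lm:proj-Gammap-lb} with the tensor estimate of Theorem~\ref{thm:tensor}. Your orthogonality claim for the vectors $w_T$ does hold (summing over $\beta$ annihilates cross terms), modulo some care with tuples $s\in[d]^\ell$ having repeated indices, which the paper's parameterization explicitly allows.

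There is, however, a real gap in your treatment of $p>2$. The dual in Theorem~\ref{thm:duality} is \emph{not} a Schatten functional of $\cov(P)^{1/2}$: it is $\sup_D \tr((D\cov(P)D)^{1/2})$ where $D$ is diagonal in the \emph{coordinate basis} indexed by $(s,\beta)$, constrained by $\trp{q}(D^2)=1$. This quantity is not determined by the spectrum of $\cov(P)$ alone, so the eigenvalue/multiplicity data you compute does not by itself yield a bound, and the specific expression $\binom{d}{\ell}^{1/2+1/p}\sqrt{\ell!/2^\ell}$ does not follow. (For instance, at $p=\infty$ the optimal $D$ restricted to the top Fourier layer gives only $2^{-\ell}\binom{d}{\ell}^{1/2}$, which is smaller by a factor of $\sqrt{2^\ell\ell!}$ than what you assert.) The paper sidesteps the $p>2$ dual entirely: it proves the $\Gamma_2$ bound and then applies the elementary comparison $\Gamma_p(K)\ge (2d)^{-\ell(1/2-1/p)}\Gamma_2(K)$ coming from Lemma~\ref{lm:trp-norm}. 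In the dual picture this corresponds to taking $D$ a multiple of the identity, which is also the simplest way to repair your argument.
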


Theorem~\ref{thm:marginals} shows a similar gap as
Theorem~\ref{thm:lb-tensor} between the optimal error achievable by
unbiased and biased mechanisms for releasing marginals. For constant
\(\ell\), the lower bound in Theorem~\ref{thm:marginals} nearly
matches the error achievable by adding i.i.d.~Gaussian noise. By
contrast, the error achieved by the projection mechanism or the
private multiplicative weights mechanism, which can be biased, is much
smaller for moderate values of \(n\) and \(\ell > 1\): for example,
the projection mechanism achieves error on the order of
\(\frac{d^{1/4}\log(1/\delta)^{1/4}}{\sqrt{\eps n}}\) for any constant
\(\ell\)~\cite{conjunctions}. In the case of \(\ell = 1\),
Theorem~\ref{thm:marginals} gives lower bounds on the error achieved
by unbiased mechanisms for one-way marginals that match, up to the
dependence on \(\delta\), the lower bounds against all
\((\eps, \delta)\)-differentially private algorithms proved via
fingerprinting codes~\cite{BunUV14}.  While our lower bounds are for
restricted mechanisms, they hold in the neighborhood of every input
dataset, rather than for a worst case dataset as the fingerprinting
lower bounds.

The proofs of Theorems~\ref{thm:lb-tensor}~and~\ref{thm:marginals},
and more precise upper and lower bounds on
\(\Gamma_p(K^{\text{marg}}_{d,\ell})\) are presented in Section~\ref{sect:lbs}.

\paragraph{Techniques.} In terms of techniques for proving Theorem~\ref{thm:main} and its
extensions, we combine a technique from \cite{factormech}, developed
for proving lower bounds on oblivious mechanisms,
with classical results from the statistical theory of unbiased
estimation. The key insight from~\cite{factormech} is that, in order
to prove a theorem like Theorem~\ref{thm:main}, it is sufficient to
show that, for every
unit vector \(\theta \in \R^d\), the variance of \(\theta^T \mech(X)\)
is bounded below in terms of the width of \(K\) in the direction of
\(\theta\). This reduction is explained in
Section~\ref{sect:high2low}. While proving a lower bound on the worst-case variance of a
one-dimensional private mechanism like \(\theta^T \mech(X)\) is easy,
the challenge is that the lower bound must hold for a fixed \(X\) that
is not allowed to vary with \(\theta\). This is trivial for oblivious
mechanisms, but not for unbiased mechanisms. Nevertheless, we show
that the classical Hammersley-Chapman-Robins (HCR) bound~\cite{H50,CR51} implies the
one-dimensional variance lower bounds we need for pure and
concentrated differential privacy. The situation is more subtle for
approximate differential privacy, i.e., \((\eps,
\delta)\)-differential privacy for \(\delta > 0\). The main technical
issue is that applying the HCR bound requires proving an upper bound
on the \(\chi^2\) divergence between the output
distributions \(\mech(X)\) and \(\mech(X')\) of a differentially private mechanism \(\mech\) on two
neighboring datasets \(X\) and \(X'\). No such finite bound need exist
for \((\eps,\delta)\)-differentially private mechanisms when \(\delta
> 0\).  To get around
this issue, we modify one of the output distributions \(\mech(X)\) and
\(\mech(X')\) so that the the
\(\chi^2\) divergence becomes bounded, and, moreover, the expectations
of the two distributions does not change much, unless one of the two
distributions already has huge variance. Then we can carry out a
win-win analysis: either one of \(\mech(X)\) or \(\mech(X')\) has huge
variance, or the HCR bound can be applied to them.

\paragraph{Notation.} As already noted, we use \(\|x\|_p := (|x_1|^p +
\ldots + |x_d|^p)^{1/p}\) for the \(\ell_p\) norm of a vector \(x \in
\R^d\), and \(B_p^d := \{x \in \R^d: \|x\|_p \le 1\}\) for the
corresponding unit ball. We write the standard inner product in
\(\R^d\) as \(\ip{x}{y} := x_1 y_1 + \ldots + x_d y_d = x^T y\) for
\(x,y \in \R^d\). For a $d\times N$ matrix $M$, we define the $\ell_p
\to \ell_q$ operator norm by $\|M\|_{p\to q} := \sup_{x \in \R^N: x
  \neq 0} \frac{\|Mx\|_q}{\|x\|_p}$. Note that \(\|M\|_{1\to 2}\)
equals the largest \(\ell_2\) norm of a column of $M$, and
\(\|M\|_{2\to \infty}\) equals the largest \(\ell_2\) norm of a row of
\(M\). We also define
the Frobenius (or Hilbert-Schmidt) norm \(\|M\|_F := \tr(M^T
M)^{1/2}\). Note that this is just the \(\ell_2\) norm of $M$ treated
as a vector. 

For a \(d\times d\) matrix \(M\), we use
\(M\succeq 0\) to denote that \(M\) is positive semidefinite, i.e.,
\(M\) is symmetric and satisfies \(x^T M x \ge 0\) for all \(x \in
\R^d\). If \(M\) is also positive definite, i.e., positive
semidefinite and non-singular, we write \(M \succ 0\). We write \(A
\succeq B\) and \(B \preceq A\) when \(A-B \succeq 0\) for two
\(d\times d\) matrices \(A\) and \(B\). We write \(\sqrt{M}\) or
\(M^{1/2}\) for the principle square root of a positive semidefinite
matrix \(M\), i.e., \(\sqrt{M}\) is a positive semidefinite matrix
such that \((\sqrt{M})^2 = M\).

For a probability distribution \(P\), we use \(X \sim P\) to denote
the fact that the random variable \(X\) is distributed according to
\(P\). We use \(\E_{X \sim P}[f(X)]\) to denote the expectation of the
function \(f(X)\) when \(X\) is a random variable distributed
according to \(P\). We use \(\cov(P)\) to denote the covariance matrix of a
distribution \(P\) on \(\R^d\).

\section{Gaussian Noise Mechanisms}
\label{sect:Gammap}

In this section, we introduce the general Gaussian noise mechanism for
estimating means in an arbitrary bounded domain \(K\). This mechanism
 generalizes known factorization mechanisms, as we discuss
later on in the section. The mechanism's error bound generalizes
factorization norms: we prove this fact, and some other important
properties in this section, as well.

\subsection{Preliminaries on Concentrated Differential Privacy}

Our mechanism's privacy guarantees are most cleanly stated in the
language of concentrated differential privacy. We recall the
definition of this variant of differential privacy here, together with
some basic properties of it.

Before we define concentrated differential privacy, it is convenient
to define a ``ratio of probability densities'' for general probability
distributions. We use the following (standard) definition.
\begin{definition}
  For two probability distribution \(P\) and \(Q\) over the same
  ground set, we define
  \(\frac{dP}{dQ}\) as follows. Let \(R = \frac{P +
    Q}{2}\) be a reference distribution, and denote by
  \(\frac{dP}{dR}, \frac{dQ}{dR}\) the Radon-Nykodim derivatives of
  \(P\) and \(Q\) with respect to \(R\). Then we take \(\frac{dP}{dQ}
  := \frac{dP/dR}{dQ/dR}\) where the ratio is defined to be \(\infty\)
  if the denominator is 0 while the numerator is positive. 
\end{definition}

We also recall the definition of R\'enyi divergence.
s
\begin{definition}
  For two probability distributions \(P\) and \(Q\) over the same
  ground set, and a real number \(\alpha > 1\), the R\'enyi divergence
  \(D_\alpha(P\|Q)\) of order \(\alpha\) is defined by
  \[
    D_\alpha(P\|Q) := \frac{1}{\alpha-1} \ln \E_{X \sim Q}
    \Braket{\left(\frac{dP}{dQ}(X)\right)^\alpha}. 
  \]
\end{definition}

We are now ready to define zero-concentrated differential privacy,
following~\cite{BunS16}.
\begin{definition}
  A mechanism \(\mech\) satisfies \(\rho\)-zero concentrated differential
  privacy (\(\rho\)-zCDP) if, for all neighboring datasets \(X,X'\),
  we have
  \[
    D_\alpha(\mech(X)\|\mech(X')) \le \rho \alpha.
  \]
\end{definition}

Concentrated differential privacy satisfies many nice properties: it
has a simple and optimal composition theorem, is invariant under
post-processing, and implies some protection to small groups in
addition to protecting the privacy of individuals. We refer
to~\cite{BunS16} for details. The properties we need are stated in the
following lemmas, and proofs can be found in~\cite{BunS16}.

\begin{lemma}\label{lm:cdp-composition}
  Suppose that \(\mech_1\) is a \(\rho_1\)-zCDP mechanism, and, for
  every \(y\) in the range of \(\mech_1\),
  \(\mech_2(y,\cdot)\) is a \(\rho_2\)-zCDP mechanism.  Then the
  composition \(\mech\) defined on dataset \(X\) by \(\mech(X) :=
  \mech_2(\mech_1(X),X)\) satisfies \((\rho_1 + \rho_2)\)-zCDP.

  In particular, if \(\mech\) satisfies \(\rho\)-zCDP, and \(\mathcal{A}\)
  is a randomized algorithm defined on the range of \(\mech\), then
  the post-processed mechanism defined on dataset \(X\) by
  \(\mathcal{A}(\mech(X))\) satisfies \(\rho\)-zCDP as well. 
\end{lemma}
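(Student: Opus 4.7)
The plan is to prove both claims directly from the definition of Rényi divergence, using the chain rule for Radon--Nikodym derivatives. The adaptive composition claim is the main one, and the post-processing claim will fall out as an immediate corollary (take $\mech_2$ to ignore its second argument, so that it is trivially $0$-zCDP).

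For adaptive composition, I fix neighbors $X, X'$ and a parameter $\alpha > 1$, and write $P$ for the law of the pair $(\mech_1(X), \mech_2(\mech_1(X), X))$ and $Q$ for the analogous law at $X'$. Denoting the first-coordinate marginals by $P_1, Q_1$ and the conditional distributions of the second coordinate given $y_1$ by $P_{2\mid y_1}, Q_{2\mid y_1}$, the Radon--Nikodym derivative factors as
\[
  \frac{dP}{dQ}(y_1, y_2) = \frac{dP_1}{dQ_1}(y_1)\cdot \frac{dP_{2\mid y_1}}{dQ_{2\mid y_1}}(y_2).
\]
Raising to the $\alpha$-th power and integrating against $Q$, I apply Fubini and integrate over $y_2$ first. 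By definition of Rényi divergence and the hypothesis that $\mech_2(y_1,\cdot)$ is $\rho_2$-zCDP,
\[
  \E_{y_2 \sim Q_{2\mid y_1}}\!\left[\left(\tfrac{dP_{2\mid y_1}}{dQ_{2\mid y_1}}(y_2)\right)^{\alpha}\right]
  = \exp\!\bigl((\alpha-1) D_\alpha(P_{2\mid y_1}\|Q_{2\mid y_1})\bigr)
  \le \exp\!\bigl((\alpha-1)\rho_2 \alpha\bigr),
\]
uniformly in $y_1$. Pulling this bound outside the remaining expectation over $y_1$ and using $\rho_1$-zCDP of $\mech_1$ gives
\[
  \E_{(y_1,y_2)\sim Q}\!\left[\left(\tfrac{dP}{dQ}\right)^{\alpha}\right] \le \exp\!\bigl((\alpha-1)(\rho_1+\rho_2)\alpha\bigr),
\]
and dividing $\ln(\cdot)$ by $\alpha-1$ yields $D_\alpha(\mech(X)\|\mech(X')) \le (\rho_1+\rho_2)\alpha$, which is the required zCDP bound.

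For the post-processing claim, the cleanest route is the specialization just mentioned: let $\mech_2(y,X) := \mathcal{A}(y)$, so $\mech_2$ depends on neither dataset and hence is $0$-zCDP for every fixed $y$. The composition theorem then gives that $\mathcal{A}(\mech(X))$ is $\rho$-zCDP. (Alternatively, one can cite the data processing inequality for Rényi divergence, which follows from Jensen's inequality applied to $t \mapsto t^\alpha$.)

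I do not foresee a serious obstacle; the only point that requires a bit of care is justifying the factorization of $dP/dQ$ when the output spaces are general (e.g.\ continuous), which is standard once one passes to a common dominating measure such as $R = (P+Q)/2$. Since the paper states that proofs can be found in~\cite{BunS16}, I would keep the write-up short and reference that work for the measure-theoretic details.
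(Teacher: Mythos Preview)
The paper does not give its own proof of this lemma; it simply states the result and refers the reader to \cite{BunS16}. Your sketch is correct and is essentially the standard argument found there: factor the Radon--Nikodym derivative via the chain rule, integrate the inner (conditional) factor first using the $\rho_2$-zCDP bound uniformly in $y_1$, then integrate the outer factor using $\rho_1$-zCDP. The post-processing corollary via $\mech_2(y,X):=\mathcal{A}(y)$ being $0$-zCDP is also exactly the right specialization.

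One small remark: as written, your argument bounds $D_\alpha$ for the \emph{joint} law of $(\mech_1(X),\mech_2(\mech_1(X),X))$, whereas the lemma as stated concerns $\mech(X)=\mech_2(\mech_1(X),X)$ alone. Passing from the joint to the second marginal is a deterministic post-processing, so the bound transfers by the data processing inequality for R\'enyi divergence (which you already mention). It would be worth adding one sentence to make this explicit, so that the derivation of the post-processing corollary from the composition claim is not circular; the DPI itself follows from Jensen's inequality for $t\mapsto t^\alpha$ and does not rely on the lemma.
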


\begin{lemma}\label{lm:cdp-to-adp}
  If a mechanism \(\mech\) satisfies \(\rho\)-zCDP, then, for any
  \(\delta > 0\), \(\mech\) also satisfies \((\rho + 2\sqrt{\rho
    \log(1/\delta)}, \delta)\)-differential privacy.
\end{lemma}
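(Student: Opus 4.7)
The plan is to follow the standard two-step conversion from R\'enyi divergence bounds to approximate differential privacy: first derive a high-probability tail bound on the privacy loss random variable via Markov's inequality applied to a moment generating function, then translate that tail bound into an $(\eps,\delta)$-guarantee. Fix neighboring datasets $X$ and $X'$, and write $P := \mech(X)$, $Q := \mech(X')$. Define the privacy loss random variable $Z(y) := \log \frac{dP}{dQ}(y)$.

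First I would observe that, for any $\alpha > 1$, since $e^{(\alpha-1)Z(Y)} = \left(\frac{dP}{dQ}(Y)\right)^{\alpha-1}$, a change of measure from $P$ to $Q$ gives
\[
\E_{Y \sim P}\bigl[e^{(\alpha-1) Z(Y)}\bigr] = \E_{Y \sim Q}\Bigl[\bigl(\tfrac{dP}{dQ}(Y)\bigr)^{\alpha}\Bigr] = e^{(\alpha-1) D_\alpha(P\|Q)} \le e^{(\alpha-1)\rho\alpha},
\]
using the $\rho$-zCDP hypothesis at the end. Markov's inequality applied to $e^{(\alpha-1)Z}$ then yields, for every $t > 0$,
\[
\Pr_{Y \sim P}[Z(Y) > t] \le e^{-(\alpha-1)t}\, \E_{Y \sim P}\bigl[e^{(\alpha-1)Z(Y)}\bigr] \le \exp\bigl((\alpha-1)(\rho\alpha - t)\bigr).
\]

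Next, I would optimize this bound in $\alpha$. Setting the right-hand side equal to $\delta$ requires $t \ge \rho\alpha + \frac{\log(1/\delta)}{\alpha-1}$; elementary calculus (differentiating in $\alpha$) gives the optimal choice $\alpha = 1 + \sqrt{\log(1/\delta)/\rho}$, for which the resulting threshold is exactly $\eps' := \rho + 2\sqrt{\rho \log(1/\delta)}$. Consequently $\Pr_{Y \sim P}[Z(Y) > \eps'] \le \delta$.

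Finally, I would convert this tail bound on the privacy loss into an $(\eps', \delta)$-DP guarantee via the standard decomposition. For any measurable event $E$, split the integration over $E$ according to whether $Z \le \eps'$:
\begin{align*}
P(E) &= \int_{E \cap \{Z \le \eps'\}} dP + \int_{E \cap \{Z > \eps'\}} dP \\
&\le e^{\eps'} \int_{E \cap \{Z \le \eps'\}} dQ + \Pr_{Y \sim P}[Z(Y) > \eps'] \le e^{\eps'} Q(E) + \delta,
\end{align*}
where the first bound uses $\frac{dP}{dQ} \le e^{\eps'}$ on $\{Z \le \eps'\}$. Since $X, X'$ were arbitrary neighbors, this establishes $(\eps', \delta)$-differential privacy. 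There is no genuine obstacle in the argument; the only place requiring care is verifying that the optimized $\alpha$ yields exactly the claimed constant $2$ in the $2\sqrt{\rho\log(1/\delta)}$ term, and that subtle measure-theoretic handling of the event $\{dQ = 0,\, dP > 0\}$ (where $Z = \infty$) is absorbed into the $\delta$ slack.
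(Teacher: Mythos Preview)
Your argument is correct and is precisely the standard proof from Bun and Steinke~\cite{BunS16}; the paper does not give its own proof of this lemma but simply cites that reference, so there is nothing further to compare.
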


\begin{lemma}\label{lm:cdp-gaussian}
  Suppose that \(f:K^n \to \R^d\) is a function on size \(n\) datasets
  drawn from the domain \(K\) with \(\ell_2\) sensitivity at most
  \(\Delta\), i.e., for any two neighboring
  datasets \(X\) and \(X'\) we have
  \(
  \norm{f(X) - f(X')}_2 \le \Delta.
  \)
  Then the mechanism that on input \(X\) outputs \(\mech(X) := f(X) +
  Z\) for \(Z \sim N(0,\sigma^2 I)\) satisfies \(\frac{\Delta^2}{2\sigma^2}\)-zCDP.
\end{lemma}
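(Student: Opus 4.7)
The plan is to directly compute the R\'enyi divergence of order \(\alpha\) between the output distributions \(\mech(X)\) and \(\mech(X')\) on two neighboring datasets \(X, X'\), and verify that it is bounded by \(\rho \alpha\) for \(\rho = \Delta^2/(2\sigma^2)\). Since \(Z\) is independent of \(X\) and has law \(N(0, \sigma^2 I)\), the output \(\mech(X)\) is distributed as \(N(f(X), \sigma^2 I)\), and likewise \(\mech(X') \sim N(f(X'), \sigma^2 I)\). Thus the problem reduces to the purely analytic computation of the R\'enyi divergence between two spherical Gaussians that share a covariance but differ in mean.

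The core identity I would establish (or invoke) is that for any \(\mu_1, \mu_2 \in \R^d\) and \(\sigma > 0\),
\[
  D_\alpha\bigl(N(\mu_1, \sigma^2 I)\,\big\|\,N(\mu_2, \sigma^2 I)\bigr) \;=\; \frac{\alpha \,\|\mu_1 - \mu_2\|_2^2}{2\sigma^2}.
\]
To prove this, I would write down the ratio of the two Gaussian densities, which is the exponential of an affine function of the integration variable \(x\), raise it to the \(\alpha\) power, and then evaluate the Gaussian integral \(\E_{X \sim N(\mu_2, \sigma^2 I)}[(dP_1/dP_2)(X)^\alpha]\) by completing the square. The quadratic terms in \(x\) cancel because both distributions have the same covariance, leaving a clean exponential in \(\|\mu_1-\mu_2\|_2^2\) whose logarithm, divided by \(\alpha - 1\), yields the formula above.

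Applying this identity to \(\mu_1 = f(X)\) and \(\mu_2 = f(X')\), and using the assumed sensitivity bound \(\|f(X) - f(X')\|_2 \le \Delta\), I get
\[
  D_\alpha(\mech(X)\|\mech(X')) \;=\; \frac{\alpha\,\|f(X)-f(X')\|_2^2}{2\sigma^2} \;\le\; \frac{\Delta^2}{2\sigma^2}\,\alpha,
\]
which is exactly the \(\rho\)-zCDP condition with \(\rho = \Delta^2/(2\sigma^2)\). Since the roles of \(X\) and \(X'\) are symmetric in the bound on the right-hand side, the same inequality holds with the arguments swapped, so the condition holds for all neighboring pairs as required.

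There is no real obstacle here beyond bookkeeping for the Gaussian integral; the only thing worth being careful about is to note that the R\'enyi divergence is defined for \(\alpha > 1\), which is the regime used in the definition of zCDP, so the computation directly yields the needed bound without having to worry about degeneracies at \(\alpha = 1\).
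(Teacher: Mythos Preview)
Your proposal is correct and is the standard direct computation of the R\'enyi divergence between two spherical Gaussians with equal covariance. Note, however, that the paper does not actually prove this lemma: it is stated as a preliminary fact with the remark that ``proofs can be found in~\cite{BunS16},'' so there is no paper proof to compare against. Your argument is essentially the one that appears in that reference.
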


\subsection{A Gaussian Noise Mechanism for General Domains}
\label{sect:gengauss}

Recall the notation $\trp{p}(M)$, defined in the Introduction: for a positive semidefinite matrix $M\in \mathbb{R}^{d\times d}$ and a
real number $p \ge 1$, we
have \(\trp{p}(M):=(\sum_i^d{M_{ii}^p})^{1/p}.\) Moreover, we have
\(\trp{\infty}(M) := \max_{i=1}^d M_{ii}\). Recall also that
$\trp{p}(M)$ is simply the $\ell_p$-norm of the diagonal entries of
$M$.
The next lemma notes a few useful properties of \(\trp{p}(M)\) which follow
from this observation.
\begin{lemma}\label{lm:trp-norm}
  The function \(\trp{p}\) satisfies the following properties for any
  \(p \ge 1\):
  \begin{enumerate}
  \item for any positive semidefinite matrix \(M\), \(\trp{p}(M) = 0\)
    implies \(M = 0\);
  \item for any real number \(t \ge 0\), and any positive semidefinite
    matrix \(M\), \(\trp{p}(tM) = t\ \trp{p}(M)\);
  \item for any two positive semidefinite matrices \(M_1\), and
    \(M_2\), \(\trp{p}(M_1 + M_2) \le \trp{p}(M_1) + \trp{p}(M_2)\);
  \item for any two \(1 \le p \le q \le \infty\), and any \(d \times
    d\) positive semidefinite matrix \(M\),
    \(
    \trp{q}(M) \le \trp{p}(M) \le d^{\frac1q - \frac1p}\trp{q}(M).
    \)
  \end{enumerate}
\end{lemma}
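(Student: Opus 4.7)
The plan is to reduce all four properties to standard facts about the $\ell_p$ norm on $\R^d$ applied to the diagonal vector $\mathrm{diag}(M) := (M_{11}, \ldots, M_{dd})$, using two simple observations: first, for $M \succeq 0$ this vector has non-negative entries, and second, the map $M \mapsto \mathrm{diag}(M)$ is linear. With these in hand, properties~2 and~3 are immediate. Homogeneity (property~2) follows from $\mathrm{diag}(tM) = t\,\mathrm{diag}(M)$ together with the $1$-homogeneity of $\|\cdot\|_p$, and the triangle inequality (property~3) follows from $\mathrm{diag}(M_1 + M_2) = \mathrm{diag}(M_1) + \mathrm{diag}(M_2)$ combined with Minkowski's inequality applied to the two non-negative vectors $\mathrm{diag}(M_1)$ and $\mathrm{diag}(M_2)$.

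For property~4, I would apply to the non-negative vector $\mathrm{diag}(M)$ the standard sandwich between $\ell_p$ and $\ell_q$ norms on $\R^d$ valid for $1 \le p \le q \le \infty$. Both inequalities follow in one line from the power-mean inequality (or, equivalently, from H\"older's inequality with exponents matched to $q/p$ and its conjugate), and the dimension-dependent factor is sharp on the all-ones vector. The only care required is to track the direction of each inequality and the correct exponent.

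The main obstacle---really the only step that requires an argument beyond the diagonal-vector viewpoint---is property~1. The identity $\trp{p}(M) = 0$ only forces $\mathrm{diag}(M) = 0$, and this must be upgraded to $M = 0$. Here I would exploit positive semidefiniteness directly: given that every $M_{ii} = 0$, for any pair $i \ne j$ and any $t \in \R$ we have $(e_i + t e_j)^T M (e_i + t e_j) = 2t M_{ij}$, which is non-negative for all $t \in \R$ only if $M_{ij} = 0$. Combined with the symmetry of $M$, this forces $M = 0$. None of these steps poses any real difficulty, and recording the four implications in the order above completes the proof.
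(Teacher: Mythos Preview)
Your proposal is correct and follows essentially the same route as the paper: properties~2--4 are reduced to standard $\ell_p$-norm facts applied to the diagonal vector $\mathrm{diag}(M)$, and property~1 uses positive semidefiniteness to upgrade ``vanishing diagonal'' to $M=0$. The only cosmetic difference is in property~1, where the paper invokes the fact that the maximum absolute entry of a positive semidefinite matrix occurs on the diagonal, whereas you give an equivalent direct argument via $(e_i+te_j)^TM(e_i+te_j)=2tM_{ij}\ge 0$ for all $t$. One small caution on property~4: as written in the statement the factor is $d^{1/q-1/p}$, but for $p\le q$ the correct H\"older bound is $\trp{p}(M)\le d^{1/p-1/q}\trp{q}(M)$, which is presumably what you mean by ``the correct exponent.''
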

\begin{proof}
  All except the first property are immediate from the observation
  that when \(M\) is a positive semidefinite matrix, \(\trp{p}(M)\) is
  the \(\ell_p\) norm of its diagonal entries. This observation also
  shows that when \(\trp{p}(M) = 0\) and \(M \succeq 0\), the diagonal
  entries of \(M\) are 0. But, since the largest absolute value of any
  entry of a positive semidefinite matrix is achieved on the diagonal,
  this also implies that \(M = 0\). 
\end{proof}

Next recall our notation for inclusion of sets up to shifting: 
for subsets $K$ and $L$ of \(\R^d\), we write
\[K\shinclu L \iff \exists v\in \R^d, K+v\subseteq L.\]
Finally, we recall the \(\Gamma_p(K)\) function defined in the
Introduction as
\[\Gamma_p({K}):=\inf\left\{\sqrt{\trp{p/2}(AA^T)}:
   K\shinclu AB_2^d\right\},\]
where the infimum is over \(d\times d\) matrices \(A\).

The next theorem is the core of the proof of Theorem~\ref{thm:ub} from
the Introduction, and is a slight generalization of Corollary 2.8 from~\cite{NTZ}.
 \begin{theorem}\label{thm:gen-gaussian}
 Suppose that \(p \in [2, \infty]\),  that \(\cset \subseteq\R^d\) is a
 bounded set, and that for some \(d\times d\) matrix \(A\), \(\cset  \shinclu AB_2^d\). Then the mechanism \(\mech\) that, on input \(X \in \cset^n\), outputs 
 \(
 \mech(X) := \mu(X) + Z,
 \)
 where \(Z \sim N(0,\frac{4}{\eps^2n^2} AA^T)\), satisfies
 \(\frac{\eps^2}{2}\)-zCDP.
 In particular, \(\mech\) is an unbiased \(\frac{\eps^2}{2}\)-zCDP
 mechanism \(\mech\) that, for any dataset \(X \in \cset^n\) achieves
 \[
   (\E\|\mech(X) - \mu(X)\|_p^2)^{1/2} \lesssim
   \frac{\sqrt{\min\{p,\log(2d)\}}\ \trp{p/2}(AA^T)^{1/2}}{\eps n}.
 \]
 \end{theorem}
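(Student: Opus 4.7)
The plan is to establish the $\frac{\eps^2}{2}$-zCDP guarantee and the $\ell_p$ error bound separately; unbiasedness is immediate since $\E[Z]=0$, hence $\E[\mech(X)]=\mu(X)$.

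For privacy, I would reduce the non-isotropic Gaussian mechanism to the isotropic one via post-processing, so that Lemma~\ref{lm:cdp-gaussian} applies. Let $v \in \R^d$ be a shift witnessing $\cset + v \subseteq AB_2^d$. Assume first $A$ is invertible (the singular case is handled by a routine perturbation $A \mapsto A + \eta I$ and $\eta \to 0^+$, using lower semicontinuity of R\'enyi divergence, or by restricting to the column space of $A$ via the Moore-Penrose pseudoinverse). Consider the auxiliary mechanism $\mech'(X) := A^{-1}\mu(X) + W$ with $W\sim N(0,\frac{4}{\eps^2 n^2}I_d)$. For neighbors $X, X'$ differing only at position $i$, write $x_i + v = Au$ and $x_i' + v = Au'$ for some $u, u' \in B_2^d$; then
\[
  \|A^{-1}(\mu(X)-\mu(X'))\|_2 = \tfrac{1}{n}\|u-u'\|_2 \le \tfrac{2}{n}.
\]
By Lemma~\ref{lm:cdp-gaussian}, $\mech'$ is $\frac{(2/n)^2}{2\cdot 4/(\eps^2 n^2)} = \frac{\eps^2}{2}$-zCDP. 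Since $A\mech'(X) = \mu(X) + AW$ has the same distribution as $\mech(X)$ (with $AW \sim N(0, \frac{4}{\eps^2 n^2} AA^T)$), Lemma~\ref{lm:cdp-composition} (post-processing) transfers the guarantee to $\mech$.

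For the error bound, each coordinate of the noise is $Z_i \sim N(0,\sigma_i^2)$ with $\sigma_i^2 = \frac{4}{\eps^2 n^2}(AA^T)_{ii}$. For any $p \in [2,\infty)$, a direct moment calculation using $\E|g|^p \lesssim p^{p/2}$ for $g\sim N(0,1)$ gives
\[
  \E\|Z\|_p^p = \E|g|^p \sum_{i=1}^d \sigma_i^p \lesssim p^{p/2}\Bigl(\tfrac{2}{\eps n}\Bigr)^p \trp{p/2}(AA^T)^{p/2},
\]
and Jensen's inequality (for $p \ge 2$) gives $(\E\|Z\|_p^2)^{1/2} \le (\E\|Z\|_p^p)^{1/p}$, producing the $\sqrt{p}$ factor. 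When $p > \log(2d)$ (including $p=\infty$), the $\sqrt{p}$ factor is too large; I would then set $q := \log(2d)$ and use the pointwise monotonicity $\|Z\|_p \le \|Z\|_q$ of $\ell_p$-norms on $\R^d$ to reduce to exponent $q$, and invoke Lemma~\ref{lm:trp-norm}(4) to convert $\trp{q/2}(AA^T) \le d^{2/q - 2/p}\trp{p/2}(AA^T) = O(1)\cdot \trp{p/2}(AA^T)$, since $d^{2/\log(2d)} \le e^2$. Combining the two regimes yields the claimed $\sqrt{\min\{p,\log(2d)\}}$ factor. The main obstacles are essentially cosmetic: bookkeeping the absolute constants cleanly, and handling the non-invertible $A$ case by a limiting argument; conceptually the proof is a direct reduction to the spherical Gaussian mechanism via a linear change of coordinates, followed by standard $p$-th moment estimates for Gaussian vectors.
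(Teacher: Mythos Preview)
Your proof is correct and follows essentially the same route as the paper: reduce to the isotropic Gaussian mechanism via a linear change of coordinates and invoke post-processing for privacy, then use Gaussian moment estimates and the $\|\cdot\|_p \le \|\cdot\|_q$ comparison with $q \approx \log d$ for the error. The only cosmetic difference is that the paper avoids your perturbation/pseudoinverse step for singular $A$ by directly defining a set-theoretic preimage map $f:K\to B_2^d$ with $Af(x)-v=x$, which is well defined even when $A$ is not invertible and makes the sensitivity bound immediate.
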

\begin{proof}
  Let us fix some set \(\cset' \subseteq B_2^d\) and vector \(v \in \R^d\) such that \(A\cset' - v =
  \cset\). \(K'\) and \(v\) exist
  by the assumption that
  \[
    \cset \shinclu AB_2^d \iff
    \exists v \in \R^d: \cset +v \subseteq  AB_2^d
    \iff \exists v \in \R^d: \cset \subseteq AB_2^d - v.
  \]
  Define a map \(f:\cset \to \cset'\) that inverts the surjective map from $x \in
  \cset'$ to $Ax - v \in K$. I.e., we choose $f$ so that \(Af(x)
  - v = x\) for each \(x \in \cset\).
  Then, for any dataset
  \(X := (x_1, \ldots, x_n) \in \cset^n\) we can define the dataset
  \(f(X) := (f(x_1), \ldots, f(x_n)) \in (\cset')^n\). Notice that
  this map sends neighboring datasets to neighboring datasets. The
  output of the
  mechanism \(\mech(X)\) is distributed identically to the following post-processing of
  the Gaussian mechanism:
  \[
    A(\mu(f(X)) + Z') - v,
  \]
  where \(Z' \sim N(0, \frac{4}{\eps^2 n^2})\). Since all elements
  of \(f(X)\) lie in \(B_2^d\), the \(\ell_2\) sensitivity of \(\mu(f(X))\) is
  bounded by \(\frac{2}{n}\), and the Gaussian mechanism \(\mu(f(X))
  + Z'\) satisfies \(\frac{\eps^2}{2}\)-zCDP by Lemma~\ref{lm:cdp-gaussian}. The privacy guarantee for \(\mech(X)\) then
  follows from the post-processing property of zCDP (Lemma~\ref{lm:cdp-composition}).
  
  To bound the \(\ell_p\) error of the mechanism \(\mech\), we use
  Jensen's inequality to write
  \begin{align}
    (\E\|\mech(X) - \mu(X)\|_p^2)^{1/2} = (\E \|Z\|_p^2)^{1/2}
    \le (\E \|Z\|_p^p)^{1/p}
    &\lesssim 
      \sqrt{p}\ \left( \sum_{i = 1}^d \var[Z_i]^{p/2}\right)^{1/p}\notag\\
    &\lesssim \frac{\sqrt{p}\ \trp{p/2}(AA^T)^{1/2}}{\eps n}.\label{eq:smallp-err}
  \end{align}
  Above, we used the standard estimate that the \(p\)-th absolute
  moment of a Gaussian \(G \sim N(0,\sigma^2)\) satisfies
  \((\E|G|^p)^{1/p} \lesssim \sqrt{p}\sigma\).  This
  estimate is, in fact, true for any \(\sigma\)-subgaussian random
  variable~\cite[Chapter 2]{Vershynin-HDP}.

  This completes the proof
  of the error bound for $p \le \max\{2,\ln d\}$. Next we consider the case $p
  \ge \max\{2,\ln d\}$. Recall the inequality $d^{\frac1p -
    \frac1q}\|x\|_q \le \|x\|_p \le \|x\|_q$, valid for any $1 \le q
  \le p$ and any $x \in \R^d$. For $q = \max\{2,\ln d\}$ and any $p \ge q$ we have
  $\|x\|_p \le \|x\|_q$ for all $x \in \R^d$. Moreover, for any
  $d\times d$ positive semidefinite matrix $M$, we have $\trp{q/2}{(M)}
  \le d^{\frac2q} \trp{p/2}{(M)} \le e^2 \trp{p/2}(M)$ by Lemma~\ref{lm:trp-norm}.
  Therefore, for $p\ge q$,
  \begin{align*}
    (\E\|\mech(X) - \mu(X)\|_p^2)^{1/2} = (\E \|Z\|_p^2)^{1/2}
    \le (\E \|Z\|_q^2)^{1/2}
    &\lesssim
    \frac{\sqrt{q}\ \trp{q/2}(AA^T)^{1/2}}{\eps n}\\
    &\lesssim
    \frac{\sqrt{q}\ \trp{p/2}(AA^T)^{1/2}}{\eps n},
  \end{align*}
  where we used \eqref{eq:smallp-err} in the penultimate
  inequality. Since we chose $q = \max\{2,\ln d\}\lesssim \log(2d)$,
  this completes the proof.
\end{proof}

Taking \(A\) to achieve \(\Gamma_p(K)\) in Theorem~\ref{thm:gen-gaussian}, and also using
Lemma~\ref{lm:cdp-to-adp}, gives Theorem~\ref{thm:ub}. We also have
the following corollary for zCDP.
\begin{corollary}\label{cor:ub-cdp}
  For any \(p \in [2, \infty]\), any \(\eps >0\), and any bounded set \(K \subseteq
  \R^d\), there exists a mechanism \(\mech\) that is unbiased over
  \(K\), for any \(X \in K^n\) achieves
  \[
    (\E\|\mech(X) - \mu(X)\|_p^2)^{1/2} \lesssim
    \frac{\sqrt{\min\{p,\log(2d)\}}\ \Gamma_p(K)}{\eps n},
  \]
  and satisfies \(\frac{\eps^2}{2}\)-zCDP.
\end{corollary}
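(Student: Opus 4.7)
The proof is essentially a direct assembly of Theorem~\ref{thm:gen-gaussian} with the definition of $\Gamma_p(K)$, so my plan is short. By definition, $\Gamma_p(K)$ is the infimum of $\sqrt{\trp{p/2}(AA^T)}$ over matrices $A$ with $K \shinclu AB_2^d$, so for any $\eta > 0$ I can select a $d \times d$ matrix $A_\eta$ with $K \shinclu A_\eta B_2^d$ and $\sqrt{\trp{p/2}(A_\eta A_\eta^T)} \le (1+\eta)\Gamma_p(K)$. The existence of such an $A_\eta$ is all that is required; there is no need to argue the infimum is attained.

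Next I would plug $A_\eta$ into Theorem~\ref{thm:gen-gaussian}. That theorem immediately produces a mechanism $\mech$ defined by $\mech(X) = \mu(X) + Z$ with $Z \sim N(0, \tfrac{4}{\eps^2 n^2} A_\eta A_\eta^T)$ that is unbiased over $K$, satisfies $\tfrac{\eps^2}{2}$-zCDP, and obeys
\[
(\E\|\mech(X) - \mu(X)\|_p^2)^{1/2} \lesssim \frac{\sqrt{\min\{p, \log(2d)\}}\,\trp{p/2}(A_\eta A_\eta^T)^{1/2}}{\eps n} \le \frac{(1+\eta)\sqrt{\min\{p, \log(2d)\}}\,\Gamma_p(K)}{\eps n}.
\]
Choosing any fixed $\eta$ (say $\eta = 1$) absorbs the factor into the hidden constant in the $\lesssim$ notation, yielding the claimed bound. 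Unbiasedness is immediate from $\E[Z] = 0$, and the privacy claim is inherited verbatim from Theorem~\ref{thm:gen-gaussian}.

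There is no real obstacle here: the reduction was already carried out inside Theorem~\ref{thm:gen-gaussian} (sensitivity control via the preimage map $f$, post-processing of the Gaussian mechanism, and the Gaussian moment bound combined with the $\|\cdot\|_p \le \|\cdot\|_q$ trick to handle very large $p$). The corollary is simply the statement that, after optimizing the covariance proxy $A$ to nearly achieve $\Gamma_p(K)$, the error bound of Theorem~\ref{thm:gen-gaussian} matches the desired $\Gamma_p(K)$-dependence. If anything required extra care it would be noting that the infimum in $\Gamma_p(K)$ need not be attained, which is handled by the $(1+\eta)$-approximation argument above.
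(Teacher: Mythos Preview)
Your proposal is correct and matches the paper's own argument, which simply says to take $A$ (nearly) achieving $\Gamma_p(K)$ in Theorem~\ref{thm:gen-gaussian}. Your added care about the infimum not being attained via the $(1+\eta)$-approximation is a harmless refinement that the paper glosses over.
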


We also give a variant of this result for local differential
privacy. 

\begin{theorem}\label{thm:ub-ldp}
  For any \(p \in [2, \infty]\), any \(\eps >0\), and any bounded set \(K \subseteq
  \R^d\), there exists a local mechanism \(\mech\) that is unbiased over
  \(K\), for any \(X \in K^n\) achieves
  \[
    (\E\|\mech(X) - \mu(X)\|_p^2)^{1/2} \lesssim
    \frac{\sqrt{p}(e^\eps + 1)\Gamma_p(K)}{(e^\eps - 1)\sqrt{n}},
  \]
  and satisfies \(\eps\)-local differential privacy. 
\end{theorem}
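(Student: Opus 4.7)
The plan is to parallel the proof of Theorem~\ref{thm:gen-gaussian}, substituting a base local randomizer on $B_2^d$ for the central Gaussian. I would fix $A$ and $v$ with $K+v\subseteq AB_2^d$ and $\sqrt{\trp{p/2}(AA^T)} \le (1+o(1))\Gamma_p(K)$, and choose $f:K\to B_2^d$ with $Af(x)-v=x$. Each local randomizer sets $y_i := f(x_i)$ and returns $\hat y_i := \mathcal{R}(y_i)$, where $\mathcal{R}$ is a base $\eps$-LDP mechanism on $B_2^d$ described in the next paragraph. The aggregator outputs $\mech(X) := A\cdot\frac{1}{n}\sum_i \hat y_i - v$. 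Unbiasedness follows from $\E[\mathcal{R}(y_i)]=y_i$ and linearity, and $\eps$-local privacy follows because each randomizer is a post-processing of $\mathcal{R}$.

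The crux is producing an unbiased, $\eps$-LDP mechanism $\mathcal{R}:B_2^d\to\R^d$ whose covariance satisfies $\cov(\mathcal{R}(y))\preceq c_\eps^2\,I$ uniformly in $y$, for $c_\eps = O((e^\eps+1)/(e^\eps-1))$. I would build such an $\mathcal{R}$ by first lifting $y\in B_2^d$ to a random $\tilde y\in S^{d-1}$ with $\E[\tilde y\mid y]=y$ (for instance $\tilde y = y+\sqrt{1-\|y\|_2^2}\,V$ with $V$ uniform on the unit sphere of the subspace orthogonal to $y$, and uniform on $S^{d-1}$ when $y=0$), and then applying the half-sphere (i.e.\ $\gamma=0$) PrivUnit mechanism of Bhowmick et al.\ at privacy parameter $\eps/2$ to $\tilde y$. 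A rotational-symmetry computation shows that both the along-$\tilde y$ and the perpendicular eigenvalues of the PrivUnit covariance are of order $((e^\eps+1)/(e^\eps-1))^2$, and the law of total covariance absorbs the bounded contribution of the lifting; the factor of $2$ in privacy introduced by the randomization of the input is absorbed into the $\lesssim$ notation.

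With the base mechanism in hand, the error analysis is standard. Writing $W_i := \hat y_i - y_i$, we have $\mech(X)-\mu(X)=\tfrac{1}{n} A\sum_i W_i$. For each coordinate $j$ the scalar increments $X_{ij} := \langle A^T e_j, W_i\rangle$ are independent, mean-zero, bounded in absolute value, and have variance at most $c_\eps^2(AA^T)_{jj}$. A Rosenthal/Bernstein-type moment inequality for sums of independent bounded mean-zero variables gives, for $p\ge 2$,
\[
  \E\Bigl|\sum_{i=1}^n X_{ij}\Bigr|^p \lesssim \bigl(c_\eps\sqrt{np\,(AA^T)_{jj}}\bigr)^p.
\]
Summing over $j$ and dividing by $n^p$,
\[
  \E\|\mech(X)-\mu(X)\|_p^p \lesssim \bigl(c_\eps\sqrt{p/n}\bigr)^p \trp{p/2}(AA^T)^{p/2},
\]
and Jensen's inequality $\E\|\cdot\|_p^2 \le (\E\|\cdot\|_p^p)^{2/p}$ (valid for $p\ge 2$) converts this to the stated bound.

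The main obstacle I expect is verifying the sub-Gaussian rate in the moment inequality above: since $\|\mathcal{R}(y)\|_2 = O(\sqrt d/\eps)$ rather than $O(1/\eps)$, each $X_{ij}$ is only sub-exponential, and the Rosenthal decomposition of the form $\E|\sum_i X_{ij}|^p \lesssim (n\sigma^2 p)^{p/2} + n\,\E|X_{ij}|^p$ gives the desired sub-Gaussian term as the dominant one only in the regime $n \gtrsim d$; confirming that this is the intended regime and that the constants line up after the lifting-induced factor of $2$ in privacy is where the remaining technical care is concentrated.
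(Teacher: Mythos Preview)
Your construction and reduction are exactly the paper's: fix $A,v$ with $K+v\subseteq AB_2^d$, map each $x_i$ to $f(x_i)\in B_2^d$, apply an $\eps$-LDP randomizer on the ball, and aggregate by $A\cdot\frac1n\sum_i\hat y_i-v$. The paper uses the local point-release randomizer of~\cite{cdp} rather than PrivUnit, but the two play the same role.

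The one genuine difference is in the error analysis, and it is precisely where you flag a concern. The paper does not go through a covariance bound plus Rosenthal; instead it uses that the randomizer's centered output is $O\!\big(\tfrac{e^\eps+1}{e^\eps-1}\big)$-\emph{subgaussian} as a $d$-dimensional random vector, so the average is $O\!\big(\tfrac{e^\eps+1}{(e^\eps-1)\sqrt n}\big)$-subgaussian and the $\ell_p$ error bound follows exactly as in Theorem~\ref{thm:gen-gaussian}. Your PrivUnit construction has the same property: a point uniform on a half-sphere of radius $r$ in $\R^d$ is $O(r/\sqrt d)$-subgaussian (each one-dimensional marginal has subgaussian norm $O(r/\sqrt d)$, not merely bound $r$), and since the normalization makes $r=\Theta(\sqrt d\,c_\eps)$, the centered output is $O(c_\eps)$-subgaussian. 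The lifting step adds at most an $O(1)$-subgaussian term, which is dominated. With this, each $X_{ij}$ is $O(c_\eps\sqrt{(AA^T)_{jj}})$-subgaussian, $\sum_i X_{ij}$ is $O(c_\eps\sqrt{n(AA^T)_{jj}})$-subgaussian, and the $p$-th moment bound follows directly with no restriction on $n$ versus $d$. Your Rosenthal route, using only the crude bound $|X_{ij}|\le \|A^Te_j\|_2\cdot O(\sqrt d\,c_\eps)$, really would need $n\gtrsim d$ and is the weaker argument.

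One minor point: the lifting $y\mapsto\tilde y$ costs no privacy at all. If PrivUnit is $\eps$-LDP on $S^{d-1}$, then for any randomized map $y\mapsto\tilde y$ into $S^{d-1}$ and any event $S$,
\[
P_{y_1}(S)=\E_{\tilde y\sim\mu_{y_1}}\big[P_{\mathrm{PrivUnit}}(S\mid\tilde y)\big]\le e^\eps\,P_{\mathrm{PrivUnit}}(S\mid\tilde y')\quad\text{for every }\tilde y',
\]
hence $P_{y_1}(S)\le e^\eps P_{y_2}(S)$ after averaging over $\tilde y'\sim\mu_{y_2}$. So you can run PrivUnit at the full budget~$\eps$.
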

\begin{proof}
  The proof is analogous to the proof of
  Theorem~\ref{thm:gen-gaussian}, except, instead of using the
  Gaussian noise mechanism, each local agent uses the local point
  release randomizer from~\cite{cdp}, similiraly to how it is used in~\cite{factormech}. In particular, on input \(x_i
  \in K\), agent \(i\) uses the randomizer with \(f(x_i)\) and
  multiplies the answer by the matrix \(A\) that achieves \(\Gamma_p(K)\); the
  aggregator then averages the outputs of the randomizers. The error
  analysis follows analogously to Theorem~\ref{thm:gen-gaussian},
  since the output of each randomizer is \(O\left(\frac{e^\eps + 1}{e^\eps -
    1}\right)\)-subgaussian,\footnote{The paper~\cite{cdp} gave a
  slightly different bound but it is easy to see that the bound we
  claim follows via the same methods.} so the average of the
randomizer outputs is \(O\left(\frac{e^\eps + 1}{(e^\eps -
    1)\sqrt{n}}\right)\)-subgaussian.
\end{proof}

\subsection{Basic Properties of $\Gamma_p(K)$}

In this subsection we prove some important properties of the
\(\Gamma_p\) function. First, we introduce a key definition in
convex geometry.
\begin{definition}
  We define the support function \(h_K:\R^d \to \R\) of a set \(K \subseteq \R^d\) by
  \(
    h_K(\theta) := \sup_{x \in K}\ip{x}{\theta}.
  \)
\end{definition}

Let us note here several important identities involving the support
function. Observe first that, using \(\clconv K\) for the closed convex hull of \(K\),
\begin{equation}\label{eq:supp-convhull}
  h_K(\theta) = h_{\clconv K}(\theta)
\end{equation}
for all \(\theta \in \R^d\). Next, we recall that for any two closed convex sets \(K\) and \(L\) in
\(\R^d\), we have
\begin{equation}\label{eq:supp-contain}
  K\subseteq L \iff \forall \theta \in \R^d: h_K(\theta) \le
    h_L(\theta).
\end{equation}
The forward implication is trivial, whereas the backwards implication
follows from the hyperplane separation theorem: see Chapter 13
of~\cite{Rockafellar} for a proof. Moreover, if only \(L\) is convex
and closed, and \(K\) is arbitrary, it holds that \(K \subseteq L\) if
and only if \(\clconv K \subseteq L\). Together with
\eqref{eq:supp-convhull}, this means that \eqref{eq:supp-contain}
holds for \(K\) arbitrary and \(L\) closed and convex.

Recall that the Minkowski sum of sets
\(K\) and \(L\) in \(\R^d\) is defined by
by \(K + L :=\{x+y: x \in K, y \in L\}\). We have
\[
  \forall \theta \in \R^d: h_{K+L}(\theta) = h_K(\theta) + h_L(\theta).
\]
This identity follows easily from the definition of the support
function.

Finally, let us calculate the support function of a
centrally symmetric ellipsoid \(E := AB_2^d\), i.e., a linear image of
the Euclidean ball: for any \(\theta \in \R^d\), we have
\begin{align*}
  h_{E}(\theta) &= \max_{x \in E} \ip{x}{\theta}
                  = \max_{x \in B_2^d}\ip{Ax}{\theta}
                  = \max_{x \in B_2^d}\ip{x}{A^T\theta}
                  = \|A^T\theta\|_2,
\end{align*}
where the final equality follows by the Cauchy-Schwarz inequality,
which is tight for $x = \frac{A^T\theta}{\|A^T\theta\|_2}$.

First we show that, for any ellipsoid \(E := AB_2^d + u\),
\(\Gamma_p(E)\) is achieved by the same matrix \(A\) defining the
ellipsoid.
\begin{lemma}\label{lm:Gammap-ellipsoid}
  For any ellipsoid \(E := AB_2^d + u\), where \(A\) is a \(d\times
  d\) matrix, \(\Gamma_p(E) = \sqrt{\trp{p/2}(AA^T)}\). 
\end{lemma}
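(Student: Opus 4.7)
The proof splits naturally into two matching inequalities, and the heart of the argument is a standard support-function computation.

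For the upper bound, I would simply plug $B = A$ into the definition of $\Gamma_p(E)$. Since $E = AB_2^d + u$, we have $E - u = AB_2^d \subseteq AB_2^d$, so $E \shinclu AB_2^d$, witnessing that $\Gamma_p(E) \le \sqrt{\trp{p/2}(AA^T)}$.

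For the lower bound, suppose $B$ is any $d\times d$ matrix with $E \shinclu BB_2^d$. By definition, there exists $v \in \R^d$ with $AB_2^d + (u+v) \subseteq BB_2^d$; writing $w := u+v$ and applying the support-function characterization of containment \eqref{eq:supp-contain} (valid since $BB_2^d$ is closed and convex), I obtain
\[
  \|A^T\theta\|_2 + \ip{w}{\theta} = h_{AB_2^d + w}(\theta) \le h_{BB_2^d}(\theta) = \|B^T\theta\|_2
  \qquad \forall \theta \in \R^d,
\]
using the support-function identities recorded just before the lemma. Applying this inequality to both $\theta$ and $-\theta$ and adding cancels the linear term $\ip{w}{\theta}$ and yields $\|A^T\theta\|_2 \le \|B^T\theta\|_2$ for every $\theta$. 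Squaring gives $\theta^T AA^T \theta \le \theta^T BB^T \theta$ for all $\theta$, i.e., $AA^T \preceq BB^T$.

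To finish, I would convert this Loewner inequality into the desired inequality on $\trp{p/2}$. Taking $\theta = e_i$ shows $(AA^T)_{ii} \le (BB^T)_{ii}$ for each $i$. Since the diagonal entries of PSD matrices are nonnegative, and $\trp{p/2}$ is the $\ell_{p/2}$ norm of the diagonal (which is monotone in each coordinate), this gives $\trp{p/2}(AA^T) \le \trp{p/2}(BB^T)$. Taking the infimum over $B$ completes the lower bound and hence the lemma.

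No step looks hard; the only place one must be careful is the cancellation of the shift $w$, which is what forces the two applications to $\pm\theta$ — this is why the shift in the definition of $\shinclu$ does not help reduce $\Gamma_p$ below $\sqrt{\trp{p/2}(AA^T)}$ for an ellipsoid.
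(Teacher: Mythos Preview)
Your proof is correct and follows essentially the same route as the paper's: both establish the upper bound by plugging in $A$ itself, and for the lower bound use support functions to derive $\|A^T\theta\|_2 \le \|B^T\theta\|_2$ (the paper takes the max over $\pm\theta$ rather than adding, but this is an immaterial variation), then pass to the Loewner ordering and compare diagonal entries.
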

\begin{proof}
  Notice first that, since \(E \shinclu AB_2^d\) by assumption,
  \(\Gamma_p(E) \le \sqrt{\trp{p/2}(AA^T)}\). It remains the show the
  reverse inequality.
  
  Let \(E + v \subseteq A'B_2^d\) for some \(d\times d\) matrix \(A'\)
  and some vector \(u \in \R^d\). This is equivalent to having
  \[
    h_{E + u}(\theta) = \|A^T\theta\|_2 + \ip{u+v}{\theta}
    \le
    h_{A'B_2^d}(\theta) = \|(A')^T \theta\|_2
  \]
  for all \(\theta \in \R^d\). Taking the maximum of \(h_{E +
    u}(\theta)\) and \(h_{E + u}(-\theta)\), we see that this is also
  equivalent to having
  \[
    \|A^T\theta\|_2 + |\ip{u+v}{\theta}| \le \|(A')^T \theta\|_2,
  \]
  for all \(\theta \in \R^d\), and, in turn, this implies
  \(
  \|A^T\theta\|_2 \le \|(A')^T \theta\|_2
  \) holds for all \(\theta \in \R^d\). This last inequality is then
  equivalent to \(AA^T \preceq (A')(A')^T\), which implies that all
  diagonal entries of \(AA^T\) are bounded by the corresponding
  diagonal entries of \((A')(A')^T\). Then,
  \(
  \trp{p/2}(AA^T) \le \trp{p/2}((A')(A')^T).
  \)
  Taking the infimum of this inequality over all \(A'\) such that \(E \shinclu A'B_2^d\)
  proves that   \(\Gamma_p(E) \ge \sqrt{\trp{p/2}(AA^T)}\), as we wanted.
\end{proof}

The next property of \(\Gamma_p\) that we prove is that it behaves
like a norm, i.e., it satisfies the triangle
inequality with respect to Minkowski sum, and is homogeneous. We also
show that it is monotone with respect to inclusion. We use this
property repeatedly in the rest of the paper.

\begin{theorem}\label{thm:triangle}
  The function \(\Gamma_p\) satisfies the following properties:
  \begin{enumerate}
  \item (\emph{invariance with respect to convex hulls}) for any
    bonded set \(K \subseteq \R^d\), \(\Gamma_p(K) = \Gamma_p(\clconv K)\);
  \item (\emph{monotonicity}) whenever \(K \shinclu \clconv L\), we have \(\Gamma_p(K) \le \Gamma_p(L)\);
  \item (\emph{homogeneity}) for any bounded set \(K \subseteq \R^d\), and any \(t \in
    \R\), \(\Gamma_p(tK) = |t| \Gamma_p(K)\);
  \item (\emph{triangle inequality}) for any bounded sets $K, L\subseteq \mathbb{R}^d$,
$\Gamma_{p}(K+L)\leq \Gamma_{p}(K)+\Gamma_{p}(L)$.
  \end{enumerate}
\end{theorem}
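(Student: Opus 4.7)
The plan is to handle the first three properties briefly and focus on the triangle inequality, which is the substantive step. For convex-hull invariance, I would use that $AB_2^d$ is closed and convex, and that a set is contained in a closed convex set iff its closed convex hull is, so $K + v \subseteq AB_2^d$ iff $\clconv K + v \subseteq AB_2^d$; the defining infima for $\Gamma_p(K)$ and $\Gamma_p(\clconv K)$ therefore range over the same set of matrices. Monotonicity is then immediate: if $K \shinclu \clconv L$ and $L \shinclu AB_2^d$, then $\clconv L \shinclu AB_2^d$ by the same observation, so composing the two shifts yields $K \shinclu AB_2^d$, meaning every $A$ feasible for $L$ is feasible for $K$. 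For homogeneity with $t \neq 0$, any $A$ feasible for $K$ yields a feasible matrix $tA$ for $tK$ with $\sqrt{\trp{p/2}((tA)(tA)^T)} = |t|\sqrt{\trp{p/2}(AA^T)}$, and the reverse inequality follows by swapping roles via $1/t$; the case $t = 0$ is handled by taking $A = 0$.

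For the triangle inequality, fix $\varepsilon > 0$ and pick near-optimal matrices $A, B$ together with shifts $v_K, v_L$ such that $K + v_K \subseteq AB_2^d$, $L + v_L \subseteq BB_2^d$, and $a := \sqrt{\trp{p/2}(AA^T)} \le \Gamma_p(K) + \varepsilon$, $b := \sqrt{\trp{p/2}(BB^T)} \le \Gamma_p(L) + \varepsilon$. Then $K + L + v_K + v_L \subseteq AB_2^d + BB_2^d$, and the task reduces to covering the Minkowski sum of two ellipsoids by a single ellipsoid $CB_2^d$ whose $\trp{p/2}$ is controlled. Using the support-function identity $h_{AB_2^d}(\theta) = \|A^T\theta\|_2$ computed just above this theorem together with $h_{K+L} = h_K + h_L$, the desired inclusion is equivalent to $\|A^T\theta\|_2 + \|B^T\theta\|_2 \le \|C^T\theta\|_2$ for every $\theta \in \R^d$.

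The key step is then to choose $C$ so that $CC^T = (1 + \alpha) AA^T + (1 + \alpha^{-1}) BB^T$ with $\alpha = b/a$. Applying the AM-GM bound $2xy \le \alpha x^2 + \alpha^{-1} y^2$ with $x = \|A^T\theta\|_2$ and $y = \|B^T\theta\|_2$ yields
\[
  (\|A^T\theta\|_2 + \|B^T\theta\|_2)^2 \le (1+\alpha)\|A^T\theta\|_2^2 + (1+\alpha^{-1})\|B^T\theta\|_2^2 = \|C^T\theta\|_2^2,
\]
which, combined with the closed convexity of $CB_2^d$, gives $AB_2^d + BB_2^d \subseteq CB_2^d$, hence $K+L \shinclu CB_2^d$. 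The triangle inequality and positive homogeneity of $\trp{p/2}$ from Lemma~\ref{lm:trp-norm} then give $\trp{p/2}(CC^T) \le (1+\alpha) a^2 + (1+\alpha^{-1}) b^2$, and substituting $\alpha = b/a$ collapses the right-hand side to $(a+b)^2$; taking square roots and sending $\varepsilon \to 0$ completes the proof.

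The main obstacle I anticipate is that the Minkowski sum of two ellipsoids is not itself an ellipsoid, so any covering ellipsoid incurs some loss; naively adding the two quadratic forms loses a factor of $\sqrt 2$, and only the optimal AM-GM parameter $\alpha = b/a$ drives the bound to the clean $(a+b)^2$ required. The edge case $a = 0$ (or $b = 0$) needs brief separate treatment: by property~1 of Lemma~\ref{lm:trp-norm}, $a = 0$ forces $AA^T = 0$, so $K$ is a translate of $\{0\}$, and the claim reduces to $\Gamma_p(K+L) \le \Gamma_p(L)$, which is immediate from the translation invariance built into $\shinclu$.
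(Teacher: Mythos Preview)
Your proof is correct and follows essentially the same approach as the paper. The paper also covers $E_K + E_L$ by an ellipsoid whose squared form is the weighted sum $\alpha_{\text{paper}} A_KA_K^T + \beta_{\text{paper}} A_LA_L^T$ with $\alpha_{\text{paper}} = (\Gamma_p(K)+\Gamma_p(L))/\Gamma_p(K) = 1 + b/a$ and $\beta_{\text{paper}} = 1 + a/b$, i.e., exactly your coefficients $1+\alpha$ and $1+\alpha^{-1}$; it phrases the pointwise inequality via Cauchy--Schwarz rather than AM--GM, which amounts to the same two-term bound, and your use of an $\varepsilon$-approximation and the explicit $a=0$ case are minor refinements not spelled out in the paper.
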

\begin{proof}
  The invariance property holds because, since an ellipsoid \(AB_2^d\)
  is convex and closed,
  \(K \subseteq AB_2^d \iff \clconv K \subseteq AB_2^d\). 
  
  The monotonicity property is obvious from the definitions.

  For homogeneity, we just notice that if \(K \shinclu AB_2^d\), then
  \(tK \shinclu tAB_2^d\), and \(\sqrt{\trp{p/2}((t A)(tA)^T)} = |t|
  \sqrt{\trp{p/2}(AA^T)}\). 

  It remains to prove the triangle inequality.
  Let ${E_K := A_K B_2^d}, {E_L := A_L B_2^d}$ be such that $K \subseteq_\leftrightarrow {E_K},
  L\subseteq_\leftrightarrow {E_L}$, and
  \(\Gamma_p(K) = \sqrt{\trp{p/2}(A_K A_K^T)}\),
  \(\Gamma_p(L) = \sqrt{\trp{p/2}(A_L A_L^T)}\). 
  Let $\alpha := \frac{\Gamma_p(K) + \Gamma_p(L)}{\Gamma_p(K)}$, $\beta := \frac{\Gamma_p(K) + \Gamma_p(L)}{\Gamma_p(L)}$.
  We define $A= \sqrt{\alpha A_KA_K^T+\beta A_LA_L^T}$ and
  $E=AB_2^d$. Then, for any \(\theta \in \R^d\),
  \begin{align*}
    h_{E}(\theta)&=\sqrt{\theta^TA^2\theta}\\
                 &=\sqrt{\alpha \theta^TA_KA_K^T\theta+\beta \theta^TA_LA_L^T\theta}\\
                 &=\sqrt{\alpha h_{E_K}(\theta)^2+\beta h_{E_L}(\theta)^2}\\
                 &\geq \frac{h_{E_L}(\theta)+h_{E_K}(\theta)}{\sqrt{\frac{1}{\alpha}+\frac{1}{\beta }}}\\
                 &=h_{E_K}(\theta)+h_{E_L}(\theta)\\
                 &=h_{E_K + E_L}(\theta),
  \end{align*}
  where the inequality follows by Cauchy-Schwarz. This means that 
  \[
    K + L \shinclu E_K + E_L \subseteq E = AB_2^d,
  \]
  and, therefore, \(\Gamma_p(K+L) \le \sqrt{\trp{p/2}(A^2)}\). We can
  then calculate, using Lemma~\ref{lm:trp-norm}, that
  \begin{align*}
    \sqrt{\tr_{p/2}(A^2)}
    &= \sqrt{ \tr_{p/2}(\alpha A_KA_K^T+\beta A_LA_L^T)}\\
    &\leq \sqrt{\alpha \tr_{p/2}(A_KA_K^T)+\beta \tr_{p/2}(A_LA_L^T)}\\
    &= \sqrt{\alpha \Gamma_p(K)^2 + \beta \Gamma_p(L)^2}\\
    &=\Gamma_{p}(K)+\Gamma_{p}(L).
  \end{align*}
  This completes the proof of the triangle inequality.
\end{proof}

The next lemma shows that shifting $K$ by a vector $v$ in the
definition of \(\Gamma_p(K)\) is not necessary when $K$ is centrally
symmetric. We then use this fact to get an alternative formulation
of $\Gamma_p(K)$ as the minimum of the $\Gamma_p$ functional over
different symmetrizations of $K$.

\begin{lemma}\label{lm:centering}
  Suppose that \(K \subseteq \R^d\) is bounded. If \(K\) is symmetric around \(0\), i.e., \(K = -K\), then
  \[
    \Gamma_p(K) = \inf\left\{\sqrt{\trp{p/2}(AA^T)}: K \subseteq AB_2^d\right\}.
  \]
  Otherwise,
  \[
    \Gamma_p(K) 
    =
    \inf\left\{\sqrt{\trp{p/2}(AA^T)}: (K+v) \cup (-K - v) \subseteq AB_2^d\right\}
    =
    \inf_{v \in \R^d} \Gamma_p((K+v) \cup (-K - v)).        
  \]
\end{lemma}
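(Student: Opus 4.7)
The plan is to reduce the shift-tolerant definition of $\Gamma_p(K)$ to a shift-free version by exploiting the central symmetry of the ellipsoid $AB_2^d$. The key observation throughout is that $-AB_2^d = A(-B_2^d) = AB_2^d$, so every ellipsoid on the right-hand side of the containment $K + v \subseteq AB_2^d$ is symmetric around the origin. This will let me ``symmetrize'' $K$ at no cost.

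For the symmetric case $K = -K$: the inequality $\Gamma_p(K) \le \inf\{\sqrt{\trp{p/2}(AA^T)} : K \subseteq AB_2^d\}$ is immediate from the definition by setting $v = 0$. For the reverse, I would start from an arbitrary $v$ with $K + v \subseteq AB_2^d$, negate to get $-(K+v) \subseteq AB_2^d$ using central symmetry of the ellipsoid, and then rewrite $-(K+v) = K - v$ using $K = -K$. With both $K + v$ and $K - v$ contained in $AB_2^d$, I would take an arbitrary $x \in K$, write $x = \tfrac{1}{2}(x+v) + \tfrac{1}{2}(x-v)$, and invoke convexity of $AB_2^d$ to conclude $x \in AB_2^d$. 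This shows $K \subseteq AB_2^d$ whenever $K \shinclu AB_2^d$, hence the reverse inequality.

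For general $K$: the first equality follows directly from the same symmetry trick applied at the level of the definition. Specifically, $K + v \subseteq AB_2^d$ is equivalent to $-(K+v) \subseteq AB_2^d$ (by symmetry of the ellipsoid), and hence to $(K+v) \cup (-K-v) \subseteq AB_2^d$. Substituting this equivalence into the definition of $\Gamma_p(K)$ yields the middle equality. For the outer equality, I would note that for every $v$, the set $(K+v) \cup (-K-v)$ is centrally symmetric around $0$; applying the first part of the lemma to this symmetric set gives
\[
  \Gamma_p\bigl((K+v) \cup (-K-v)\bigr) = \inf\left\{\sqrt{\trp{p/2}(AA^T)} : (K+v) \cup (-K-v) \subseteq AB_2^d\right\},
\]
and then taking the infimum over $v$ and interchanging the two infima finishes the proof.

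I do not expect any real obstacle: the entire argument rests on the single observation that $AB_2^d$ is centrally symmetric and convex, combined with a midpoint averaging step. The slight subtlety is keeping straight which containments are ``up to shift'' and which are strict, which is handled cleanly by the equivalences above.
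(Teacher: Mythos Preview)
Your proposal is correct and follows essentially the same argument as the paper. The paper phrases the midpoint step at the set level, writing $K \subseteq \tfrac12(K+v) + \tfrac12(K-v) \subseteq \tfrac12 AB_2^d + \tfrac12 AB_2^d = AB_2^d$, while you do it pointwise via $x = \tfrac12(x+v) + \tfrac12(x-v)$; these are the same idea, and the treatment of the general case via the equivalence $K+v \subseteq AB_2^d \iff (K+v)\cup(-K-v) \subseteq AB_2^d$ and an appeal to the symmetric case is also identical.
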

\begin{proof}
  Whenever \(K\) is symmetric around
  \(0\), and \(K \shinclu AB_2^d\), we have \(K \subseteq AB_2^d\) as
  well, which proves the first claim in the lemma. Indeed, if \(K + v \subseteq AB_2^d\) for some \(v \in
  \R^d\), then by the symmetry of \(AB_2^d\) and of \(K\) we have \[K
  - v = -K  - v = -(K+v) \subseteq -AB_2^d = AB_2^d.\] Using the
  convexity and symmetry of \(AB_2^d\), we have
  \[
    K \subseteq \frac12 (K + v) + \frac12 (K - v) \subseteq \frac12
    AB_2^d + \frac{1}{2} AB_2^d = AB_2^d,
  \]
  as we claimed.
  

  The next claim follows from  the observation
  that, for any \(d\times d\) matrix \(A\), \(K+v \subseteq AB_2^d\)
  is equivalent to \((K+v) \cup (-K - v) \subseteq
  AB_2^d\) because \(AB_2^d\) is symmetric around 0. Then
  $K \shinclu AB_2^d$ is equivalent to the existence of a vector $v
  \in \R^d$ such that \((K+v) \cup (-K - v)\subseteq
  AB_2^d\). We have
  \begin{align*}
    \Gamma_p(K)
    &= \inf \left\{\sqrt{\trp{p/2}(AA^T)}: K\shinclu
      AB_2^d\right\}\\
    &= \inf_{v \in \R^d} \inf \left\{\sqrt{\trp{p/2}(AA^T)}: K + v \subseteq AB_2^d\right\}\\
    &= \inf_{v \in \R^d} \inf\left\{\sqrt{\trp{p/2}(AA^T)}: (K+v) \cup (-K - v) \subseteq AB_2^d\right\}\\
    &=     \inf_{v \in \R^d} \Gamma_p((K+v) \cup (-K - v)),
  \end{align*}
  where the final equality follows from the first claim in the lemma,
  since $(K+v) \cup (-K - v)$ is symmetric around $0$.
\end{proof}

\subsection{Connection to Factorization}
\label{sect:fact}

In this subsection we show that, in the case when \(K = \{\pm w_1,
\ldots, \pm w_N\} \subseteq \R^d\) is a finite
symmetric set, the quantities \(\Gamma_2(K)\) and
\(\Gamma_\infty(K)\) can be equivalently formulated in terms of the
factorization norms $\gamma_F(W)$ and $\gamma_2(W)$ of the matrix \(W
:= (w_i)_{i = 1}^N\). These norms have been
studied in prior work on factorization mechanisms in differential privacy. The
\(\gamma_2\) norm is classical in functional analysis: see, e.g., the
book by Tomczak-Jaegermann~\cite{TJ-book}. It was first applied to
differential privacy implicitly in~\cite{NTZ-stoc,NTZ}, and more explicitly
in~\cite{nikolov-thesis}. The \(\gamma_F\) norm is implicit in the work on the
matrix mechanism~\cite{LiHRMM10}, and the notation we use is
from~\cite{factormech}, albeit with different normalization. 
We define natural analogs of these quantities
that correspond to \(\Gamma_p\) for any \(p \in [2,
\infty]\). Our general
formulation of Gaussian noise mechanisms thus generalizes factorization
mechanisms to more general domains and more general measures of
error. 

First we recall the definitions of the $\gamma_F$
and $\gamma_2$ factorization norms, and introduce a definition of a
family of factorization norms parameterized by $p \in [2,\infty]$ that
we later show correspond to $\Gamma_p$. 
\begin{definition}
  The \(\gamma_2\) and the \(\gamma_F\) factorization norms of a \(d
  \times N\) real matrix \(W\) are defined\footnote{In~\cite{factormech} the
    \(\gamma_F\) norm is normalized differently.} as
  \begin{align*}
    \gamma_2(W) &:= \inf\{\|A\|_{2\to\infty} \|C\|_{1\to 2}: AC =
                  W\}\\
    \gamma_F(W) &:= \inf\{\|A\|_{F} \|C\|_{1\to 2}: AC = W\}.
  \end{align*}
  More generally, we define, for \(p \in [2,\infty]\),
  \[
    \gamma_{(p)}(W) :=
    \inf\left\{\sqrt{\trp{p/2}(AA^T)} \|C\|_{1\to 2}: AC = W \right\},
  \]
  where \(\gamma_{(2)}(W) = \gamma_F(W)\) and \(\gamma_{(\infty)}(W)
  = \gamma_2(W)\). 
\end{definition}


We have the following connection between \(\Gamma_p\) and these
factorization norms.

\begin{theorem}\label{thm:Gammap-as-factorization}
  For any \(p \in [2,\infty]\), and any \(d\times N\) real matrix
  \(W\) with columns \(w_1, \ldots, w_N\), for the set \(K^{\text{sym}} := \{\pm
  w_1, \ldots, \pm w_N\}\) we have
  \[
    \Gamma_p(K^{\text{sym}}) = \gamma_{(p)}(W).
  \]
  Moreover, for the set \(K := \{w_1, \ldots, w_N\}\) we have
  \[
    \Gamma_p(K) = \inf_{v \in \R^d} \gamma_{(p)}(W + v1^T),
  \]
  where \(1\) is the \(N\)-dimensional all-ones vector.
\end{theorem}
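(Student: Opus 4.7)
The plan is to unpack both quantities as optimization problems and show they are essentially the same after a change of variables. The main tool will be the elementary observation that containment of a finite set of vectors in an ellipsoid $AB_2^d$ is equivalent to a factorization of the corresponding matrix through $A$ with a right factor of bounded $\ell_1\to\ell_2$ operator norm.

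First I would handle the symmetric case. By Lemma~\ref{lm:centering}, since $K^{\text{sym}}$ is symmetric around $0$, the infimum defining $\Gamma_p(K^{\text{sym}})$ can be restricted to $A$ with $K^{\text{sym}} \subseteq AB_2^d$ (no shift). The containment $K^{\text{sym}} \subseteq AB_2^d$ is equivalent, using symmetry of $AB_2^d$, to $\{w_1,\ldots,w_N\} \subseteq AB_2^d$, which in turn is equivalent to the existence of vectors $c_1,\ldots,c_N$ with $\|c_i\|_2 \le 1$ and $Ac_i = w_i$ for all $i$. Assembling these into a matrix $C$ with columns $c_i$, this says exactly that $W = AC$ with $\|C\|_{1\to 2} \le 1$. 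Scaling $A$ by $t$ and $C$ by $1/t$ converts any factorization $W = AC$ into one with $\|C\|_{1\to 2} = 1$ at the price of replacing $\sqrt{\trp{p/2}(AA^T)}$ by $\|C\|_{1\to 2}\sqrt{\trp{p/2}(AA^T)}$, which matches $\gamma_{(p)}(W)$.

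The one place requiring a small check is that the definition of $\gamma_{(p)}$ allows $A$ to be $d\times k$ for arbitrary $k$, while $\Gamma_p$ insists on $d\times d$. For $k < d$, padding $A$ with zero columns and $C$ with zero rows preserves both $AA^T$ and $\|C\|_{1\to 2}$. For $k > d$, I would apply the singular value decomposition $A = U\Sigma V^T$, absorbing $V^T$ into $C$ and truncating to $d$ columns of $U\Sigma$; since $V$ has orthonormal rows, this does not increase $\|C\|_{1\to 2}$, and $AA^T$ is unchanged. Hence the infimum defining $\gamma_{(p)}(W)$ is attained, up to arbitrary approximation, by $d\times d$ factorizations, and the two values coincide: $\Gamma_p(K^{\text{sym}}) = \gamma_{(p)}(W)$.

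For the non-symmetric case, I would simply iterate over shifts. Writing out the definition,
\[
\Gamma_p(K) = \inf_{v \in \R^d} \inf\left\{\sqrt{\trp{p/2}(AA^T)} : K + v \subseteq AB_2^d \right\},
\]
and noting that $K+v \subseteq AB_2^d$ is equivalent to $(W + v1^T) = AC$ with $\|C\|_{1\to 2}\le 1$ (by the same ``vector in ellipsoid iff factorization'' argument, symmetry no longer being needed since we only need containment), the inner infimum equals $\gamma_{(p)}(W + v1^T)$ by the same scaling and SVD argument as above. Taking the infimum over $v$ gives the claimed formula. The only mildly delicate point throughout is the intermediate-dimension bookkeeping in step one; everything else is a direct translation between the geometric and matrix pictures.
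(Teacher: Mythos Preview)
Your proof is correct and follows essentially the same approach as the paper's: both reduce $\Gamma_p(K^{\text{sym}})$ to the unshifted infimum via Lemma~\ref{lm:centering}, translate the containment $\{w_i\}\subseteq AB_2^d$ into a factorization $W=AC$ with $\|C\|_{1\to 2}\le 1$, and then handle the inner-dimension mismatch. The only cosmetic differences are that you use an SVD-and-truncate argument where the paper projects to the intersection of the row span of $A$ and the column span of $C$, and that for the non-symmetric claim you unfold the definition directly while the paper routes through the symmetrized set $(K+v)\cup(-K-v)$ and reapplies the first part via Lemma~\ref{lm:centering}.
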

\begin{proof}
  First we show that \(\Gamma_p(K^{\text{sym}}) \le \gamma_{(p)}(W).\) Let
  \(AC = W\) be an optimal factorization of \(W\). Since we can always
  multiply $A$ and divide $C$ by the same constant, we can assume, without loss of
  generality, that \(\|C\|_{1\to 2} = 1\) and
  \(\sqrt{\trp{p/2}(AA^T)} = \gamma_{(p)}(W)\). Furthermore, we claim
  that we can assume that
  the inner dimension $k$ of \(A\) and \(C\) is \(d\). In fact, it is
  enough to show that $k$ can be taken to be at most
  $d$, since we can then add $0$ columns to $A$ and $0$ rows to $C$ to
  ensure $k=d$.
  To see that $k\le d$ without loss of generality, let us first
  orthogonally project the rows of \(A\)
  and columns of \(C\) to the intersection of the row span of \(A\)
  and the column span of \(C\). This transformation still gives us a valid
  optimal factorization $W = AC$, since it does not change the product
  $AC$, and does not increase \(\sqrt{\trp{p/2}(AA^T)} \|C\|_{1\to
    2}\). We can now assume that the column span of $C$ equals the
  rowspan of $A$, and, therefore, both matrices have rank equal to the
  rank of $W$. After an appropriate change of
  basis, we can then make sure that the inner dimension $k$ of $A$ and $C$
  equals their rank, so that $k \le \mathrm{rank}\ W \le d$.

  Now the inequality \(\Gamma_p(K^{\text{sym}} ) \le \gamma_{(p)}(W)\) is
  straightforward. We have that \(CB_1^N
  \subseteq B_2^d\) since all columns of \(C\) have norm at most
  \(\|C\|_{1\to 2} = 1\). Therefore, \(K^{\text{sym}}  = A(CB_1^N) \subseteq
  AB_2^d\). The inequality follows.

  Next we show that \(\gamma_{(p)}(W) \le \Gamma_p(K^{\text{sym}})\). Since
  \(K^{\text{sym}}\) is centrally symmetric around \(0\), by
  Lemma~\ref{lm:centering} there exists some matrix \(A\) such that
  \(\Gamma_p(K^{\text{sym}}) = \sqrt{\trp{p/2}(AA^T)}\) and \(K^{\text{sym}}  \subseteq
  AB_2^d.\) 
  Let us set
  \(c_i\) to be some point in \(B_2^d\) such that \(Ac_i = w_i\). Such
  a point \(c_i\) exists because \(w_i \in K^{\text{sym}} \). Defining the matrix \(C :=
  (c_i)_{i=1}^N\), we then have the factorization \(W = AC\), with
  \(\|C\|_{1\to 2} = \max_{i = 1}^N\|c_i\|_2 \le 1\). 

  The claim after ``moreover'' follows from what we just proved and
  Lemma~\ref{lm:centering}.
\end{proof}

Notice that \((W_1 + W_2)B_1^N \subseteq W_1B_1^N + W_2 B_2^N\). Then,
a triangle inequality for \(\gamma_{(p)}\), as well as homogeneity, follow from
Theorem~\ref{thm:triangle}. The fact that \(\gamma_{(p)}(W) = 0\) only
if \(W = 0\) follows from the observation that \(\trp{p/2}(AA^T) = 0\)
implies the diagonal of \(AA^T\) is \(0\), which implies that \(A =
0\). This verifies that $\gamma_{(p)}$ is, indeed, a norm on matrices.

We note that, instead of defining the functionals $\Gamma_p$ on
bounded subsets of $\R^d$, we could have taken the approach of
defining the $\gamma_{(p)}$ factorization norms above, and
generalizing them to operators between (finite or infinite
dimensional) normed vector spaces. In our opinion, our approach is
more transparent for our applications, and handles non-symmetric
domains $K$ more easily.

\subsection{Duality}\label{sect:duality}

Our goal in this section is to derive a dual characterization of
\(\Gamma_p(K)\) as a maximization problem over probability
distributions on \(K\). We first carry this out for \(p=2\), and then
reduce the general case to \(p=2\). This dual characterization is
useful in the proofs of some of our later results.

Let us introduce some notation before we state our main duality
result. 
\begin{definition}
  For a compact set \(K \subseteq \R^d\), we define \(\Delta(K)\) to
  be the set of Borel regular measures supported on \(K\). 
\end{definition}

\begin{definition}
  For a probability measure \(P\) over \(\R^d\), we use the notation
  \(\cov(P)\) for the covariance matrix of \(P\), i.e.,
  \[
    \cov(P) := \E_{Y \sim P}[(Y - \E[Y])(Y - \E[Y])^T].
  \]
\end{definition}

The following theorem is our dual characterization of \(\Gamma_p(K)\)
as a problem of maximizing the covariance of probability distributions
over \(K\). It generalizes the known dual characterizations of
the \(\gamma_2\) and \(\gamma_F\) factorization norms~\cite{LeeSS08,HUU22}.
\begin{theorem}\label{thm:duality}
    Let \(K \subseteq \R^d\) be a bounded set, and let \(p \in (2,
    \infty]\).  Then, for \(q := \frac{p}{p-2}\) we have the identity
  \begin{equation}\label{eq:gammap-duality}
    \Gamma_p(K) = \sup\{\tr((D\cov(P)D)^{1/2}):D \text{ diagonal}, D \succeq 0, \trp{q}{(D^2)} = 1, P \in \Delta(K)\}.
  \end{equation}
  Moreover, if \(K\) is symmetric around \(0\) (i.e., \(K = -K\)),
  then 
  \begin{equation}\label{eq:gammap-duality-sym}
    \Gamma_p(K) = \sup\{\tr((D\E_{X\sim P}[XX^T]D)^{1/2}):D \text{ diagonal}, D \succeq 0, \trp{q}{(D^2)} = 1, P \in \Delta(K)\}.
  \end{equation}
\end{theorem}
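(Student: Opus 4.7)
The plan is to reduce the case $p \in (2, \infty]$ to the $p = 2$ duality $\Gamma_2(L) = \sup_{P' \in \Delta(L)} \tr(\cov(P')^{1/2})$ (which I would prove separately in this section) via a minimax argument. Start from the convex optimization formulation
\[
  \Gamma_p(K)^2 = \inf\{\trp{p/2}(M) : M \succ 0,\ \exists v \in \R^d \text{ with } (x+v)^T M^{-1}(x+v) \le 1 \ \forall x \in K\},
\]
established in Section~\ref{sect:alg}, and combine it with the standard $\ell_{p/2}$--$\ell_q$ duality
\[
  \trp{p/2}(M) = \sup\bigl\{\tr(D^2 M) : D \succeq 0 \text{ diagonal},\ \trp{q}(D^2) \le 1\bigr\},\qquad q = p/(p-2).
\]
This rewrites $\Gamma_p(K)^2$ as a min-max of the bilinear form $(M,D) \mapsto \tr(D^2 M)$. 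The $D$-set is convex and compact, and the $(M,v)$-feasible region can be restricted to a bounded sublevel set without changing the infimum (since $M$ has bounded $\trp{p/2}$-norm on any sublevel set, and the constraint then forces $v$ to lie in a bounded region as well). Sion's minimax theorem therefore permits the swap
\[
  \Gamma_p(K)^2 = \sup_D \inf_{M, v} \tr(D^2 M).
\]

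For invertible $D$, the substitution $M' := DMD$, $y := Dx$, $v' := Dv$ turns the inner infimum into $\inf \tr(M')$ over $M' \succ 0$ and $v' \in \R^d$ with $(y + v')^T (M')^{-1}(y + v') \le 1$ for all $y \in DK$, which is exactly $\Gamma_2(DK)^2$. Non-invertible $D$ can be handled by approximating with $D + \epsilon I$ and passing to the limit, using continuity of $\Gamma_2$ in the domain. Now invoking the $p = 2$ duality with $L := DK$, and using that the push-forward $Y = DX$ bijects $\Delta(K)$ with $\Delta(DK)$ and transforms covariances via $\cov(D_* P) = D \cov(P) D$, we conclude
\[
  \Gamma_p(K) = \sup_D \Gamma_2(DK) = \sup_{D,\, P \in \Delta(K)} \tr\bigl((D\cov(P)D)^{1/2}\bigr),
\]
which is \eqref{eq:gammap-duality}. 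For the symmetric case ($K = -K$), Lemma~\ref{lm:centering} permits dropping the shift $v$ throughout, so $DK$ is itself symmetric and the symmetric $p = 2$ duality (with $\E_{X \sim P}[XX^T]$ in place of $\cov(P)$) yields \eqref{eq:gammap-duality-sym} by the same chain of equalities.

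The main technical difficulty, which I would absorb into the separate proof of the $p = 2$ duality, is the matching lower bound $\Gamma_2(L) \le \sup_{P'} \tr(\cov(P')^{1/2})$. The upper bound is straightforward via Cauchy--Schwarz on $\tr(\cov(P')^{1/2}) = \tr((M^*)^{1/2} \cdot (M^*)^{-1/2} \cov(P')^{1/2})$, combined with averaging the primal constraint over $X \sim P'$ to get $\tr((M^*)^{-1}\cov(P')) \le 1$. The lower bound requires SDP strong duality, applied to the Schur-complement reformulation of the primal (whose Slater condition is trivially satisfied by $M = c I$ for large $c$); an analysis of the dual with rank-$1$ PSD-matrix multipliers, together with complementary slackness, produces an optimal $P^*$ supported on the contact points of $L$ with the optimal enclosing ellipsoid $\{x : (x+v^*)^T(M^*)^{-1}(x+v^*) = 1\}$, for which $\tr(\cov(P^*)^{1/2}) = \sqrt{\tr(M^*)} = \Gamma_2(L)$.
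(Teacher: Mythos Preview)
Your reduction of the $p>2$ case to $p=2$ --- writing $\trp{p/2}(M) = \sup_D \tr(D^2 M)$, applying Sion's minimax, and identifying the inner infimum as $\Gamma_2(DK)^2$ via the substitution $M' = DMD$ --- is essentially the paper's proof of Lemma~\ref{lm:Gammap-via-Gamma2}. (A small note: you do not need to restrict the $(M,v)$-region to a compact set; Sion's theorem only requires one side compact, and the $D$-side already is. What you do need, and do not mention, is convexity of the $(M,v)$-feasible region, which the paper verifies as a separate claim using joint convexity of $(M,v)\mapsto (x+v)^T M^{-1}(x+v)$.)

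The substantive difference is your proposed proof of the $p=2$ duality. The paper applies Sion's theorem a second time, now with the measure side $\Delta(K)$ (compact in the weak* topology) against $(M,v)$, obtaining $\Gamma_2(K)^2 = \max_{P} \inf_{M,v} \E_{X\sim P}[(X+v)^T M^{-1}(X+v)]$; the inner infimum is then evaluated in closed form via the matrix Cauchy--Schwarz inequality, yielding $\tr(\cov(P)^{1/2})^2$. Your route --- Schur-complement SDP, strong duality, and complementary slackness to extract a measure on contact points --- is a legitimate alternative when $K$ is finite (or a polytope), but as written it has a gap for general bounded $K$: the primal then has a continuum of constraints, so ``SDP strong duality'' is really semi-infinite conic duality, the ``rank-1 multipliers'' must be replaced by a Borel measure on $K$, and Slater's condition on the primal does not by itself guarantee dual attainment. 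You would need either a semi-infinite duality theorem with the right compactness hypotheses, or an approximation of $K$ by finite subsets together with a limiting argument --- and the latter ultimately relies on weak* compactness of $\Delta(\bar K)$ anyway. The paper's direct use of Sion on $\Delta(K)$ avoids this detour entirely.

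For the symmetric case, the paper does not rerun the argument with $v=0$; instead it observes that symmetrizing any $P\in\Delta(K)$ by a random sign only increases the second-moment matrix (since $\E[XX^T] = \cov(P) + \E[X]\E[X]^T \succeq \cov(P)$), so the supremum over all $P$ with $\cov(P)$ equals the supremum over symmetric $P$ with $\E[XX^T]$. Your approach via Lemma~\ref{lm:centering} is also fine but requires you to have separately established the symmetric form of the $p=2$ duality.
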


Let us also state one of the two key lemmas used in the proof of
Theorem~\ref{thm:duality}, since it is useful in several of our lower
bound proofs. This lemma expresses \(\Gamma_p(K)\) in terms of the
\(\Gamma_2\) functional applied to rescalings of \(K\).
\begin{lemma}\label{lm:Gammap-via-Gamma2}
  Let \(K \subseteq \R^d\) be a bounded set, and let \(p \in (2,
  \infty]\). Then, for \(q := \frac{p}{p-2}\) we have the identity
  \begin{equation}\label{eq:Gammap-via-Gamma2}
    \Gamma_p(K) = \max\{\Gamma_2(DK): D \text{ diagonal}, D \succeq 0,
    \trp{q}{(D^2)} = 1\}.
  \end{equation}
\end{lemma}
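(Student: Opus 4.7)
The plan is to combine two ingredients: the convex optimization formulation of $\Gamma_p(K)^2$ given in the Introduction,
\[
\Gamma_p(K)^2 = \min\{\trp{p/2}(M) : M \succ 0,\ v \in \R^d,\ (x+v)^T M^{-1}(x+v) \le 1\ \forall x \in K\},
\]
together with the dual norm identity for $\ell_{p/2}$ applied to the diagonal entries of $M$: since $q = p/(p-2)$ is conjugate to $p/2$,
\[
\trp{p/2}(M) = \sup_{\tilde D \in \mathcal{D}} \tr(\tilde D M), \qquad \mathcal{D} := \{\tilde D \succeq 0\ \text{diagonal} : \trp{q}(\tilde D) = 1\}.
\]
The strategy is then to substitute the second display into the first and to interchange the inf and the sup by a minimax theorem.

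The easy direction $\Gamma_2(DK) \le \Gamma_p(K)$ for each admissible $D$ is handled first. Whenever $K \shinclu AB_2^d$, we also have $DK \shinclu (DA)B_2^d$, so
\[
\Gamma_2(DK)^2 \le \tr((DA)(DA)^T) = \sum_i D_{ii}^2 (AA^T)_{ii} \le \trp{q}(D^2)\,\trp{p/2}(AA^T) = \trp{p/2}(AA^T),
\]
by H\"older's inequality with the conjugate pair $(q, p/2)$. Taking the infimum over $A$ gives $\Gamma_2(DK)^2 \le \Gamma_p(K)^2$, hence $\sup_{D} \Gamma_2(DK) \le \Gamma_p(K)$.

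For the matching lower bound, substitute the dual identity for $\trp{p/2}$ into the convex program to write $\Gamma_p(K)^2 = \inf_{M, v} \sup_{\tilde D \in \mathcal{D}} \tr(\tilde D M)$. The bilinear pairing $(M, \tilde D) \mapsto \tr(\tilde D M)$ is continuous, convex in $M$, concave in $\tilde D$, and $\mathcal{D}$ is convex and compact, so Sion's minimax theorem allows the exchange of inf and sup. For each fixed $\tilde D = D^2 \succ 0$, the change of variables $N := DMD$, $w := Dv$ turns the constraint $(x+v)^T M^{-1}(x+v) \le 1$ into $(Dx + w)^T N^{-1}(Dx + w) \le 1$ and the objective $\tr(\tilde D M) = \tr(DMD)$ into $\tr(N)$, so the resulting inner minimum is exactly $\Gamma_2(DK)^2$ by the $p = 2$ instance of the same convex formulation (where $\trp{1} = \tr$). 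This yields $\Gamma_p(K)^2 \le \sup_{\tilde D \succ 0} \Gamma_2(DK)^2$.

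The main obstacle is the technical care required to justify Sion's theorem (the $M$-domain is not compact) and to conclude that the supremum on the right of \eqref{eq:Gammap-via-Gamma2} is actually a maximum. The plan is to reduce to the case where $K$ spans $\R^d$ by restricting to its affine hull, which forces any feasible $M$ of bounded objective to remain uniformly positive definite and bounded, so the minimization can be confined to a compact convex subset without changing the value. To exhibit an attaining $D$, take any optimizer $A^*$ for $\Gamma_p(K)$ and set $D_{ii} \propto ((A^*(A^*)^T)_{ii})^{(p-2)/4}$, renormalized so that $\trp{q}(D^2) = 1$; this choice produces equality in the H\"older step of the easy direction applied to $A^*$, and combined with the lower bound just proved it yields $\Gamma_2(DK) = \Gamma_p(K)$ for this particular $D$.
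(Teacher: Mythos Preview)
Your approach is essentially the paper's: H\"older for the inequality $\Gamma_2(DK)\le\Gamma_p(K)$, and Sion's minimax theorem applied to the bilinear pairing $\tr(\tilde D M)$ for the reverse direction, with the compact set being the diagonal side $\mathcal D$. Two small clean-ups: your worry about the $M$-domain not being compact is misplaced, since Sion only needs one of the two sets compact and $\mathcal D$ already is (what does need checking is that the $(M,v)$-feasible set is \emph{convex}, which the paper verifies separately); and your change of variables $N=DMD$, $w=Dv$ only works for $D\succ 0$, so the singular case requires a short continuity argument---the paper handles this explicitly by interpolating $D_\beta=((1-\beta)D^2+\beta I)^{1/2}$ and sending $\beta\to 0$.
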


Our proofs of Lemma~\ref{lm:Gammap-via-Gamma2} and
Theorem~\ref{thm:duality} use Sion's minimax theorem and the
compactness of $\Delta(K)$ in the weak* topology. For readability, we
defer the proof itself to Appendix~\ref{app:duality}.

\subsection{Computational Efficiency}
\label{sect:alg}

Next, we show that, under appropriate assumptions, \(\Gamma_p\) can
be efficiently approximated. It is known that the $\gamma_F$ and $\gamma_2$ norms
can be computed efficiently using semidefinite
programming~\cite{LeeSS08,factormech,HUU22}. Via
Theorem~\ref{thm:Gammap-as-factorization} this implies that
\(\Gamma_p(K)\) can be efficiently approximated for $p \in \{2,
\infty\}$ when \(K\) is a centrally symmetric finite set or, equivalently, a
centrally symmetric polytope in vertex representation. The same
techniques can be used to extend this result to arbitrary finite \(K\)
or arbitrary polytopes in vertex representation. Here we further
show that \(\Gamma_p(K)\) can be approximated efficiently for these
\(K\) and \emph{all} \(p \ge 2\). Moreover, we give a general
condition under which \(\Gamma_p(K)\) is efficiently approximable,
which is satisfied by several families of sets \(K\) that go far beyond
polytopes in vertex representation. 

Note that the \(\gamma_2\) norm as defined in Section~\ref{sect:fact}
is the special case the more general \(\gamma_2\) norm of linear
operators between Banach spaces when the \(d \times N\) matrix \(W\)
is treated as a linear operator from \(\ell_1^N\) to
\(\ell_\infty^d\).  Our results in this section are related to work of
Bhattiprolu, Lee, and Naor~\cite{BLN21}, who formulated the general
\(\gamma_2\) norm as a convex optimization problem, and showed that
this optimization problem can be solved efficiently given quadratic
oracles as defined in Definition~\ref{defn:oracle} below. While
similar in spirit and techniques, our results do not follow directly
from theirs, because the definition of \(\Gamma_p(K)\) allows for
non-symmetric sets \(K\) that do not correspond to the unit balls of
Banach spaces. Moreover, even for symmetric sets \(K\), \(\Gamma_p(K)\)
cannot be written as the \(\gamma_2\) of an operator, unless
\(p = \infty\).

We first prove a lemma showing that the minimization defining
\(\Gamma_p(K)\) can be restricted to positive definite matrices.

\begin{lemma}\label{lm:posdef}
  For any bounded set \(K \subseteq \R^d\), we have
    \begin{align}
    \Gamma_p(K) &=\inf\left\{\sqrt{\trp{p/2}{(M)}}: M\succ 0, K\shinclu
      \sqrt{M}B_2^d\right\}\label{eq:pd}\\
    &=\inf\left\{\sqrt{\trp{p/2}{(M)}}: v \in -\clconv K, M\succ 0, (x+v)^T M^{-1}(x+v)
      \le 1 \ \forall x \in K\right\}.\label{eq:inv}
    \end{align}
    If \(K\) is symmetric around \(0\), then \eqref{eq:pd} and
    \eqref{eq:inv} hold with, respectively, \(\shinclu\) replaced by
    \(\subseteq\), and \(v = 0\).
\end{lemma}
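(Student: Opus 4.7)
The plan is to rewrite the feasible set in the definition of $\Gamma_p(K)$ through a sequence of equivalent reformulations. First I would parametrize by $M = AA^T$, then relax to $M \succ 0$, then rephrase the ellipsoid inclusion as a quadratic inequality, and finally restrict the center $v$ to $-\clconv K$. The only real work is in the last step, which will rely on the classical fact that the center of the smallest enclosing Euclidean ball of a bounded set lies in the closed convex hull of that set.

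Since $AB_2^d$ is a convex body whose support function satisfies $h_{AB_2^d}(\theta) = \|A^T\theta\|_2 = \sqrt{\theta^T(AA^T)\theta}$, the set $AB_2^d$ depends on $A$ only through $M := AA^T$, and in fact $AB_2^d = \sqrt{M}B_2^d$. Hence the defining infimum for $\Gamma_p(K)$ can be reformulated as an infimum of $\sqrt{\trp{p/2}(M)}$ over $M \succeq 0$ with $K \shinclu \sqrt{M}B_2^d$. Restricting to $M \succ 0$ is a routine perturbation: if $M \succeq 0$ is feasible then so is $M + \varepsilon I \succ 0$ for every $\varepsilon > 0$, because $M \preceq M + \varepsilon I$ gives $\sqrt{M}B_2^d \subseteq \sqrt{M + \varepsilon I}B_2^d$ at the level of support functions, and $\trp{p/2}(M + \varepsilon I) \to \trp{p/2}(M)$ as $\varepsilon \to 0^+$; this yields \eqref{eq:pd}. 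For \eqref{eq:inv}, I would just observe that when $M \succ 0$ the inclusion $K + v \subseteq \sqrt{M}B_2^d$ is literally the condition $(x+v)^T M^{-1}(x+v) \le 1$ for every $x \in K$.

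The remaining issue, which is the main technical point, is showing that in \eqref{eq:inv} one may further demand $v \in -\clconv K$ without enlarging the infimum. Given a feasible pair $(M, v)$ with $M \succ 0$, I would change variables by $y = M^{-1/2} x$, reducing the constraint to $K' \subseteq B_2^d - u$, where $K' := M^{-1/2} K$ and $u := M^{-1/2} v$; in particular $K'$ is contained in a translated Euclidean unit ball. Now I would invoke the classical fact that the center $c^*$ of the smallest enclosing Euclidean ball of a bounded set $K'$ lies in $\clconv K'$. This can be proved in one line: if $c^* \notin \clconv K'$, then letting $c_0$ be the Euclidean projection of $c^*$ onto $\clconv K'$, the projection inequality gives $\|x - c_0\|_2^2 \le \|x - c^*\|_2^2 - \|c^* - c_0\|_2^2$ for every $x \in K'$, strictly decreasing the radius, a contradiction. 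Taking $c^* \in \clconv K'$ with $K' - c^* \subseteq B_2^d$, and setting $v^* := -M^{1/2} c^*$, I obtain $v^* \in -M^{1/2}\clconv(M^{-1/2}K) = -\clconv K$ and $(x+v^*)^T M^{-1}(x+v^*) = \|M^{-1/2}x - c^*\|_2^2 \le 1$ for every $x \in K$, as required.

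Finally, for the symmetric case, Lemma~\ref{lm:centering} already shows that the shift in the definition of $\Gamma_p(K)$ is unnecessary when $K = -K$, so the feasibility condition in \eqref{eq:pd} becomes $K \subseteq \sqrt{M}B_2^d$, and the quadratic formulation in \eqref{eq:inv} reduces to $v = 0$ (which is consistent with the constraint $v \in -\clconv K$ since $0 \in \clconv K$ by symmetry).
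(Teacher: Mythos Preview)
Your proof is correct and follows essentially the same route as the paper: reparametrize by $M=AA^T$, perturb to $M\succ 0$, rewrite the inclusion as the quadratic inequality, and finally restrict $v$ to $-\clconv K$. The only minor variation is in that last step: the paper replaces a given feasible $v$ by the $M^{-1}$-metric projection of $-v$ onto $\clconv K$ and checks feasibility via the first-order optimality condition, whereas you change variables to the Euclidean picture and take the Chebyshev center of $M^{-1/2}K$; both arguments rest on the same projection inequality $\|x-c_0\|_2^2\le\|x-c^*\|_2^2-\|c^*-c_0\|_2^2$.
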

\begin{proof}
  We first show that
  \begin{equation}\label{eq:psd}
    \Gamma_p(K) =\inf\left\{\sqrt{\trp{p/2}(M)}: M\succeq 0, K\shinclu \sqrt{M}B_2^d\right\}.
  \end{equation}
  To see this, note that, for any matrix \(A\), the ellipsoids
  \(AB_2^d\) and \(\sqrt{AA^T}B_2^d\) are equal because \(\sqrt{AA^T}
  = AU\) for some \(d\times d\) unitary matrix \(U\), and, by
  unitarity, \(UB_2^d = B_2^d\).

  Next we show that \(M\) on the right hand side of \eqref{eq:psd} can
  also be taken to be positive definite.
  Take any \(\beta > 0\) and any \(M \succeq 0\) such that \(K \shinclu \sqrt{M}B_2^d\) and
  define \(M' := M + \beta I\succ 0\). Then \(\sqrt{M}B_2^d \subseteq
  \sqrt{M'}B_2^d\) since \(M' \succ M\), so, for any \(\theta \in \R^d\),
  \[
    h_{\sqrt{M'}B_2^d}(\theta) = \sqrt{\theta^T M'\theta}
    \ge
    \sqrt{\theta^T M\theta}
    = h_{\sqrt{M}B_2^d}(\theta).
  \]
  This  means that \(K\shinclu \sqrt{M'}B_2^d\) as well. At the same
  time, by the triangle inequality for \(\ell_{p/2}\), \(\trp{p/2}(M')
  \le \trp{p/2}(M) + \trp{p/2}(\beta I) = \trp{p/2}(M) + \beta
  d^{2/p}\). Taking \(\beta\) to \(0\) shows that the infimum in
  \eqref{eq:psd} equals the infimum in \eqref{eq:pd}.

  To prove~\eqref{eq:inv}, let us take any \(M \succ 0\) such that \(K+v\subseteq
  \sqrt{M}B_2^d\). This is equivalent to
  \(M^{-1/2} (K+v) \subseteq B_2^d\), i.e., \(\max_{x \in K}
  \sqrt{(x+v)^T M^{-1}(x+v)} \le 1\).  Moreover, a standard argument
  shows that, for any \(v \in \R^d\), if \(-v' \in \arg \min\{(y+v)^T
  M^{-1} (y+v): y \in  \clconv K\}\), then, for any \(x \in K\), \((x+v')^T M^{-1}(x+v') \le
  (x+v)^T M^{-1}(x+v)\). This inequality can be proved analogously to
   inequality (3.6)~of~\cite{jlquery} as follows. The
  gradient of \((y+v)^T M^{-1} (y+v)\) with respect to \(y\) is
  \(2M^{-1} (y+v)\), so, by the first order optimality conditions, for
  all \(x \in K\),
  \[
    \ip{x+v'}{2M^{-1}(-v' +v)} = 2(x+v')M^{-1}(v-v')\ge 0.
  \]
  Therefore,
  \begin{align*}
    (x+v)^T M^{-1}(x+v) - (x+v')^T M^{-1}(x+v')
    &=
    2(x+v')^TM^{-1}(v-v') + (v-v')^TM^{-1}(v-v')\\
    &\ge (v-v')^TM^{-1}(v-v') \ge 0.
  \end{align*}
  In summary, \(K+v \subseteq \sqrt{M}B_2^d\) is equivalent to
  \((x+v')^TM^{-1}(x+v') \le 1\) for all \(x \in K\) and some \(v' \in
  -\clconv K\).
  Substituting into \eqref{eq:pd} finishes the proof of \eqref{eq:inv}.

  The claim for \(K\) symmetric around \(0\) follows analogously using
  Lemma~\ref{lm:centering}. 
\end{proof}

The following definition underlies our condition for \(\Gamma_p(K)\)
being efficiently approximable.

\begin{definition}\label{defn:oracle}
  We say that a set \(K \subseteq \R^d\) has an \(\alpha\)-approximate
  quadratic oracle with running time \(T\) if there exists an
  algorithm that, on input a positive semidefinite \(d\times d\)
  matrix \(M\), and a vector \(v \in \R^d\), in time \(T\) outputs
  some \(\tilde{x} \in K\) such that
  \begin{equation}\label{eq:quadr-approx}
    (\tilde{x}+v)^TM(\tilde{x}+v) \ge \frac{1}{\alpha^2} \sup_{x \in K}({x}+v)^TM({x}+v).
  \end{equation}
  If \(K\) is symmetric around \(0\) (i.e., \(K = -K\)), then \eqref{eq:quadr-approx} only
  needs to hold for \(v = 0\).  
  The running time \(T = T(M,v,K)\) can be a function of \(K\) and the bit
  representation length of \(M\) and \(v\).
\end{definition}

We now state our main theorem guaranteeing that \(\Gamma_p(K)\) can be
approximated efficiently given a quadratic oracle for \(K\).

\begin{theorem}\label{thm:opt}
  Suppose that, for some \(R > 0\), the set \(K\subseteq R B_2^d\) has
  an \(\alpha\)-approximate quadratic oracle with running time
  \(T\). For any \(p \in [2, \infty]\), and any \(\beta > 0\), in time
  polynomial in \(T, d, \log(R/\beta)\), we can compute a matrix \(A\) and vector \(v\in
  \R^d\) such that \(K + v \subseteq AB_2^d\), and
  \[
    \sqrt{\trp{p/2}(AA^T)} \le \alpha\Gamma_p(K) + \beta
  \]
\end{theorem}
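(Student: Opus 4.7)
The plan is to recast $\Gamma_p(K)^2$ as the convex program
\[
\inf\left\{\trp{p/2}(M): M \succ 0,\ v \in \R^d,\ (x+v)^T M^{-1}(x+v) \le 1\ \forall x \in K\right\}
\]
supplied by Lemma~\ref{lm:posdef}, and to solve it via a cutting-plane (ellipsoid) method in which the quadratic oracle of Definition~\ref{defn:oracle} plays the role of a weak separation oracle for the infinite family of constraints indexed by $x \in K$. Convexity of the feasible region $F$ is immediate from the Schur complement: for $M \succ 0$, the inequality $(x+v)^T M^{-1}(x+v) \le 1$ is equivalent to the linear matrix inequality $\begin{pmatrix} M & x+v \\ (x+v)^T & 1 \end{pmatrix} \succeq 0$ in the variables $(M,v)$. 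The objective $\trp{p/2}(M)$ is the $\ell_{p/2}$-norm of the non-negative diagonal of $M$, hence convex, with easily computable value and subgradient. Writing $F_\alpha$ for the relaxation of $F$ obtained by replacing the right-hand side $1$ by $\alpha^2$, one has $F \subseteq F_\alpha$ and therefore $\inf_{F_\alpha} \trp{p/2}(M) \le \Gamma_p(K)^2$.

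Next I would convert the quadratic oracle into a weak separation oracle for $F$ that rejects every point outside $F_\alpha$ and accepts every point of $F$. On input $(M, v)$ with $M \succ 0$, invoke the oracle on the PSD matrix $M^{-1}$ and the shift $v$ to obtain $\tilde x \in K$. If $(\tilde x + v)^T M^{-1}(\tilde x + v) > 1$, the Schur-complement LMI for $\tilde x$ is violated and yields a valid cut; otherwise \eqref{eq:quadr-approx} gives $\sup_{x \in K}(x + v)^T M^{-1}(x+v) \le \alpha^2$, so $(M, v) \in F_\alpha$ and we certify relaxed feasibility. Positive definiteness of $M$ is separated by the smallest-eigenvalue eigenvector. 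Standard convex-minimization-by-separation then produces, in time polynomial in $T$, $d$, and $\log(R/\beta')$, some $(M^\star, v^\star) \in F_\alpha$ with $\trp{p/2}(M^\star) \le \Gamma_p(K)^2 + \beta'$.

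Setting $v := v^\star$ and $A := \alpha \sqrt{M^\star}$ then gives $K + v \subseteq \alpha \sqrt{M^\star}\,B_2^d = AB_2^d$ and
\[
\sqrt{\trp{p/2}(AA^T)} = \alpha\sqrt{\trp{p/2}(M^\star)} \le \alpha\sqrt{\Gamma_p(K)^2 + \beta'} \le \alpha\,\Gamma_p(K) + \alpha\sqrt{\beta'},
\]
so the choice $\beta' := (\beta/\alpha)^2$ delivers the claimed accuracy. To make the cutting-plane method run in time polynomial in $T, d, \log(R/\beta)$ I would restrict to $\|v\| \le R$ (any optimal shift satisfies this since $K \subseteq R B_2^d$), bound the optimum by $M^\star \preceq O(R^2 d)\,I$, and regularize by $M \succeq \eta I$ for $\eta = \mathrm{poly}(\beta/R)$; the regularization perturbs $\trp{p/2}(M)$ by at most $\eta d^{2/p}$, which is absorbed into $\beta'$, and bounds the condition number of iterates so that each $M^{-1}$ passed to the quadratic oracle has bit-length polynomial in $d$ and $\log(R/\beta)$. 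The main technical obstacle is exactly this bookkeeping — verifying that accepting points of $F_\alpha \setminus F$ is compatible with the convergence guarantees of the cutting-plane method, and that the bit-lengths of the intermediate matrices remain controlled so that every call to the quadratic oracle terminates within its promised time $T$.
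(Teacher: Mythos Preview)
Your proposal is correct and takes essentially the same approach as the paper: formulate $\Gamma_p(K)^2$ as the convex program of Lemma~\ref{lm:posdef}, use the quadratic oracle to build an approximate separation oracle for the feasible set (accepting points of the $\alpha^2$-relaxation while separating points outside $S_K$), run the ellipsoid method, and scale the output by $\alpha$. The only superficial differences are that the paper invokes Ando's joint-convexity result instead of the Schur complement, and organizes the ellipsoid method as a binary search over objective sublevel sets rather than direct minimization with a weak separation oracle; the bookkeeping you flag (boundedness of $v$, regularization $M \succeq \eta I$, bit-length control) is exactly what the paper's appendix fills in.
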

\begin{proof}
  Let $W$ be the affine span of $K$.
  We write \(\Gamma_p(K)\) as the optimal value of a convex
  optimization problem as
  \begin{align}
    \Gamma_p(K)^2 = &\min \trp{p/2}(M)\label{eq:cp-obj}\\
                    &\text{s.t.}\notag\\
                    &(x + v)^T M^{-1} (x+v) \le 1 \ \ \forall x \in K,\label{eq:cp-contain}\\
                    &M \succ 0, v \in RB_2^d.\label{eq:cp-psd}
  \end{align}
  This identity follows from \eqref{eq:inv} in Lemma~\ref{lm:posdef}
  since \(-\clconv K \subseteq RB_2^d\) follows from the assumption
  that \(K \subseteq RB_2^d\) and the fact that \(RB_2^d\) is closed
  and convex. Moreover, Lemma~\ref{lm:centering} shows that when \(K =
  -K\) we can fix \(v:= 0\).

  Let us verify that the optimization problem
  \eqref{eq:cp-obj}--\eqref{eq:cp-psd} is convex. The objective
  \(\trp{p/2}(M)\) is convex on positive semidefinite matrices \(M\)
  since, as we already observed, it equals the \(\ell_{p/2}\) norm of
  the diagonal entries of \(M\). For each \(x\), the constraint \((x +
  v)^T M^{-1} (x+v) \le 1\) is jointly convex in \(M\) and \(v\) by Theorem~1 of
  \cite{Ando79} (see also Section 3.1 of \cite{Carlen10}). Finally,
  the constraints \(M \succ 0\) and \(v \in R B_2^d\) are clearly
  convex.

  To finish the proof, we claim that, under the assumptions of the
  theorem, \eqref{eq:cp-obj}--\eqref{eq:cp-psd} can be solved
  approximately using the ellipsoid method. The details of the
  argument are deferred to Appendix~\ref{app:opt}.
\end{proof}

To complete this subsection, we would like to identify cases when
\(K\) has an efficient approximate quadratic oracle. Clearly, if \(K\) is
finite, we have such an oracle with running time polynomial in the
size of \(K\). This observation can be easily extended to polytopes in
vertex representation. Using Grothendieck's inequality and its
extensions, we can also show there exists approximate quadratic
oracles for the \(\ell_p\) balls for \(p \in [2,\infty]\) and their
affine images. 

The next lemma shows a few equivalent conditions for the existence of
an approximate quadratic oracle. These conditions can be useful when
implementing such an oracle for some \(K\).

\begin{lemma}\label{lm:oracle-equiv}
  The following are all equivalent:
  \begin{itemize}
  \item \(K\) has an \(\alpha\)-approximate quadratic
    oracle;
    
  \item there exists an
    algorithm that, on input a \(d\times d\)
    matrix \(A\), and a vector \(v \in \R^d\), outputs
    some \(\tilde{x} \in K\) such that
    \begin{equation}\label{eq:opnorm-approx}
      \|A(\tilde{x}+v)\|_2 \ge \frac{1}{{\alpha}}
      \sup_{x \in K}\|A({x}+v)\|_2. 
    \end{equation}
    
  \item there exists an
    algorithm that, on input a positive semidefinite \(d\times d\)
    matrix \(M\), and a vector \(v \in \R^d\), outputs
    some \(\tilde{x},\tilde{y}
    \in K\) such that
    \begin{equation}
      \label{eq:bilin-approx}
      (\tilde{x}+v)^TM(\tilde{y}+v) \ge \frac{1}{\alpha^2} \sup_{x,y \in K}({x}+v)^TM({y}+v).
    \end{equation}
  \end{itemize}
  If \(K = -K\), then the conditions only need to hold for \(v =0\).
  If one of these conditions is satisfied by an oracle with running
  time \(T\), then the other conditions are satisfied by an oracle
  with running time \(T'\) which is polynomial in \(T,d \)
  and in the bit complexity of, respectively, \(A\) or \(M\).

  Finally, if \(K\) has an \(\alpha\)-approximate quadratic
  oracle with running time \(T\), then, for any full rank matrix \(F\), the
  linear image \(FK\) has an \(\alpha\)-approximate quadratic
  oracle with running time \(T'\), where \(T'\) is polynomial in \(T, d\)
  and in the bit complexity of \(F\) and \(M\).
\end{lemma}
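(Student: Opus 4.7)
The plan is to show the three characterizations are equivalent by pairwise reductions and then derive the linear-image claim by a change of variables; each reduction should add only polynomial overhead in $d$ and in the bit lengths of the inputs.

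For the equivalence (a)$\Leftrightarrow$(b), I would use that any PSD matrix $M$ admits a factorization $M = A^T A$ computable to polynomial precision (e.g.\ via Cholesky, or via a spectral square root), and then $(x+v)^T M (x+v) = \|A(x+v)\|_2^2$. Thus an oracle for either quantity converts into an oracle for the other with identical approximation factor, after a routine algebraic manipulation: given $M$ use $A := \sqrt{M}$, and given $A$ use $M := A^T A$.

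For (a)$\Leftrightarrow$(c), the implication (a)$\Rightarrow$(c) is immediate: by Cauchy--Schwarz in the semidefinite inner product, $\sup_{x,y\in K}(x+v)^T M (y+v) = \sup_{x\in K}(x+v)^T M (x+v)$, so returning $\tilde{y}:=\tilde{x}$ from the quadratic oracle certifies the bilinear bound with the same factor $\alpha^2$. For (c)$\Rightarrow$(a), given bilinear outputs $\tilde{x},\tilde{y}$, Cauchy--Schwarz again yields
\[
(\tilde{x}+v)^T M (\tilde{y}+v) \;\leq\; \sqrt{(\tilde{x}+v)^T M (\tilde{x}+v)} \cdot \sqrt{(\tilde{y}+v)^T M (\tilde{y}+v)},
\]
so the larger of the two quadratic values at $\tilde{x}$ and at $\tilde{y}$ is at least $(\tilde{x}+v)^T M (\tilde{y}+v) \geq \alpha^{-2}\sup_{x\in K}(x+v)^T M (x+v)$; I would return whichever of $\tilde{x},\tilde{y}$ attains the larger quadratic value.

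For the linear image $FK$ with $F$ full rank, I would observe that every $y\in FK$ has the form $y=Fx$ with $x\in K$, and
\[
(Fx+u)^T M (Fx+u) \;=\; (x + F^{-1}u)^T (F^T M F)(x + F^{-1}u).
\]
Hence on input $(M,u)$ for $FK$ I would invoke the oracle for $K$ with PSD matrix $F^T M F$ and shift $F^{-1}u$ to get $\tilde{x}\in K$, then return $F\tilde{x}\in FK$; when $K=-K$ we have $FK=-FK$ and the constraint $u=0$ forces $F^{-1}u=0$, so the symmetric case goes through unchanged. The only real technical nuisance (rather than an obstacle) is numerical: computing $F^{-1}$, $F^T M F$, and matrix square roots introduces roundoff, which can be controlled by working with polynomially many bits of precision and absorbing a $1+o(1)$ factor into $\alpha$.
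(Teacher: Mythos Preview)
Your proposal is correct and follows essentially the same approach as the paper: both use the identity $(x+v)^T M (x+v) = \|\sqrt{M}(x+v)\|_2^2$ for the first equivalence, Cauchy--Schwarz plus the observation that the bilinear and quadratic suprema coincide for the second, and the change of variables $(M,v)\mapsto (F^T M F, F^{-1}v)$ for the linear-image claim. Your explicit remark about controlling roundoff in square roots and inverses with polynomial precision is a detail the paper leaves implicit but is otherwise in line with its treatment.
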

\begin{proof}
  The equivalence between \eqref{eq:quadr-approx} and \eqref{eq:opnorm-approx} is immediate from the
  assumption that \(M\) is positive semidefinite, since that
  assumption implies that, for any \(x \in K\), 
  \[
    (x+v)^T M (x+v) = \ip{\sqrt{M}(x+v)}{\sqrt{M}(x+v)} =
    \|\sqrt{M}(x+v)\|_2^2.
  \]
  To prove equivalence between \eqref{eq:quadr-approx} and \eqref{eq:bilin-approx}
  note that, for any \(x,y \in K\), by the Cauchy-Schwarz
  inequality we have
  \begin{align*}
    (x+v)^T M(y+v) &= \ip{\sqrt{M}(x+v)}{\sqrt{M}(y+v)}\\
    &\le \|\sqrt{M}(x+v)\|_2\|\sqrt{M}(x+v)\|_2\\
    &\le \max\{({x}+v)^TM({x}+v), ({y}+v)^TM({y}+v)\},
  \end{align*}
  where, in the last inequality we used that the geometric mean of two non-negative
  numbers is no bigger than their maximum. Therefore, the right hand
  sides of \eqref{eq:quadr-approx} and \eqref{eq:bilin-approx} are
  equal, and if \(\tilde{x}\) satisfies \eqref{eq:quadr-approx}, then
  \(\tilde{x}, \tilde{y}:=\tilde{x}\) satisfy
  \eqref{eq:bilin-approx}. In the other direction, if \(\tilde{x}\) and \(\tilde{y}\) that satisfy
  \eqref{eq:bilin-approx}, then \(\arg
  \max\{(\tilde{x}+v)^TM(\tilde{x}+v), (\tilde{y}+v)^TM(\tilde{y}+v)\}\) satisfies
  \eqref{eq:quadr-approx}. The running time guarantee for the
  equivalences is apparent from the proof.

  To show the final claim, observe that we can implement an oracle for
  \(FK\) by calling, on input \(M\) and \(v\), the oracle for \(K\)
  with input \(M:= F^T M F\) and any \(v'\) such that \(Fv' =
  v\). This is because, for any \(x \in FK\),
  \(
  (x+v)^T M (x+v) = (x' + v')^T F^T M F^T (x' + v')
  \)
  for some \(x' \in K\) such that \(Fx' = x\). 
\end{proof}

Using Theorem~\ref{thm:opt} and plugging in known results for
quadratic optimization over convex sets, we are able to show that
\(\Gamma_p(K)\) can be efficiently approximated for several rich
classes of bodies \(K\). To see the definitions of these classes, and
details about oracle implementations, refer to the papers cited in the
proof of the next theorem.

\begin{theorem}\label{thm:cases}
  Let \(p \in [2,\infty]\), and let \(\beta > 0\) be given
  constants. Suppose that \(K \subseteq RB_2^d\)  for some \(R > 0\)
  and that \(\clconv K\) contains a Euclidean ball of radius \(r > 0\) for
  some \(r < R\). Suppose also that we are given a membership oracle for \(K\) running in
  time \(T\), and \(K\) belongs to one of the
  following classes of sets in \(\R^d\):
  \begin{enumerate}
  \item finite sets \(K\), and polytopes \(K\) given in vertex
    representation, where \(T\) is at least, respectively, \(|K|\) or
    the number of vertices of \(K\);\label{it:finite}

  \item the \(\ell_p\) balls \(B_p^d := \{x \in \R^d: \|x\|_p \le
    1\}\) for \(p \in [2,\infty]\); \label{it:ellp}

  \item the unit ball of a 2-convex sign-invariant norm;\label{it:signs}

  \item the unit ball of a norm symmetric under coordinate
    permutations, provided that the dual of the norm has bounded
    cotype-2 constant; \label{it:sym}

  \item the operator norm (also known as the Schatten-\(\infty\) norm)
    unit ball $B_{op}^d$, consisting of \(d \times d\) matrices whose
    largest singular value is bounded by  \(1\); \label{it:noncomm}

  \item the unit ball of a norm on matrices that's invariant under
    unitary transformations, provided that the dual of the norm has
    bounded cotype-2 constant.\label{it:unitary}
    
  \end{enumerate}
  Then, there exists a constant \(\alpha\ge 1\) such that, in time
  polynomial in \(T, d, \log\left(\frac{R}{r\beta}\right)\), we can
  compute a matrix \(A\) and a vector \(v\in \R^d\) for which \(K + v \subseteq AB_2^d\), and
  \[
    \sqrt{\trp{p/2}(AA^T)} \le  (\alpha + \beta)\Gamma_p(K).
  \]
  The constant \(\alpha\) equals \(1\) in case~\ref{it:finite}, and may depend on the cotype constant in cases~\ref{it:sym}~and~\ref{it:unitary}, but is otherwise an
  absolute constant.  
  
  The same guarantee also holds if we replace \(K\) with \(FK\) for
  any matrix \(F\), with the running time now also polynomial in the
  bit complexity of \(F\). 
\end{theorem}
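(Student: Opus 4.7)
The plan is to reduce Theorem~\ref{thm:cases} to Theorem~\ref{thm:opt} by producing an $\alpha$-approximate quadratic oracle, in the sense of Definition~\ref{defn:oracle}, for each family of sets. The first step is to upgrade the additive error bound $\alpha\Gamma_p(K)+\beta$ of Theorem~\ref{thm:opt} to the relative bound $(\alpha+\beta)\Gamma_p(K)$ required here. Using monotonicity of $\Gamma_p$ (Theorem~\ref{thm:triangle}), Lemma~\ref{lm:Gammap-ellipsoid} applied to $A=rI$, and the hypothesis $r B_2^d \shinclu \clconv K$, we get $\Gamma_p(K)\ge r\,d^{1/p}\ge r$. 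Hence calling Theorem~\ref{thm:opt} with precision parameter $\beta r$ yields a matrix $A$ with $\sqrt{\trp{p/2}(AA^T)} \le \alpha\Gamma_p(K)+\beta r \le (\alpha+\beta)\Gamma_p(K)$, at the cost of only a $\log(1/r)$ factor in the running time, which fits the stated $\log(R/(r\beta))$ bound.

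For case~\ref{it:finite} the oracle is trivial: enumerate the vertices and evaluate $(x+v)^T M (x+v)$ on each, achieving $\alpha=1$ exactly, since a PSD quadratic form is convex and attains its maximum over a polytope at a vertex. For cases~\ref{it:ellp}--\ref{it:unitary} the sets are centrally symmetric around $0$, so by Definition~\ref{defn:oracle} an oracle is needed only for $v=0$, and by Lemma~\ref{lm:oracle-equiv} it suffices to approximate $\max_{x\in K}\|Ax\|_2$ within a constant factor for any given matrix $A$. This is precisely the quadratic (equivalently, operator-norm) optimization problem treated by Bhattiprolu, Lee, and Naor~\cite{BLN21}, who give polynomial-time constant-factor approximation algorithms from a membership oracle. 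Their results cover, in order: the $\ell_p$ balls for $p \in [2,\infty]$, with $p=\infty$ reducing to the classical Grothendieck inequality~\cite{grothendieck,AlonN04}; the unit balls of $2$-convex sign-invariant norms; of symmetric norms whose dual has bounded cotype-2 constant; the Schatten-$\infty$ ball; and its generalization to unitarily invariant matrix norms with cotype-2 dual. The constant $\alpha$ is absolute in all cases except~\ref{it:sym} and~\ref{it:unitary}, where it is controlled by the cotype-2 constant.

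The extension to an affine image $FK$ requires no extra work: the last clause of Lemma~\ref{lm:oracle-equiv} builds an oracle for $FK$ from the given oracle for $K$ by the substitution $M\mapsto F^T M F$ and $v\mapsto v'$ for any preimage $Fv'=v$, with only polynomial overhead in the bit complexity of $F$. The main obstacle, conceptually, lies in invoking the nontrivial functional-analytic machinery behind the~\cite{BLN21} algorithms in cases~\ref{it:sym} and~\ref{it:unitary}, namely Maurey--Pisier factorization and cotype-2 estimates; but since these are imported as a black box, the present proof amounts to checking that the hypotheses of Theorem~\ref{thm:opt} are met for each family and that the running time and approximation ratio match the claim.
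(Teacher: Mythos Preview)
Your proposal is correct and follows essentially the same approach as the paper's proof: reduce to Theorem~\ref{thm:opt} by exhibiting an $\alpha$-approximate quadratic oracle for each class, then convert the additive $\beta$ to a multiplicative one via the lower bound $\Gamma_p(K)\ge r\,d^{1/p}$ coming from $rB_2^d\shinclu\clconv K$. The paper gives slightly more granular citations for the individual oracles (Krivine for $\ell_p$ balls, Pisier and Naor--Regev--Vidick for the operator-norm ball) where you cite~\cite{BLN21} uniformly, but the structure and content of the argument are the same.
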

\begin{proof}
  All cases of the theorem follow from Theorem~\ref{thm:opt} after
  showing that there exists an appropriate quadratic oracle. We argue
  that such oracles exist next.
  
  In the case of finite \(K\), we can implement an exact quadratic oracle oracle by simply
  enumerating over the elements of \(K\). Since, \((x+v)^T M (x+v)\)
  is convex in \(x\) for \(M \succeq 0\), we see that for any polytope
  \(K\), \(\max_{x \in K}({x}+v)^TM({x}+v)\) is achieved at a
  vertex. Therefore, we can reduce the case of a polytope in vertex
  representation to the finite case. This establishes the case
  \ref{it:finite}.

  The other cases follow by using either formulation
  \eqref{eq:opnorm-approx} or \eqref{eq:bilin-approx} of the quadratic
  oracle, and known approximation algorithms for quadratic
  maximization. The relevant results are as follows.
  \begin{itemize}
  \item The \(\ell_\infty\) ball case \([-1,+1]^d\) is implied by
    Grothendieck's inequality~\cite{grothendieck}, whose algorithmic version was first
    given by Alon and Naor~\cite{AlonN04}. Here we only need the inequality for
    positive semidefinite \(M\), and an algorithmic version of this
    easier case was given by Nesterov~\cite{Nesterov98}.

  \item Krivine observed that the case
    of other \(\ell_p\) balls for \(p \ge 2\) follows from the
    \(\ell_\infty\) case~\cite{Krivine74}. A sketch of the simple argument is given in
    the survey by Khot and Naor~\cite{KhotNaor12}. In fact, the same argument of
    Krivine also works
    for the sign-invariant 2-convex norms, i.e.,
    case~\ref{it:signs}, as explained in~\cite{BLN21}. This was also
    discovered by Nesterov with a different, algorithmic
    proof~\cite{Nesterov98}.  

  \item The operator ball case is implied by the non-commutative
    Grothendieck inequality, first proved by Pisier~\cite{Pisier-noncomm}. An algorithmic
    version was given by Naor, Regev, and Vidick~\cite{NRV-noncomm}.

  \item For the cases \ref{it:signs}, \ref{it:sym}, and
    \ref{it:unitary}, see Section 7 of~\cite{BLN21}.
  \end{itemize}

  The statement about linear images \(FK\) of \(K\) holds by
  Theorem~\ref{thm:opt}, Lemma~\ref{lm:oracle-equiv}, and the oracles
  described above.

  Note that Theorem~\ref{thm:opt} only gives an additive approximation
  \(\beta\), rather than a multiplicative one. Nevertheless, since we
  assumed that \(\clconv K\) contains a Euclidean ball of radius
  \(r\), by
  Lemma~\ref{lm:Gammap-ellipsoid} and Theorem~\ref{thm:triangle}, we
  have
  \[
    \Gamma_p(K) = \Gamma_p(\clconv K)) \ge \Gamma_p(rB_2^d) = rd^{1/p}.
  \]
  Therefore, an additive approximation of \(\beta\) implies a
  multiplicative approximation of \(\frac{\beta}{rd^{1/p}}\). 
\end{proof}

\section{High-dimensional Lower Bound from One-dimensional Marginals}
\label{sect:high2low}

In this section we give a framework for deriving lower bounds on the
\(\ell_p\) error of an unbiased mean estimation mechanism from lower
bounds on the variance of its one-dimensional marginals. This
framework is essentially the same as the one proposed
in~\cite{factormech} for oblivious mechanisms, with some small
improvements. In particular, here we generalize the framework
in~\cite{factormech} to not necessarily symmetric domains, and to
error measured in the \(\ell_p\) norm for \(p \in [2,\infty]\). We
also give slightly different, easier proofs of some of the main
claims.

In the following, we use the notation $\cov(\mech(X))$ for the
covariance matrix of the output distribution of the mechanism $\mech$
on input dataset $X$.
The next lemma gives a lower bound on the \(\ell_p\) error in terms of
a function of the covariance matrix. 
\begin{lemma}\label{lm:err-trace}
  For any \(p \in [2,\infty]\), and any unbiased mechanism \(\mech\)
  over \(\cset \subseteq \R^d\), and any input dataset \(X \in K^n\),
  we have
  $$\E \left[{\norm{\mech (X)-{\mu(X)}}_p^2}\right]^{1/2}\geq \sqrt{\trp{p/2}(\cov(\mech(X)))}$$
\end{lemma}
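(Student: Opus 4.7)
My plan is to prove the inequality by rewriting $\|\mech(X)-\mu(X)\|_p^2$ as the $\ell_{p/2}$ norm of a vector of squared coordinates, then applying Jensen's inequality to the convex function given by this norm.

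First, I would set $Z := \mech(X) - \mu(X)$. Since $\mech$ is unbiased over $K$, we have $\E[Z] = 0$, and hence the covariance matrix satisfies $\cov(\mech(X)) = \E[ZZ^T]$; in particular its diagonal entries are $\cov(\mech(X))_{ii} = \E[Z_i^2]$. Therefore
\[
    \trp{p/2}(\cov(\mech(X))) = \left(\sum_{i=1}^d \E[Z_i^2]^{p/2}\right)^{2/p} = \bigl\|(\E[Z_1^2],\ldots,\E[Z_d^2])\bigr\|_{p/2},
\]
with the obvious modification as a maximum when $p = \infty$.

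Next, I would observe the identity
\[
    \|Z\|_p^2 = \left(\sum_{i=1}^d |Z_i|^p\right)^{2/p} = \bigl\|(Z_1^2,\ldots,Z_d^2)\bigr\|_{p/2},
\]
valid for every $p \in [2,\infty]$ (again reading $\|\cdot\|_{\infty}$ as the coordinate-wise maximum when $p=\infty$). The key point is that for $p \ge 2$, the quantity $\|\cdot\|_{p/2}$ is a genuine norm on $\R^d$, hence convex.

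Then I would apply Jensen's inequality to the convex function $v \mapsto \|v\|_{p/2}$ with the random vector $(Z_1^2,\ldots,Z_d^2)$ to obtain
\[
    \E\bigl[\|Z\|_p^2\bigr] = \E\bigl\|(Z_1^2,\ldots,Z_d^2)\bigr\|_{p/2} \ge \bigl\|\E[(Z_1^2,\ldots,Z_d^2)]\bigr\|_{p/2} = \trp{p/2}(\cov(\mech(X))).
\]
Taking square roots yields the statement of the lemma. There isn't a real obstacle here: the only subtlety is that one needs $p \ge 2$ for $\|\cdot\|_{p/2}$ to be a norm (and thus convex), which is exactly the hypothesis of the lemma, and the endpoint case $p=\infty$ reduces to the elementary bound $\E[\max_i Z_i^2] \ge \max_i \E[Z_i^2]$.
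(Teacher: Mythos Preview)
Your proof is correct and takes essentially the same approach as the paper: both rewrite $\|\mech(X)-\mu(X)\|_p^2$ as the $\ell_{p/2}$ norm of the vector of squared coordinate errors and apply Jensen's inequality to this convex norm. Your version is slightly more explicit in isolating the role of unbiasedness and in handling the $p=\infty$ endpoint, but the argument is the same.
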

\begin{proof}
  Using Jensen's inequality and the fact that the norm
  \(\|\cdot\|_{p/2}\) is convex for \(p \ge 2\), we have
\begin{align*}
    \E \left[{\norm{\mech (X)-{\mu(X)}}_p^2}\right]&=\E\left[\left(\sum_{i=1}^d{\left(\mech (X)_i-{\mu(X)}_i\right)}^{2\cdot\frac{p}{2}}\right)^{\frac{2}{p}}\right]\\
    &\geq \left(\sum_{i=1}^d{\E\left[{\left(\mech (X)_i-{\mu(X)}_i\right)^2}\right]^{\frac{p}{2}}}\right)^{\frac{2}{p}}\\
    &=\left(\sum_{i=1}^d{\var[\mech (X)_i]^{\frac{p}{2}}}\right)^{\frac{2}{p}}\\
    &=\trp{p/2}(\cov(\mech(X))).
\end{align*}
Taking the square root on both sides finishes the proof.
\end{proof}

The next lemma is key to our framework. Before we state it, we
introduce notation for the width of a set in a given direction. 
\begin{definition}
  We define the width function \(w_K:\R^d \to \R\) of a set
  \(K\subseteq \R^d\) by
  $$w_K(\theta) := \sup_{x \in K} \ip{x}{\theta} - \inf_{x \in K} \ip{x}{\theta}=
  h_K(\theta)+h_K(-\theta).$$
\end{definition}

\begin{lemma}\label{lm:cov-containment}
  Let \(c > 0\), let $K\subseteq \R^d$, let \(X \in K^n\) be a
  dataset, and let \(\mech\) be an unbiased mechanism over \(K\).
  If, for all
  $\theta\in \R^d$, $\mech$ satisfies that 
  $$\sqrt{\var[\theta^T \mech(X)]}\geq c\ w_K(\theta),$$
  then $K\shinclu \frac1c \sqrt{\cov(\mech(X))}B_2^d$.
\end{lemma}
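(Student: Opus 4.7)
The plan is to argue via support functions. Write $\Sigma := \cov(\mech(X))$ and $L := \frac{1}{c}\sqrt{\Sigma}B_2^d$, so the goal is to produce a vector $v \in \R^d$ with $K + v \subseteq L$. The set $L$ is a closed, convex, centrally symmetric ellipsoid whose support function is $h_L(\theta) = \frac{1}{c}\|\sqrt{\Sigma}\theta\|_2 = \frac{1}{c}\sqrt{\theta^T\Sigma\theta}$, by the calculation of support functions of ellipsoids recorded earlier in Section~\ref{sect:Gammap}. Since $\mech$ is unbiased, $\var[\theta^T\mech(X)] = \theta^T\Sigma\theta$, so the hypothesis of the lemma rewrites cleanly as
\[
  w_K(\theta) \le h_L(\theta) \quad \text{for all } \theta \in \R^d.
\]

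The key observation is that $w_K$ is itself a support function, namely that of the difference body $K - K := \{a - b : a, b \in K\}$:
\[
  h_{K-K}(\theta) = \sup_{a, b \in K} \ip{a-b}{\theta} = h_K(\theta) + h_K(-\theta) = w_K(\theta).
\]
Thus the hypothesis says $h_{K-K}(\theta) \le h_L(\theta)$ for every $\theta$. Because $L$ is closed and convex, the support function characterization of containment recalled in~\eqref{eq:supp-contain} (applied in its version valid for arbitrary $K-K$ on the left and closed convex $L$ on the right) yields $K - K \subseteq L$.

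Finally, the set $K$ is nonempty (since the hypothesis references a dataset $X \in K^n$); pick any $y \in K$ and set $v := -y$. Then $K + v = K - y \subseteq K - K \subseteq L$, which is precisely the statement $K \shinclu \frac{1}{c}\sqrt{\cov(\mech(X))}B_2^d$. No serious obstacle arises: the only conceptual content is recognizing $w_K$ as the support function of the difference body, after which the argument reduces to a one-line application of the support-function containment principle, together with the trivial centering step $v = -y$.
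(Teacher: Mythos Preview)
Your proof is correct and follows essentially the same approach as the paper: both compute $h_{(1/c)E}(\theta) = \frac{1}{c}\sqrt{\theta^T\Sigma\theta} = \frac{1}{c}\sqrt{\var[\theta^T\mech(X)]}$, bound $h_{K-y}(\theta) \le w_K(\theta)$ for any $y \in K$, and conclude via \eqref{eq:supp-contain}. Your phrasing via the difference body $K-K$ (with $K - y \subseteq K - K$) is a clean way to package the same step; one small remark is that the identity $\var[\theta^T\mech(X)] = \theta^T\Sigma\theta$ holds for any random vector and does not actually require unbiasedness.
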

\begin{proof}
  Let us denote \(E := \sqrt{\cov(\mech(X))}B_2^d\). Then, for each
  \(\theta \in \R^d\), we have
  \begin{align*}
    h_E(\theta) &= \norm{\sqrt{\cov(\mech(X))} \theta}_2
    = \sqrt{\theta^T \cov(\mech(X))\theta}
    = \sqrt{\var[\theta^T \mech(X)]}.
  \end{align*}
  Together with the assumption of the lemma, this means that
  \(
  h_E(\theta) \ge c\ w_K(\theta)
  \)
  for all \(\theta \in \R^d\). Let then \(v \in K\) be arbitrary, and
  note that
  \[
    h_{K-v}(\theta) = h_K(\theta) - \ip{v}{\theta}
    = \max_{x \in K}\ip{x}{\theta} - \ip{v}{\theta} \ge 0, 
  \]
  for all \(\theta \in \R^d\). Therefore,
  \[
    h_{K-v}(\theta) \le h_{K-v}(\theta) + h_{K-v}(-\theta)
    = w_{K-v}(\theta) = w_{K}(\theta).
  \]
  Then, for all $\theta \in \R^d$,
  \(
  h_{K-v}(\theta) \le \frac{1}{c} h_E(\theta) = h_{(1/c)E}(\theta)
  \)
  which is equivalent to \(K-v \subseteq \frac1c E\).
\end{proof}

Combining the two lemmas, we have the following lemma that allows us
to reduce proving our lower bounds to proving one-dimensional lower
bounds on variance.

\begin{lemma}\label{lm:1-to-d}
  Let \(c > 0\), let $K\subseteq \R^d$, let \(X \in K^n\) be a
  dataset, and let \(\mech\) be an unbiased mechanism over \(K\).
  If, for all $\theta\in \R^d$, $\mech$ satisfies that 
  $$\sqrt{\var[\theta^T \mech(X)]}\geq c\ w_K(\theta),$$
  then we have that
  \[
    {\E\left[\|\mech(X) - \mu(X)\|_p^2\right]^{1/2}}
    \ge c\ \Gamma_p(K). 
  \]
\end{lemma}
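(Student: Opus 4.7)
The plan is to derive Lemma~\ref{lm:1-to-d} by directly chaining the two preceding lemmas in the section. The hypothesis on the directional variances is exactly the hypothesis of Lemma~\ref{lm:cov-containment}, so applying that lemma first yields
\[
K \shinclu \tfrac{1}{c}\sqrt{\cov(\mech(X))}\,B_2^d.
\]
This inclusion is what I need to feed into the definition of $\Gamma_p(K)$.

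Next, I would interpret this containment as exhibiting a feasible matrix for the infimum defining $\Gamma_p(K)$. Taking $A := \frac{1}{c}\sqrt{\cov(\mech(X))}$ gives $AA^T = \frac{1}{c^2}\cov(\mech(X))$, and by the homogeneity of $\trp{p/2}$ recorded in Lemma~\ref{lm:trp-norm} (the $\ell_{p/2}$ norm of the diagonal scales by the scalar), this yields
\[
\Gamma_p(K) \le \sqrt{\trp{p/2}(AA^T)} = \tfrac{1}{c}\sqrt{\trp{p/2}(\cov(\mech(X)))}.
\]

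Finally, I would invoke Lemma~\ref{lm:err-trace}, which bounds
\[
\E\bigl[\|\mech(X) - \mu(X)\|_p^2\bigr]^{1/2} \ge \sqrt{\trp{p/2}(\cov(\mech(X)))}.
\]
Rearranging the previous inequality to $\sqrt{\trp{p/2}(\cov(\mech(X)))} \ge c\,\Gamma_p(K)$ and composing the two bounds delivers $\E[\|\mech(X)-\mu(X)\|_p^2]^{1/2} \ge c\,\Gamma_p(K)$, completing the argument.

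There is no genuine obstacle here, since both ingredients have already been established; the lemma is essentially a bookkeeping step that converts a one-dimensional variance lower bound (the hypothesis on $\theta^T\mech(X)$) into a $d$-dimensional error lower bound expressed in the geometric language of $\Gamma_p(K)$. The only point worth double-checking is the scalar homogeneity of $\trp{p/2}$, which is immediate from its description as an $\ell_{p/2}$ norm of diagonal entries of a positive semidefinite matrix.
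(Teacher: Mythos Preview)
Your proposal is correct and follows essentially the same approach as the paper: apply Lemma~\ref{lm:cov-containment} to get $K \shinclu \tfrac{1}{c}\sqrt{\cov(\mech(X))}\,B_2^d$, use this as a feasible $A$ in the definition of $\Gamma_p(K)$ to obtain $\Gamma_p(K)\le \tfrac{1}{c}\sqrt{\trp{p/2}(\cov(\mech(X)))}$, and then invoke Lemma~\ref{lm:err-trace}. The paper's proof is just a more compressed version of the same two-step chain.
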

\begin{proof}
  By Lemma~\ref{lm:cov-containment}, and the definition of
  \(\Gamma_p(\cdot)\), we have
  \[
  \Gamma_p(K) \le \frac1c \sqrt{\trp{p/2}(\cov(\mech(X)))}.
  \]
  On the other hand, by Lemma~\ref{lm:err-trace},
  \[
    \sqrt{\trp{p/2}(\cov(\mech(X)))} \le \E \left[{\norm{\mech (X)-{\mu(X)}}_p^2}\right]^{1/2}.
  \]
  Combining the two inequalities and multiplying through by \(c\)
  gives the lemma.
\end{proof}

\section{Lower Bound for Pure and Concentrated Differential Privacy}

We first show that for any dataset \(X\) we can find a neighboring
dataset \(X'\) so \(\mu(X)\) and \(\mu(X')\) are far in a given
direction. The following simple geometric lemma is helpful for that goal.

\begin{lemma}\label{lm:farpoints}
  For any $\beta > 0$, any bounded \(K\subseteq \R^d\), any \(x \in K\), and any
  \(\theta \in \R^d\), there exists a point \(x' \in K\) such
  that
  \(
  |\ip{\theta}{x- x'}| \ge \frac{(1-\beta)w_K(\theta)}{2}.
  \)
\end{lemma}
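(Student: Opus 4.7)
The plan is to reduce to a one-dimensional problem by projecting onto the direction $\theta$. Set $M := h_K(\theta) = \sup_{y\in K}\ip{y}{\theta}$ and $m := -h_K(-\theta) = \inf_{y\in K}\ip{y}{\theta}$, so that $w_K(\theta) = M - m$. Since $x \in K$, the scalar $t := \ip{x}{\theta}$ lies in $[m,M]$, and the point $x'$ we are looking for is one whose projection $\ip{x'}{\theta}$ is either close to $M$ or close to $m$, whichever is farther from $t$.

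More precisely, I would split into two cases according to whether $t \le (m+M)/2$ or $t > (m+M)/2$. In the first case, I invoke the definition of supremum to choose $x' \in K$ with $\ip{x'}{\theta} \ge M - \tfrac{\beta}{2}(M-m)$; subtracting $t$ gives $\ip{x'-x}{\theta} \ge \tfrac{M-m}{2} - \tfrac{\beta}{2}(M-m) = \tfrac{1-\beta}{2}w_K(\theta)$. In the second case, I symmetrically use the definition of infimum to pick $x' \in K$ with $\ip{x'}{\theta} \le m + \tfrac{\beta}{2}(M-m)$, yielding $\ip{x-x'}{\theta} \ge \tfrac{1-\beta}{2}w_K(\theta)$. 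Either way, taking absolute value gives the claimed bound.

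One small issue to handle is the degenerate case $w_K(\theta)=0$, in which case any $x' \in K$ (e.g., $x' = x$) trivially works, so the statement is vacuous. Also, because $K$ is only assumed bounded (not closed), the sup and inf may not be attained, but the approximate versions with slack $\tfrac{\beta}{2}(M-m)$ are exactly what the factor $(1-\beta)$ is there to absorb. There is no real obstacle here — the lemma is a clean one-dimensional fact about intervals, and the only thing to watch is that the slack is distributed correctly so that the gap is at least $\tfrac{(1-\beta)w_K(\theta)}{2}$ in the worst case $t = (m+M)/2$.
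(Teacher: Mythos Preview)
Your proposal is correct and follows essentially the same approach as the paper: pick points near the extremes of $\ip{\cdot}{\theta}$ on $K$ and argue that at least one of them is far from $x$. The only cosmetic difference is that the paper selects both approximate extremizers $x_+,x_-$ and uses $|\ip{\theta}{x-x_+}|+|\ip{\theta}{x-x_-}|\ge \ip{\theta}{x_+-x_-}\ge(1-\beta)w_K(\theta)$ to conclude one of the two works, whereas you case-split on which side of the midpoint $t$ lies and pick the appropriate extremizer directly; the content is the same.
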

\begin{proof}
  Let $x_{+}$ be such that $\ip{\theta}{x_+} \ge 
  h_K(\theta) - \frac{\beta w_K(\theta)}{2}$ and let $x_{-}$ be such that $\ip{-\theta}{x_+}
  \ge h_K(-\theta) - \frac{\beta w_K(\theta)}{2}$. 
  Thus,
  \[
    |\ip{\theta}{x-x_+}| + |\ip{\theta}{x-x_-}| \ge \ip{\theta}{x_+ -
      x_-} \ge  (1-\beta)w_K(\theta)
  \]
  Then, it must be true that either $$|{\ip{\theta}{x-x_{+}}}|\geq
\frac{(1-\beta) w_K(\theta)}{2},$$ or $$|\ip{\theta}{x-x_{-}}|\geq
\frac{(1-\beta) w_K(\theta)}{2},$$ and we can choose \(x' \in \{x_+, x_-\}\) accordingly.
\end{proof}

Next we use Lemma~\ref{lm:farpoints} to construct a neighboring
dataset \(X'\) for any dataset \(X\) so that the means of \(X\) and
\(X'\) are far in the direction of \(\theta\). 
\begin{lemma}\label{lm:neighboring}
    For any \(\beta > 0\), any bounded \(K \subseteq \R^d\), any
    \(\theta \in \R^d\), and any dataset $X \in K^n$, there exists a neighboring dataset $X'$ such that $$|\ip{\theta}{\mu(X)-\mu(X')}|\geq \frac{(1-\beta)w_K(\theta)}{2n}$$.
\end{lemma}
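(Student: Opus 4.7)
The plan is to reduce the statement to a single application of Lemma~\ref{lm:farpoints}, exploiting the fact that neighboring datasets differ in exactly one entry and that changing one entry shifts the mean by exactly $\frac{1}{n}$ times the change in that entry.

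Concretely, I would fix any index $i \in [n]$ (say $i = 1$) and let $x := x_i$ be the corresponding data point of $X$. Applying Lemma~\ref{lm:farpoints} with this choice of $x$ and with the given direction $\theta$ produces a point $x' \in K$ such that
\[
  |\ip{\theta}{x - x'}| \ge \frac{(1-\beta)\,w_K(\theta)}{2}.
\]
I would then define $X'$ to be the dataset obtained from $X$ by replacing $x_i$ with $x'$, leaving all other entries unchanged. By construction $X' \in K^n$ and $X'$ is neighboring to $X$ (they differ in a single entry).

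Finally, since $\mu(X) - \mu(X') = \frac{1}{n}(x - x')$, linearity of the inner product gives
\[
  |\ip{\theta}{\mu(X) - \mu(X')}| = \frac{1}{n} |\ip{\theta}{x - x'}| \ge \frac{(1-\beta)\,w_K(\theta)}{2n},
\]
as desired. There is no real obstacle here; the lemma is essentially a repackaging of Lemma~\ref{lm:farpoints} together with the observation that a single-coordinate change in a dataset of size $n$ scales differences of means by a factor of $\frac{1}{n}$.
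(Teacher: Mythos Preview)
Your proposal is correct and follows essentially the same approach as the paper: both fix the first data point, invoke Lemma~\ref{lm:farpoints} to obtain a replacement point $x'$, form $X'$ by swapping that single entry, and then use $\mu(X)-\mu(X') = \tfrac{1}{n}(x_1 - x')$ to conclude.
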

\begin{proof}
For any given $X$, we change only the first data point to
construct $X'$. Let $x_1\in K$ be the first data point of $X$, and,
take \(x'_1\) to be the point \(x'\) guaranteed by
Lemma~\ref{lm:farpoints} used with \(x := x_1\). Then we set \(X'
:= (x'_1, x_2 \ldots, x_n)\). 
We have
\begin{align*}
    |\ip{\theta}{\mu(X)-\mu(X')}|&=|{(\mu(\theta^TX)-\mu(\theta^TX'))}|
  =\left|\frac{1}{n}\ip{\theta}{x_1-x'_1}\right|
  \geq \frac{(1-\beta)w_K(\theta)}{2n},
\end{align*}
where we use the notation \(\theta^T X := (\theta^T x_1, \ldots,
\theta^T x_n)\), and, similarly, \(\theta^T X' := (\theta^T x'_1, \ldots,
\theta^T x_n)\). This completes the proof.
\end{proof}

Recall that, for two probability distributions \(P\) and \(Q\), defined on the same
ground set, the \(\chi^2\)-divergence between them is defined by
\[
\Chisq(P\|Q) := \E_{X \sim Q}\Braket{\braket{\frac{dP}{dQ}(X) - 1}^2}.
\]
The following lemma, bounding the \(\chi^2\)-divergence between the
output distributions of an \(\eps\)-differentially private mechanism
run on two neighboring datasets, is likely well-known. We include a brief proof
sketch, as we were unable to find a reference for the tight bound.
\begin{lemma}\label{lm:eps-to-chi2}
  Suppose that \(\mech\) is an \(\eps\)-differentially private
  mechanism, and \(X, X'\) are two neighboring datasets. Let \(P\) be
  the probability distribution of \(\mech(X)\), and \(Q\) the
  probability distribution of \(\mech(X')\). Then $\Chisq(P\|Q) \leq
  e^{-\eps}(e^\eps-1)^2$. 
\end{lemma}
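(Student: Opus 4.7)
The plan is to reduce this to a one-line computation about bounded random variables with known mean. Let $R = \frac{dP}{dQ}$ viewed as a random variable on the probability space of $Q$. First I would rewrite the $\chi^2$-divergence in the convenient form
\[
  \Chisq(P\|Q) = \E_{Y \sim Q}\left[R(Y)^2\right] - 1,
\]
using the fact that $\E_{Y \sim Q}[R(Y)] = 1$, which holds because $P$ and $Q$ are probability distributions (this is just the usual property of the Radon--Nikodym derivative). So the problem reduces to upper-bounding the second moment of $R$ under $Q$.

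Next I would use the $\eps$-differential privacy assumption to control the range of $R$. Since $\mech$ is pure $\eps$-DP and $X, X'$ are neighbors, the standard density-ratio characterization gives $e^{-\eps} \le R(Y) \le e^{\eps}$ for $Q$-almost every $Y$ (one should be slightly careful here since we defined $\frac{dP}{dQ}$ via the mixture reference distribution, but the bound still holds wherever the denominator is positive, and the set where it vanishes has $Q$-measure zero, so it does not contribute to the expectation).

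The key inequality is then the elementary fact that for any random variable $Z$ with $a \le Z \le b$ almost surely, the nonnegativity of $(Z-a)(b-Z)$ yields $Z^2 \le (a+b)Z - ab$, hence $\E[Z^2] \le (a+b)\E[Z] - ab$. Applying this with $Z = R$, $a = e^{-\eps}$, $b = e^\eps$, and using $\E_Q[R] = 1$, gives
\[
  \E_{Y \sim Q}[R(Y)^2] \le e^\eps + e^{-\eps} - 1.
\]
Subtracting $1$ and factoring yields
\[
  \Chisq(P\|Q) \le e^\eps + e^{-\eps} - 2 = e^{-\eps}(e^\eps - 1)^2,
\]
which is the desired bound. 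There is no real obstacle here beyond making the measure-theoretic setup for $\frac{dP}{dQ}$ precise; the rest is a two-line calculation once one remembers the $(Z-a)(b-Z) \ge 0$ trick, which is the standard way to see that a bounded random variable with fixed mean has maximum variance when it is supported on the endpoints.
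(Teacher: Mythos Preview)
Your proof is correct. The identity $\Chisq(P\|Q)=\E_Q[R^2]-1$ is valid because pure $\eps$-DP forces $P\ll Q$, so the Radon--Nikodym derivative exists and integrates to $1$; the pointwise bound $e^{-\eps}\le R\le e^{\eps}$ holds $Q$-a.e.\ for the same reason; and the $(Z-a)(b-Z)\ge 0$ trick then yields exactly $e^{\eps}+e^{-\eps}-2=e^{-\eps}(e^{\eps}-1)^2$.

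Your argument is genuinely different from the paper's. The paper proceeds by a data-processing reduction: it invokes Blackwell's theorem (via \cite{KOV17}) to show that the output pair $(P,Q)$ is a post-processing of the binary randomized-response pair $(P',Q')$, then applies the data-processing inequality for $\chi^2$-divergence and computes $\Chisq(P'\|Q')$ directly. Your route is more elementary and fully self-contained---it avoids the external Blackwell/KOV machinery and replaces it with a one-line second-moment bound for a bounded random variable with fixed mean. The paper's approach has the conceptual payoff of identifying randomized response as the extremal mechanism (which explains why the bound is tight), whereas your approach gets the same tight constant with less overhead; equality in your inequality occurs precisely when $R$ is supported on $\{e^{-\eps},e^{\eps}\}$, which is exactly the randomized-response case.
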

\begin{proof}
  Let \(P', Q'\) be the probability distributions on \(\{0,1\}\) such that
  \begin{align*}
    &\Pr_{P'}[0] = \Pr_{Q'}[1] = \frac{e^\eps}{1+e^\eps},
    &\Pr_{P'}[1] = \Pr_{Q'}[0] = \frac{1}{1+e^\eps}.
  \end{align*}
  (In other words, \(P',Q'\) are the probability distributions
  defining the randomized response mechanism.)
  Let \(A\) be a random bit distributed according to \(P'\), and \(B\) a
  random bit distributed according to \(Q'\). Then, there exists a
  randomized function (i.e., a Markov kernel) \({F}\) such that \({F}(A)\) is
  distributed identically to \(P\), and \({F}(B)\) is
  distributed identically to \(Q\). This follows from a theorem of
  Blackwell, as observed in~\cite{KOV17}. By the data processing inequality
  for \(\chi^2\)-divergence, this implies
  \(
    \Chisq(P\|Q) \le \Chisq(P'\|Q').
  \)
  It remains to compute the right hand side:
  \[
    \Chisq(P'\|Q') = \frac{1}{1+e^\eps} \cdot (e^\eps - 1)^2 +
    \frac{e^\eps}{1+e^\eps}\cdot (e^{-\eps}-1)^2
    = e^{-\eps}(e^\eps-1)^2.
  \]
  This completes the proof.
\end{proof}

For the our one dimensional lower bounds, we use the classical Hammersley-Chapman-Robins
bound~\cite{H50,CR51}, stated in the next lemma.
\begin{lemma}\label{lm:hcr}
  For any two probability distributions \(P\) and \(Q\) over the
  reals, and for random variables \(X, Y\) distributed, respectively, according to
  \(P\) and \(Q\), we have
  \[
    \sqrt{\var(Y)} \ge \frac{|\E[X] - \E[Y]|}{\sqrt{\Chisq(P\|Q)}}.
  \]
\end{lemma}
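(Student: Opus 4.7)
The plan is to prove the Hammersley-Chapman-Robins bound by a single application of the Cauchy-Schwarz inequality, after writing the mean gap $\E[X]-\E[Y]$ as an integral against the ``excess'' density $\tfrac{dP}{dQ}-1$ with respect to $Q$.

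First, I would dispose of the trivial case: if $P$ is not absolutely continuous with respect to $Q$, then in the paper's convention (with reference measure $R=(P+Q)/2$) there is a positive-$R$-measure set on which $dQ/dR=0$ and $dP/dR>0$, so $\tfrac{dP}{dQ}=\infty$ on that set and $\Chisq(P\|Q)=\infty$; the inequality then holds vacuously. So assume $P\ll Q$, in which case $\tfrac{dP}{dQ}$ is $Q$-integrable and $\E_Q\!\left[\tfrac{dP}{dQ}\right]=1$.

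Second, I would rewrite the mean gap. Using $P\ll Q$,
\begin{equation*}
\E[X]-\E[Y]=\int x\,dP-\int x\,dQ=\int x\!\left(\tfrac{dP}{dQ}-1\right)dQ.
\end{equation*}
Since $\int\!\left(\tfrac{dP}{dQ}-1\right)dQ=0$, I may replace $x$ by $x-\E[Y]$ without altering the integral, obtaining
\begin{equation*}
\E[X]-\E[Y]=\int (x-\E[Y])\!\left(\tfrac{dP}{dQ}(x)-1\right)dQ(x).
\end{equation*}

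Third, apply Cauchy-Schwarz in $L^2(Q)$ to the two factors $(x-\E[Y])$ and $\bigl(\tfrac{dP}{dQ}-1\bigr)$:
\begin{equation*}
|\E[X]-\E[Y]|^2\le \int(x-\E[Y])^2\,dQ\;\cdot\;\int\!\left(\tfrac{dP}{dQ}-1\right)^{\!2} dQ=\var(Y)\cdot\Chisq(P\|Q).
\end{equation*}
Taking square roots and dividing by $\sqrt{\Chisq(P\|Q)}$ (which is positive unless $P=Q$, in which case both sides are zero) yields the claim.

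There is no real obstacle here: the only thing to be careful about is the recentering trick (subtracting $\E[Y]$), which is what tightens the naive Cauchy-Schwarz bound from $\E[X^2]$ to $\var(Y)$, and the degenerate cases where $\Chisq(P\|Q)$ is $0$ or $\infty$, both of which make the inequality trivial.
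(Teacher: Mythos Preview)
Your proof is correct and is the standard Cauchy--Schwarz argument for the HCR bound. Note, however, that the paper does not actually prove this lemma: it is stated as the classical Hammersley--Chapman--Robbins bound with references \cite{H50,CR51} and used as a black box, so there is no ``paper's own proof'' to compare against. Your write-up would serve perfectly well as a self-contained proof if one were desired.
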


We first state our one dimensional lower bound for pure differential privacy.
\begin{lemma}\label{lm:puredp-var}
Let \(K \subseteq \R^d\), let $X \in K^n$ be a
dataset, and \(\mech\) be an unbiased \(\eps\)-differentially private
mechanism over $K$. Then for any $\theta \in \R^d$,  $$\sqrt{\var{[\theta^T \mech(X)]}}\gtrsim\frac{w_K(\theta)}{ne^{-0.5\varepsilon}(e^\varepsilon-1)} $$
\end{lemma}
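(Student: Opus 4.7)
The plan is to assemble the pieces already built: use Lemma~\ref{lm:neighboring} to find a neighbor of $X$ whose mean is far from $\mu(X)$ in direction $\theta$, use unbiasedness to convert this into a gap between the expectations of $\theta^T\mech(X)$ and $\theta^T\mech(X')$, control the $\chi^2$-divergence between these two one-dimensional output distributions via Lemma~\ref{lm:eps-to-chi2}, and finally invoke the Hammersley-Chapman-Robins bound (Lemma~\ref{lm:hcr}) to turn the expectation gap into a variance lower bound.

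Concretely, fix $\theta \in \R^d$ and $\beta > 0$ small. By Lemma~\ref{lm:neighboring}, there is a dataset $X' \in K^n$ neighboring $X$ with
\[
    |\theta^T(\mu(X) - \mu(X'))| \ge \frac{(1-\beta)w_K(\theta)}{2n}.
\]
Since $\mech$ is unbiased, $\E[\theta^T\mech(X)] = \theta^T\mu(X)$ and $\E[\theta^T\mech(X')] = \theta^T\mu(X')$, so the absolute difference of means of the two one-dimensional random variables $\theta^T\mech(X)$ and $\theta^T\mech(X')$ is at least $(1-\beta)w_K(\theta)/(2n)$.

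Next, let $P$ be the distribution of $\theta^T\mech(X')$ and $Q$ the distribution of $\theta^T\mech(X)$. These are post-processings (by $y \mapsto \theta^T y$) of $\mech(X')$ and $\mech(X)$ respectively, so by the data-processing inequality for $\chi^2$ and by Lemma~\ref{lm:eps-to-chi2} applied to the neighboring pair $(X', X)$,
\[
    \Chisq(P\|Q) \le \Chisq(\mech(X')\|\mech(X)) \le e^{-\eps}(e^\eps - 1)^2.
\]
Now Lemma~\ref{lm:hcr} applied with this $P$ and $Q$ yields
\[
    \sqrt{\var[\theta^T\mech(X)]} \ge \frac{|\E[\theta^T\mech(X')] - \E[\theta^T\mech(X)]|}{\sqrt{\Chisq(P\|Q)}} \ge \frac{(1-\beta)w_K(\theta)}{2n\, e^{-\eps/2}(e^\eps - 1)}.
\]
Letting $\beta \to 0$ and absorbing the factor $1/2$ into the $\gtrsim$ gives the claimed bound.

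There is no real obstacle here: the whole argument is just plugging the right pieces together in the right direction. The only subtle choice is selecting $Q$ to be the law of $\theta^T\mech(X)$ (not of $\theta^T\mech(X')$), so that HCR produces the variance on the correct dataset; this is legitimate because Lemma~\ref{lm:eps-to-chi2} applies symmetrically to any ordered pair of neighbors.
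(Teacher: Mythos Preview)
Your proof is correct and follows essentially the same approach as the paper: pick a neighbor $X'$ via Lemma~\ref{lm:neighboring}, use unbiasedness to get an expectation gap, bound $\Chisq(P\|Q)$ via Lemma~\ref{lm:eps-to-chi2}, and apply the HCR bound (Lemma~\ref{lm:hcr}) with $Q$ chosen as the law of $\theta^T\mech(X)$. The only cosmetic difference is that you invoke the data-processing inequality for $\chi^2$ explicitly, whereas the paper applies Lemma~\ref{lm:eps-to-chi2} directly to the post-processed mechanism $\theta^T\mech(\cdot)$; these are equivalent.
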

\begin{proof}
Let $c < 2$, and let $X'$ be any dataset that is neighboring with $X$ and satisfies
\[|{\theta^T({\mu(X)}-f(X'))}|\geq \frac{c\ w_K(\theta)}{n}.\] Such a
dataset exists by Lemma~\ref{lm:neighboring}. Let $Q$ be the
probability distribution of $\theta^T\mech(X)$ and $P$ the probability
distribution $\theta^T\mech(X')$.
By Lemma~\ref{lm:hcr}, we have
\[
  \var{[\theta^T \mech(X)]}\geq
  \frac{(\theta^T\E[M(X)]-\theta^T\E[M(X')])^2}{\Chisq(P\|Q)}.
\]
By Lemma~\ref{lm:eps-to-chi2}, 
\(\Chisq(P\|Q) \le   e^{-\eps}(e^\eps-1)^2\), and, 
thus 
$$\sqrt{\var{[\theta^T
    \mech(X)]}}\gtrsim\frac{w_K(\theta)}{ne^{-0.5\varepsilon}(e^\varepsilon-1)},$$
as we needed to prove.
\end{proof}

Combining Lemmas~\ref{lm:1-to-d}~and~\ref{lm:puredp-var} immediately
gives the following theorem. 
\begin{theorem}\label{thm:PDP-bound}
  For any \(\eps\)-differentially private mechanism \(\mech\) that is
  unbiased over \(K\subseteq \R^d\),   and any dataset
  \(X \in K^n\), its error is
  bounded from below as
$$\sqrt{\E \left[{\norm{\mech (X)-{\mu(X)}}_p^2}\right]}\gtrsim \frac{\Gamma_p(K)}{ne^{-0.5\varepsilon}(e^\varepsilon-1)}.$$    
\end{theorem}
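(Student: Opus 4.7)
The proof is essentially a direct combination of two lemmas already established earlier in the section. My plan is to invoke Lemma~\ref{lm:puredp-var} to obtain the required one-dimensional variance lower bound for every direction $\theta \in \R^d$ simultaneously, then feed this bound into Lemma~\ref{lm:1-to-d} (which reduces $\ell_p$ lower bounds for unbiased mechanisms to one-dimensional variance lower bounds) to deduce the high-dimensional statement.

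More concretely, fix an $\eps$-differentially private unbiased mechanism $\mech$ and any dataset $X \in K^n$. By Lemma~\ref{lm:puredp-var}, there is an absolute constant $c > 0$ such that
\[
  \sqrt{\var[\theta^T \mech(X)]} \ge \frac{c \, w_K(\theta)}{n e^{-0.5\eps}(e^\eps - 1)}
\]
for \emph{every} $\theta \in \R^d$. Crucially, the lower bound holds uniformly in $\theta$ with $X$ fixed, which is exactly the hypothesis needed by Lemma~\ref{lm:1-to-d}. Setting the constant $c' := c / (n e^{-0.5\eps}(e^\eps - 1))$ and applying Lemma~\ref{lm:1-to-d} with this value of $c'$ yields
\[
  \sqrt{\E[\|\mech(X) - \mu(X)\|_p^2]} \ge c' \, \Gamma_p(K) = \frac{c \, \Gamma_p(K)}{n e^{-0.5\eps}(e^\eps - 1)},
\]
which is precisely the claimed bound up to the hidden absolute constant.

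There is no real obstacle to speak of: the one-dimensional Hammersley-Chapman-Robins argument in Lemma~\ref{lm:puredp-var} already handles the fact that the neighboring dataset $X'$ may depend on the direction $\theta$, because it is applied per direction, and Lemma~\ref{lm:1-to-d} then aggregates the per-direction bounds through the support function / covariance containment argument. The only thing to verify is that the constant $c$ inside the $\gtrsim$ notation of Lemma~\ref{lm:puredp-var} is indeed an absolute constant independent of $\theta$, $X$, $n$, $d$, and $\eps$, which it is by inspection of the proof (the constant only comes from the choice of $\beta$ in Lemma~\ref{lm:neighboring} applied within Lemma~\ref{lm:puredp-var}). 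Hence the two lemmas compose cleanly into the theorem.
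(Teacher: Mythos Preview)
Your proposal is correct and matches the paper's own proof exactly: the paper simply states that Theorem~\ref{thm:PDP-bound} follows immediately by combining Lemmas~\ref{lm:1-to-d} and~\ref{lm:puredp-var}, which is precisely what you do. Your additional remarks about the uniformity of the constant in $\theta$ are accurate and make the composition explicit.
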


Note that Theorem~\ref{thm:PDP-bound} is usually not tight. For
example, when \(K = B_2^d\), every \(\eps\)-differentially private
mechanism has \(\ell_2\) error \(\gtrsim \frac{d}{\eps n}\), whereas
\(\Gamma_2(B_2^d) = \sqrt{d}\). Nevertheless, we record the theorem
here for ease of future reference. Moreover, Lemma~\ref{lm:puredp-var}
will be used in deriving tight lower bounds on the error of
approximately and also of locally differentially private mechanisms.

We can also get a similar lower bound for zCDP, which turns out to be
tight for small \(\eps\).
\begin{theorem}\label{thm:zCDP-bound}
    For any \(\frac{\eps^2}{2}\)-zCDP mechanism \(\mech\) that is
  unbiased over \(K\subseteq \R^d\),  and any dataset
  \(X \in K^n\), its error is
  bounded below as
$$\sqrt{\E \left[{\norm{\mech (X)-{\mu(X)}}_p^2}\right]}\gtrsim \frac{\Gamma_p(K)}{n\sqrt{e^{\eps^2}-1}}.$$    
\end{theorem}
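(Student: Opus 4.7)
The plan is to mimic exactly the structure used for the pure-DP bound in Theorem~\ref{thm:PDP-bound}, replacing only the mechanism-specific $\chi^2$-divergence estimate. Concretely, I will first prove a one-dimensional analogue of Lemma~\ref{lm:puredp-var}, and then combine it with the dimension-reduction result Lemma~\ref{lm:1-to-d}.

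For the one-dimensional step, fix $\theta \in \R^d$ and a dataset $X \in K^n$. By Lemma~\ref{lm:neighboring}, there is a neighboring dataset $X'$ with
\[
|\theta^T(\mu(X) - \mu(X'))| \gtrsim \frac{w_K(\theta)}{n}.
\]
Let $P, Q$ denote the distributions of $\theta^T \mech(X')$ and $\theta^T \mech(X)$, respectively. Since $\mech$ is unbiased, $\E[\theta^T\mech(X)] = \theta^T\mu(X)$ and likewise for $X'$, so the Hammersley-Chapman-Robins bound (Lemma~\ref{lm:hcr}) gives
\[
\sqrt{\var[\theta^T\mech(X)]} \;\ge\; \frac{|\theta^T(\mu(X)-\mu(X'))|}{\sqrt{\Chisq(P\|Q)}}.
\]

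The main (but minor) new ingredient is a $\chi^2$-divergence bound coming from zCDP in place of Lemma~\ref{lm:eps-to-chi2}. The connection is the standard identity relating $\chi^2$ to R\'enyi divergence of order $2$:
\[
D_2(P \| Q) \;=\; \ln\bigl(1 + \Chisq(P\|Q)\bigr).
\]
Since $\mech$ is $\tfrac{\eps^2}{2}$-zCDP and $\theta^T\mech$ is a post-processing (hence also $\tfrac{\eps^2}{2}$-zCDP by Lemma~\ref{lm:cdp-composition}), plugging $\alpha = 2$ into the definition of zCDP yields $D_2(P\|Q) \le \eps^2$, and therefore
\[
\Chisq(P\|Q) \;\le\; e^{\eps^2} - 1.
\]
Combining with the HCR inequality,
\[
\sqrt{\var[\theta^T\mech(X)]} \;\gtrsim\; \frac{w_K(\theta)}{n\sqrt{e^{\eps^2}-1}}.
\]

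This is exactly the hypothesis required by Lemma~\ref{lm:1-to-d} with constant $c \asymp \tfrac{1}{n\sqrt{e^{\eps^2}-1}}$, and the conclusion of that lemma is precisely the theorem:
\[
\sqrt{\E\bigl[\|\mech(X)-\mu(X)\|_p^2\bigr]} \;\gtrsim\; \frac{\Gamma_p(K)}{n\sqrt{e^{\eps^2}-1}}.
\]
There is no real obstacle here; the only point to verify carefully is that one-dimensional marginals of a zCDP mechanism are themselves zCDP (immediate from post-processing) and that the $D_2$--$\chi^2$ identity is applied correctly. Everything else is a transparent substitution into the framework already assembled for the pure-DP case.
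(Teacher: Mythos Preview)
Your proposal is correct and follows essentially the same approach as the paper: both replace the $\chi^2$-divergence bound from Lemma~\ref{lm:eps-to-chi2} by the zCDP bound $\Chisq(P\|Q)\le e^{\eps^2}-1$ obtained from $D_2(P\|Q)\le \eps^2$, and then feed the resulting one-dimensional variance bound into Lemma~\ref{lm:1-to-d}. You are slightly more explicit about invoking post-processing to transfer zCDP to the marginal $\theta^T\mech$, which is fine.
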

\begin{proof}
  Let us denote by $Q$ the probability distribution of
  $\theta^T\mech(X)$ and by $P$ the probability distribution of
  $\theta^T\mech(X')$, as in the proof of Lemma~\ref{lm:puredp-var}. 
  From the definition of zCDP, the \(2\)-Renyi divergence of of \(P\)
  and \(Q\) is bounded as \(D_2(P\|Q) \le \eps^2\). We then use the
  relationship between the 2-Renyi divergence and the \(\chi^2\)-divergence to write
  \[\Chisq(P\|Q) = 2^{D_2(P\|Q)} - 1 \le e^{\eps^2}-1.\]
  The rest of the proof is analogous to the proof of
  Theorem~\ref{thm:PDP-bound} using Lemma~\ref{lm:puredp-var}.
\end{proof}
\section{Lower Bound for Approximate Differential Privacy}

Our lower bounds for approximate differential privacy do not follow
directly from the Hammersley-Chapman-Robins bound, because the
probability distributions of \(\mech(X)\) and \(\mech(X')\), for two
neighboring datasets \(X\) and \(X'\), and an \((\eps,
\delta)\)-differentially private mechanism \(\mech\), may not have the
same support. For this reason, the \(\Chisq\)-divergence between the
distributions can be infinite. This leads to some complications, both
for the one-dimensional, and for the higher-dimensional lower bounds,
presented in this section. 

\subsection{One-dimensional Lower Bound}

Let us introduce notation for the subset of  the ground set where
the ratio of densities is small. In our context, this will be the subset of
possible outputs of a mechanism for which the mechanism satisfies pure
differential privacy for a pair of neighboring inputs.

\begin{definition}
  For two probability distribution \(P\) and \(Q\) over the same
  ground set \(\Omega\), we define
  \[
    S_{P,Q,\eps} := \left\{\omega \in \Omega: e^{-\eps} \le \frac{dP}{dQ}(\omega) \le e^\eps
    \right\}.
  \]
\end{definition}

We restate Case 2 of Lemma~3.3 from
\cite{kasiviswanathan2008thesemantics} here. The lemma captures the
fact that an approximately differentially private mechanism is
``purely differentially private with high probability''. 
\begin{lemma}\label{lm:delta'}
  Let $\mech$ be an $(\eps,\delta)$-differentially private mechanism,
  let \(X,X'\) be neighboring datasets, and define \(P\) to be
  the probability distribution of \(\mech(X)\), and \(Q\) to be the
  probability distribution of \(\mech(X')\). Then,
  $$\max\left\{\Pr\left[\mech(X) \not\in S_{P,Q,2\eps} \right], \Pr\left[\mech(X') \not\in S_{P,Q,2\eps} \right]\right\}\leq \delta':= \frac{2\delta}{1-e^{-\eps}}.$$
\end{lemma}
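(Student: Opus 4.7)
The plan is to analyze the two "bad" sets separately: $A := \{\omega : \frac{dP}{dQ}(\omega) > e^{2\eps}\}$ and $B := \{\omega : \frac{dP}{dQ}(\omega) < e^{-2\eps}\}$, so that $\Omega \setminus S_{P,Q,2\eps} = A \cup B$. The approach is to apply the $(\eps,\delta)$-differential privacy inequality to each of these sets, using the fact that the density ratio bound in the definition of $A$ and $B$ creates an inconsistency that can only be resolved if the mass assigned to these sets is small.

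For the bound on $P(A \cup B) = P(A) + P(B)$, first I would observe that on $A$ we have $dP/dQ > e^{2\eps}$, which integrates to $Q(A) \le e^{-2\eps} P(A)$. Combining with the DP inequality $P(A) \le e^\eps Q(A) + \delta$ gives $P(A) \le e^{-\eps} P(A) + \delta$, and hence $P(A) \le \frac{\delta}{1-e^{-\eps}}$. Symmetrically on $B$ we have $P(B) \le e^{-2\eps} Q(B)$, and the DP inequality $Q(B) \le e^\eps P(B) + \delta$ yields $Q(B) \le \frac{\delta}{1-e^{-\eps}}$, which in turn gives $P(B) \le e^{-2\eps} Q(B) \le \frac{\delta}{1-e^{-\eps}}$. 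Summing, $P(A \cup B) \le \frac{2\delta}{1-e^{-\eps}} = \delta'$, as desired. The bound for $Q$ is obtained by the same argument with the roles of $A$ and $B$ swapped: $Q(A) \le e^{-2\eps} P(A) \le \frac{\delta}{1-e^{-\eps}}$, and $Q(B) \le \frac{\delta}{1-e^{-\eps}}$ directly.

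The only subtle point to verify is measurability of $A$ and $B$, which follows because $dP/dQ$ is defined via the Radon--Nikodym derivative with respect to the reference measure $R = (P+Q)/2$, so both $A$ and $B$ are Borel sets and thus valid events for applying the differential privacy inequality. I do not expect any real obstacle here; the entire argument is a short two-step application of the DP inequality combined with the density-ratio bound on each bad set.
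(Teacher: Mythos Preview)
Your argument is correct. The paper does not supply its own proof of this lemma; it simply restates Case~2 of Lemma~3.3 from \cite{kasiviswanathan2008thesemantics} and cites that reference. Your direct decomposition into the two bad sets $A$ and $B$, followed by combining the pointwise density-ratio inequality with the $(\eps,\delta)$-DP inequality to get $P(A),Q(B)\le \delta/(1-e^{-\eps})$ and then bounding the cross terms $P(B),Q(A)$ via the density ratio, is exactly the standard route and matches the argument in the cited source.
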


The main challenge in applying our techniques to approximate differential
privacy is that the $\Chisq$-divergence between the output
distributions of an \((\eps, \delta)\)-differentially private
mechanism run on two neighboring datasets is not necessarily
bounded. To get around this issue, we modify one of the two
distributions slightly, so that we can fall back on bounds on the
$\Chisq$-divergence for pure differential privacy. Here we will use
Lemma~\ref{lm:delta'} crucially.

\begin{lemma}\label{lm:phat}
Let $\mech$ be an $(\eps, \delta)$-differentially private mechanism
with range \(\R^d\),
let $X$ be an arbitrary dataset, and let $X'$ be any dataset that is
neighboring with $X$. 
Let \(Q\) be the probability distribution of
$\theta^T\mech(X)$, and $P$ the probability distribution of
$\theta^T\mech(X')$. Define $S:=S_{P,Q,2\eps}$. There exists a
probability distribution $\hat{P}$ s.t.
\begin{align}
\left\lvert\log{\frac{d\hat{P}}{dQ}(y)}\right\rvert&\leq
2\eps-\log{(1-\delta')} \indent \text{for any $y$ in the range of
                                                     \(\mech\)}\label{eq:phat-dp}\\
\lvert \E_{Y \sim \hat P}[Y]-\E_{Y\sim Q}[Y]]\rvert&= \left\lvert
                                                  C_{\delta}\E_{Y\sim
                                                  P}[Y\mathbf{1}\{Y\in
                                                  S\}]-\E_{Y\sim Q}[Y\mathbf{1}\{Y\in S\}]\right\rvert  \label{eq:phat-exp}
\end{align}
where $\delta'$ is as in Lemma~\ref{lm:delta'}, and $C_\delta
:=\frac{Q(S)}{P(S)}$.
\end{lemma}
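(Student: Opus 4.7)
The plan is to construct $\hat{P}$ explicitly as a mixture that ``repairs'' $P$ on the bad set $S^c$ (where the density ratio is unbounded) by replacing the contribution of $P$ there with a piece of $Q$. Concretely, I will define the measure
\[
\hat{P}(A) := C_\delta P(A \cap S) + Q(A \cap S^c)
\]
for every Borel set $A \subseteq \R$, where $C_\delta := Q(S)/P(S)$. The intuition is that on $S$ the mechanism is essentially $2\eps$-pure-DP, so $P$ is already close to $Q$; the scaling by $C_\delta$ just reweights the restricted $P$ so that the total mass on $S$ matches $Q(S)$, and then $Q$ itself fills in the rest on $S^c$.

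First I would check that $\hat{P}$ is a probability measure: by construction $\hat{P}(\R) = C_\delta P(S) + Q(S^c) = Q(S) + Q(S^c) = 1$. Next, I would compute the Radon-Nikodym derivative piecewise: on $S^c$ we have $d\hat{P}/dQ = 1$, and on $S$ we have $d\hat{P}/dQ = C_\delta \cdot dP/dQ$, which lies in $[C_\delta e^{-2\eps}, C_\delta e^{2\eps}]$ by the definition of $S$. To bound $C_\delta$, Lemma~\ref{lm:delta'} gives $P(S), Q(S) \in [1-\delta', 1]$, hence $C_\delta \in [1-\delta',\, 1/(1-\delta')]$. Combining these yields
\[
(1-\delta')e^{-2\eps} \le \frac{d\hat{P}}{dQ}(y) \le \frac{e^{2\eps}}{1-\delta'}
\]
for all $y$, which after taking logarithms and absolute values gives exactly the bound \eqref{eq:phat-dp}.

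For \eqref{eq:phat-exp}, I would just unfold the definition of $\hat{P}$:
\[
\E_{\hat P}[Y] = C_\delta \E_P[Y \mathbf{1}\{Y \in S\}] + \E_Q[Y \mathbf{1}\{Y \in S^c\}],
\]
and subtract $\E_Q[Y] = \E_Q[Y \mathbf{1}\{Y \in S\}] + \E_Q[Y \mathbf{1}\{Y \in S^c\}]$; the $S^c$ terms cancel exactly, leaving the claimed identity.

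The only mild subtlety is really the choice of how to patch $P$ on the unbounded-ratio set $S^c$: one has to sacrifice something, and replacing by $Q|_{S^c}$ is what makes the expectation difference collapse to a clean expression supported on $S$ alone (so that later, in a win-win argument, it can be controlled by the variance of $P$ or $Q$ on $S$). Using instead, say, a scaled copy of $P$ on $S^c$ would reintroduce an unbounded ratio, and using a generic measure would pollute the expectation identity with an extra term. So the construction above is essentially forced, and once chosen the verification reduces to the bounded arithmetic on $C_\delta$ above.
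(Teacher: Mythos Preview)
Your proposal is correct and is essentially identical to the paper's own proof: the paper defines $\hat P(T) := C_\delta P(T\cap S) + Q(T\setminus S)$, verifies it is a probability measure, computes $d\hat P/dQ$ piecewise, bounds $C_\delta\in[1-\delta',\,1/(1-\delta')]$ via Lemma~\ref{lm:delta'}, and obtains \eqref{eq:phat-exp} by the same cancellation of the $S^c$ contributions. Your additional discussion of why this patching is the natural choice is a nice bit of motivation but not needed for the proof itself.
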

\begin{proof}
  The distribution \(\hat P\) is defined, for any (measurable) subset \(T\) of the
  range of \(\mech\), by
  \[
    \hat{P}(T):=C_\delta P(T \cap S) + Q(T \setminus S).
  \]
Then it is easy to verify that $\hat{P}$ is a valid probability
measure. This construction gives us, for any \(y\) in the range of \(\mech\),
\begin{equation*}
\frac{d\hat{P}}{dQ}(y) = \begin{cases}
C_\delta \frac{dP}{dQ}(y) &\text{$y\in S$}\\
1 &\text{otherwise}
\end{cases}.
\end{equation*}
We have the following inequality, which we prove later.
\begin{align}
  1-\delta' &\leq C_\delta \leq \frac{1}{1-\delta'}, \label{eq:Cdelta-1}
\end{align}
By \eqref{eq:Cdelta-1}, we know that for $y\in S$ $$\frac{1-\delta'}{e^{2\eps}}\leq C_\delta\frac{dP}{dQ}(y)\leq \frac{e^{2\eps}}{1-\delta'}.$$
This proves \eqref{eq:phat-dp}. Next we prove
\eqref{eq:phat-exp}. Since $\hat{P}$ and $Q$ agree outside \(S\), we have
\begin{align*}
    \lvert \E_{Y\sim\hat P}[Y]-\E_{Y\sim Q}[Y]\rvert&=\left\lvert
                                                      \int_S y\ d\hat{P}(y) - \int_S y\ dQ(y)\right\rvert\\
    &=\left\lvert C_{\delta} \int_S y\ d{P}(y) - \int_S y\ dQ(y)\right\rvert\\
    &=\left\lvert C_{\delta}\E_{Y\sim P}[Y\mathbf{1}\{Y\in S\}]-\E_{Y\sim Q}[Y\mathbf{1}\{Y\in S\}]\right\rvert.
\end{align*}
It only remains to prove \eqref{eq:Cdelta-1}. 
By Lemma~\ref{lm:delta'}, $Q(S) \geq 1-\delta'$, $P(S)\geq
1-\delta'$. Since both these probabilities are also at most $1$, we know
that \(C_\delta = \frac{Q(S)}{P(S)}\) satisfies
$1-\delta '\leq C_\delta \leq \frac{1}{1-\delta'}.$
This completes the proof of the lemma.
\end{proof}

Combining Lemma~\ref{lm:eps-to-chi2} and Lemma~\ref{lm:phat}, we get
the following bound on the  $\Chisq$-divergence.
\begin{lemma}\label{lm:epsdelta-to-chi2}
  Let \(\hat{P}\) and \(Q\) be as in Lemma~\ref{lm:phat}. Then we have
$$\Chisq(\hat P\|Q)\leq {\hat{\eps}}^{\, 2} := e^{-(2\varepsilon-\log{(1-\delta'))}}(e^{2\varepsilon-\log{(1-\delta')}}-1)^2.$$
\end{lemma}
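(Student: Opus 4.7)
The plan is to apply Lemma~\ref{lm:eps-to-chi2} essentially as a black box, but with the privacy parameter replaced by $\hat\eps := 2\eps - \log(1-\delta')$, and with the pair of distributions $(\hat P, Q)$ in place of $(P, Q)$. The key observation enabling this is inequality~\eqref{eq:phat-dp} of Lemma~\ref{lm:phat}, which tells us precisely that $\left|\log \frac{d\hat P}{dQ}(y)\right| \leq \hat\eps$ pointwise for every $y$ in the range. This pointwise condition is exactly what $\hat\eps$-differential privacy imposes on the two output distributions of a mechanism applied to a pair of neighboring datasets.

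To make this precise, I would first observe that the proof of Lemma~\ref{lm:eps-to-chi2} only ever uses the pointwise likelihood ratio bound on $P$ and $Q$, and not any other structural property of an $\eps$-DP mechanism. Indeed, its Blackwell-style argument produces a Markov kernel $F$ with $F(A) \sim P$ and $F(B) \sim Q$ whenever $|\log dP/dQ| \leq \eps$, after which the bound follows by the data-processing inequality from the direct computation of $\Chisq$ for the two randomized-response distributions. Hence the conclusion $\Chisq(P\|Q) \leq e^{-\eps}(e^\eps-1)^2$ actually holds for any two distributions whose log-likelihood ratio is bounded by $\eps$ in absolute value, regardless of how they arise.

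Applying this observation to $(\hat P, Q)$ with $\eps$ replaced by $\hat\eps$ immediately yields
\[
\Chisq(\hat P \| Q) \leq e^{-\hat\eps}(e^{\hat\eps} - 1)^2,
\]
which, after substituting $\hat\eps = 2\eps - \log(1-\delta')$, is exactly the $\hat\eps^{\,2}$ appearing in the statement of the lemma.

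I do not expect any real obstacle here: the entire content of the lemma is that the modification $\hat P$ constructed in Lemma~\ref{lm:phat} restores pure (rather than approximate) indistinguishability with $Q$, at the cost of enlarging the privacy parameter by $-\log(1-\delta')$, and this in turn makes the $\chi^2$-divergence finite and bounded in the standard way. The only point requiring a bit of care is the invocation of Lemma~\ref{lm:eps-to-chi2} for a general pair of distributions with bounded likelihood ratio, rather than strictly for the outputs of a mechanism on neighboring datasets; as noted above, this is an immediate consequence of the proof of that lemma rather than of its statement.
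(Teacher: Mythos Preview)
Your proposal is correct and takes essentially the same approach as the paper, which simply states that the lemma follows by combining Lemma~\ref{lm:eps-to-chi2} and Lemma~\ref{lm:phat}. Your added remark that the Blackwell-type argument in the proof of Lemma~\ref{lm:eps-to-chi2} only uses the pointwise log-likelihood-ratio bound (and hence applies to any pair of distributions satisfying it, not just outputs of a mechanism on neighboring inputs) is a useful clarification that the paper leaves implicit.
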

Now we can prove a lower bound on the variance of an approximately
differentially private mechanism in a given direction. 
\begin{lemma}\label{lm:ADP-1d-bound}
  Let \(c > 0\) be a small enough absolute constant, and let $\mech$ be
  $(\eps, \delta)$-differentially private mechanism that is unbiased
  over \(K \subseteq \R^d\). Define \(\hat \eps\) and
  \(\delta'\) as in Lemmas~\ref{lm:epsdelta-to-chi2}~and~\ref{lm:delta'}. If \(\delta' \le
  \frac{c}{n}\), then for every
  $\theta \in \R^d$, and for every dataset $X \in K^n$,
  either
  $$\sqrt{\var{[\theta^T
      \mech(X)]}}\gtrsim\frac{w_K(\theta)}{n\hat{\eps}},$$ or there
  exist some $X'$ neighboring with $X$ such
  that
  $$\sqrt{\var{[\theta^T
      \mech(X')]}}\gtrsim\frac{w_K(\theta)}{n\sqrt{\delta'}}.$$
\end{lemma}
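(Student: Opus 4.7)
The plan is to follow the HCR-based strategy of Lemma~\ref{lm:puredp-var}, but to apply the Hammersley--Chapman--Robins bound not to the raw distribution $P$ of $\theta^T\mech(X')$ (whose $\chi^2$-divergence with $Q$ may be infinite under approximate DP) but to the modified distribution $\hat P$ furnished by Lemma~\ref{lm:phat}, whose divergence with $Q$ is controlled by $\hat\eps^2$ via Lemma~\ref{lm:epsdelta-to-chi2}. The catch is that passing from $P$ to $\hat P$ may shrink the expectation gap with $Q$, so we need $|\E_{\hat P}[Y] - \E_Q[Y]|$ to remain a constant fraction of $|v| := |\theta^T(\mu(X') - \mu(X))|$ unless some relevant variance is already huge; we arrange this via a win-win case analysis on the second moments.

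First, invoke Lemma~\ref{lm:neighboring} with $\beta := 1/2$ to produce a neighbor $X'$ of $X$ with $|v| \gtrsim w_K(\theta)/n$. Let $Y := \theta^T\mech(X)$ with distribution $Q$ and let $P$ be the distribution of $\theta^T\mech(X')$, so that by unbiasedness $\E_P[Y]-\E_Q[Y]=v$. Fix a large absolute constant $C>0$. \emph{Case 1.} If $\var[\theta^T\mech(X)] \ge v^2/(C\delta')$, take the neighbor in the conclusion to be $X$ itself; if instead $\var[\theta^T\mech(X')] \ge v^2/(C\delta')$, take the $X'$ produced by Lemma~\ref{lm:neighboring}. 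Either way, $\sqrt{\var[\theta^T\mech(X')]} \gtrsim |v|/\sqrt{\delta'} \gtrsim w_K(\theta)/(n\sqrt{\delta'})$, which is the second conclusion.

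\emph{Case 2.} Both $\var[\theta^T\mech(X)]$ and $\var[\theta^T\mech(X')]$ are below $v^2/(C\delta')$. Translating $Y$ by the constant $\theta^T\mu(X)$ preserves variances, $\chi^2$-divergences, and the construction of $\hat P$ (with $S$ translated accordingly), so we may assume $\E_Q[Y]=0$ and $\E_P[Y]=v$. Combining \eqref{eq:phat-exp} with the triangle inequality, Cauchy--Schwarz, and the bounds $\Pr_P[S^c], \Pr_Q[S^c] \le \delta'$ together with $1-\delta' \le C_\delta \le 1/(1-\delta')$,
$$|\E_{\hat P}[Y]-\E_Q[Y]| \;\ge\; C_\delta|v| - C_\delta\sqrt{(\var[\theta^T\mech(X')]+v^2)\,\delta'} - \sqrt{\var[\theta^T\mech(X)]\cdot\delta'} \;\ge\; \tfrac{|v|}{2},$$
provided $C$ is large enough and $\delta'$ is below a small absolute constant, both of which hold by hypothesis. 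Applying Lemma~\ref{lm:hcr} with $\hat P$ in place of $P$ and using $\chi^2(\hat P\|Q)\le \hat\eps^2$ yields
$$\sqrt{\var[\theta^T\mech(X)]} \;\ge\; \frac{|\E_{\hat P}[Y]-\E_Q[Y]|}{\hat\eps} \;\gtrsim\; \frac{w_K(\theta)}{n\hat\eps},$$
which is the first conclusion. The main obstacle is the bookkeeping in Case~2: quantifying how truncation at $S$ perturbs the expectations in terms of second moments via Cauchy--Schwarz, which is precisely what forces the second-moment threshold $v^2/(C\delta')$ and hence the win-win structure of the statement.
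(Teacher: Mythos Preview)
Your proof is correct and follows essentially the same approach as the paper: construct a neighbor $X'$ with a gap in the $\theta$-direction, run a win-win on whether one of the two variances is already of order $w_K(\theta)^2/(n^2\delta')$, and in the remaining case bound the truncation error via Cauchy--Schwarz so that the HCR bound (Lemma~\ref{lm:hcr}) applied to $\hat P$ and $Q$ with the $\chi^2$ bound of Lemma~\ref{lm:epsdelta-to-chi2} yields the first conclusion. The only cosmetic difference is the centering: the paper shifts so that $0\in K$ (hence $|\E_P[Y]|\le w_K(\theta)$) and splits off a separate $(1-C_\delta)\E_P[Y]$ term, whereas you translate so that $\E_Q[Y]=0$ (hence $|\E_P[Y]|=|v|$) and absorb everything into the second-moment Cauchy--Schwarz bound; both lead to the same inequality up to constants.
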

\begin{proof}
  For this proof, we will assume that \(0 \in K\). This comes without
  loss of generality, since we can shift both \(K\) and the output of
  \(\mech\) by the same vector, without affecting the privacy
  guarantees of \(\mech\) or the variance in the direction of
  \(\theta\). This allows us to assume that, for any \(x
  \in K\), \(|\ip{\theta}{x}| \le \max\{h_K(\theta), h_K(-\theta)\}
  \le w_K(\theta)\), since $h_K(\theta) \ge 0$ and $h_K(-\theta) \ge 0$.
  
  Let \(X'\) then be a neighboring dataset to \(X\) such that
  $\abs{\ip{\theta}{\mu(X)-\mu(X')}}\geq \frac{w_K(\theta)}{3n}$ (which
  we know exists by Lemma~\ref{lm:neighboring}). Let us use the
  notation from Lemma~\ref{lm:phat}, and let us abbreviate, for
  any function \(f\), \(\E_{Y\sim P}[f(Y)]\) to \(\E_{P}[f(Y)]\), and,
  similarly, \(\E_{Y\sim \hat P}[f(Y)]\) to \(\E_{\hat P}[f(Y)]\), and
  \(\E_{Y\sim Q}[f(Y)]\) to \(\E_{Q}[f(Y)]\). 
  For the distribution
  \(\hat P\) given by Lemma~\ref{lm:phat}, we have
  \begin{align}
    \lvert \E_{\hat P}[Y]-\E_{ Q}[Y]]\rvert
    &= \left\lvert C_{\delta}\E_{ P}[Y\mathbf{1}\{Y\in
      S\}]-\E_{ Q}[Y\mathbf{1}\{Y\in S\}]\right\rvert\notag\\
    &\geq \left\lvert C_{\delta}\E_{ P}[Y]-\E_{
      Q}[Y]\right\rvert-\left\lvert C_{\delta}\E_{
      P}[Y\mathbf{1}\{Y\notin S\}]\right\rvert-\left\lvert\E_{
      Q}[Y\mathbf{1}\{Y\notin S\}]\right\rvert\notag\\
    &\geq \left\lvert \E_P[y]-\E_Q[y]\right\rvert-\lvert (1-C_\delta) \E_P[y]\rvert - \left\lvert C_{\delta}\E_{
      P}[Y\mathbf{1}\{Y\notin S\}]\right\rvert-\left\lvert\E_{
      Q}[Y\mathbf{1}\{Y\notin S\}]\right\rvert\notag\\
    &\geq \frac{w_K(\theta)}{3n}-\lvert (1-C_\delta) \E_P[Y]\rvert - \left\lvert C_{\delta}\E_{
      P}[Y\mathbf{1}\{Y\notin S\}]\right\rvert-\left\lvert\E_{
      Q}[Y\mathbf{1}\{Y\notin S\}]\right\rvert.\label{eq:ADP-1d-1}
  \end{align} 
  We proceed to bound the last three terms. For the first term we have
  \begin{equation}
    \label{eq:ADP-1d-exp}
    \lvert (1-C_\delta) \E_P[Y]\rvert
    \le 2\delta' w_{K}(\theta),
  \end{equation}
  where we used that \(\mech\) is unbiased, and, therefore
  \[
    |\E_P[Y]| =  |\theta^T \E[\mech(X)]| = |\theta^T \mu(X)| \le
    w_K(\theta),
  \]
  and we also used the inequality
  \[
    \lvert 1-C_\delta \rvert \leq \frac{\delta'}{1-\delta'}\leq 2\delta'
  \]
  which follows from \eqref{eq:Cdelta-1} for \(\delta' \le \frac12\).

  Next we bound
  \begin{align}
    \left\lvert\E_{ P}[Y\mathbf{1}\{Y\notin S\}]\right\rvert
    &\le \lvert\E_{ P}[(Y-\E_P[Y])\mathbf{1}\{Y\notin S\}]\rvert
      + \lvert\E_P[Y]\rvert (1-P(S))\notag\\
    &\le \sqrt{\E_{ P}[(Y-\E_P[Y])^2](1 -P(S))}  +\lvert\E_P[Y]\rvert (1-P(S))\notag\\
    &\le \sqrt{\delta' \var[\theta^T \mech(X')]} + \delta' w_K(\theta),\label{eq:ADP-1d-CS-P}
  \end{align}
  where in the second line we used the Cauchy-Schwarz
  inequality. Analogously, we bound
  \begin{equation}
    \label{eq:ADP-1d-CS-Q}
    \left\lvert\E_{ Q}[Y\mathbf{1}\{Y\notin S\}]\right\rvert 
    \le \sqrt{\delta' \var[\theta^T \mech(X)]} + \delta' w_K(\theta).
  \end{equation}
  Combining 
  \eqref{eq:ADP-1d-1},\eqref{eq:ADP-1d-exp},\eqref{eq:ADP-1d-CS-P},
  and \eqref{eq:ADP-1d-CS-Q}, we get
  \begin{align*}
    \lvert \E_{\hat P}[Y]-\E_{ Q}[Y]]\rvert
    &\ge
    \left(\frac{1}{3n} - 4\delta'\right)w_K(\theta) - 2\sqrt{\delta'
      \var[\theta^T \mech(X')]} - \sqrt{\delta' \var[\theta^T  \mech(X)]}\\
    &\ge \frac{w_K(\theta)}{4n}  - 2\sqrt{\delta'
    \var[\theta^T \mech(X')]} - \sqrt{\delta' \var[\theta^T \mech(X)]},
  \end{align*} 
  where we used the assumption that \(\delta' \le \frac{c}{n}\) for a
  small enough \(c > 0\). Then, if both \(\sqrt{\delta'
    \var[\theta^T \mech(X)]}\) and \(\sqrt{\delta'
    \var[\theta^T \mech(X')]}\) are bounded by \(\frac{c\,
    w_K(\theta)}{n}\) for a small enough \(c > 0\), we have that \(\lvert \E_{\hat P}[Y]-\E_{
    Q}[Y]]\rvert \gtrsim \frac{w_K}{n}\), and the first lower bound in
  the lemma follows from  Lemmas~\ref{lm:hcr} and \ref{lm:epsdelta-to-chi2}. Otherwise, the second lower bound in the lemma holds. 
\end{proof}

\subsection{High-dimensional Lower Bound for $\ell_2$}

We first state a lower bound on \(\ell_2\) error that is nearly tight
when \(\delta\) is small with respect to the minimum width of the
domain \(K\). Then we will show that we can always ensure the minimum
width is not too small.

\begin{lemma}\label{lm:ADP-minwidth}
  Let \(c > 0\) be a small enough absolute constant, and let $\mech$ be
  $(\eps, \delta)$-differentially private mechanism that is unbiased
  over \(K \subseteq \R^d\). Define \(\hat \eps\) and
  \(\delta'\) as in Lemmas~\ref{lm:epsdelta-to-chi2}~and~\ref{lm:delta'}. If \(\delta' \le
  \frac{c}{n}\), and \(K\) satisfies  $\min_{\theta\in \R^d:
    \norm{\theta}_2 = 1}{w_{K}(\theta)}\geq w_0$, then for every
  dataset $X \in K^n$, either $$\sqrt{\E \left[{\norm{\mech
        (X)-{\mu(X)}}_2^2}\right]}\gtrsim \frac{\Gamma_2(K)}{n\hat{\eps}},$$
  or there exists a neighboring dataset $X'$ to $X$ s.t. 
    $$\sqrt{\E \left[{\norm{\mech (X')-{\mu(X')}}_2^2}\right]}\gtrsim \frac{w_0}{n\sqrt{\delta'}}.$$
\end{lemma}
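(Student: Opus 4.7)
The plan is to reduce this high-dimensional statement to Lemma~\ref{lm:ADP-1d-bound} via the framework of Section~\ref{sect:high2low}, in direct analogy to the pure and zCDP arguments for Theorems~\ref{thm:PDP-bound} and~\ref{thm:zCDP-bound}. Fix a dataset $X \in K^n$. Applying Lemma~\ref{lm:ADP-1d-bound} separately to each unit vector $\theta \in \R^d$, at least one of the following alternatives holds: (i) $\sqrt{\var[\theta^T \mech(X)]} \gtrsim w_K(\theta)/(n\hat\eps)$ with $X$ itself, or (ii) there is a neighbor $X'$ of $X$ (possibly depending on $\theta$) such that $\sqrt{\var[\theta^T \mech(X')]} \gtrsim w_K(\theta)/(n\sqrt{\delta'})$. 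Since both inequalities are positively homogeneous of degree one in $\theta$, it suffices to inspect unit vectors, and we branch on whether alternative~(i) holds for \emph{every} unit $\theta$ or fails for some $\theta$.

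In the first case, alternative~(i) holds uniformly in $\theta$, which is exactly the hypothesis of Lemma~\ref{lm:1-to-d} with $p=2$ and constant of order $1/(n\hat\eps)$. That lemma then yields
\[
\sqrt{\E\|\mech(X) - \mu(X)\|_2^2} \;\gtrsim\; \frac{\Gamma_2(K)}{n\hat\eps},
\]
which is the first conclusion of the statement.

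In the second case, there is some unit vector $\theta_0$ for which~(i) fails, so alternative~(ii) must hold: there exists a neighbor $X'$ of $X$ with $\sqrt{\var[\theta_0^T \mech(X')]} \gtrsim w_K(\theta_0)/(n\sqrt{\delta'}) \ge w_0/(n\sqrt{\delta'})$, where the second inequality is the minimum-width hypothesis $w_K(\theta_0) \ge w_0$. Because $\mech$ is unbiased, $\E[\mech(X')] = \mu(X')$, and therefore
\[
\E\|\mech(X')-\mu(X')\|_2^2 \;=\; \tr(\cov(\mech(X'))) \;\ge\; \theta_0^T \cov(\mech(X'))\theta_0 \;=\; \var[\theta_0^T\mech(X')],
\]
where the inequality uses $\cov(\mech(X')) \succeq 0$ and $\|\theta_0\|_2 = 1$. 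Taking square roots yields the second conclusion with the claimed rate $w_0/(n\sqrt{\delta'})$.

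There is no genuine obstacle: the dichotomy is a direct transcription of Lemma~\ref{lm:ADP-1d-bound}, and the only mild care required is bookkeeping to turn a single-direction variance lower bound on the neighbor $X'$ into an $\ell_2$ error lower bound. This is handled by unbiasedness together with the trivial PSD inequality $\theta^T M \theta \le \tr(M)$ for unit $\theta$. The minimum-width hypothesis $w_0$ is precisely what is needed to keep Case~2 informative, which is also why the lemma is only useful in regimes where $w_0$ is comparable to $\Gamma_2(K)$ up to the targeted approximation factor.
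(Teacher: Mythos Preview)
Your proof is correct and follows essentially the same approach as the paper: both branch on whether the first alternative of Lemma~\ref{lm:ADP-1d-bound} holds for all unit $\theta$ (invoking Lemma~\ref{lm:1-to-d}) or fails for some $\theta_0$ (extracting a neighbor $X'$ and using the minimum-width bound). The only cosmetic difference is that the paper bounds $\E\|\mech(X')-\mu(X')\|_2^2 \ge \E[\ip{\theta_0}{\mech(X')-\mu(X')}^2]$ via Cauchy--Schwarz, whereas you phrase the same inequality as $\tr(\cov(\mech(X'))) \ge \theta_0^T\cov(\mech(X'))\theta_0$.
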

\begin{proof}

By Lemma~\ref{lm:ADP-1d-bound}, one of the following two cases will
hold:

\textbf{Case 1.} For all $\theta\in \R^d,\norm{\theta}_2=1$, $\sqrt{\var{[\theta^T
    \mech(X)]}}\gtrsim\frac{w_K(\theta)}{n\hat{\eps}}$. Then, by Lemma~\ref{lm:1-to-d}, \[\E
\left[{\norm{\mech (X)-{\mu(X)}}_2}\right]\gtrsim
\frac{\Gamma(K)}{n\hat{\eps}}.\]

\textbf{Case 2.} There exists a $\theta^*\in \R^d,\norm{\theta^*}_2=1$
such that there exists some $X'$ neighboring to $X$ for which $\sqrt{\var{[\theta^{*T}
    \mech(X')]}}\gtrsim\frac{w_K(\theta^*)}{n\sqrt{\delta'}}$. Then, by
the Cauchy-Schwarz inequality, we have
\begin{align*}
  \sqrt{\E \left[{\norm{\mech (X')-{\mu(X')}}_2^2}\right]}
    &\geq \sqrt{\E\left[{{\ip{\theta^{*}}{\mech (X')-{\mu(X')}}}^2}\right]}\\
    &=\sqrt{\var{[\theta^{*T} \mech(X')]}}\\
    &\gtrsim \frac{w_K(\theta^*)}{n\sqrt{\delta'}}
    \geq \frac{w_0}{n\sqrt{\delta'}}.
\end{align*}

This completes the proof.
\end{proof}

In general, the minimum width of \(K\) can be \(0\) even
of \(\Gamma_2(K)\) is large. The next lemma shows that, nevertheless,
any \(K\) has a projection \(P(K)\) for which the minimum width
(within the image of \(P\))
and \(\Gamma_2(P(K))\) are within a factor linear in the dimension, and
\(\Gamma_2(P(K))\) is comparable to \(\Gamma_2(K)\). 
\begin{lemma}\label{lm:proj}
  For any \(K \subseteq \R^d\), there exists an orthogonal
  projection $P:\R^d\rightarrow \R^d$, such that $P(K)$ satisfies both
  of the following two conditions:
  \begin{align*}
    \Gamma_2(P(K))&\geq   \frac{\Gamma_2(K)}{2}\\
    \min_{\theta\in \Image(P):\norm{\theta}_2 = 1}{w_{P(K)}(\theta)}&\geq \frac{\Gamma_2(K)}{2d}
  \end{align*}
  Above, \(\Image(P)\) is the image of \(P\).
\end{lemma}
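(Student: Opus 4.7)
My plan is to construct $P$ greedily, iteratively eliminating directions of small width. I would start with $P_0 := I$, $V_0 := \R^d$, and at each iteration $i$ either halt (with $P := P_i$) once the minimum width of $P_i(K)$ over unit vectors in $V_i := \Image(P_i)$ is at least $\Gamma_2(K)/(2d)$, or pick a unit $\theta^*_i \in V_i$ witnessing a strictly smaller width and update $P_{i+1} := Q_i P_i$, where $Q_i$ is the orthogonal projection onto $(\theta^*_i)^\perp$. Since $\theta^*_i \in V_i$, each step drops $\dim V_i$ by one, so the procedure halts within $d$ steps.

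The key per-step estimate I need to establish is that one such projection decreases $\Gamma_2$ by at most half of the eliminated width:
\[
  \Gamma_2(P_{i+1}(K)) \ge \Gamma_2(P_i(K)) - \tfrac{1}{2}\, w_{P_i(K)}(\theta^*_i).
\]
To prove it, I would set $a := \inf_{x \in P_i(K)} \ip{\theta^*_i}{x}$ and $b := \sup_{x \in P_i(K)} \ip{\theta^*_i}{x}$ (so $b-a$ is the eliminated width), then use the orthogonal decomposition $x = Q_i x + \ip{\theta^*_i}{x}\theta^*_i$ to obtain the Minkowski-sum containment $P_i(K) \subseteq P_{i+1}(K) + [a,b]\theta^*_i$. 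Monotonicity and the triangle inequality from Theorem~\ref{thm:triangle} then bound $\Gamma_2(P_i(K))$ by $\Gamma_2(P_{i+1}(K)) + \Gamma_2([a,b]\theta^*_i)$. The line segment's $\Gamma_2$ evaluates to $(b-a)/2$: after the shift permitted by $\shinclu$, it lies in $AB_2^d$ for the rank-one matrix $A := \tfrac{b-a}{2}\theta^*_i e_1^T$, for which $AA^T$ has trace $((b-a)/2)^2$.

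Summing the per-step estimate over at most $d$ non-terminal steps, the cumulative loss is bounded by $d \cdot \Gamma_2(K)/(4d) = \Gamma_2(K)/4$, so throughout the procedure $\Gamma_2(P_i(K)) \ge \tfrac{3}{4}\Gamma_2(K)$. Crucially, this lower bound forces termination at some $V_k$ of positive dimension: were the procedure ever to reach $V_k = \{0\}$, then $P_k(K)$ would be a single point and $\Gamma_2(P_k(K)) = 0$, contradicting the lower bound (the degenerate case $\Gamma_2(K) = 0$ is trivial via $P := 0$). Therefore termination simultaneously yields $\Gamma_2(P(K)) \ge \tfrac{3}{4}\Gamma_2(K) \ge \Gamma_2(K)/2$ and, by the halting condition, the minimum-width bound $\Gamma_2(K)/(2d)$.

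The main obstacle is the per-step estimate: one has to set up the correct Minkowski-sum containment for possibly non-convex $K$ and correctly evaluate $\Gamma_2$ of a one-dimensional slice from the definitions and Theorem~\ref{thm:triangle}. Once that estimate is in hand, the greedy iteration and dimension-counting argument are routine.
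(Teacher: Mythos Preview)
Your proposal is correct and follows essentially the same approach as the paper: the paper also greedily projects out unit directions of width below $\Gamma_2(K)/(2d)$, uses the Minkowski-sum containment $K_{i-1}\shinclu K_i + L_i$ together with monotonicity and the triangle inequality from Theorem~\ref{thm:triangle}, and then sums the losses over at most $d$ steps. The only difference is cosmetic: you track the loss per step and evaluate the segment's $\Gamma_2$ exactly as $(b-a)/2$, which yields the slightly sharper $\Gamma_2(P(K))\ge \tfrac34\Gamma_2(K)$, whereas the paper uses the looser bound $\Gamma_2(L_i)\le w_{K_{i-1}}(\theta_i)$ and sums at the end to get $\Gamma_2(P(K))\ge \tfrac12\Gamma_2(K)$.
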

\begin{proof}
Let \(K_0 := K\), and consider the following procedure.
Set, initially, \(i := 1\). While there is a direction $\theta_i,
\norm{\theta_i}_2 = 1$ which is
orthogonal to \(\theta_1, \ldots, \theta_{i-1}\) (if \(i > 1\)), and is such
that $w_{K_{i-1}}(\theta_i)<\frac{\Gamma_2(K)}{2d}$, we set \(K_i\) to
be the projection of \(K_{i-1}\) orthogonal to \(\theta_i\), and set
\(i := i+1\). Continue until no such direction can be found, or until
we have made \(d\) projections, after which \(K_d\) is a
point. Suppose that this procedure terminates after \(k\le d\) projections.

Let $P(K) := K_k$ be the new set at the end of the procedure. \(P\) is
the orthogonal projection onto the subspace orthogonal to the span of
$\theta_1, \ldots, \theta_k$. From the construction it is clear that
for any \(\theta\) in the range of \(P\) such that $\|\theta\|_2 =1$, \(w_{P(K)}(\theta) \ge
\frac{\Gamma_2(K)}{2d}\), or the procedure would not have terminated. It remains to analyze \(\Gamma_2(P(K))\),
and here we use the triangle inequality for \(\Gamma_2\). Define the segment \(L_i := \left[-\frac{w_{K_{i-1}}(\theta_i)}{2}\theta_i,
\frac{w_{K_{i-1}}(\theta_i)}{2}\theta_i\right]\). Then \(K_{i-1} \shinclu K_{i} + L_i\), and, by
induction, we have
\(
K \shinclu K_k + \sum_{i=1}^k L_i.
\)
By the choice of \(\theta_i\),
\[
  \Gamma_2(L_i) \le w_{K_{i-1}}(\theta_i) \le \frac{\Gamma_2(K)}{2d},
\] and, using the monotonicity and triangle inequality properties from
Theorem~\ref{thm:triangle}, we have
\[
  \Gamma_2(K) \le \Gamma_2(K_k) + \sum_{i=1}^k \Gamma_2(L_i)
  \le \Gamma_2(P(K)) + \frac{\Gamma_2(K)}{2}.
\]
Rearranging gives us that \(\Gamma_2(P(K)) \ge
\frac{\Gamma_2(K)}{2}\). 
\end{proof}

Combining these two lemmas above we can prove a lower bound on the
\(\ell_2\) error for general $K$ that is tight as long as \(\delta\)
is sufficiently small with respect to \(d\).
\begin{theorem} \label{thm:main-ell2}
Let \(c > 0\) be a small enough absolute constant, and let $\mech$ be an
  $(\eps, \delta)$-differentially private mechanism that is unbiased
  over \(K \subseteq \R^d\). Define \(\hat \eps\) and
  \(\delta'\) as in Lemmas~\ref{lm:epsdelta-to-chi2}~and~\ref{lm:delta'}. If \(\delta' \le
  \frac{c}{n}\), then for every dataset \(X \in K^n\), either 
  $$\sqrt{\E \left[{\norm{\mech (X)-{\mu(X)}}_2^2}\right]}\gtrsim \frac{\Gamma_2(K)}{n\hat{\eps}}$$
  or there exists a dataset $X'$ neighboring with $X$ s.t. 
  $$\sqrt{\E \left[{\norm{\mech (X')-{\mu(X')}}_2^2}\right]}\gtrsim\frac{\Gamma_2(K)}{\sqrt{\delta'}nd}$$
\end{theorem}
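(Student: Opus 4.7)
The plan is to combine the projection lemma (Lemma~\ref{lm:proj}) with the one-dimensional variance bound (Lemma~\ref{lm:ADP-1d-bound}), following the same blueprint as Lemma~\ref{lm:ADP-minwidth} but carrying out the argument inside a subspace of $\R^d$ on which the projected body has non-negligible minimum width. First I would apply Lemma~\ref{lm:proj} to obtain an orthogonal projection $P:\R^d\to\R^d$ satisfying $\Gamma_2(P(K))\ge \Gamma_2(K)/2$ and
\[
  w_0 := \min_{\theta\in \Image(P),\,\|\theta\|_2=1} w_{P(K)}(\theta) \ge \frac{\Gamma_2(K)}{2d}.
\]
The key elementary observation is that for $\theta\in\Image(P)$ we have $P\theta=\theta$, whence $w_{P(K)}(\theta)=w_K(\theta)$ and $\theta^T\mech(X)=\theta^TP(\mech(X))$; in particular $\var[\theta^T\mech(X)]=\theta^T\cov(P(\mech(X)))\theta$.

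Fixing the dataset $X$, I would apply Lemma~\ref{lm:ADP-1d-bound} separately to every unit $\theta\in\Image(P)$. If some unit $\theta^*\in\Image(P)$ falls into the $\sqrt{\delta'}$-alternative of that lemma, then the corresponding neighbor $X'$ of $X$ satisfies, by Cauchy-Schwarz together with the unbiasedness of $\mech$,
\[
  \sqrt{\E\bigl[\|\mech(X')-\mu(X')\|_2^2\bigr]} \ge \sqrt{\var[\theta^{*T}\mech(X')]} \gtrsim \frac{w_{P(K)}(\theta^*)}{n\sqrt{\delta'}} \ge \frac{w_0}{n\sqrt{\delta'}} = \frac{\Gamma_2(K)}{2nd\sqrt{\delta'}},
\]
yielding the second conclusion. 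Otherwise, for every unit $\theta\in\Image(P)$ one has $\sqrt{\theta^T\Sigma'\theta}\gtrsim w_{P(K)}(\theta)/(n\hat\eps)$ with $\Sigma':=\cov(P(\mech(X)))$. I would conclude by essentially repeating the proof of Lemma~\ref{lm:cov-containment} inside $\Image(P)$: since $\Sigma'=P\Sigma' P$ vanishes on $\Image(P)^\perp$, the support function of the ellipsoid $E:=\sqrt{\Sigma'}B_2^d$ factors through $P$, and so does the support function of $P(K)-v$ for any $v\in P(K)$; it follows that $P(K)\shinclu (n\hat\eps/c)\,E$ and therefore $\Gamma_2(P(K))^2\lesssim (n\hat\eps)^2\,\tr(\Sigma')$. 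Combining $\tr(\Sigma')\le\E[\|\mech(X)-\mu(X)\|_2^2]$ (which uses only unbiasedness of $\mech$ and that $P$ is a contraction) with $\Gamma_2(P(K))\ge\Gamma_2(K)/2$ then gives the first conclusion.

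The main conceptual obstacle is that one cannot simply invoke Lemma~\ref{lm:ADP-minwidth} on $P\circ\mech$ viewed as a mechanism over $P(K)$, because the natural input space of $\mech$ is $K^n$ rather than $P(K)^n$, and there is no canonical lifting of a dataset from $P(K)^n$ back to $K^n$ that also agrees with the specific $X$ under consideration. Restricting the one-dimensional variance lemma to directions $\theta\in\Image(P)$, along which the identity $\theta^T\mech(X)=\theta^TP(\mech(X))$ holds automatically, is the device that lets the proof go through without constructing any such lifting.
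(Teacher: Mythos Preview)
Your proof is correct but takes a different route from the paper. The paper \emph{does} construct the lifting you describe as unavailable: for each $y\in P(K)$ it fixes a preimage $f(y)\in K$ with $P(f(y))=y$, choosing $f$ so that $f(P(x_i))=x_i$ for each point of the fixed dataset $X$; then $\mech'(\tilde X):=P(\mech(f(\tilde X)))$ is an $(\eps,\delta)$-differentially private unbiased mechanism on $P(K)^n$ whose $\ell_2$ error on $P(X)$ (and on any neighbor $P(X)'$) is bounded by that of $\mech$ on $X$ (respectively, on the lifted neighbor $f(P(X)')$), so Lemma~\ref{lm:ADP-minwidth} applies as a black box. Your approach bypasses this construction by applying Lemma~\ref{lm:ADP-1d-bound} directly to $\mech$ only along directions $\theta\in\Image(P)$ and re-running the proof of Lemma~\ref{lm:cov-containment} inside $\Image(P)$. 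This is slightly more self-contained and sidesteps a minor edge case in the lifting (when two distinct points of $X$ share a projection), at the cost of unpacking Lemma~\ref{lm:ADP-minwidth} rather than citing it. Your diagnosis of the ``obstacle'' is mistaken, though: the lifting need not be canonical, merely chosen once compatibly with the given $X$, and this is exactly what the paper does.
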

\begin{proof}
  The key observation is that we can define an $(\eps, \delta)$-differentially
  private mechanism that's unbiased over \(P(K)\) using \(\mech\), so
  that the \(\ell_2\) error does not increase. To do so, we can fix,
  for any \(x \in P(K)\) a preimage \(f(x)\) so that \(P(f(x)) =
  x\). Then we apply \(f\) pointwise to any dataset \(\tilde{X} := (\tilde{x}_1,
  \ldots, \tilde{x}_n) \in (P(K))^n\) to get \(f(\tilde{X}) := (f(\tilde{x}_1), \ldots,
  f(\tilde{x}_n))\) in \(K^n\) so that \(P(f(\tilde{X})):= (P(f(\tilde{x}_1)), \ldots,
  P(f(\tilde{x}_n))) = \tilde{X}\).
  Moreover, for a fixed dataset \(X \in K^n\), we can make sure
  that \(f(P(X)) = X\). Then, given \(\mech\), we define
  \(\mech'(\tilde{X}) := P(\mech(f(\tilde{X})))\). Since \(f\) maps
  neighboring datasets to neighboring datasets, and \(\mech'\) is a
  postprocessing of \(\mech(f(\tilde{X}))\), \(\mech'\) is \((\eps,
  \delta)\)-differentially private.

  Because
  orthogonal projection does not increase the \(\ell_2\) norm, and
  since we ensured \(f(P(X)) = X\), we have
  \[
    \sqrt{\E \left[{\norm{\mech(X)-{\mu(X)}}_2^2}\right]}
    \ge
    \sqrt{\E \left[{\norm{P(\mech (X))-{P(\mu(X))}}_2^2}\right]}
    =
    \sqrt{\E \left[{\norm{\mech'(P(X)))-{\mu(P(X))}}_2^2}\right]}.
  \]
  An analogous analysis works for a dataset \(X'\) that is neighboring
  to \(X\).
  
  It is clear that $\Image(P)$ is isometric with $\R^{d-k}$, both
  endowed with the \(\ell_2\) metric. The theorem then follows from
  Lemmas~\ref{lm:ADP-minwidth}~and~\ref{lm:proj}. 
\end{proof}

\subsection{High-dimensional Lower Bound for $\ell_p$, $p > 2$}

To prove a lower bound for the non-Euclidean case \(\ell_p\), \(p >
2\), we reduce to the \(\ell_2\) case.  We do so via Lemma~\ref{lm:Gammap-via-Gamma2}.

\begin{theorem}\label{thm:main-ellp}
  Let \(c > 0\) be a small enough absolute constant, let \(p \in [2,
  \infty]\), and let $\mech$ be an
  $(\eps, \delta)$-differentially private mechanism that is unbiased
  over \(K \subseteq \R^d\). Define \(\hat \eps\) and
  \(\delta'\) as in Lemmas~\ref{lm:epsdelta-to-chi2}~and~\ref{lm:delta'}. If \(\delta' \le
  \frac{c}{n}\), then for every dataset \(X \in K^n\), either 
  $$\sqrt{\E \left[{\norm{\mech (X)-{\mu(X)}}_p^2}\right]}\gtrsim \frac{\Gamma_p(K)}{n\hat{\eps}}$$
  or there exists a dataset $X'$ neighboring with $X$ s.t. 
  $$\sqrt{\E \left[{\norm{\mech (X')-{\mu(X')}}_p^2}\right]}\gtrsim\frac{\Gamma_p(K)}{\sqrt{\delta'}nd}$$
\end{theorem}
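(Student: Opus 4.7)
}
The plan is to reduce to the $\ell_2$ lower bound (Theorem~\ref{thm:main-ell2}) via the diagonal rescaling duality provided by Lemma~\ref{lm:Gammap-via-Gamma2}. For $p=2$ the claim is exactly Theorem~\ref{thm:main-ell2}, so assume $p\in(2,\infty]$ and set $q=p/(p-2)$. Using Lemma~\ref{lm:Gammap-via-Gamma2}, I will pick a diagonal $D\succeq 0$ with $\trp{q}(D^2)=1$ and $\Gamma_2(DK)=\Gamma_p(K)$, and after a perturbation $D\mapsto D+\rho I$ renormalized so $\trp{q}$ stays $1$, I may assume $D\succ 0$; continuity of $\Gamma_2((\cdot)K)$ in the perturbation means any loss is absorbed into the hidden constant as $\rho\to 0$.

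For such an invertible $D$, I will construct an $(\eps,\delta)$-differentially private mechanism $\tilde{\mech}$ that is unbiased over $DK$ by setting $\tilde{\mech}(Y):=D\,\mech(D^{-1}Y)$, with $D^{-1}$ applied componentwise to the data points of $Y\in(DK)^n$. Componentwise $D^{-1}$ sends neighbors to neighbors in $K^n$ and $D$ is post-processing, so $\tilde{\mech}$ inherits $(\eps,\delta)$-DP; and $\E[\tilde{\mech}(Y)]=D\mu(D^{-1}Y)=\mu(Y)$, so it is unbiased. Applying Theorem~\ref{thm:main-ell2} to $\tilde{\mech}$ at the dataset $DX:=(Dx_1,\ldots,Dx_n)\in(DK)^n$ yields either
\[
\sqrt{\E\|\tilde{\mech}(DX)-\mu(DX)\|_2^2}\gtrsim \frac{\Gamma_2(DK)}{n\hat{\eps}}=\frac{\Gamma_p(K)}{n\hat{\eps}},
\]
or the analogous bound $\Gamma_p(K)/(\sqrt{\delta'}nd)$ at some neighbor $Y'\in(DK)^n$ of $DX$; writing $Y'=DX'$ with $X':=D^{-1}Y'\in K^n$, the invertibility of $D$ guarantees $X'$ is a neighbor of $X$.

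The final step is to translate these $\ell_2$ bounds on $\tilde{\mech}$ into $\ell_p$ bounds on $\mech$. For any $Z\in K^n$ the construction gives the identity $\tilde{\mech}(DZ)-\mu(DZ)=D\bigl(\mech(Z)-\mu(Z)\bigr)$, and H\"older's inequality with conjugate exponents $q$ and $p/2$ (so that $1/q+2/p=1$) yields, for every $v\in\R^d$,
\[
\|Dv\|_2^2=\sum_{i=1}^d D_{ii}^2 v_i^2 \le \left(\sum_{i=1}^d D_{ii}^{2q}\right)^{1/q}\!\left(\sum_{i=1}^d |v_i|^p\right)^{2/p}=\trp{q}(D^2)\,\|v\|_p^2=\|v\|_p^2,
\]
so $\|D(\mech(Z)-\mu(Z))\|_2\le \|\mech(Z)-\mu(Z)\|_p$ pointwise. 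Combining this with the two alternatives above, applied at $Z=X$ and $Z=X'$ respectively, yields the two conclusions of the theorem.

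The step I expect to require the most care is the possibility that the extremal $D$ is singular, in which case $D^{-1}Y$ is literally undefined and $DK$ lives in a proper subspace. The perturbation $D\mapsto D+\rho I$ above is the clean fix, but an alternative that avoids the limiting argument entirely is to replace $Y\mapsto D^{-1}Y$ by a position-dependent preimage map $f=(f_1,\ldots,f_n)$ where each $f_i:DK\to K$ satisfies $Df_i(y)=y$ and is chosen with $f_i(Dx_i)=x_i$ for the fixed dataset $X$; this map preserves the neighboring relation coordinatewise and guarantees $f(DX)=X$, which is all the argument uses.
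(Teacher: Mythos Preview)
Your proposal is correct and follows essentially the same route as the paper: choose the optimal diagonal $D$ from Lemma~\ref{lm:Gammap-via-Gamma2}, build an $(\eps,\delta)$-DP unbiased mechanism over $DK$ by pre-composing with a preimage map and post-composing with $D$, apply Theorem~\ref{thm:main-ell2} at $DX$, and transfer the $\ell_2$ error back to $\ell_p$ via the H\"older inequality $\|Dv\|_2\le\|v\|_p$. The paper skips the perturbation and uses the preimage map $f:DK\to K$ directly; your position-dependent variant $f=(f_1,\ldots,f_n)$ with $f_i(Dx_i)=x_i$ is in fact slightly cleaner than the paper's single $f$, since it sidesteps the (minor) issue that a single coordinate-independent $f$ cannot satisfy $f(DX)=X$ when $Dx_i=Dx_j$ but $x_i\ne x_j$.
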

\begin{proof}
  Let \(D\) achieve
  the maximum on the right hand side of equation \eqref{eq:Gammap-via-Gamma2}
  in Lemma~\ref{lm:Gammap-via-Gamma2}. Similarly to the proof of
  Theorem~\ref{thm:main-ell2}, we will construct a mechanism
  \(\mech'\) over \(DK\) from \(\mech\). We can define a function \(f:DK \to
  K\) such that \(Df(x) = x\) for any \(x \in DK\), and extend it to datasets \(\tilde{X} =
  (\tilde{x}_1, \ldots, \tilde{x}_n)\) by
  defining \(f(\tilde{X}) := (f(\tilde{x}_1), \ldots, f(\tilde{x}_n))\). We can also make
  sure that \(f(DX) = X\) for the dataset \(X\) in the statement of
  the theorem. Then we define the \((\eps, \delta)\)-differentially
  private mechanism \(\mech'(\tilde{X}) := D\mech(f(\tilde{X}))\).

  On the one hand, since \(\trp{q}(D^2) \le 1\) for \(q :=
  \frac{p}{p-2}\), we have, by H\"older's inequality, that
  \[
    \E \left[\norm{\mech' (DX)-\mu(DX)}_2^2\right]
    =
    \E \left[\norm{D(\mech(X)-\mu(X))}_2^2\right]
    \le
    \E \norm{\mech(X) - \mu(X)}_p^2.
  \]
  Analogously, for any dataset \(X'' \in (DK)^n\) that is neighboring to
  \(DX\), the dataset \(X' := f(X'')\), which is neighboring to
  \(X\), satisfies
  \[
    \E \left[\norm{\mech' (X'')-\mu(X'')}_2^2\right]
    =
    \E \left[\norm{D(\mech(X')-\mu(X'))}_2^2\right]
    \le
    \E \norm{\mech(X') - \mu(X')}_p^2.
  \]
  On the other hand, we have that \(\Gamma_2(DK) =
  \Gamma_p(K)\). Now, applying Theorem~\ref{thm:main-ell2} to \(DK\),
  \(\mech'\), and the dataset \(DX\) finishes the proof. 
\end{proof}

Theorem~\ref{thm:main} in the Introduction follows from
Theorems~\ref{thm:main-ell2}~and~\ref{thm:main-ellp}.

\subsection{A Counterexample}
\label{sect:counterexample}

The final theorem of the section shows why we cannot prove a lower
bound on the error for every possible input for approximately
differentially private mechanisms. 
\begin{theorem}
    For any $\delta$, any \(K \subseteq \R^d\), and any \(X_0 \in K^n\), there exists a
    $(0, \delta)$-differentially private mechanism that is unbiased
    over \(K\) and  achieves $0$ error on \(X_0\).
\end{theorem}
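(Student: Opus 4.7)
The plan is to exhibit the mechanism explicitly, mimicking the informal description given near the statement of Theorem~\ref{thm:main}. On the designated input \(X_0\), the mechanism will output \(\mu(X_0)\) deterministically, and on every other input \(X\) it will output \(\mu(X_0)\) with probability \(1-\delta\) and a single correcting value with probability \(\delta\) chosen to make the mechanism unbiased. Concretely, for each \(X \neq X_0\) set
\[
z(X) \;:=\; \frac{\mu(X) - (1-\delta)\mu(X_0)}{\delta},
\]
and define \(\mech(X_0) = \mu(X_0)\) almost surely, and, for \(X \neq X_0\), \(\mech(X) = \mu(X_0)\) with probability \(1-\delta\), and \(\mech(X) = z(X)\) with probability \(\delta\). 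By construction, \(\mech\) has zero error on \(X_0\), and a one-line computation shows \(\E[\mech(X)] = (1-\delta)\mu(X_0) + \delta z(X) = \mu(X)\) for every \(X\), so \(\mech\) is unbiased over \(K\).

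For the privacy analysis, I would separately handle two kinds of neighboring pairs \((X,X')\). First, if both \(X\) and \(X'\) differ from \(X_0\), then for any measurable event \(S\) we can write
\[
\Pr[\mech(X)\in S] - \Pr[\mech(X')\in S]
= \delta\bigl(\mathbf 1[z(X)\in S] - \mathbf 1[z(X')\in S]\bigr),
\]
because the common atom at \(\mu(X_0)\) contributes identically to both sides. This expression is bounded in absolute value by \(\delta\), which yields the \((0,\delta)\)-privacy guarantee in this case. Second, if one of the two datasets is \(X_0\), say \(X = X_0\), then for any \(S\),
\(
\Pr[\mech(X_0)\in S] - \Pr[\mech(X')\in S] = \delta\,\mathbf{1}[\mu(X_0)\in S] - \delta\,\mathbf{1}[z(X')\in S],
\)
again of absolute value at most \(\delta\). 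Combining these two cases covers all neighboring pairs and completes the proof of \((0,\delta)\)-differential privacy.

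There is no real obstacle here: the only care needed is to notice that the construction uses a \emph{single} ``backup'' point \(\mu(X_0)\) that does not depend on \(X\), so the two output distributions for any two neighbors share the same atom at \(\mu(X_0)\) with weights differing by at most \(\delta\), and differ only on a set of total mass \(\delta\). Were the distribution of \(\mech(X)\) to be spread over multiple data-dependent correcting values, the differences at individual atoms \(z(X)\) versus \(z(X')\) would not telescope so cleanly, which is the pitfall the construction avoids.
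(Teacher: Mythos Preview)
Your proposal is correct and follows essentially the same construction as the paper: output \(\mu(X_0)\) with probability \(1-\delta\) and the correcting value \(z(X)=\frac{\mu(X)-(1-\delta)\mu(X_0)}{\delta}\) with probability \(\delta\). The only cosmetic difference is that the paper applies this single formula to \emph{all} inputs, including \(X_0\) (where \(z(X_0)=\mu(X_0)\), so the output is deterministic anyway), and then observes that any two output distributions are within total variation distance \(\delta\); this avoids your case split on whether one of the neighbors equals \(X_0\), but the content is identical.
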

\begin{proof}
  We define the following mechanism:
    \begin{equation*}
\mech_{X_0}(X) = \begin{cases}
\mu(X_0) &\text{with probability $1-{\delta}$}\\
\frac{\mu(X)-(1-\delta)\mu(X_0))}{\delta} &\text{with probability ${\delta}$}
\end{cases}
\end{equation*}
It is clear that $\mech_{X_0}(X_0)=\mu(X_0)$, thus it achieves $0$ error at input $X_0$.\\
It is unbiased since $$\mu(X_0)(1-\delta)+\frac{\mu(X)-(1-{\delta})\mu(X_0)}{\delta}\cdot {\delta}=\mu(X)$$
It satisfies $(0, \delta)$-differential privacy since for any two datasets $X, X'$, the
total variation distance between \(\mech_{X_0}(X)\) and
\(\mech_{X_1}(X)\) is easily verified to be \(\delta\).    
\end{proof}

\section{Lower Bound for Local Differential Privacy}\label{sect:ldp}

Our lower bounds for local differential privacy follow from
Lemmas~\ref{lm:1-to-d}~and~\ref{lm:puredp-var} and a slight modification of Lemma 3.1
from~\cite{asi2022optimal}, which we state next.
\begin{lemma}\label{lm:canonical}
    For any $\varepsilon$-locally differentially private mechanism
    $\mech=(\mathcal{R}_1, \ldots, \mathcal{R}_n,\mathcal{A})$ that is
    unbiased over \(K\subseteq \R^d\), for any dataset
    $X \in K^n$, there
    exist  $\varepsilon$-differentially private local
    randomizers $\hatcar_i$ for every $i\in [n]$ such that each
    \(\hatcar_i\) is unbiased over \(K\), and for all $\theta\in \R^d$,
    $$\sqrt{\var[\theta^T\mech(X)]}\geq
    \frac{1}{n}\sqrt{\sum_{i=1}^n{\var[\theta^T\hatcar_i(x_i)]}}.$$
\end{lemma}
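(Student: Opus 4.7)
The plan is to construct each $\hatcar_i$ as a post-processing of $\mathcal{R}_i$ chosen so that (a) it depends on the input $y$ only through $\mathcal{R}_i(y)$ (hence remains an $\eps$-DP local randomizer), (b) it is unbiased for $y$ at every $y\in K$, and (c) its variance in direction $\theta$ equals, up to the factor $n^2$, the first-order Hoeffding projection of $\theta^T\mech(X)$ onto $\mathcal{R}_i(x_i)$. Concretely, for each $i$, let $R_j \sim \mathcal{R}_j(x_j)$ for $j\ne i$ be independent copies using fresh randomness, and for $y \in K$ write $X_{i\to y} := (x_1,\ldots,x_{i-1},y,x_{i+1},\ldots,x_n)$. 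Define
\[
\hatcar_i(y) := n\,\mathbb{E}\bigl[\mathcal{A}(R_1,\ldots,R_{i-1},\mathcal{R}_i(y),R_{i+1},\ldots,R_n) \,\big|\, \mathcal{R}_i(y)\bigr] \;-\; \sum_{j\ne i} x_j,
\]
where the expectation integrates out the $R_j$ for $j\ne i$ and any internal randomness of $\mathcal{A}$. This is a function of $\mathcal{R}_i(y)$ alone, so $\hatcar_i$ is an $\eps$-DP local randomizer by post-processing. Unbiasedness follows from the tower property: for every $y\in K$, the inner conditional expectation has unconditional mean $\E[\mech(X_{i\to y})] = \mu(X_{i\to y}) = \tfrac{1}{n}(y+\sum_{j\ne i}x_j)$, since $\mech$ is unbiased on $K^n$; multiplying by $n$ and subtracting $\sum_{j\ne i}x_j$ leaves $y$.

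For the variance bound, fix $\theta$, set $Y := \theta^T\mech(X)$, and put $Y_i := \E[Y \mid \mathcal{R}_i(x_i)]$. Then $\theta^T\hatcar_i(x_i) = nY_i - \sum_{j\ne i}\theta^T x_j$, so $\var[\theta^T \hatcar_i(x_i)] = n^2 \var[Y_i]$. Since the randomizers have independent randomness, each $Y_i - \E[Y]$ is a function of $\mathcal{R}_i(x_i)$ alone, making the variables $Y_1-\E[Y],\ldots,Y_n-\E[Y]$ mutually independent and mean-zero. A short tower-property calculation shows that the residual $H := Y - \E[Y] - \sum_i (Y_i-\E[Y])$ is uncorrelated with each $Y_i$: for any $i$, $\E[Y\cdot Y_i] = \E[Y_i^2]$ by the tower property, and $\E[Y_j Y_i] = \E[Y_j]\E[Y_i]$ for $j\ne i$ by independence, which together yield $\E[H\,(Y_i - \E[Y])] = 0$. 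Therefore
\[
\var[Y] \;=\; \sum_{i=1}^n \var[Y_i] \;+\; \var[H] \;\ge\; \sum_{i=1}^n \var[Y_i] \;=\; \frac{1}{n^2}\sum_{i=1}^n \var[\theta^T \hatcar_i(x_i)],
\]
and taking square roots yields the inequality in the lemma.

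The main step to get right is the choice of the post-processing: it must simultaneously be a valid \emph{local} randomizer (depending on $y$ only through $\mathcal{R}_i(y)$), remain unbiased for every $y\in K$ and not only at the particular data point $x_i$, and scale so that its variance equals $n^2$ times a Hoeffding component of $\var[Y]$. Taking the conditional expectation of $n\,\mech$ applied to the $y$-replaced dataset given $\mathcal{R}_i(y)$, together with the compensating constant $-\sum_{j\ne i} x_j$, achieves all three; the variance inequality then collapses to the standard orthogonality of first-order ANOVA components for functions of independent random variables.
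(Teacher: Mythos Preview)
Your construction of $\hatcar_i$ is identical to the paper's, and the privacy and unbiasedness verifications match. The only difference is in how you prove the variance inequality: the paper sets up the Doob martingale $\hatcar_{\le i} := n\,\E[\mech(X)\mid \mathcal{R}_1(x_1),\ldots,\mathcal{R}_i(x_i)] - \sum_j x_j$, uses orthogonality of martingale increments to write $n^2\var[\theta^T\mech(X)] = \sum_i \E[(\theta^T\hatcar_{\le i}-\theta^T\hatcar_{\le i-1})^2]$ exactly, and then applies Jensen's inequality (conditioning further on $\mathcal{R}_i(x_i)$ alone) to lower bound each increment by $\var[\theta^T\hatcar_i(x_i)]$. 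You instead go straight to the first-order Hoeffding/Efron--Stein projections $Y_i = \E[Y\mid \mathcal{R}_i(x_i)]$ and use their mutual independence plus orthogonality to the residual $H$. Both arguments are correct; yours is slightly more direct here since it avoids the intermediate martingale and the Jensen step, while the paper's martingale route would generalize more readily to sequentially interactive protocols where the $\mathcal{R}_i$ are not independent.
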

\begin{proof}
  Define $\hatcar_i$ to be
  $$\hatcar_i(x_i'):=n\E\Braket{\mathcal{A}(\mathcal{R}_1(x_1),\ldots,\mathcal{R}_i(x_i'),\ldots,\mathcal{R}_n(x_n))|\mathcal{R}_i(x'_i)}-\sum_{j\neq
    i}{x_j}.$$
  I.e., to compute \(\hatcar_i(x'_i)\), we run all other randomizers
  with their input from \(X\), and average over all random choices
  except those of \(\mathcal{R}_i\).
  For every $i$, $\hatcar_i$ is $\eps$-differentially private, since
  it is a post-processing of $\mathcal{R}_i$. Also, because \(\mech\)
  is unbiased, we have 
  \(\E[\hatcar_i(x_i')]=x_i',\)
  i.e., \(\hatcar_i\) is also unbiased over \(K\). 
  
  Now we define
    $$\hatcar_{\leq i}:=n\E\left[\mathcal{A}(\mathcal{R}_1(x_1),\ldots,\mathcal{R}_n(x_n))|\mathcal{R}_1(x_1),\ldots,\mathcal{R}_i(x_i)\right]-\sum_{j=1}^n{x_j}.$$
    We have that $\hatcar_{\leq 0}=0$ because \(\mech\) is
    unbiased. Moreover, the random variables \(\hatcar_{\leq i}\) form
    a martingale sequence by the law of total expectation. This means
    that, for any \(\theta \in \R^d\), \(\theta^T \hatcar_{\le 0},
    \ldots, \theta^T\hatcar_{\le n}\) is also a martingale sequence,
    and, therefore
    \begin{align}
    n^2\var[\theta^T\mech(X)] 
      =\E\left[\braket{\theta^T\hatcar_{\leq n}}^2\right]
      =\sum_{i=1}^n{\E\Braket{\braket{\theta^T\hatcar_{\leq i}-\theta^T\hatcar_{\leq i-1}}^2}}.\label{eq:ldp-martingale}
    \end{align}
    Applying the law of total expectation and Jensen's inequality to
    the right hand side, we get
    \begin{align}
      \E\Braket{\braket{\theta^T\hatcar_{\leq i}-\theta^T\hatcar_{\leq
      i-1}}^2}
      &= \E\Braket{\E\Braket{\braket{\theta^T\hatcar_{\leq i}-\theta^T\hatcar_{\leq i-1}}^2|\mathcal{R}_{i}(x_i)}}\notag\\
      &\ge \E\Braket{\E\Braket{\braket{\theta^T\hatcar_{\leq
        i}-\theta^T\hatcar_{\leq i-1}}|\mathcal{R}_{i}(x_i)}^2}\notag\\
      &= \E\Braket{\braket{\theta^T\hatcar_{i}(x_i)-\theta^Tx_i}^2}\label{eq:ldp-jensen}\\
      &= {\var[\theta^T\hatcar_i(x_i)]}.\label{eq:ldp-final}
    \end{align}
    The equality \eqref{eq:ldp-jensen} follows because
    \(\E[\hatcar_{\le i}|\mathcal{R}_i(x_i)] = \hatcar_i(x_i) - x_i\)
    by definition, and because     \[\E[\hatcar_{\le
      i-1}|\mathcal{R}_i(x_i)] = \E[\hatcar_{\le i-1}] = 0.\]
    The proof of the lemma is now completed by plugging
    \eqref{eq:ldp-final} into \eqref{eq:ldp-martingale}.
  \end{proof}
  
With the lemma we just proved above, and Lemmas~\ref{lm:1-to-d}~and~\ref{lm:puredp-var},  we have the following theorem.
\begin{theorem}
  For any \(\eps\)-locally differentially private mechanism \(\mech\) that is
  unbiased over \(K\subseteq \R^d\), and any dataset \(X
  \in K^n\), its error is
  bounded below as
  $$\sqrt{\E \left[{\norm{\mech (X)-{\mu(X)}}_p^2}\right]}\gtrsim \frac{\Gamma_p(K)}{\sqrt{n}e^{-0.5\varepsilon}(e^\varepsilon-1)}.$$    
\end{theorem}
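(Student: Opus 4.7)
The plan is to combine the three ingredients already assembled in the paper: the framework of Lemma~\ref{lm:1-to-d} that reduces an $\ell_p$ lower bound on the error to a uniform lower bound on the directional variances $\var[\theta^T \mech(X)]$, the pure-DP one-dimensional variance bound of Lemma~\ref{lm:puredp-var}, and the martingale/canonical-randomizer decomposition of Lemma~\ref{lm:canonical}. The scheme is to fix an arbitrary direction $\theta \in \R^d$, lower bound the directional variance of $\mech(X)$ by a sum of the directional variances of single-point unbiased randomizers, bound each of those via the Hammersley--Chapman--Robins argument for pure local DP, and then apply Lemma~\ref{lm:1-to-d}.

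Concretely, I would proceed as follows. First, fix $\theta \in \R^d$ and an input dataset $X = (x_1,\ldots,x_n) \in K^n$. Invoke Lemma~\ref{lm:canonical} to produce $\eps$-differentially private local randomizers $\hatcar_1,\ldots,\hatcar_n$, each unbiased over $K$, satisfying
\[
  n^2\,\var[\theta^T\mech(X)] \;\ge\; \sum_{i=1}^n \var[\theta^T\hatcar_i(x_i)].
\]
Next, view each $\hatcar_i$ as an $\eps$-differentially private unbiased mechanism over $K$ run on a dataset of size $n=1$ (namely $(x_i)$). Lemma~\ref{lm:puredp-var} applied with $n=1$ and the same direction $\theta$ yields
\[
  \var[\theta^T\hatcar_i(x_i)] \;\gtrsim\; \frac{w_K(\theta)^2}{e^{-\eps}(e^\eps-1)^2}
\]
for each $i$. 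Summing and dividing by $n^2$ gives
\[
  \var[\theta^T\mech(X)] \;\gtrsim\; \frac{w_K(\theta)^2}{n\,e^{-\eps}(e^\eps-1)^2},
\]
so $\sqrt{\var[\theta^T\mech(X)]} \gtrsim \frac{w_K(\theta)}{\sqrt{n}\,e^{-0.5\eps}(e^\eps-1)}$ uniformly in $\theta$.

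Finally, since this one-dimensional lower bound holds for every $\theta \in \R^d$ with the same absolute constant $c = \Omega\bigl(1/(\sqrt{n}\,e^{-0.5\eps}(e^\eps-1))\bigr)$, Lemma~\ref{lm:1-to-d} immediately upgrades it to the desired $\ell_p$ bound
\[
  \sqrt{\E\bigl[\norm{\mech(X)-\mu(X)}_p^2\bigr]} \;\gtrsim\; \frac{\Gamma_p(K)}{\sqrt{n}\,e^{-0.5\eps}(e^\eps-1)},
\]
which is exactly the claim of the theorem. There is essentially no main obstacle once one has Lemmas~\ref{lm:1-to-d}, \ref{lm:puredp-var}, and \ref{lm:canonical}: the only subtle point is to notice that the conclusion of Lemma~\ref{lm:puredp-var} holds with a single absolute constant that does not depend on $\theta$, so that the hypothesis of Lemma~\ref{lm:1-to-d} is met simultaneously for every direction. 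The improvement from the factor $1/n$ in the central-DP bound (Theorem~\ref{thm:PDP-bound}) to $1/\sqrt{n}$ in the local-DP setting comes entirely from the $\sqrt{n}$-style gain produced by the variance decomposition in Lemma~\ref{lm:canonical}, reflecting the well-known quadratic gap between central and local differential privacy.
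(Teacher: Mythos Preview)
Your proposal is correct and follows precisely the same route as the paper: combine Lemma~\ref{lm:canonical} with Lemma~\ref{lm:puredp-var} (applied to each $\hatcar_i$ as a single-sample mechanism) to obtain the uniform directional variance bound, and then invoke Lemma~\ref{lm:1-to-d}. The paper's proof is just a terser statement of exactly this argument.
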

\begin{proof}
  By Lemmas~\ref{lm:puredp-var}~and~\ref{lm:canonical}, for any
  \(\theta \in \R^d\),
  \[
    \sqrt{\var[\theta^T \mech(X)}] \gtrsim
    \frac{w_K(\theta)}{\sqrt{n}e^{-0.5\varepsilon}(e^\varepsilon-1)}. 
  \]
  Now the theorem follows from Lemma~\ref{lm:1-to-d}.
\end{proof}

Note that this theorem nearly matches the lower bound in
Theorem~\ref{thm:ub-ldp} for small \(\eps\).

\section{Distributionally Unbiased Mechanisms}
\label{sect:dist}

While results so far were stated for mechanisms that are unbiased with
respect to their input dataset, they can easily be extended to
mechanisms that take i.i.d.~samples from an unknown distribution \(P\)
over \(K\) and are unbiased with respect to the true mean of
\(P\). Let us define such distributionally unbiased mechanisms.

\begin{definition}
  A mechanism \(\mech\) that takes as input datasets of \(n\) points
  in some bounded domain \(K\subseteq \R^d,\) and
  outputs a vector in \(\R^d\) is distributionally unbiased
  over \(K\subseteq \R^d\) if, for every probability distribution \(P\) over \(K\),
  when \(X\in K^n\) is drawn according to the product distribution
  \(P^n\), we have
  \(
  \E[\mech(X)] = \mu(P),
  \)
  where the expectation is both over the randomness of \(\mech\) and
  the randomness of picking \(X\), and \(\mu(P)\) denotes the mean of \(P\).
\end{definition}

We can reduce proving lower bounds against distributionally unbiased
mechanisms to proving lower bounds against (empirically) unbiased
mechanisms. To this end, let us restate Lemma 6.1
from~\cite{karwa2017finite}.

\begin{lemma}\label{lm:karwavadhan}
  Suppose \(P\) and \(Q\) are two probability distribution on a common
  domain \(K\) with total
  variation distance \(\beta\). Let \(\mech\) be an \((\eps,
  \delta)\)-differentially private mechanism taking inputs from \(K\).
  Define \(\mech_P := \mech(X)\)  when \(X\) is drawn from
  \(P^n\), and \(\mech_Q := \mech(X)\) when \(X\) is drawn from \(Q^n\). Then, for every event \(E\)
  in the range of \(\mech\), we have
  \[
    \Pr[\mech_P \in E] \le e^{\eps'} \Pr[\mech_Q \in E] + \delta',
  \]
  where \(\eps':= 6\eps n \beta\), and \(\delta' := 4e^{\eps'} n\delta \beta\).
\end{lemma}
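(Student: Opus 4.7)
}
The plan is to couple the product distributions $P^n$ and $Q^n$ using the optimal coupling of $P$ and $Q$, reduce the problem to comparing $\mech$ on two datasets whose Hamming distance is typically small, and then apply group privacy coordinate-by-coordinate. Concretely, since the total variation distance between $P$ and $Q$ is $\beta$, there is a coupling $\pi$ of $P$ and $Q$ on $K\times K$ with $\Pr_\pi[X\neq Y]=\beta$; taking $n$ independent draws from $\pi$ produces a joint distribution on datasets $(X,Y)$ with marginals $P^n$ and $Q^n$, in which the number of disagreeing coordinates $K$ is distributed as $\mathrm{Bin}(n,\beta)$ with mean $n\beta$.

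Next, I would invoke the standard group privacy property of $(\eps,\delta)$-differential privacy: for any two datasets at Hamming distance $k$, the output distributions of $\mech$ are $(k\eps, k\,e^{k\eps}\delta)$-indistinguishable (this follows by iterating the $(\eps,\delta)$-DP guarantee $k$ times along a path of neighbors). Fixing the threshold $K_0 := 6n\beta$ so that $K_0\eps = \eps'$, I would then argue as follows for any measurable event $E$:
\begin{align*}
  \Pr[\mech_P\in E]
  &= \Pr[\mech(X)\in E,\,K\le K_0] + \Pr[\mech(X)\in E,\,K> K_0]\\
  &\le \E_{X,Y}\bigl[\mathbf{1}[K\le K_0]\Pr_\mech[\mech(X)\in E\mid X,Y]\bigr] + \Pr[K>K_0].
\end{align*}
Applying group privacy pointwise inside the first expectation and using $K\le K_0$ on this event yields $e^{\eps'}\Pr[\mech_Q\in E] + K_0\,e^{\eps'}\delta$.

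The remaining step, which is the delicate part, is to absorb $\Pr[K>K_0]$ into $\delta'=4e^{\eps'}n\delta\beta$. Here I would combine two observations: (i) a multiplicative Chernoff bound gives $\Pr[K>6n\beta]\le e^{-\Omega(n\beta)}$ whenever $n\beta$ is at least a constant, and (ii) for the small-$n\beta$ regime, a direct union-bound estimate $\Pr[K\ge k]\le \binom{n}{k}\beta^k\le (en\beta/k)^k$ can be invoked. Combining these with the $K_0\,e^{\eps'}\delta = 6n\beta e^{\eps'}\delta$ term from group privacy, and possibly reoptimizing the threshold within a constant factor, should yield the claimed $\delta' = 4e^{\eps'}n\delta\beta$. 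I expect the main obstacle to be exactly this bookkeeping: matching the constant $4$ requires picking the threshold carefully (and perhaps using the sharper group-privacy bound $\frac{e^{k\eps}-1}{e^\eps-1}\delta$ rather than $k\,e^{k\eps}\delta$), but conceptually the argument is a direct coupling-plus-group-privacy reduction, and since the lemma is restated verbatim from~\cite{karwa2017finite} the precise constants can be imported from there.
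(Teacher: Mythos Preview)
The paper does not prove this lemma; it restates Lemma~6.1 from Karwa--Vadhan and cites the reference, so there is no in-paper proof to compare against. Independently of that, your sketch has a genuine gap, not merely a constants issue.

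The problem is the tail term $\Pr[K > K_0]$. This quantity, with $K\sim\mathrm{Bin}(n,\beta)$ and $K_0=6n\beta$, is a fixed positive number that does \emph{not} depend on $\delta$. The target $\delta' = 4e^{\eps'}n\delta\beta$, by contrast, is linear in $\delta$. Take $\delta=0$: then $\delta'=0$, yet your decomposition still leaves the additive $\Pr[K>K_0]>0$. More generally, whenever $\delta$ is small enough that $4e^{\eps'}n\delta\beta < \Pr[\mathrm{Bin}(n,\beta)>6n\beta]$, your bound cannot close, and no sharpening of the Chernoff tail or of the group-privacy additive term can repair this, because the tail term lives on the wrong scale entirely. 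The difficulty you flag as ``bookkeeping'' is actually a structural obstruction to the thresholding approach.

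The argument Karwa--Vadhan use avoids thresholding. Write $P=(1-\beta)R+\beta P'$ and $Q=(1-\beta)R+\beta Q'$ via the maximal coupling, and run a hybrid over coordinates, switching the $i$th marginal from $P$ to $Q$ one step at a time. For a single step, the mixture representation together with $(\eps,\delta)$-DP on neighboring datasets gives an indistinguishability of the form $\bigl(O(\beta\eps),\,O(\beta\delta)\bigr)$ --- this is the ``secrecy of the sample'' / amplification-by-subsampling calculation. Composing the $n$ steps yields $\eps'=O(n\beta\eps)$ and $\delta'=O(e^{\eps'}n\beta\delta)$, with the additive part genuinely proportional to $\delta$ and no tail term anywhere.
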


Lemma \ref{lm:karwavadhan} allows us to show the following lower bound.
\begin{theorem}\label{thm:worstcase-dist}
  Let \(c > 0\) be a small enough absolute constant, let \(p \in [2,
  \infty]\), and let $\mech$ be an
  $(\eps, \delta)$-differentially private mechanism that is distributionally unbiased
  over \(K \subseteq \R^d\). Define \(\delta'\) and \(\hat \eps\) as
  $$\delta':= \frac{8e^{6\eps}\delta}{1-e^{-6\eps}}$$
  $${\hat{\eps}}^{\, 2} := e^{-(6\varepsilon-\log{(1-\delta'))}}(e^{6\varepsilon-\log{(1-\delta')}}-1)^2.$$
  If \(\delta' \le \min\left\{\frac{c}{n},\frac{\hat{\eps}^2}{d^2}\right\}\),
  then there exists a probability distribution \(P\) over \(K\) with
  mean \(\mu(P)\) such that, when \(X\) is drawn from \(P^n\), we have
  $$\sqrt{\E \left[{\norm{\mech (X)-{\mu(P)}}_p^2}\right]}\gtrsim
  \frac{\Gamma_p(K)}{n\hat{\eps}},$$
  with the expectation taken over the randomness of \(\mech\) and the
  random choice of \(X\).
\end{theorem}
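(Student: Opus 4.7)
The plan is to reduce the distributional lower bound to the empirical one (Theorem~\ref{thm:main-ellp}) by constructing an auxiliary mechanism that is unbiased in the empirical sense. Given a dataset $X \in K^n$, let $\hat{P}_X$ denote the uniform distribution on its entries (so $\mu(\hat{P}_X) = \mu(X)$), and define
$$\mech'(X) := \mech(Y), \qquad \text{where } Y = (Y_1,\ldots,Y_n) \stackrel{\text{i.i.d.}}{\sim} \hat{P}_X.$$
Since $\mech$ is distributionally unbiased and $\hat{P}_X$ is supported on $K$, we have $\E[\mech'(X)] = \E_{Y \sim \hat{P}_X^n}[\E[\mech(Y)]] = \mu(\hat{P}_X) = \mu(X)$, so $\mech'$ is an empirically unbiased mechanism over $K$. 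Moreover, the key observation is that for every dataset $X$,
$$\E\|\mech'(X) - \mu(X)\|_p^2 = \E_{Y \sim \hat{P}_X^n}\!\left[\E\|\mech(Y) - \mu(\hat{P}_X)\|_p^2\right],$$
so any lower bound on the error of $\mech'$ at $X$ translates directly into a lower bound on the error of $\mech$ under $P := \hat{P}_X$.

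Next, I would control the privacy of $\mech'$. For two neighboring datasets $X, X' \in K^n$, the empirical distributions $\hat{P}_X$ and $\hat{P}_{X'}$ have total variation distance at most $1/n$. Applying Lemma~\ref{lm:karwavadhan} with $\beta = 1/n$ to the original $(\eps,\delta)$-DP mechanism $\mech$ shows that $\mech'$ is $(6\eps,\, 4e^{6\eps}\delta)$-differentially private. Feeding these parameters into Lemma~\ref{lm:delta'} yields exactly $\delta' = 8e^{6\eps}\delta/(1-e^{-6\eps})$, and Lemma~\ref{lm:epsdelta-to-chi2} yields exactly the $\hat\eps$ in the theorem statement.

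Now apply Theorem~\ref{thm:main-ellp} to $\mech'$ at a worst-case dataset $X$. For every $X$ we obtain either (i) $\sqrt{\E\|\mech'(X) - \mu(X)\|_p^2} \gtrsim \Gamma_p(K)/(n\hat\eps)$, or (ii) some neighbor $X'$ with $\sqrt{\E\|\mech'(X') - \mu(X')\|_p^2} \gtrsim \Gamma_p(K)/(\sqrt{\delta'}\, n d)$. Under the hypothesis $\delta' \le \hat\eps^2/d^2$ we have $1/\sqrt{\delta'} \ge d/\hat\eps$, so case (ii) gives a bound no weaker than case (i). In either case, setting $P$ to be the empirical distribution of $X$ (resp.\ $X'$) and using the identity in the first paragraph converts the empirical lower bound on $\mech'$ into the claimed distributional lower bound on $\mech$ for $X \sim P^n$.

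The main obstacle is the bookkeeping of privacy parameters through the resampling reduction: the Karwa--Vadhan amplification introduces a multiplicative blowup on $\eps$ and a factor $e^{O(\eps)}$ on $\delta$, and these must be threaded carefully through Lemmas~\ref{lm:delta'} and~\ref{lm:epsdelta-to-chi2} so that the final $\delta'$ and $\hat\eps$ match the expressions in the theorem statement. A secondary subtlety is that one must verify case (ii) of Theorem~\ref{thm:main-ellp} really reduces to case (i) under the stated hypothesis on $\delta'$; this is where the $\hat\eps^2/d^2$ condition (rather than just $c/n$) is used, and it is the analog of the tradeoff seen in the empirical version.
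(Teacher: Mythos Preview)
Your proposal is correct and follows essentially the same approach as the paper: define $\mech'$ by resampling from the empirical distribution, observe that $\mech'$ is empirically unbiased and $(6\eps,4e^{6\eps}\delta)$-DP via Lemma~\ref{lm:karwavadhan}, apply Theorem~\ref{thm:main-ellp} to $\mech'$, and then take $P$ to be the empirical distribution of whichever dataset ($X$ or its neighbor $X'$) witnesses the lower bound. Your handling of the two cases of Theorem~\ref{thm:main-ellp} via the hypothesis $\delta' \le \hat\eps^2/d^2$ is in fact more explicit than the paper's own write-up.
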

\begin{proof}
  Consider the mechanism \(\mech'\) that, on input a dataset \(X\) in
  \(K^n\), samples uniformly and independently with replacement \(n\)
  times from the empirical distribution of \(X\) to form a new dataset
  \(\hat{X}\), and returns \(\mech(\hat{X})\). Then, by
  Lemma~\ref{lm:karwavadhan}, \(\mech'\) is \(\left(6\eps,
    4e^{6\eps}\delta\right)\)-differentially private. Moreover,
  \(\mech'\) is unbiased over \(K\) by the assumption that \(\mech\)
  is distributionally unbiased. The lower bound now follows from
  Theorem~\ref{thm:main-ellp} by taking \(X\) to be any fixed dataset in
  \(K^n\) and defining \(P\) to be either its empirical distribution,
  or the empirical distribution of a neighboring dataset \(X'\),
  depending on which case of Theorem~\ref{thm:main-ellp} holds.
\end{proof}

Theorem~\ref{thm:worstcase-dist} is a worst case lower bound over
distributions. We can, instead, prove a lower bound that gives the
kind of instance optimality we have in
Theorems~\ref{thm:main-ell2}~and~\ref{thm:main-ellp}. Rather than a
reduction to the empirically unbiased case, the proof merely redoes
the proofs of these two theorems in the distributional setting. Rather
than repeating the arguments in full, we will sketch the necessary
modifications. 

\begin{theorem}\label{thm:instopt-dist}
  Let \(c, p, \mech, \hat{\eps}, \delta'\) be as in Theorem~\ref{thm:worstcase-dist}.
  If \(\delta' \le \frac{c}{n}\),
  then, for any probability distribution \(P\) over \(K\) with
  mean \(\mu(P)\), there exists a probability distribution \(Q\) at
  total variation distance at most \(\frac1n\) from \(P\) such that
  the following holds. Either 
  $$\sqrt{\E \left[{\norm{\mech (X)-{\mu(P)}}_p^2}\right]}\gtrsim
  \frac{\Gamma_p(K)}{n\hat{\eps}}$$
  when \(X\) is drawn from \(P^n\),
  or
  $$\sqrt{\E \left[{\norm{\mech (X)-{\mu(Q)}}_p^2}\right]}\gtrsim\frac{\Gamma_p(K)}{\sqrt{\delta'}nd}$$
  when \(X\) is drawn from \(Q^n\). The
  expectations above are taken over the randomness of \(\mech\) and the
  random choice of \(X\).
\end{theorem}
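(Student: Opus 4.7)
The plan is to carry out the proofs of Theorems~\ref{thm:main-ell2} and \ref{thm:main-ellp} in the distributional setting, substituting ``distributions at total variation distance at most $1/n$'' for ``neighboring datasets'' throughout, and invoking Lemma~\ref{lm:karwavadhan} in place of the basic $(\eps,\delta)$-DP guarantee on neighbors. For a distribution $R$ on $K$, write $\mech_R$ for the output distribution of $\mech(X)$ when $X \sim R^n$; distributional unbiasedness gives $\E_{Y \sim \mech_R}[Y] = \mu(R)$.

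The first step is a distributional analogue of Lemma~\ref{lm:neighboring}: for any $P$ on $K$ and any $\theta \in \R^d$, there is a distribution $Q$ with $d_{\mathrm{TV}}(P, Q) \le 1/n$ and $|\ip{\theta}{\mu(P) - \mu(Q)}| \gtrsim w_K(\theta)/n$. To build $Q$, I would choose near-extremal points $x_{\pm} \in K$ in directions $\pm \theta$ as in Lemma~\ref{lm:farpoints}, remove $1/n$ mass from $P$, and reallocate it to $x_+$ or $x_-$, choosing whichever sign shifts $\mu(P)$ further in direction $\theta$. I would then redo the argument of Lemma~\ref{lm:ADP-1d-bound} essentially verbatim: the $(\eps,\delta)$-indistinguishability of $\mech(X)$ and $\mech(X')$ is replaced by the $(6\eps,\,4 e^{6\eps}\delta)$-indistinguishability of $\mech_P$ and $\mech_Q$ provided by Lemma~\ref{lm:karwavadhan} at $\beta = 1/n$, which is precisely what produces the parameters $\hat{\eps}$ and $\delta'$ declared in Theorem~\ref{thm:worstcase-dist}. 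The modification of $\mech_P$ given by Lemma~\ref{lm:phat} depends only on the privacy parameters, and the expectations $\theta^T \mu(P)$, $\theta^T \mu(Q)$ are available from distributional unbiasedness, so the Hammersley-Chapman-Robins bound then yields the one-dimensional variance dichotomy: either $\sqrt{\var[\theta^T \mech_P]} \gtrsim w_K(\theta)/(n\hat{\eps})$, or $\sqrt{\var[\theta^T \mech_Q]} \gtrsim w_K(\theta)/(n\sqrt{\delta'})$.

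Finally, I would lift this dichotomy to $\ell_2$, and then to $\ell_p$, via the projection of Lemma~\ref{lm:proj} and the diagonal rescaling of Lemma~\ref{lm:Gammap-via-Gamma2}, mirroring Theorems~\ref{thm:main-ell2} and \ref{thm:main-ellp}. The one genuine subtlety is that the deterministic section $f : \Pi(K) \to K$ used in the empirical proof must be replaced by a Markov kernel for which $f_*(\Pi_* P) = P$; concretely, take $f(\tilde{x})$ to be a sample from $P$ conditional on $\Pi X = \tilde{x}$. With this choice, the lifted mechanism $\mech'(\tilde{X}) := \Pi(\mech(f(\tilde{X})))$ is distributionally unbiased over $\Pi(K)$ because $\Pi \E[f(\tilde{x})] = \tilde{x}$ for every $\tilde{x}$, and it remains $(\eps,\delta)$-differentially private because neighboring inputs to $\mech'$ can be coupled to neighboring inputs to $\mech$. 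Moreover, any $\tilde{Q}$ at TV distance $\le 1/n$ from $\Pi_* P$ pulls back through $f_*$ to a $Q$ at TV distance $\le 1/n$ from $P$, since Markov kernels do not increase total variation distance, and $\ell_2$ errors only shrink under $\Pi$. The $\ell_p$-to-$\ell_2$ reduction through the diagonal $D$ of Lemma~\ref{lm:Gammap-via-Gamma2} is easier, since $D$ is invertible and a deterministic linear section suffices. The main obstacle in the whole program is exactly this conditional-lift bookkeeping: one must verify that the Markov-kernel section preserves both distributional unbiasedness and the differential privacy guarantee of the lifted mechanism, and faithfully track distributions through the chain $P \mapsto \Pi_* P \mapsto \tilde{Q} \mapsto Q = f_* \tilde{Q}$. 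Once this is in hand, every remaining step is a direct translation of the empirical argument.
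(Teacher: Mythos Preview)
Your proposal is correct and follows the same route as the paper's proof, which likewise redoes Lemma~\ref{lm:ADP-1d-bound} and the reductions behind Theorems~\ref{thm:main-ell2}--\ref{thm:main-ellp} in the distributional setting, constructing $Q=\frac{n-1}{n}P+\frac1n 1_{x'}$ and invoking Lemma~\ref{lm:karwavadhan} to get $(6\eps,4e^{6\eps}\delta)$-indistinguishability of $\mech_P$ and $\mech_Q$. You are in fact more careful than the paper on the one step where the translation is not mechanical: the paper simply asserts that ``the rest of the proofs then go through,'' whereas you correctly observe that the deterministic section $f:\Pi(K)\to K$ used in Theorem~\ref{thm:main-ell2} must be upgraded to a Markov kernel (the conditional law of $P$ given $\Pi X=\tilde x$) so that $f_*(\Pi_*P)=P$, and hence $Q=f_*\tilde Q$ stays within TV distance $1/n$ of $P$; with a deterministic $f$ one would only bound the error of $\mech$ at $f_*(\Pi_*P)$, which need not equal or even be close to $P$. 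One small correction: the optimal diagonal $D$ from Lemma~\ref{lm:Gammap-via-Gamma2} need not be invertible, so the $\ell_p\to\ell_2$ step may also require the same Markov-kernel section (or a limiting argument with $D_\beta=(1-\beta)D+\beta I$) rather than a deterministic linear one.
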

\begin{proof}
  Let $\theta\in \R^d$ be arbitrary.  
  Let \(x' \in K\) be the point guaranteed by Lemma~\ref{lm:farpoints}
  used with \(x := \mu(P)\). 
  We choose \(Q = \frac{n-1}{n}P + \frac1n 1_{x'}\) where \(1_{x'}\)
  is a point mass at \(x'\). This gives us \(|\theta^T(\mu(P) -
  \mu(Q))| \ge \frac{w_K(\theta)}{2n}\). We can then prove a variant
  of Lemma~\ref{lm:ADP-1d-bound} for distributionally unbiased
  mechanisms by applying Lemma~\ref{lm:phat} not to the distributions
  of \(\theta^T \mech(X)\) and \(\theta^T \mech(X')\) for neighboring
  datasets \(X\) and \(X'\), but instead to the marginal distribution
  of \(\theta^T \mech(X)\) when \(X\) is sampled from \(P^n\), and the
  marginal distribution of \(\theta^T \mech(X)\) when \(X\) is sampled
  from \(Q^n\). By Lemma~\ref{lm:karwavadhan},
 these distributions are $(6\eps,
 4e^{6\eps}\delta)$-indistinguishable, i.e., satisfy the defining
 inequality of \((6\eps, 4e^{6\eps}\delta)\)-differential privacy, so
 the proofs of Lemmas~\ref{lm:phat}~and~\ref{lm:ADP-1d-bound} go
 through. The rest of the proofs of
 Theorems~\ref{thm:main-ell2}~and~\ref{thm:main-ellp} then go
 through. 
\end{proof}

\section{Lower Bounds on \(\Gamma_p\)}
\label{sect:lbs}

In this section we compute  the \(\Gamma_p(K)\) norm of several
sets \(K\) that appear in applications. This allows us to prove the
lower bounds in Theorems~\ref{thm:lb-tensor}~and~\ref{thm:marginals}.

Our first lemma determines the \(\Gamma_p\) norm of product sets. 

\begin{lemma}\label{lm:prod}
  Let \(K_1\subseteq \R^{d_1}\) and \(K_2 \subseteq \R^{d_2}\) be
  bounded, and define
  \[
    K := K_1\times K_2 := \{(x,y) \in \R^{d_1 + d_2}: x \in K_1, y \in
    K_2\}.
  \]
  Then \(\Gamma_{2}(K) = \Gamma_2(K_1) + \Gamma_2(K_2)\), and, more
  generally,
  \(
  \Gamma_p(K) = \left(\Gamma_p(K_1)^{\frac{2p}{p+2}} + \Gamma_p(K_2)^{\frac{2p}{p+2}}\right)^{\frac{p+2}{2p}}\) for \(p
  \in [2,  \infty)\), and \(\Gamma_\infty(K) = \sqrt{\Gamma_\infty(K_1)^2
    + \Gamma_\infty(K_2)^2}\). 
\end{lemma}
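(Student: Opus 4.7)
The plan is to reduce to the $p=2$ case via Lemma~\ref{lm:Gammap-via-Gamma2}, and handle $p=2$ directly by matching a block-diagonal upper bound against a product-measure lower bound. Write $G_i := \Gamma_p(K_i)$.

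\textbf{The case $p=2$.} For the upper bound, let $v_i$ and $A_i$ achieve $\Gamma_2(K_i)$ (so $K_i+v_i\subseteq A_iB_2^{d_i}$ and $\|A_i\|_F = G_i$), and form the scaled block-diagonal matrix $A:=\mathrm{diag}(\alpha A_1,\beta A_2)$ with shift $v:=(v_1,v_2)$ for scalars $\alpha,\beta>0$. Additivity of support functions under products, combined with the identity $h_{AB_2^{d_1+d_2}}(\theta_1,\theta_2) = \sqrt{\alpha^2\|A_1^T\theta_1\|_2^2+\beta^2\|A_2^T\theta_2\|_2^2}$ and Cauchy--Schwarz, shows that $(K_1\times K_2)+v\subseteq AB_2^{d_1+d_2}$ as soon as $1/\alpha^2 + 1/\beta^2\le 1$. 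A second application of Cauchy--Schwarz minimizes $\trp{1}(AA^T) = \alpha^2G_1^2+\beta^2G_2^2$ on this constraint to the value $(G_1+G_2)^2$, giving $\Gamma_2(K_1\times K_2)\le G_1+G_2$. For the matching lower bound I use the dual formula $\Gamma_2(K)=\sup_{P\in\Delta(K)}\tr(\cov(P)^{1/2})$ from the introduction: for $P_i\in\Delta(K_i)$ the product measure $P_1\times P_2$ has block-diagonal covariance, so $\tr(\cov(P_1\times P_2)^{1/2}) = \tr(\cov(P_1)^{1/2}) + \tr(\cov(P_2)^{1/2})$, and taking suprema over $P_1,P_2$ gives the reverse inequality.

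\textbf{The case $p\in(2,\infty]$.} Set $q:=p/(p-2)$. By Lemma~\ref{lm:Gammap-via-Gamma2}, $\Gamma_p(K) = \max\{\Gamma_2(DK): D\text{ diagonal}, D\succeq 0, \trp{q}(D^2)=1\}$. Every such $D$ splits as $\mathrm{diag}(D_1,D_2)$, and $DK = D_1K_1\times D_2K_2$ is again a product, so the $p=2$ case just proved yields $\Gamma_2(DK) = \Gamma_2(D_1K_1)+\Gamma_2(D_2K_2)$. Setting $a:=\trp{q}(D_1^2)$ and rescaling $D_1\mapsto D_1/\sqrt{a}$, the homogeneity of $\Gamma_2$ together with Lemma~\ref{lm:Gammap-via-Gamma2} applied to $K_1$ alone give $\sup_{D_1:\trp{q}(D_1^2)=a}\Gamma_2(D_1K_1)=\sqrt{a}\,G_1$, and similarly for $K_2$. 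Using the identity $\trp{q}(D^2)^q = \trp{q}(D_1^2)^q+\trp{q}(D_2^2)^q$ and writing $b:=\trp{q}(D_2^2)$, the problem reduces to the scalar program $\Gamma_p(K) = \max\{a^{1/2}G_1+b^{1/2}G_2: a,b\ge 0,\ a^q+b^q=1\}$, which H\"older's inequality with conjugate exponents $2q$ and $r := 2p/(p+2)$ evaluates to $(G_1^r+G_2^r)^{1/r}$. This matches the claimed formula, with $p=\infty$ recovered by taking $q=1$ and $r=2$.

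\textbf{Main obstacle.} The tempting naive approach to the lower bound is to project a near-optimal $A$ for $K$ onto the two coordinate blocks: one then reads off $\sum_{i\le d_1}(AA^T)_{ii}^{p/2}\ge G_1^p$ and similarly for the second block, giving only $\Gamma_p(K)\ge (G_1^p+G_2^p)^{1/p}$. Since $r<p$ for every $p>2$, this bound is strictly weaker than the target $(G_1^r+G_2^r)^{1/r}$, and avoiding this loss is exactly what forces the detour through the dual characterization, product measures, and the homogeneity reduction from $\Gamma_p$ to $\Gamma_2$ provided by Lemma~\ref{lm:Gammap-via-Gamma2}. Once the problem is reduced to the scalar program above, the concluding H\"older optimization is routine.
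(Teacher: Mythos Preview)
Your proof is correct and follows essentially the same route as the paper: the $p=2$ lower bound via the dual characterization with product measures, and the reduction of $p>2$ to $p=2$ via Lemma~\ref{lm:Gammap-via-Gamma2} followed by the scalar H\"older optimization, are exactly what the paper does. The only cosmetic difference is in the $p=2$ upper bound: the paper identifies $K_1\times K_2$ with the Minkowski sum $K_1+K_2$ in orthogonal subspaces and invokes the triangle inequality of Theorem~\ref{thm:triangle} directly, whereas you unfold that triangle-inequality argument by hand with the block-diagonal construction $\mathrm{diag}(\alpha A_1,\beta A_2)$---the Cauchy--Schwarz step and optimization over $1/\alpha^2+1/\beta^2=1$ are precisely the content of the proof of Theorem~\ref{thm:triangle}.
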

\begin{proof}
  We first prove the lemma for \(p = 2\), and then reduce to this case
  via Lemma~\ref{lm:Gammap-via-Gamma2}.

  Let us first show that \(\Gamma_2(K) \le \Gamma_2(K_1) +
  \Gamma_2(K_2)\). Let \(d := d_1 + d_2\) so that \(K_1 \times K_2 
  \subseteq \R^d\). We can identify \(\R^{d_1}\) with the first \(d_1\)
  coordinates of \(\R^{d}\), and \(\R^{d_2}\) with the last \(d_2\)
  coordinates, so that \(K_1\) and \(K_2\) lie in orthogonal subspaces of
  \(\R^d\). Then \(K_1\times K_2\) can be identified with \(K_1 + K_2\), and
  the inequality follows form Theorem~\ref{thm:triangle}.

  Next we show that \(\Gamma_2(K) \ge \Gamma_2(K_1) + \Gamma_2(K_2)\)
  using Theorem~\ref{thm:duality} . Let \(P_1 \in \Delta(K_1)\) and
  \(P_2 \in \Delta(K_2)\) be such that \(\Gamma_2(K_i) =
  \tr(\cov(P_i)^{1/2})\) for \(i \in \{1,2\}\). Let \(P \in
  \Delta(K)\) be the probability distribution of \(X = (X_1, X_2)\)
  where \(X_1\sim P_1\) and \(X_2 \sim P_2\) are independent. Then,
  \[
    \cov(P) =
    \begin{pmatrix}
      \cov(P_1) & 0 \\
      0 & \cov(P_2)
    \end{pmatrix}
    \implies 
    \cov(P)^{1/2} =
    \begin{pmatrix}
      \cov(P_1)^{1/2} & 0 \\
      0 & \cov(P_2){1/2}
    \end{pmatrix}
  \]
  and, therefore, by Theorem~\ref{thm:duality},
  \[
    \Gamma_2(K) \ge
    \tr(\cov(P)^{1/2})
    =
    \tr(\cov(P_1)^{1/2}) + \tr(\cov(P_2)^{1/2})
    = \Gamma_2(K_1) + \Gamma_2(K_2).
  \]

  This proves the \(p=2\) case.
  For the case \(p > 2\), we use Lemma~\ref{lm:Gammap-via-Gamma2}. For any
  \(\alpha \in [0,1]\), by what we just proved and by
  Lemma~\ref{lm:Gammap-via-Gamma2}, we have
  \begin{align*}
    \Gamma_p(K) &= \max\{\Gamma_2(DK): D \text{ diagonal}, D \succeq 0,
    \trp{q}{(D^2)} = 1\}\\
    &= \max\{\Gamma_2(D_1K_1) + \Gamma_2(D_2K_2)\},
  \end{align*}
  where the second maximum ranges over
  diagonal matrices \(D_1\succeq 0\) and \(D_2\succeq 0\) such that
  \(\trp{q}(D_1^2)^q + \trp{q}(D_2^2)^q = 1\) for  \(q := \frac{p}{p-2}\). The first equality follows by
  Lemma~\ref{lm:Gammap-via-Gamma2}, and the second equality follows
  from the \(p=2\) case, and from observing that \(DK = (D_1K_1)
  \times (D_2K_2)\) for appropriately chosen disjoint principle
  submatriced \(D_1, D_2\) of \(D\). Replacing \(D_i\) with \(\alpha_i D'_i\) where \(\alpha_i :=
  \sqrt{\trp{q}(D_i^2)}\) and \(\trp{q}((D'_i)^2) =1 \), we get the equivalent
  formulation
  \[
    \Gamma_p(K)
    =
    \max\{{\alpha_1}\Gamma_2(D'_1K_1) + {\alpha_2}\Gamma_2(D'_2K_2)\}
  \]
  with the maximum ranging over non-negative reals \(\alpha_1,\alpha_2\)
  such that \(\alpha_1^{2q} + \alpha_2^{2q} = 1\), and over diagonal
  matrices \(D_1'\succeq 0\) and \(D_2'\succeq 0\) such that
  \(\trp{q}((D_i)^2) = 1\) for \(i \in \{1,2\}\). 
  By Lemma~\ref{lm:Gammap-via-Gamma2}, the right hand side is maximized by taking \(D'_i\) be such that \(\Gamma_2(D'_iK_i) =
  \Gamma_p(K_i)\), and, so, we have
  \[
    \Gamma_p(K)
    =
    \max\{{\alpha_1}\Gamma_p(K_1) + {\alpha_2}\Gamma_p(K_2):
    \alpha_1^{2q} + \alpha_2^{2q} = 1\}.
  \]
  The lemma now follows by H\"older's inequality and its equality case.
\end{proof}

Lemma~\ref{lm:prod} generalizes a fact about the  \(\gamma_2\) norm
proved in~\cite{disc-gamma2}: if the rows of a matrix \(W\) is the disjoint union
of the rows of the matrices \(W_1\) and \(W_2\), then
\(\gamma_2(W) \le (\gamma_2(W_1)^2 + \gamma_2(W_2)^2)^{1/2}\). This is
because the columns of \((W, -W)\) are a subset of \(K_1^{\text{sym}} \times
K_2^{\text{sym}}\), where \(K_i^{\text{sym}}\) is the set of columns of \((W_i,-W_i)\) for \(i \in
\{1,2\}\). The inequality then follows from
Theorem~\ref{thm:Gammap-as-factorization} and
Lemma~\ref{lm:prod}. More generally, for any \(p \ge 2\) we get the inequality
\[
  \gamma_{(p)}(W) \le (\gamma_{(p)}(W_1)^{\frac{2p}{p+2}} +
  \gamma_{(p)}(W_2)^{\frac{2p}{p+2}})^{\frac{p+2}{2p}},
\]
which is new. 

Lemma~\ref{lm:prod} implies immediately that, for sequences \(a_1, \ldots, a_d\)
and \(b_1, \ldots, b_d\) such that \(a_i \le b_i\) for all \(i\), we
have
\begin{equation}\label{eq:box}
  \Gamma_p([a_1,b_1]\times \ldots, [a_d,b_d])
  =
  \|b-a\|_{q},
\end{equation}
where \(q := \frac{2p}{p+2}\) for \(p \in [2, \infty)\), and \(q :=2
\) for \(p = \infty\). In particular, for any real numbers \(a \le
b\), we have
\begin{equation}\label{eq:cube}
\Gamma_p([a,b]^d) =  \frac{d^{\frac12 + \frac1p}(b-a)}{2}.
\end{equation}
Via Theorem~\ref{thm:ub}, equation \eqref{eq:box} recovers an upper
bound on the \(\ell_p\) error for mean estimation with Gaussian noise
from~\cite{plan}. Via
Theorems~\ref{thm:main-ell2}~and~\ref{thm:main-ellp}, it also implies
nearly matching lower bounds for all unbiased mechanisms. 

Next, we show that \(\Gamma_p\) is multiplicative with respect to
Kronecker products. This generalizes the multiplicativity of the $\gamma_2$ norm
with respect to Kronecker products, proved in~\cite{LeeSS08}.
To generalize to \(\Gamma_p(K)\) for general sets \(K\), we first introduce
the following notation. 

\begin{definition}
  For two sets \(K \subseteq \R^{d_1}\) and \(L \subseteq
  \R^{d_2}\), we define
  \[
    K \otimes L := \{x\otimes y: x\in K, y \in L\},
  \]
  where \(x\otimes y\) is the Kronecker product of vectors. We use
  \(K^{\otimes \ell}\) to denote \(K\otimes \ldots \otimes K\) with
  \(\ell\) terms in the product
\end{definition}

We can now state the multiplicativity lemma.
\begin{lemma}\label{lm:tensoring}
  For any \(p \in [2,\infty]\), and any two bounded sets \(K \subseteq
  \R^{d_1}\) and \(L \subseteq\R^{d_2}\), both symmetric around
  \(0\), we have \(\Gamma_{p}(K \otimes L) =
  \Gamma_{p}(K)\Gamma_{p}(L).\)   
\end{lemma}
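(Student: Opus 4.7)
The plan is to prove the two inequalities $\Gamma_p(K \otimes L) \le \Gamma_p(K)\Gamma_p(L)$ and $\Gamma_p(K \otimes L) \ge \Gamma_p(K)\Gamma_p(L)$ separately, relying throughout on the multiplicative behaviour of Kronecker products of positive semidefinite matrices. The key identity I will invoke repeatedly is that for diagonal or positive semidefinite matrices $A \in \R^{d_1 \times d_1}$ and $B \in \R^{d_2 \times d_2}$, we have $(A \otimes B)(A \otimes B)^T = (AA^T) \otimes (BB^T)$, the square root satisfies $(A \otimes B)^{1/2} = A^{1/2} \otimes B^{1/2}$ when both are PSD, and the diagonal of $A \otimes B$ is indexed by $(i,j)$ with entries $A_{ii}B_{jj}$. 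Consequently, for any $r \ge 1$, $\trp{r}(A \otimes B) = \trp{r}(A)\, \trp{r}(B)$, and $\tr(A \otimes B) = \tr(A)\tr(B)$.

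For the upper bound, I would use Lemma~\ref{lm:centering} (applicable since $K$ and $L$ are symmetric around $0$) to pick matrices $A_K$ and $A_L$ with $K \subseteq A_K B_2^{d_1}$, $L \subseteq A_L B_2^{d_2}$, and $\Gamma_p(K)^2 = \trp{p/2}(A_K A_K^T)$, $\Gamma_p(L)^2 = \trp{p/2}(A_L A_L^T)$. For any $x = A_K x' \in K$ and $y = A_L y' \in L$ with $\|x'\|_2, \|y'\|_2 \le 1$, we have $x \otimes y = (A_K \otimes A_L)(x' \otimes y')$ and $\|x' \otimes y'\|_2 = \|x'\|_2 \|y'\|_2 \le 1$, so $K \otimes L \subseteq (A_K \otimes A_L) B_2^{d_1 d_2}$. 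Hence $\Gamma_p(K \otimes L)^2 \le \trp{p/2}((A_K A_K^T) \otimes (A_L A_L^T)) = \Gamma_p(K)^2 \Gamma_p(L)^2$ by the multiplicativity of $\trp{p/2}$ observed above.

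For the lower bound I plan to use the dual characterization from Theorem~\ref{thm:duality}. Assuming $p \in (2,\infty]$ and setting $q := \frac{p}{p-2}$, pick $(D_K, P_K)$ and $(D_L, P_L)$ that are (nearly) optimal in the symmetric form~\eqref{eq:gammap-duality-sym} for $K$ and $L$. Let $X \sim P_K$ and $Y \sim P_L$ be independent, and let $P$ be the law of $X \otimes Y$, which lies in $\Delta(K \otimes L)$. Take $D := D_K \otimes D_L$, which is diagonal and PSD, and observe that $\trp{q}(D^2) = \trp{q}(D_K^2)\trp{q}(D_L^2) = 1$. Using independence, $\E[(X \otimes Y)(X \otimes Y)^T] = \E[XX^T] \otimes \E[YY^T]$, so
\[
  D\, \E[(X \otimes Y)(X \otimes Y)^T]\, D
  = \bigl(D_K \E[XX^T] D_K\bigr) \otimes \bigl(D_L \E[YY^T] D_L\bigr),
\]
and taking the PSD square root and then the trace factors as a product. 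This yields $\Gamma_p(K \otimes L) \ge \Gamma_p(K)\Gamma_p(L)$. The case $p = 2$ I would treat separately by the simpler formula $\Gamma_2(K) = \sup_{P \in \Delta(K)} \tr(\cov(P)^{1/2})$: for symmetric $K$ I can assume the optimal $P_K$ has mean $0$ (by averaging $P$ with the push-forward of $P$ under $x \mapsto -x$, which only increases $\cov$), and likewise for $P_L$, so that $\cov(X \otimes Y) = \E[XX^T] \otimes \E[YY^T] = \cov(X) \otimes \cov(Y)$ and the same factorization goes through.

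I do not anticipate a serious obstacle: the upper bound is essentially a one-line verification once the Kronecker identity for $\trp{p/2}$ is noted, and the lower bound is a clean application of duality together with the same identity. The one subtlety is checking that the $p=2$ case works even though it is not formally covered by~\eqref{eq:gammap-duality-sym}, but the mean-zero symmetrization of $P_K, P_L$ handles it.
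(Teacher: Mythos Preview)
Your proposal is correct and follows essentially the same route as the paper: the upper bound via the Kronecker product of enclosing ellipsoids together with the multiplicativity $\trp{p/2}(M_K \otimes M_L) = \trp{p/2}(M_K)\trp{p/2}(M_L)$, and the lower bound via Theorem~\ref{thm:duality} applied to the tensor of independent optimal witnesses. The only cosmetic differences are that the paper phrases the upper bound through the positive-definite $M$ formulation of Lemma~\ref{lm:posdef} rather than your $A$ formulation, and it does not single out the $p=2$ case explicitly (your separate treatment via mean-zero symmetrization is a harmless extra check).
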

\begin{proof}
  First we prove
  \(\Gamma_{p}(K \otimes L) \le \Gamma_{p}(K)\Gamma_{p}(L).\)
  Let \(M_K\succ 0\) and \(M_L\succ 0\) be such that \(\max_{x \in K}
  x^T M_K^{-1} x \le 1\) and \(\max_{y \in L}
  y^T M_L^{-1} y \le 1\). Letting \(M := M_K \otimes M_L\), and noting
  that \(M^{-1} = M_K^{-1}\otimes M_L^{-1}\), we have that, for any
  \(x \in K\) and \(y \in L\),
  \[
    (x\otimes y)^TM^{-1}(x\otimes y) =
    (x^T M_K^{-1} x)(y^T M_L^{-1} y) \le 1.
  \]
  Minimizing \(\trp{p/2}{(M_K)}\) over choices of \(M_K\), and
  \(\trp{p/2}{(M_L)}\) over choices of \(M_L\), using
  Lemma~\ref{lm:posdef}, and noticing that \(\trp{p/2}(M) =
  \trp{p/2}(M_K)\trp{p/2}(M_L)\) finishes the proof of the inequality. 

  In the other direction, we need to prove
  \(\Gamma_{p}(K \otimes L) \ge \Gamma_{p}(K)\Gamma_{p}(L).\)
  To this end, we use Theorem~\ref{thm:duality}. Let \(P_K\) and
  \(D_K\) achieve the
  maximum in \eqref{eq:gammap-duality-sym} for \(\Gamma_p(K)\), and
  let \(P_L\) and \(D_L\) achieve the maximum for \(\Gamma_p(L)\). Let \(X_K \sim
  P_K\) and \(X_L \sim P_L\) be independent, and define \(X := X_K
  \otimes X_L\). Then,
  \[
    \E[XX^T] =
    \E[(X_K X_K^T) \otimes (Y_K Y_K^T)] = 
    \E[X_K X_K^T] \otimes \E[X_L X_L^T],
  \]
  where the last equality follows by the independence of \(X_K\) and
  \(X_L\). Let us, finally, also define \(D := D_K \otimes D_L\), and
  notice that \(\trp{q}{(D^2)} = \trp{q}{(D_K^2)}\trp{q}{(D_L^2)} = 1\).
  We have, by Theorem~\ref{thm:duality},
  \begin{align*}
    \Gamma_p(K\otimes L)
    &\ge
    \tr((D\E_{X\sim P}[XX^T]D)^{1/2})\\
    &=
    \tr((D(\E[X_K X_K^T] \otimes \E[X_L X_L^T])D)^{1/2})\\
    &=  \tr(((D_K\E[X_K X_K^T]D_L) \otimes (D_L\E[X_L X_L^T]D_L))^{1/2})\\
    &= \tr((D_K\E[X_K X_K^T]D_L)^{1/2} \otimes (D_L\E[X_L X_L^T]D_L)^{1/2})\\
    &= \tr((D_K\E[X_K X_K^T]D_L)^{1/2}) \tr((D_L\E[X_L  X_L^T]D_L)^{1/2})\\
    &= \Gamma_p(K)\Gamma_p(L).
  \end{align*}
  This completes the proof.
\end{proof}

Finally, we show a simple lemma that allows proving a lower bound on
\(\Gamma_p(K)\) by proving a lower bound on \(\Gamma_p(PK)\) for some
orthogonal projection matrix \(P\). Recall that a coordinate
projection \(P\) is a linear transformation on \(\R^d\) of the form
\[
  \forall i \in [d]:
  P(x)_i =
  \begin{cases}
    x_i & i \in S\\
    0 & i \not \in S
  \end{cases},
\]
for some \(S \subseteq [d]\). 

\begin{lemma}\label{lm:proj-Gammap-lb}
  For any bounded set \(K \subseteq \R^d\), and every coordinate
  projection \(P\), and every \(p \in [2,\infty]\), \(\Gamma_p(PK) \le
  \Gamma_p(K)\). Moreover, for every orthogonal projection \(P\),
  \(\Gamma_2(PK) \le \Gamma_2(K)\).
\end{lemma}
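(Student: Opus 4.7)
The plan is to use the definition of $\Gamma_p(K)$ via Lemma~\ref{lm:posdef}: given any matrix $A$ and vector $v$ with $K + v \subseteq AB_2^d$, I will produce a matrix $A'$ with $PK + Pv \subseteq A'B_2^d$ and $\trp{p/2}(A'(A')^T) \le \trp{p/2}(AA^T)$ (in the coordinate case) or $\tr(A'(A')^T) \le \tr(AA^T)$ (in the general orthogonal case). Taking the infimum over $A$ then yields the lemma. In both cases, the natural choice is $A' := PA$, since applying $P$ to both sides of $K + v \subseteq AB_2^d$ gives $PK + Pv \subseteq P(AB_2^d) = (PA)B_2^d$.

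For the first claim (coordinate projections), let $S \subseteq [d]$ be the set of coordinates that $P$ preserves. Then $A'(A')^T = PAA^TP^T = PAA^TP$, and direct computation shows that $(PAA^TP)_{ii} = (AA^T)_{ii}$ for $i \in S$ and $0$ for $i \notin S$. Therefore the diagonal of $A'(A')^T$ is a sub-vector of the diagonal of $AA^T$ (padded with zeros), so $\trp{p/2}(A'(A')^T) \le \trp{p/2}(AA^T)$ for every $p \in [2,\infty]$. Combining with $PK \shinclu A'B_2^d$ and taking the infimum over $A$ gives $\Gamma_p(PK) \le \Gamma_p(K)$.

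For the second claim (general orthogonal projection, $p = 2$), the same construction $A' = PA$ yields $PK \shinclu A'B_2^d$. Here we cannot control individual diagonal entries, but we only need the trace: using $P^T = P$ and the cyclicity of the trace,
\[
\tr(A'(A')^T) = \tr(PAA^TP) = \tr(P^2 AA^T) = \tr(P AA^T).
\]
Writing $I = P + (I-P)$ and noting that $I - P$ is also an orthogonal projection, we get $\tr(AA^T) = \tr(PAA^T) + \tr((I-P)AA^T)$, and each term is non-negative since $\tr(QAA^T) = \tr(QAA^TQ) = \|QA\|_F^2 \ge 0$ for any orthogonal projection $Q$. Hence $\tr(A'(A')^T) \le \tr(AA^T)$, which is exactly $\trp{1}(A'(A')^T) \le \trp{1}(AA^T)$. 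Taking the infimum over $A$ yields $\Gamma_2(PK) \le \Gamma_2(K)$.

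The only real subtlety is understanding why the argument does not extend to $\Gamma_p$ with $p > 2$ for arbitrary orthogonal projections: in that case the diagonal entries of $PAA^TP$ need not be bounded by those of $AA^T$, so the $\ell_{p/2}$ norm of the diagonal can genuinely increase; only the trace (which is basis-independent) is guaranteed to decrease under conjugation by a projection. This is precisely why the coordinate structure in the first claim is needed to get all $p$.
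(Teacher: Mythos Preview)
Your proof is correct and follows essentially the same approach as the paper: take $A' = PA$, observe $PK \shinclu PAB_2^d$, and then bound $\trp{p/2}(PAA^TP) \le \trp{p/2}(AA^T)$ coordinatewise for coordinate projections, or $\tr(PAA^TP) \le \tr(AA^T)$ via the decomposition $I = P + (I-P)$ for general orthogonal projections. Your additional remark explaining why the argument does not extend to $p>2$ for arbitrary orthogonal projections is a nice clarification not present in the paper.
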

\begin{proof}
  For the first claim observe that, for any matrix
  \(A\) such that \(K \shinclu AB_2^d\), we have \(PK \shinclu
  PAB_2^d\). Moreover, if \(P\) is a coordinate projection, then
  \(\trp{p/2}(PAA^TP) \le \trp{p/2}(AA^T)\). The second claim follows
  analogously, using the fact that, for any orthogonal projection
  \(P\), \(\tr(PAA^TP) \le \tr(AA^T)\). This latter
  inequality follows because
  \begin{align*}
    \tr(AA^T) &= \tr(AA^TP) + \tr(AA^T(I-P))\\
    &= \tr(AA^TP^2) + \tr(AA^T(I-P)^2)\\
    &= \tr(PAA^TP) + \tr((I-P)AA^T(I-P)) \ge \tr(PAA^TP).
  \end{align*}
  This completes the proof.
\end{proof}

Next we use Lemmas~\ref{lm:prod}~and~\ref{lm:tensoring} to
determine the \(\Gamma_p\) norm of two sets that naturally appear in
applications. In particular, we consider the domain of \(\ell\)-tensor
products of points in the unit ball \(B_2^d\), which appears in
Theorem~\ref{thm:lb-tensor}. 
\begin{theorem}\label{thm:tensor}
  Let \(K^\ell_{d,\infty} := \conv\{x^{\otimes \ell}: x \in
  \{-1,+1\}^d\)\}, and let \(K^{\ell}_{d,2}:= \conv\{x^{\otimes \ell}:
  x \in B_2^d\}\) for positive integers \(d\) and \(\ell\). Then, for
  any \(p \ge 2\), we have
  \begin{align*}
    \left(\frac{d}{\ell}\right)^{\frac{\ell}{p} + \frac{\ell}{2}} &\le \Gamma_p(K^{\ell}_{d,\infty}) \le
    d^{\frac{\ell}{p} + \frac{\ell}{2}};\\
    \left(\frac{d}{\ell}\right)^{\frac{\ell}{p}} &\le \Gamma_p(K^{\ell}_{d,2}) \le d^{\frac{\ell}{p}}.
  \end{align*}
\end{theorem}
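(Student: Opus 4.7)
The strategy is to treat each of the four inequalities in turn, combining the monotonicity, homogeneity, and multiplicativity properties of \(\Gamma_p\) proved earlier in the section.

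Both upper bounds are immediate from monotonicity (Theorem~\ref{thm:triangle}). Every generator \(x^{\otimes \ell}\) with \(x \in \{-1,+1\}^d\) has \(\pm 1\) entries, so \(K^{\ell}_{d,\infty} \subseteq [-1,+1]^{d^\ell}\); combined with \eqref{eq:cube} this yields \(\Gamma_p(K^{\ell}_{d,\infty}) \le (d^\ell)^{1/2+1/p} = d^{\ell/2+\ell/p}\). Similarly, the inequality \(\|x^{\otimes \ell}\|_2 = \|x\|_2^\ell \le 1\) for \(x \in B_2^d\) forces \(K^{\ell}_{d,2} \subseteq B_2^{d^\ell}\), and then Lemma~\ref{lm:Gammap-ellipsoid} gives \(\Gamma_p(K^{\ell}_{d,2}) \le \Gamma_p(B_2^{d^\ell}) = d^{\ell/p}\).

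For the lower bound on \(\Gamma_p(K^{\ell}_{d,\infty})\), I would partition \([d]\) into \(\ell\) disjoint blocks \(S_1,\ldots,S_\ell\) each of size \(d/\ell\), and let \(P\) be the coordinate projection of \(\R^{d^\ell}\) onto the indices lying in the Cartesian product \(S_1 \times \cdots \times S_\ell\). For any \(x \in \{-1,+1\}^d\), the projection \(P(x^{\otimes \ell})\) equals \(x|_{S_1} \otimes \cdots \otimes x|_{S_\ell}\); since the blocks \(S_j\) are disjoint, the restrictions \(x|_{S_j}\) range independently over \(\{-1,+1\}^{d/\ell}\) as \(x\) varies, so \(P(K^{\ell}_{d,\infty})\) contains the full Kronecker product \(\{-1,+1\}^{d/\ell} \otimes \cdots \otimes \{-1,+1\}^{d/\ell}\). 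Chaining Lemma~\ref{lm:proj-Gammap-lb} (projection does not increase \(\Gamma_p\)), monotonicity with respect to inclusion (Theorem~\ref{thm:triangle}), Lemma~\ref{lm:tensoring} applied inductively (each factor is symmetric around \(0\)), and the identity \(\Gamma_p(\{-1,+1\}^{d/\ell}) = (d/\ell)^{1/2+1/p}\) coming from \eqref{eq:cube}, I obtain
\[
    \Gamma_p(K^{\ell}_{d,\infty}) \;\ge\; \Gamma_p(\{-1,+1\}^{d/\ell})^\ell \;=\; (d/\ell)^{\ell/2+\ell/p}.
\]

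For the lower bound on \(\Gamma_p(K^{\ell}_{d,2})\), I would observe that \(\sigma/\sqrt{d} \in B_2^d\) for every \(\sigma \in \{-1,+1\}^d\), hence \((\sigma/\sqrt{d})^{\otimes \ell} = d^{-\ell/2}\sigma^{\otimes \ell} \in K^{\ell}_{d,2}\), which yields the scaled inclusion \(d^{-\ell/2}K^{\ell}_{d,\infty} \subseteq K^{\ell}_{d,2}\). Homogeneity and monotonicity (Theorem~\ref{thm:triangle}) together with the bound just established give
\[
    \Gamma_p(K^{\ell}_{d,2}) \;\ge\; d^{-\ell/2}\Gamma_p(K^{\ell}_{d,\infty}) \;\ge\; d^{-\ell/2}(d/\ell)^{\ell/2+\ell/p} \;=\; \ell^{-\ell/2}(d/\ell)^{\ell/p},
\]
which yields the claimed bound up to an \(\ell\)-dependent factor. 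The main obstacle is recovering the sharp \((d/\ell)^{\ell/p}\) without the \(\ell^{-\ell/2}\) loss: this arises because in writing a vector of \(B_2^d\) as \((x|_{S_1},\ldots,x|_{S_\ell})\) with \(\sum_j \|x|_{S_j}\|_2^2 \le 1\) one is forced by AM--GM to lose a factor \(\ell^{-\ell/2}\) in the product of the block norms. Removing it likely requires working directly with the duality characterization (Theorem~\ref{thm:duality} combined with Lemma~\ref{lm:Gammap-via-Gamma2}) and exhibiting a distribution on \(K^{\ell}_{d,2}\) supported on generators of the form \(\pm(1/\sqrt{\ell})\sum_{i\in T}\sigma_i e_i\) for a uniformly random size-\(\ell\) subset \(T\subseteq [d]\) and uniform signs \(\sigma\), paired with a diagonal matrix concentrated on the all-distinct coordinates: a direct computation of \(\tr(\cov(X^{\otimes \ell})^{1/2})\) for this distribution yields \((d/\ell)^{\ell/2}\) (matching \(p=2\)), and Lemma~\ref{lm:Gammap-via-Gamma2} extends it to general \(p\).
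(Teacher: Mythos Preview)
Your argument is essentially the same as the paper's. The upper bounds and the lower bound for \(K^{\ell}_{d,\infty}\) are handled identically: inclusion in \([-1,+1]^{d^\ell}\) (resp.\ \(B_2^{d^\ell}\)) for the upper bounds, and the block decomposition of \([d]\) into \(\ell\) pieces followed by a coordinate projection and Lemma~\ref{lm:tensoring} for the lower bound on \(K^{\ell}_{d,\infty}\). For the lower bound on \(K^{\ell}_{d,2}\) the paper also uses exactly your reduction \(K^{\ell}_{d,\infty}\subseteq d^{\ell/2}K^{\ell}_{d,2}\) and concludes with ``This implies the lower bound.''

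You are correct that this reduction only yields
\[
\Gamma_p(K^{\ell}_{d,2})\;\ge\; d^{-\ell/2}\Gamma_p(K^{\ell}_{d,\infty})\;\ge\; d^{-\ell/2}\Bigl(\frac{d}{\ell}\Bigr)^{\ell/2+\ell/p}\;=\;\ell^{-\ell/2}\Bigl(\frac{d}{\ell}\Bigr)^{\ell/p},
\]
which is off from the stated \((d/\ell)^{\ell/p}\) by the factor \(\ell^{-\ell/2}\). The paper's own proof makes the identical step and does not close this gap either; the statement of the \(K^{\ell}_{d,2}\) lower bound in the theorem is slightly stronger than what the paper's argument actually delivers. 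So there is nothing you are missing relative to the paper, and you have been more careful in flagging the discrepancy. Your suggested fix via duality is plausible but is not what the paper does.
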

\begin{proof}
  By   Lemma~\ref{lm:tensoring}, and equation \eqref{eq:cube},
  we have
  \[
    \Gamma_p((\{-1,+1\}^d)^{\otimes \ell})
    = \Gamma_p(\{-1,+1\}^{d})^\ell
    = \Gamma_p([-1,+1]^{d})^\ell
    = d^{\frac{\ell}{p} + \frac{\ell}{2}}.
  \]
  Clearly, \(K^\ell_{d,\infty}\subseteq (\{-1,+1\}^d)^{\otimes \ell}\) and
  the upper bound on \(\Gamma_p(K^{\ell}_{d,\infty})\) follows by
  monotonicity (Theorem~\ref{thm:triangle}). For the lower bound, we
  observe that, after identifying \(\R^{(d/\ell)^\ell}\) with a certain
  coordinate subspace of \(\R^{d^\ell}\), \((\{-1,+1\}^{d/\ell})^{\otimes
    \ell}\) is contained in a coordinate projection of \(K^\ell_{d,\infty}\).  Indeed, for any \(x, y \in
  [-1,1]^{d/2}\), we can define \(z = (x,y) \in \{-1,+1\}^d\), and notice
  that
  \[
    z\otimes z =
    (x^{\otimes 2}, x\otimes y, y \otimes x, y^{\otimes 2}).
  \]
  In particular, there is a coordinate projection of \(z\otimes
  z\) that equals \(x\otimes y\). Using this claim inductively, we see
  that for any \(x_1, \ldots, x_\ell \in \{-1,+1\}^{d/\ell}\), \(x_1
  \otimes \ldots \otimes x_\ell\) equals a coordinate projection of
  \(z^{\otimes \ell}\), where \(z:= (x_1, \ldots, x_\ell) \in \{-1,+1\}^d\). This means that \((\{-1,+1\}^{d/\ell})^{\otimes
    \ell}\) is contained in a coordinate projection 
  \(P(K^\ell_{d,\infty})\) of \(K^\ell_{d,\infty}\), as
  claimed. Therefore, by Lemmas~\ref{lm:tensoring}~and~\ref{lm:proj-Gammap-lb}, and equation \eqref{eq:cube},
  \[
    \Gamma_p(K^{\ell}_{d,\infty})
    \ge \Gamma_p((\{-1,+1\}^{d/\ell})^{\otimes \ell})
    = \Gamma_p(\{-1,+1\}^{d/\ell})^{\ell}
    =\Gamma_p([-1,+1]^{d/\ell})^{\ell}
    = \left(\frac{d}{\ell}\right)^{\frac{\ell}{p} + \frac{\ell}{2}}.
  \]

  The inequalities for \(K_{d,2}^\ell\) follow as a corollary. Notice
  that \(\{-1,+1\}^d \subseteq \sqrt{d}B_2^d\), and, therefore,
  \(K_{d,\infty}^\ell \subseteq d^{\frac{\ell}{2}}
  K_{d,2}^\ell\). Then, by Theorem~\ref{thm:triangle},
  \(\Gamma_p(K_{d,2}^\ell) \ge d^{-\frac{\ell}{2}} \Gamma_p(K_{d,\infty}^\ell)\).
  This implies the lower bound. For the upper bound, analogously to \(K_{d,\infty}\)
  we have, using Lemma~\ref{lm:Gammap-ellipsoid},
  \[
    \Gamma_p(K_{d,2}^\ell)
    \le \Gamma_p((B_2^d)^{\otimes \ell})
    = \Gamma_p(B_2^{d})^\ell
    = d^{\frac{\ell}{p}}.
  \]
  This completes the proof.
\end{proof}

Theorem~\ref{thm:lb-tensor} in the Introduction follows from Theorem~\ref{thm:tensor} and
from Theorems~\ref{thm:main-ell2}~and~\ref{thm:main-ellp}.

Finally, we consider the problem of releasing \(\ell\)-way marginal
queries, and prove Theorem~\ref{thm:marginals}. Recall that
\(\queries^{\text{marg}}_{d,\ell}\) is the set of
statistical queries over the universe \(\uni := \{0,1\}^d\) where each
query \(\query_{s, \beta}\) is defined by a sequence of \(\ell\)
indices \(s := (i_1, \ldots, i_\ell) \in [d]^\ell\) and a sequence of
\(\ell\) bits \(\beta := (\beta_1, \ldots, \beta_\ell) \in
\{0,1\}^d\), and has value \(\query_{s,\beta}(x) = \prod_{j = 1}^\ell
|x_{i_j} - \beta_{j}|\) on every \(x \in \uni\). As mentioned in the
introduction, releasing answers to \(\queries^{\text{marg}}_{d,\ell}(X)\)
is equivalent to mean estimation over the set \(K^{\text{marg}}_{d,\ell}
:= \{\queries^{\text{marg}}_{d,\ell}(x): x \in \{0,1\}^d\}\). Let us
give a more explicit geometric description of this set. When
\(\ell=1\), \(K^{\text{marg}}_{d,1} = L\times \ldots \times L :=
L^{\times d}\) where the cross product is taken \(d\) times, and
\[
L := \left\{
  \begin{pmatrix}1\\0\end{pmatrix},
  \begin{pmatrix}0\\1\end{pmatrix}
\right\}.
\]
For larger \(\ell > 1\), we have \(K^{\text{marg}}_{d,\ell} =
\{y^{\otimes \ell}: y \in K^{\text{marg}}_{d,1}\}.\)

\begin{theorem}\label{thm:marginals-K}
  For any positive integer \(d\), we have
  \[
    \Gamma_p(K^{\text{marg}}_{d,1}) = 2^{\frac1p - 1} d^{\frac1p + \frac12}.
  \]
  Moreover, for any integer \(\ell \ge 2\), we have
  \[
  \frac{d^{\frac{\ell}{2} + \frac{\ell}{p}}}{(2\sqrt{2} \ell)^\ell}\le
  \Gamma_p(K^{\text{marg}}_{d,\ell}) \le d^{\frac{\ell}{p}+\frac{\ell}{2}}.
  \]
\end{theorem}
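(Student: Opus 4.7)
For the $\ell = 1$ case, I will exploit that $K^{\mathrm{marg}}_{d,1} = L^{\times d}$ where $L = \{e_1, e_2\} \subseteq \R^2$. Iterating Lemma~\ref{lm:prod} gives $\Gamma_p(L^{\times d}) = d^{1/p + 1/2} \Gamma_p(L)$. To compute $\Gamma_p(L)$, observe that $L$ is symmetric about $\frac12(e_1 + e_2)$, so by Lemma~\ref{lm:centering} it suffices to compute $\Gamma_p$ of the centrally symmetric segment $[-a, a]$ with $a := \frac12(e_1 - e_2)$. Since the segment is a rank-one ellipsoid, Lemma~\ref{lm:Gammap-ellipsoid} gives $\Gamma_p(L) = \sqrt{\trp{p/2}(aa^T)}$, and the diagonal $(1/4, 1/4)$ yields $\Gamma_p(L) = 2^{1/p - 1}$, completing the exact formula.

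For the $\ell \ge 2$ upper bound, I will write each $y \in K^{\mathrm{marg}}_{d,1}$ as $y = y' + v$ with $v := \frac{1}{2}(1,\ldots,1)^T \in \R^{2d}$ and $y'$ ranging over the symmetric set $K^{\mathrm{marg}}_{d,1} - v$. Expanding $y^{\otimes \ell} - v^{\otimes \ell} = \sum_{\emptyset \ne S \subseteq [\ell]} T_S(y')$, where $T_S(y')$ places $y'$ in the tensor positions in $S$ and $v$ in the other $\ell - |S|$ positions, exhibits a shift of $K^{\mathrm{marg}}_{d,\ell}$ as a subset of a Minkowski sum of such pieces. Each piece $\{T_S(y')\}$ is a $(2d)^{\ell - |S|}$-fold coordinate replication of a scaled $(K^{\mathrm{marg}}_{d,1} - v)^{\otimes |S|}$; since the shifted set is symmetric about $0$, Lemma~\ref{lm:tensoring} computes the tensor-power $\Gamma_p$ exactly, and tracking how $\trp{p/2}$ transforms under coordinate replication bounds each piece's $\Gamma_p$ by $2^{\ell/p - \ell} d^{\ell/p + |S|/2}$. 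Summing via Theorem~\ref{thm:triangle} and the binomial identity $\sum_{k=1}^\ell \binom{\ell}{k} d^{k/2} \le 2^\ell d^{\ell/2}$ produces the desired upper bound $d^{\ell/p + \ell/2}$ (with the absolute constant absorbed into the stated inequality).

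For the $\ell \ge 2$ lower bound, the key idea is to exhibit a linear map that recovers $z^{\otimes \ell}$ from $Q^{\mathrm{marg}}_{d,\ell}$. Define $F : \R^{(2d)^\ell} \to \R^{d^\ell}$ whose row indexed by $s \in [d]^\ell$ is supported on $\{(s, \beta) : \beta \in \{0,1\}^\ell\}$ with entries $F_{s, (s, \beta)} = \prod_j \epsilon(\beta_j)$, where $\epsilon(0) = 1$ and $\epsilon(1) = -1$. Using the expansion $q_{s,\beta}(x) = 2^{-\ell} \prod_j (1 + \epsilon(\beta_j) z_{i_j})$ with $z := 2x - 1$, a character-orthogonality calculation gives $F Q^{\mathrm{marg}}_{d,\ell}(x) = z^{\otimes \ell}$, so $F K^{\mathrm{marg}}_{d,\ell}$ equals the vertex set of $K^\ell_{d, \infty}$. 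By invariance of $\Gamma_p$ under convex hulls (Theorem~\ref{thm:triangle}) and Theorem~\ref{thm:tensor}, $\Gamma_p(F K^{\mathrm{marg}}_{d,\ell}) \ge (d/\ell)^{\ell/p + \ell/2}$. To pass this back through $F$, I take any $A$ with $K^{\mathrm{marg}}_{d,\ell} \shinclu A B_2^{(2d)^\ell}$ and bound the diagonal entries $(FAA^T F^T)_{ss} \le \|F_s\|_2^2 \cdot \lambda_{\max}(M_s) \le 2^\ell \cdot \tr(M_s) \le 2^{2\ell} \max_\beta (AA^T)_{(s,\beta),(s,\beta)}$, where $M_s$ is the $2^\ell \times 2^\ell$ principal submatrix of $AA^T$ indexed by $\{(s,\beta)\}_\beta$ and $\|F_s\|_2^2 = 2^\ell$. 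Summing over $s$ with the elementary inequality $(\max_\beta a_\beta)^{p/2} \le \sum_\beta a_\beta^{p/2}$ yields $\trp{p/2}(F A A^T F^T) \le 2^{2\ell} \trp{p/2}(AA^T)$, hence $\Gamma_p(F K^{\mathrm{marg}}_{d,\ell}) \le 2^\ell \Gamma_p(K^{\mathrm{marg}}_{d,\ell})$. Rearranging gives $\Gamma_p(K^{\mathrm{marg}}_{d,\ell}) \ge d^{\ell/p + \ell/2}/(2\,\ell^{1/p + 1/2})^\ell$, which is at least $d^{\ell/p + \ell/2}/(2\sqrt 2 \ell)^\ell$ because $\ell^{1/p - 1/2} \le 1 \le \sqrt 2$ for $p \ge 2$.

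The main obstacle is the Cauchy-Schwarz / trace bookkeeping in the lower bound: controlling $(FAA^T F^T)_{ss}$ in terms of the diagonal of $AA^T$ requires passing through the intermediate quantity $\lambda_{\max}(M_s) \le \tr(M_s)$, and this is where the $2^{2\ell}$ factor (equivalently, $2^\ell$ in $\Gamma_p$) enters, producing the $(2\sqrt{2}\ell)^\ell$ denominator in the stated bound.
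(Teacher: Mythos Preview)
Your argument is correct throughout, and for $\ell = 1$ it coincides with the paper's. For $\ell \ge 2$ the two proofs diverge in both directions.

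For the upper bound, the paper takes a much shorter route: since every $y \in K^{\mathrm{marg}}_{d,1}$ has $\|y\|_2 = \sqrt{d}$, one has $K^{\mathrm{marg}}_{d,\ell} \subseteq d^{\ell/2} B_2^{(2d)^\ell}$, and Lemma~\ref{lm:Gammap-ellipsoid} plus monotonicity give the bound in one line. Your binomial expansion of $(y'+v)^{\otimes \ell}$, Minkowski-sum triangle inequality, and application of Lemma~\ref{lm:tensoring} to the symmetric tensor powers is correct but considerably more work for the same outcome. (Incidentally, both routes actually produce $2^{\ell/p} d^{\ell/p + \ell/2}$ rather than the stated $d^{\ell/p + \ell/2}$, so your parenthetical about ``absorbing the constant'' is matching a harmless slip in the paper's statement.)

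For the lower bound, your approach is genuinely different and slightly sharper. The paper first applies the orthogonal projection $P_0^{\otimes \ell}$ with $P_0 = I - \frac{1}{2d}11^T$ to center each $L$-block, then a coordinate projection to obtain $2^{-\ell} K^\ell_{d,\infty}$. Because Lemma~\ref{lm:proj-Gammap-lb} only guarantees that \emph{orthogonal} projections contract $\Gamma_2$, the paper first establishes the $p=2$ bound and then transfers to general $p$ via $\Gamma_2(K) \le N^{1/2 - 1/p} \Gamma_p(K)$ with $N = (2d)^\ell$, incurring an extra $(2d)^{\ell(1/2 - 1/p)}$ loss. Your map $F$ (a Walsh transform in the $\beta$ coordinates) sends $K^{\mathrm{marg}}_{d,\ell}$ onto the vertex set of $K^\ell_{d,\infty}$ directly, and the entrywise estimate $\trp{p/2}(FAA^TF^T) \le 2^{2\ell}\trp{p/2}(AA^T)$ works uniformly for all $p \ge 2$. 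This yields $\Gamma_p(K^{\mathrm{marg}}_{d,\ell}) \ge d^{\ell/p + \ell/2} / (2\,\ell^{1/p + 1/2})^\ell$, which beats the paper's stated $(2\sqrt{2}\,\ell)^{-\ell}$ denominator by a factor of $(\sqrt{2}\,\ell^{1/2 - 1/p})^\ell \ge 1$.
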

\begin{proof}
  The formula for \(\Gamma_p(K^{\text{marg}}_{d,1})\) follows from
  Lemma~\ref{lm:prod} since \(\Gamma_p(L) = 2^{\frac1p - 1}.\) To
  justify the last equality, note that, for any \(p \ge 2\),
  \(\Gamma_p(L)\) is achieved by the (degenerate) ellipsoid
  \[
    \begin{pmatrix}
      \frac12\\
      \frac12
    \end{pmatrix}
    +
    \begin{pmatrix}
      \frac{1}{2\sqrt{2}} & -\frac{1}{2\sqrt{2}} \\
      -\frac{1}{2\sqrt{2}} & \frac{1}{2\sqrt{2}} \\
    \end{pmatrix}
    B_2^2.
  \]
  This can be verified, either using Lemma~\ref{lm:Gammap-ellipsoid},
  or using Theorem~\ref{thm:duality} with \(P\) the uniform
  distribution on \(L\) and \(D := 2^{\frac1p -\frac12}I\). 

  The upper bound on \(\Gamma_p(K^{\text{marg}}_{d,\ell})\) follows by Lemma~\ref{lm:Gammap-ellipsoid}
  because \(K^{\text{marg}}_{d,\ell} \subseteq
  d^{\frac{\ell}{2}} B_2^{(2d)^\ell}\). One way to see
  this, is to note that \(L \subseteq B_2^2\), so
  \(K^{\text{marg}}_{d,1} \subseteq \sqrt{d}B_2^{2d}\), and,
  therefore, \(\|y^{\otimes \ell}\|_2 = \|y\|_2^\ell \le
  d^{\frac{\ell}{2}}\) for all \(y \in K^{\text{marg}}_{d,1}\).

  To prove the lower bound, we use Lemma~\ref{lm:proj-Gammap-lb} and
  Theorem~\ref{thm:tensor}.   We will exhibit orthogonal projections
  \(P\) and \(Q\) such that \(QPK^{\text{marg}}_{d,\ell}\) equals \(K^\ell_{d,\infty}\) after
  identifying \(\R^{d^\ell}\) with a subspace of \(\R^{(2d)^\ell}\)
  (in fact, a coordinate subspace). Let \(P_0 := I - \frac{1}{2d} 11^T\), where
  \(1\) is the \(2d\)-dimensional all-ones vector. I.e., \(P_0\)
  projects orthogonal to the all-ones vector. Every point in
  \(y \in K^{\text{marg}}_{d,1}\) satisfies \(1^T y = d\), and
  therefore, \(P_0 y = y - \frac12 1\). This implies that
  \(P_0K^{\text{marg}}_{d,1}\) equals the set \((L')^{\times d}\) where
  \[
    L' := \left\{
      \begin{pmatrix}\frac{1}{2}\\-\frac{1}{2}\end{pmatrix},
      \begin{pmatrix}-\frac{1}{2}\\\frac{1}{2}\end{pmatrix}
    \right\}.
  \]
  Defining \(P := P_0^{\otimes d}\), we have
  \begin{align*}
    PK^{\text{marg}}_{d,\ell}
    &= \{Py^{\otimes d}: y \in K^{\text{marg}}_{d,1}\}\\
    &= \{P_0^{\otimes d} y^{\otimes d}: y \in K^{\text{marg}}_{d,1}\}\\
    &= \{(P_0 y)^{\otimes d}: y \in K^{\text{marg}}_{d,1}\}\\
    &= \{y^{\otimes d}: y \in (L')^{\times d}\}.
  \end{align*}
  Notice that when projected to the first coordinate, \(L'\) is just
  \(\{-\frac12, \frac12\}\). Therefore, there is a coordinate
  projection of \((L')^{\times d}\) (taking the first coordinate from
  every copy of \(L'\)) that equals \(\frac12 \{-1,+1\}^d\). As a
  consequence, there is also a coordinate projection \(Q\) of
  \(PK^{\text{marg}}_{d,\ell}\) that equals \(\frac{1}{2^\ell} K^\ell_{d,\infty}\).  By
  applying Lemma~\ref{lm:proj-Gammap-lb} twice, we have
  \[
    \Gamma_2(K^{\text{marg}}_{d,\ell}) \ge \frac{1}{2^\ell} \Gamma_2(K^\ell_{d,\infty}).
  \]
  The lower bound for \(\Gamma_2(K^{\text{marg}}_{d,\ell})\) then
  follows from Theorem~\ref{thm:tensor}. 
  To prove the lower bound on \(\Gamma_p(K^{\text{marg}}_{d,\ell})\)
  for \(p > 2 \), we make the observation that, for any \(p \in
  [2,\infty]\), and any bounded set \(K\subseteq \R^d\), by Lemma~\ref{lm:trp-norm},
  \(
  \Gamma_2(K) \le d^{\frac12 - \frac1p}\Gamma_p(K).
  \)
  Applying this inequality to \(K^{\text{marg}}_{d,\ell} \subseteq
  \R^{(2d)^\ell}\) finishes the proof. 
\end{proof}

Theorem~\ref{thm:marginals} in the Introduction follows from Theorem~\ref{thm:marginals-K} and
from Theorems~\ref{thm:main-ell2}~and~\ref{thm:main-ellp}. 

\bibliographystyle{alpha}
\bibliography{Privacy}

\newcommand{\etalchar}[1]{$^{#1}$}
\begin{thebibliography}{MMHM18}

\bibitem[AD20a]{AsiD20}
Hilal Asi and John~C. Duchi.
\newblock Instance-optimality in differential privacy via approximate inverse
  sensitivity mechanisms.
\newblock In {\em Advances in Neural Information Processing Systems 33: Annual
  Conference on Neural Information Processing Systems 2020, NeurIPS 2020,
  December 6-12, 2020, virtual}, 2020.

\bibitem[AD20b]{asi20optimal}
Hilal Asi and John~C. Duchi.
\newblock Near instance-optimality in differential privacy.
\newblock {\em CoRR}, abs/2005.10630, 2020.

\bibitem[AFT22]{asi2022optimal}
Hilal Asi, Vitaly Feldman, and Kunal Talwar.
\newblock Optimal algorithms for mean estimation under local differential
  privacy.
\newblock In {\em International Conference on Machine Learning, {ICML} 2022,
  17-23 July 2022, Baltimore, Maryland, {USA}}, volume 162 of {\em Proceedings
  of Machine Learning Research}, pages 1046--1056. {PMLR}, 2022.

\bibitem[ALNP23]{plan}
Martin Aum{\"{u}}ller, Christian~Janos Lebeda, Boel Nelson, and Rasmus Pagh.
\newblock {PLAN:} variance-aware private mean estimation.
\newblock {\em CoRR}, abs/2306.08745, 2023.

\bibitem[AN04]{AlonN04}
Noga Alon and Assaf Naor.
\newblock {Approximating the cut-norm via Grothendieck's inequality}.
\newblock In {\em ACM Symposium on Theory of Computing}, pages 72--80, 2004.

\bibitem[And79]{Ando79}
T.~Ando.
\newblock Concavity of certain maps on positive definite matrices and
  applications to {H}adamard products.
\newblock {\em Linear Algebra Appl.}, 26:203--241, 1979.

\bibitem[BBNS19]{cdp}
Jaros{\l}aw B{\l}asiok, Mark Bun, Aleksandar Nikolov, and Thomas Steinke.
\newblock Towards instance-optimal private query release.
\newblock In {\em Proceedings of the {T}hirtieth {A}nnual {ACM}-{SIAM}
  {S}ymposium on {D}iscrete {A}lgorithms. {SODA} 2019}, pages 2480--2497. SIAM,
  Philadelphia, PA, 2019.

\bibitem[BDKT12]{BhaskaraDKT12}
Aditya Bhaskara, Daniel Dadush, Ravishankar Krishnaswamy, and Kunal Talwar.
\newblock Unconditional differentially private mechanisms for linear queries.
\newblock In Howard~J. Karloff and Toniann Pitassi, editors, {\em Proceedings
  of the 44th Symposium on Theory of Computing Conference, {STOC} 2012, New
  York, NY, USA, May 19 - 22, 2012}, pages 1269--1284. {ACM}, 2012.

\bibitem[BLN21]{BLN21}
Vijay Bhattiprolu, Euiwoong Lee, and Assaf Naor.
\newblock A framework for quadratic form maximization over convex sets through
  nonconvex relaxations.
\newblock In Samir Khuller and Virginia~Vassilevska Williams, editors, {\em
  {STOC} '21: 53rd Annual {ACM} {SIGACT} Symposium on Theory of Computing,
  Virtual Event, Italy, June 21-25, 2021}, pages 870--881. {ACM}, 2021.

\bibitem[BS16]{BunS16}
Mark Bun and Thomas Steinke.
\newblock Concentrated differential privacy: Simplifications, extensions, and
  lower bounds.
\newblock In {\em Theory of Cryptography - 14th International Conference, {TCC}
  2016-B, Beijing, China, October 31 - November 3, 2016, Proceedings, Part
  {I}}, volume 9985 of {\em Lecture Notes in Computer Science}, pages 635--658,
  2016.

\bibitem[BST14]{BassilyST14}
Raef Bassily, Adam~D. Smith, and Abhradeep Thakurta.
\newblock Private empirical risk minimization: Efficient algorithms and tight
  error bounds.
\newblock In {\em 55th {IEEE} Annual Symposium on Foundations of Computer
  Science, {FOCS} 2014, Philadelphia, PA, USA, October 18-21, 2014}, pages
  464--473. {IEEE} Computer Society, 2014.

\bibitem[BUV14]{BunUV14}
Mark Bun, Jonathan Ullman, and Salil~P. Vadhan.
\newblock Fingerprinting codes and the price of approximate differential
  privacy.
\newblock In David~B. Shmoys, editor, {\em Symposium on Theory of Computing,
  {STOC} 2014, New York, NY, USA, May 31 - June 03, 2014}, pages 1--10. {ACM},
  2014.

\bibitem[Car10]{Carlen10}
Eric Carlen.
\newblock Trace inequalities and quantum entropy: an introductory course.
\newblock In {\em Entropy and the quantum}, volume 529 of {\em Contemp. Math.},
  pages 73--140. Amer. Math. Soc., Providence, RI, 2010.

\bibitem[CMRT22]{CMRT22}
Christopher~A. Choquette{-}Choo, H.~Brendan McMahan, Keith Rush, and Abhradeep
  Thakurta.
\newblock Multi-epoch matrix factorization mechanisms for private machine
  learning.
\newblock {\em CoRR}, abs/2211.06530, 2022.

\bibitem[CR51]{CR51}
Douglas~G. Chapman and Herbert Robbins.
\newblock Minimum variance estimation without regularity assumptions.
\newblock {\em Ann. Math. Statistics}, 22:581--586, 1951.

\bibitem[DJW18]{DJW}
John~C. Duchi, Michael~I. Jordan, and Martin~J. Wainwright.
\newblock Minimax optimal procedures for locally private estimation.
\newblock {\em J. Amer. Statist. Assoc.}, 113(521):182--201, 2018.

\bibitem[DKM{\etalchar{+}}06]{DworkKMMN06}
Cynthia Dwork, Krishnaram Kenthapadi, Frank McSherry, Ilya Mironov, and Moni
  Naor.
\newblock Our data, ourselves: Privacy via distributed noise generation.
\newblock In {\em Advances in Cryptology - {EUROCRYPT} 2006, 25th Annual
  International Conference on the Theory and Applications of Cryptographic
  Techniques, St. Petersburg, Russia, May 28 - June 1, 2006, Proceedings},
  pages 486--503, 2006.

\bibitem[DKSS23]{DKSS23}
Travis Dick, Alex Kulesza, Ziteng Sun, and Ananda~Theertha Suresh.
\newblock Subset-based instance optimality in private estimation.
\newblock In Andreas Krause, Emma Brunskill, Kyunghyun Cho, Barbara Engelhardt,
  Sivan Sabato, and Jonathan Scarlett, editors, {\em International Conference
  on Machine Learning, {ICML} 2023, 23-29 July 2023, Honolulu, Hawaii, {USA}},
  volume 202 of {\em Proceedings of Machine Learning Research}, pages
  7992--8014. {PMLR}, 2023.

\bibitem[DLY22]{DLY22}
Wei Dong, Yuting Liang, and Ke~Yi.
\newblock Differentially private covariance revisited.
\newblock {\em CoRR}, abs/2205.14324, 2022.

\bibitem[DMNS06]{DworkMNS06}
Cynthia Dwork, Frank McSherry, Kobbi Nissim, and Adam Smith.
\newblock Calibrating noise to sensitivity in private data analysis.
\newblock In {\em Proceedings of the Third Conference on Theory of
  Cryptography}, TCC'06, pages 265--284, Berlin, Heidelberg, 2006.
  Springer-Verlag.

\bibitem[DNT15]{conjunctions}
Cynthia Dwork, Aleksandar Nikolov, and Kunal Talwar.
\newblock Efficient algorithms for privately releasing marginals via convex
  relaxations.
\newblock {\em Discrete Comput. Geom.}, 53(3):650--673, 2015.

\bibitem[DR16]{DworkR16}
Cynthia Dwork and Guy~N. Rothblum.
\newblock Concentrated differential privacy.
\newblock {\em CoRR}, abs/1603.01887, 2016.

\bibitem[DR18]{DuchiRuan18}
John~C. Duchi and Feng Ruan.
\newblock The right complexity measure in locally private estimation: It is not
  the fisher information.
\newblock {\em CoRR}, abs/1806.05756, 2018.

\bibitem[DY22]{DongYi22}
Wei Dong and Ke~Yi.
\newblock A nearly instance-optimal differentially private mechanism for
  conjunctive queries.
\newblock In {\em {PODS} '22: International Conference on Management of Data,
  Philadelphia, PA, USA, June 12 - 17, 2022}, pages 213--225. {ACM}, 2022.

\bibitem[EG92]{EvansG92}
Lawrence~C. Evans and Ronald~F. Gariepy.
\newblock {\em Measure theory and fine properties of functions}.
\newblock Studies in Advanced Mathematics. CRC Press, Boca Raton, FL, 1992.

\bibitem[EGS03]{EvfimievskiGS03}
Alexandre~V. Evfimievski, Johannes Gehrke, and Ramakrishnan Srikant.
\newblock Limiting privacy breaches in privacy preserving data mining.
\newblock In {\em {PODS}}, pages 211--222. {ACM}, 2003.

\bibitem[ENU20]{factormech}
Alexander Edmonds, Aleksandar Nikolov, and Jonathan Ullman.
\newblock The power of factorization mechanisms in local and central
  differential privacy.
\newblock In {\em S{TOC}'20---{P}roceedings of the 52n {A}nnual {ACM} {SIGACT}
  {S}ymposium on {T}heory of {C}omputing}, pages 425--438. {ACM}, 2020.

\bibitem[Gro53]{grothendieck}
Alexandre Grothendieck.
\newblock R{\'e}sum{\'e} de la th{\'e}orie m{\'e}trique des produits tensoriels
  topologiques.
\newblock {\em Bol. Soc. Mat. Sao Paulo}, 8(1-79):88, 1953.

\bibitem[Ham50]{H50}
J.~M. Hammersley.
\newblock On estimating restricted parameters.
\newblock {\em J. Roy. Statist. Soc. Ser. B}, 12:192--229; discussion,
  230--240, 1950.

\bibitem[HLY21]{HuangLY21}
Ziyue Huang, Yuting Liang, and Ke~Yi.
\newblock Instance-optimal mean estimation under differential privacy.
\newblock In Marc'Aurelio Ranzato, Alina Beygelzimer, Yann~N. Dauphin, Percy
  Liang, and Jennifer~Wortman Vaughan, editors, {\em Advances in Neural
  Information Processing Systems 34: Annual Conference on Neural Information
  Processing Systems 2021, NeurIPS 2021, December 6-14, 2021, virtual}, pages
  25993--26004, 2021.

\bibitem[HU22]{HU22}
Monika Henzinger and Jalaj Upadhyay.
\newblock Constant matters: Fine-grained complexity of differentially private
  continual observation using completely bounded norms.
\newblock {\em CoRR}, abs/2202.11205, 2022.

\bibitem[HUU22]{HUU22}
Monika Henzinger, Jalaj Upadhyay, and Sarvagya Upadhyay.
\newblock Almost tight error bounds on differentially private continual
  counting.
\newblock {\em CoRR}, abs/2211.05006, 2022.

\bibitem[KLN{\etalchar{+}}08]{KLNRS}
Shiva~Prasad Kasiviswanathan, Homin~K. Lee, Kobbi Nissim, Sofya Raskhodnikova,
  and Adam Smith.
\newblock What can we learn privately?
\newblock In {\em FOCS}, pages 531--540. {IEEE}, Oct 25--28 2008.

\bibitem[KMR{\etalchar{+}}23]{trilemma23}
Gautam Kamath, Argyris Mouzakis, Matthew Regehr, Vikrant Singhal, Thomas
  Steinke, and Jonathan~R. Ullman.
\newblock A bias-variance-privacy trilemma for statistical estimation.
\newblock {\em CoRR}, abs/2301.13334, 2023.

\bibitem[KN12]{KhotNaor12}
Subhash Khot and Assaf Naor.
\newblock Grothendieck-type inequalities in combinatorial optimization.
\newblock {\em Comm. Pure Appl. Math.}, 65(7):992--1035, 2012.

\bibitem[Kom88]{Komiya88}
Hidetoshi Komiya.
\newblock Elementary proof for {S}ion's minimax theorem.
\newblock {\em Kodai Math. J.}, 11(1):5--7, 1988.

\bibitem[KOV17]{KOV17}
Peter Kairouz, Sewoong Oh, and Pramod Viswanath.
\newblock The composition theorem for differential privacy.
\newblock {\em {IEEE} Trans. Inf. Theory}, 63(6):4037--4049, 2017.

\bibitem[Kri74]{Krivine74}
J.~L. Krivine.
\newblock Th\'{e}or\`emes de factorisation dans les espaces r\'{e}ticul\'{e}s.
\newblock In {\em S\'{e}minaire {M}aurey-{S}chwartz 1973--1974: {E}spaces
  {$L^p$}, applications radonifiantes et g\'{e}om\'{e}trie des espaces de
  {B}anach}, pages Exp. Nos. 22 et 23, 22. \'{E}cole Polytech., Paris, 1974.

\bibitem[KS14]{kasiviswanathan2008thesemantics}
Shiva~Prasad Kasiviswanathan and Adam~D. Smith.
\newblock On the 'semantics' of differential privacy: {A} bayesian formulation.
\newblock {\em J. Priv. Confidentiality}, 6(1), 2014.

\bibitem[KV18]{karwa2017finite}
Vishesh Karwa and Salil~P. Vadhan.
\newblock Finite sample differentially private confidence intervals.
\newblock In Anna~R. Karlin, editor, {\em 9th Innovations in Theoretical
  Computer Science Conference, {ITCS} 2018, January 11-14, 2018, Cambridge, MA,
  {USA}}, volume~94 of {\em LIPIcs}, pages 44:1--44:9. Schloss Dagstuhl -
  Leibniz-Zentrum f{\"{u}}r Informatik, 2018.

\bibitem[LHR{\etalchar{+}}10]{LiHRMM10}
Chao Li, Michael Hay, Vibhor Rastogi, Gerome Miklau, and Andrew McGregor.
\newblock Optimizing linear counting queries under differential privacy.
\newblock In {\em Proceedings of the 29th ACM Symposium on Principles of
  Database Systems}, PODS'10, pages 123--134. ACM, 2010.

\bibitem[LMH{\etalchar{+}}15]{MM}
Chao Li, Gerome Miklau, Michael Hay, Andrew McGregor, and Vibhor Rastogi.
\newblock The matrix mechanism: optimizing linear counting queries under
  differential privacy.
\newblock {\em {VLDB} J.}, 24(6):757--781, 2015.

\bibitem[LSS08]{LeeSS08}
Troy Lee, Adi Shraibman, and Robert Spalek.
\newblock A direct product theorem for discrepancy.
\newblock In {\em Proceedings of the 23rd Annual {IEEE} Conference on
  Computational Complexity, {CCC} 2008, 23-26 June 2008, College Park,
  Maryland, {USA}}, pages 71--80. {IEEE} Computer Society, 2008.

\bibitem[MMHM18]{HDMM}
Ryan McKenna, Gerome Miklau, Michael Hay, and Ashwin Machanavajjhala.
\newblock Optimizing error of high-dimensional statistical queries under
  differential privacy.
\newblock {\em Proc. {VLDB} Endow.}, 11(10):1206--1219, 2018.

\bibitem[MNT20]{disc-gamma2}
Ji\v{r}{\'i} Matou\v{s}ek, Aleksandar Nikolov, and Kunal Talwar.
\newblock Factorization norms and hereditary discrepancy.
\newblock {\em Int. Math. Res. Not. IMRN}, 2020(3):751--780, 2020.

\bibitem[MRT22]{MRT22}
Brendan McMahan, Keith Rush, and Abhradeep~Guha Thakurta.
\newblock Private online prefix sums via optimal matrix factorizations.
\newblock {\em CoRR}, abs/2202.08312, 2022.

\bibitem[MSU22]{MSU22}
Audra McMillan, Adam~D. Smith, and Jonathan~R. Ullman.
\newblock Instance-optimal differentially private estimation.
\newblock {\em CoRR}, abs/2210.15819, 2022.

\bibitem[Nes98]{Nesterov98}
Yu. Nesterov.
\newblock Semidefinite relaxation and nonconvex quadratic optimization.
\newblock {\em Optim. Methods Softw.}, 9(1-3):141--160, 1998.

\bibitem[Nik14]{nikolov-thesis}
Aleksandar Nikolov.
\newblock {\em New Computational Aspects of Discrepancy Theory}.
\newblock PhD thesis, Rutgers, The State University of New Jersey, 2014.

\bibitem[Nik23]{jlquery}
Aleksandar Nikolov.
\newblock Private query release via the johnson-lindenstrauss transform.
\newblock In {\em Proceedings of the 2023 {ACM-SIAM} Symposium on Discrete
  Algorithms, {SODA} 2023, Florence, Italy, January 22-25, 2023}, pages
  4982--5002. {SIAM}, 2023.

\bibitem[NRV14]{NRV-noncomm}
Assaf Naor, Oded Regev, and Thomas Vidick.
\newblock Efficient rounding for the noncommutative {G}rothendieck inequality.
\newblock {\em Theory Comput.}, 10:257--295, 2014.

\bibitem[NTZ13]{NTZ-stoc}
Aleksandar Nikolov, Kunal Talwar, and Li~Zhang.
\newblock The geometry of differential privacy: the sparse and approximate
  cases.
\newblock In {\em S{TOC}'13---{P}roceedings of the 2013 {ACM} {S}ymposium on
  {T}heory of {C}omputing}, pages 351--360. ACM, New York, 2013.

\bibitem[NTZ16]{NTZ}
Aleksandar Nikolov, Kunal Talwar, and Li~Zhang.
\newblock The geometry of differential privacy: The small database and
  approximate cases.
\newblock {\em {SIAM} J. Comput.}, 45(2):575--616, 2016.

\bibitem[Pis78]{Pisier-noncomm}
Gilles Pisier.
\newblock Grothendieck's theorem for noncommutative {$C\sp{\ast} $}-algebras,
  with an appendix on {G}rothendieck's constants.
\newblock {\em J. Functional Analysis}, 29(3):397--415, 1978.

\bibitem[Roc70]{Rockafellar}
R.~Tyrrell Rockafellar.
\newblock {\em Convex analysis}.
\newblock Princeton Mathematical Series, No. 28. Princeton University Press,
  Princeton, N.J., 1970.

\bibitem[RT09]{MathStats09}
Kandethody~M. Ramachandran and Chris~P. Tsokos.
\newblock {\em Mathematical statistics with applications}.
\newblock Elsevier/Academic Press, Amsterdam, 2009.

\bibitem[Sio58]{Sion58}
Maurice Sion.
\newblock On general minimax theorems.
\newblock {\em Pacific J. Math.}, 8:171--176, 1958.

\bibitem[TJ89]{TJ-book}
N.~Tomczak-Jaegermann.
\newblock {\em Banach-Mazur Distances and Finite-Dimensional Operator Ideals}.
\newblock Pitman Monographs and Surveys in Pure and Applied Mathematics 38. J.
  Wiley, New York, 1989.

\bibitem[Ver18]{Vershynin-HDP}
Roman Vershynin.
\newblock {\em High-dimensional probability}, volume~47 of {\em Cambridge
  Series in Statistical and Probabilistic Mathematics}.
\newblock Cambridge University Press, Cambridge, 2018.
\newblock An introduction with applications in data science, With a foreword by
  Sara van de Geer.

\bibitem[VN93]{VoinovNikulin-vol1}
V.~G. Voinov and M.~S. Nikulin.
\newblock {\em Unbiased estimators and their applications. {V}ol. 1}, volume
  263 of {\em Mathematics and its Applications}.
\newblock Kluwer Academic Publishers, Dordrecht, 1993.
\newblock Univariate case, Translated from the 1989 Russian original by L. E.
  Strautman and revised by the authors.

\bibitem[VN96]{VoinovNikulin-vol2}
V.~G. Voinov and M.~S. Nikulin.
\newblock {\em Unbiased estimators and their applications. {V}ol. 2}, volume
  362 of {\em Mathematics and its Applications}.
\newblock Kluwer Academic Publishers Group, Dordrecht, 1996.
\newblock Multivariate case.

\bibitem[War65]{Warner65}
Stanley~L. Warner.
\newblock Randomized response: A survey technique for eliminating evasive
  answer bias.
\newblock {\em Journal of the American Statistical Association},
  60(309):63--69, 1965.

\bibitem[XHZK23]{XHZK23}
Yingtai Xiao, Guanlin He, Danfeng Zhang, and Daniel Kifer.
\newblock An optimal and scalable matrix mechanism for noisy marginals under
  convex loss functions.
\newblock {\em CoRR}, abs/2305.08175, 2023.

\bibitem[ZFH{\etalchar{+}}23]{ZFHDT23}
Keyu Zhu, Ferdinando Fioretto, Pascal~Van Hentenryck, Saswat Das, and Christine
  Task.
\newblock Privacy and bias analysis of disclosure avoidance systems.
\newblock {\em CoRR}, abs/2301.12204, 2023.

\bibitem[ZHF21]{ZhuHF21}
Keyu Zhu, Pascal~Van Hentenryck, and Ferdinando Fioretto.
\newblock Bias and variance of post-processing in differential privacy.
\newblock In {\em Thirty-Fifth {AAAI} Conference on Artificial Intelligence,
  {AAAI} 2021, Virtual Event, February 2-9, 2021}, pages 11177--11184. {AAAI}
  Press, 2021.

\end{thebibliography}

\appendix

\section{Proof of the Dual Characterization}
\label{app:duality}

Before we proceed to the proofs of the main duality results, we recall some
important preliminaries, and introduce useful notation. First we recall a special case of Sion's minimax
theorem~\cite{Sion58} (see also a simple proof by Komiya~\cite{Komiya88}).
\begin{theorem}\label{thm:sion}
  Let \(A\) and \(B\) be convex subsets of two topological vector
  spaces. Let \(f:A \times B\to \R\) be such that
  \begin{itemize}
  \item \(f(a,b)\) is continuous and convex in \(b\) for each \(a \in
    A\);
  \item \(f(a,b)\) is continuous and concave in \(a\) for each \(b \in
    B\).
  \end{itemize}
  If at least one of \(A\) or \(B\) is compact, then
  \[
    \sup_{a \in A}\inf_{b \in B} f(a,b) =
    \inf_{b \in B} \sup_{a \in A} f(a,b).
  \]
\end{theorem}
Sion's theorem in fact allows weaker conditions on the continuity and
convexity-concavity properties of \(f\), but we will not need this
extra power. 

We also recall the notion of weak* convergence: a sequence of measures
\(P_1, P_2, \ldots \in \Delta(K)\) weak* converges to \(P\) if for all
continuous functions \(f:K \to \R\) the sequence
\(\E_{X \sim P_1}[f(X)], \E_{X \sim P_2}[f(X)], \ldots\) converges to
\(E_{X \sim P}[f(X)]\). A standard application of the Riesz
representation theorem shows that, for a compact set \(K\),
\(\Delta(K)\) is sequentially compact in the weak* topology, i.e., any
sequence of measures in \(\Delta(K)\) has a subsequence that weak*
converges. For a proof (of a more general result) see, e.g.~Section
1.9 in~\cite{EvansG92}.

With these preliminaries out of the way, we can now prove our duality
results. 
Our first lemma gives a dual characterization of \(\Gamma_2(K)\), which
generalizes a known
duality result for the \(\gamma_F\) factorization
norm~\cite{LeeSS08,HUU22}. Later in this subsection we also derive a
characterization of \(\Gamma_p(K)\) as a consequence of the one for \(\Gamma_2(K)\).

\begin{lemma}\label{lm:gamma2-duality}
  Let \(K \subseteq \R^d\) be a compact set. We have the identity
  \begin{equation}\label{eq:gamma2-duality}
    \Gamma_2(K) = \sup\{\tr(\cov(P)^{1/2}): P \in \Delta(K)\}.
  \end{equation}
\end{lemma}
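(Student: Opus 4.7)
The plan is to prove the identity by Lagrangian duality. By~\eqref{eq:pd} in Lemma~\ref{lm:posdef},
\[
\Gamma_2(K)^2 \;=\; \inf\bigl\{\tr(M) : M \succ 0,\ v\in\R^d,\ (x+v)^T M^{-1}(x+v) \le 1\ \forall x \in K\bigr\},
\]
so introducing a non-negative Borel measure $\mu$ on $K$ as a Lagrange multiplier for the family of quadratic constraints and defining
\[
\mathcal{L}(M, v, \mu) \;:=\; \tr(M) + \int_K \bigl[(x+v)^T M^{-1}(x+v) - 1\bigr]\,d\mu(x),
\]
one has $\Gamma_2(K)^2 = \inf_{M,v} \sup_{\mu \ge 0} \mathcal{L}(M, v, \mu)$. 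The goal is to justify swapping the inf and sup, then evaluate the resulting dual.

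The easy direction $\sup_P \tr(\cov(P)^{1/2}) \le \Gamma_2(K)$ is direct: for feasible $(M,v)$ and $P \in \Delta(K)$, taking expectations of the constraint and discarding the non-negative centering term gives $\tr(M^{-1}\cov(P)) \le 1$; then the Cauchy--Schwarz-for-traces bound
\[
\tr(\cov(P)^{1/2}) = \tr\bigl(\sqrt{M}\cdot\sqrt{M}^{-1}\cov(P)^{1/2}\bigr) \le \sqrt{\tr(M)\cdot \tr(M^{-1}\cov(P))} \le \sqrt{\tr(M)}
\]
and an inf/sup finish this bound. For the dual side, I evaluate $\inf_{M,v}\mathcal{L}(M,v,\mu)$ for $\mu=\alpha P$ with $\alpha=\mu(K)$ and $P\in\Delta(K)$: the optimal shift is $v=-\mu_P$, killing the cross term and reducing to $\tr(M) + \alpha\,\tr(M^{-1}\cov(P)) - \alpha$; the identity $\inf_{M\succ 0}\tr(M)+\tr(M^{-1}B) = 2\tr(B^{1/2})$ for $B\succeq 0$ (achieved at $M=B^{1/2}$ when $B\succ 0$, and approached as $M=(B+\epsilon I)^{1/2}$, $\epsilon\downarrow 0$, when $B$ is singular) yields $2\sqrt{\alpha}\,\tr(\cov(P)^{1/2}) - \alpha$, whose maximum over $\alpha\ge 0$ is $\tr(\cov(P)^{1/2})^2$.

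To exchange the order of the inf and the sup, restrict $\mu$ to the convex set $\Sigma_C := \{\mu\ge 0 : \mu(K)\le C\}$, which is weak*-compact by Banach--Alaoglu since $K$ is compact. The function $\mathcal{L}$ is linear (hence concave) and weak*-continuous in $\mu$ because its integrand is a continuous function of $x$ on the compact $K$, and is jointly convex and continuous in $(M,v)\in\{M\succ 0\}\times\R^d$ by Theorem~1 of~\cite{Ando79}. Sion's theorem (Theorem~\ref{thm:sion}) therefore gives
\[
\Gamma_2^C \;:=\; \inf_{M,v}\sup_{\mu\in\Sigma_C}\mathcal{L} \;=\; \sup_{\mu\in\Sigma_C}\inf_{M,v}\mathcal{L} \;=\; \sup_{P\in\Delta(K)} \tr(\cov(P)^{1/2})^2,
\]
the last equality holding whenever $C$ exceeds the a priori bound $R^2 d^2$ on $\tr(\cov(P)^{1/2})^2$ (where $K\subseteq RB_2^d$). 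Finally, the explicit formula $\sup_{\mu\in\Sigma_C}\mathcal{L}(M,v,\mu) = \tr(M) + C\cdot (\sup_{x\in K}(x+v)^T M^{-1}(x+v)-1)_+$ forces any near-optimal $(M_C,v_C)$ to have constraint violation $\epsilon_C = O(1/C)$; rescaling to $(1+\epsilon_C)M_C$ yields a feasible matrix, so $\Gamma_2(K)^2 \le (1+\epsilon_C)\Gamma_2^C$, which combined with the trivial $\Gamma_2^C\le \Gamma_2(K)^2$ and $\epsilon_C\to 0$ closes the chain and yields the lemma after taking square roots.

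The main technical obstacle is the non-compactness of the natural space of Lagrange multipliers, which is what forces the cutoff-and-limit scheme to bring Sion's theorem to bear. The joint convexity of $(x+v)^T M^{-1}(x+v)$ in $(M,v)$, invoked already in Theorem~\ref{thm:opt} via~\cite{Ando79}, is the key ingredient making Sion's theorem applicable.
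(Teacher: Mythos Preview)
Your proof is correct and shares the same essential ingredients as the paper's: Sion's minimax theorem, the joint convexity of $(M,v)\mapsto (x+v)^T M^{-1}(x+v)$ from~\cite{Ando79}, and weak* compactness of Borel measures on the compact set $K$. The organization, however, differs. The paper first renormalizes $M$ to the set $S=\{M\succ 0:\tr(M)\le 1\}$, obtaining
\[
\Gamma_2(K)^2 \;=\; \inf_{M\in S,\,v}\ \max_{P\in\Delta(K)}\ \E_{X\sim P}[(X+v)^T M^{-1}(X+v)],
\]
and then applies Sion directly with the compact set $\Delta(K)$ on the max side. After swapping, the inner optimization over $v$ gives $v=-\E[X]$, and the inner optimization over $M\in S$ is handled via the matrix Cauchy--Schwarz inequality $\tr(\cov(P)^{1/2})^2\le \tr(M)\,\tr(M^{-1}\cov(P))$, whose equality case identifies the optimal $M$. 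This avoids both the truncation $\Sigma_C$ and the limiting step $C\to\infty$ entirely.

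Your Lagrangian route is the more ``textbook'' convex-duality argument, and your AM--GM identity $\inf_{M\succ 0}\tr(M)+\tr(M^{-1}B)=2\tr(B^{1/2})$ plays the same role as the paper's Cauchy--Schwarz step. The cost is the extra cutoff-and-limit machinery to secure compactness on the multiplier side; the paper's normalization trick buys compactness on the probability-measure side for free and is correspondingly cleaner. A minor point: you cite~\eqref{eq:pd} but actually use the explicit-$v$ formulation, which is~\eqref{eq:inv}; the content is of course the same.
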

\begin{proof}
  By Lemma~\ref{lm:posdef} (in the case \(p = 2\)),
  \begin{equation}\label{eq:gamma2-sqrt}
    \Gamma_2(K) =
    \inf\left\{\sqrt{\tr(M)}: v \in \R^d, M\succ 0, (x+v)^T M^{-1}(x+v)
      \le 1 \ \forall x \in K\right\}.
  \end{equation}
  Letting \(t = \tr(M)\), and dividing \(M\) by \(t\), we can rewrite
  \eqref{eq:gamma2-sqrt} as 
  \begin{align*}
    \Gamma_2(K) &= \inf\left\{t: v \in \R^d, M\succ 0, \tr(M) \le 1, (x+v)^T M^{-1}(x+v)
      \le t\right\}\\
    &= \inf\left\{\max_{x \in K} \sqrt{(x+v)^T
                  M^{-1}(x+v)}: v\in \R^d,  M\succ 0, \tr(M) \le 1\right\}\\
                &= \left(\inf_{M \in S, v \in \R^d}\max_{P \in \Delta(K)} \E_{X \sim P}[(X+v)^T M^{-1}(X+v)]\right)^{1/2},
  \end{align*}
  where \(S := \{M\succ 0, \tr(M) \le 1\}\).
  We are now ready to apply Theorem~\ref{thm:sion} to the expression inside the square root on
  the right  hand side. The set \(\Delta(K)\) is clearly convex, and,
  as noted above, compact in the weak* topology. The set \(S\times \R^d\) is
  also convex. Moreover, the function \(f(M,v,P):= \E_{X \sim
    P}[(X+v)^T M^{-1}(X+v)]\) is continuous in
  \((M,v)\) for each \(P\) since \(K\) is bounded and \(M^{-1}\) is
  continuous on \(S\). This function is also jointly convex in \(M\)
  and \(v\) for each
  \(P\), since the function \((x+v)^T M^{-1} (x+v)\) is jointly convex
  in \(M\) and \(v\) for each \(x \in \R^d\): see Theorem~1 of
  \cite{Ando79} or Section 3.1 of \cite{Carlen10}. Finally, for each
  \(M \in S\) and \(v \in \R^d\), \(f(M,v,P)\) is linear
  in \(P\), and continuous in the weak* topology. Thus all
  assumptions of Theorem~\ref{thm:sion} are satisfied, and we can
  switch the \(\inf\) and the \(\max\), getting
  \begin{equation}\label{eq:gamma2-dual-saddle}
    \Gamma_2(K) =
    \left(\max_{P \in \Delta(K)} \inf_{M \in S, v \in \R^d}\E_{X \sim P}[(X+v)^T M^{-1}(X+v)]\right)^{1/2}.
  \end{equation}
  
  To complete the proof of \eqref{eq:gamma2-duality}, we need to show that
  \begin{multline}\label{eq:gamma2-dual-inner}
    \inf_{M \in S, v \in \R^d}\E_{X \sim P}[(X+v)^T M^{-1}(X+v)]\\
    =
    \inf_{M \in S}\tr(\E_{X \sim P}[(X+v)(X+v)^T] M^{-1})
    =
    \tr(\cov(P)^{1/2})^2.
  \end{multline}
  We first perform the minimization over \(v \in \R^d\). We claim
  that, for a fixed \(M \succ 0\), the infimum over \(v \in \R^d\) is achieved by \(v = -\E_{X'\sim P}[X']\). It is enough to show that, for any \(v \in
  \R^d\),
  \begin{equation}\label{eq:matrix-var}
  \E_{X \sim P}[(X+v)(X +v)^T] \succeq \cov(P).
  \end{equation}
  Let  \(\theta \in \R^d\) be arbitrary. We have
  \begin{align*}
    \theta^T \E_{X \sim P}[(X+v)(X +v)^T] \theta
    &= \E_{X\sim P}[(\ip{X}{\theta} + \ip{v}{\theta})^2]\\
    &\ge \E_{X\sim P}[(\ip{X}{\theta} - \E_{X' \sim P}[\ip{X'}{\theta}])^2]\\
    &=
    \E_{X\sim P}[(\ip{X}{\theta} - \ip{\E_{X' \sim  P}[X']}{\theta}])^2]\\
    &=
     \theta^T \cov(P)\theta,
   \end{align*}
   where the inequality follows because for any real-valued
   square-integrable random variable \(Z\), \(\E[(Z + t)^2]\) is
   minimized at \(t = - \E[Z]\). 
   This proves \eqref{eq:matrix-var} and shows that the left hand side
   of \eqref{eq:gamma2-dual-inner} equals
   \(
   \inf_{M \in S}\tr(\cov(P)M^{-1}).
   \)

   Notice next that, by the matrix Cauchy-Schwarz inequality, for any
  \(P \in \Delta(K)\) and \(M \in S\),
  \begin{align*}
    \tr(\cov(P)^{1/2})^2
    &=  \tr((\cov(P)^{1/2}M^{-1/2})M^{1/2})^2\\
    &\le
    \tr(\cov(P) M^{-1}) \tr(M)\\
    &\le \tr(\cov(P) M^{-1}).
  \end{align*}
  Moreover, if \(\cov(P) \succ 0\), then equality is achieved for
  \(
  M := \frac{1}{\tr(\cov(P)^{1/2})} \cov(P)^{1/2}.
  \)
  In case \(\cov(P)\) is singular, we can define, for \(\beta > 0\),
  \[
    M := \frac{1}{\tr(\cov(P)^{1/2} + \beta I)} (\cov(P)^{1/2} + \beta I).
  \]
  Letting \(\beta\) go to \(0\), we achieve equality again in the
  limit.
  This finishes the proof of \eqref{eq:gamma2-dual-inner}, and,
  together with \eqref{eq:gamma2-dual-saddle}, also the proof of
  \eqref{eq:gamma2-duality}. 
\end{proof}

Next, we prove Lemma~\ref{lm:Gammap-via-Gamma2}, which relates
\(\Gamma_p\) for \(p > 2\) to \(\Gamma_2\). This allows translating
many facts about \(\Gamma_p\) into facts about \(\Gamma_2\) which can
be easier to establish. In particular, it allows us to extend
Lemma~\ref{lm:gamma2-duality} to \(p > 2\).

\begin{proof}[Proof of Lemma~\ref{lm:Gammap-via-Gamma2}]
  Note that it is enough to prove
  \[
    \Gamma_p(K) = \max\{\Gamma_2(DK): D \text{ diagonal}, D \succeq 0,
    \trp{q}{(D^2)} \le 1\},
  \]
  since the right hand side is maximized at some $D$
  satisfying $\trp{q}{D^2} = 1$ by the homogeneity property of
  $\Gamma_2$ (Theorem~\ref{thm:triangle}).
  We first show that, for any non-negative diagonal matrix \(D\) such that
  \(\trp{q}{(D)} \le 1\) we have
  \[
    \Gamma_p(K) \ge \Gamma_2(DK).
  \]
  Indeed, for any \(d \times d\) matrix \(A\) such that \(K \subseteq
  AB_2^d\), we have that \(DK \subseteq DAB_2^d\). Moreover, by
  H\"older's inequality,
  \begin{align*}
    \tr((DA)(DA)^T) = \tr(DAA^TD) = \tr(D^2AA^T)
    &= \sum_{i = 1}^d D_{ii}^2(AA^T)_{ii}\\
    &\le \trp{q}(D^2)\trp{p/2}(AA^T)
      \le \trp{p/2}(AA^T).
  \end{align*}
  Taking an infimum over all choices of \(A\) gives the inequality.

  In the other direction we use Theorem~\ref{thm:sion}. We first note that, for
  any non-negative diagonal matrix \(D\), we can write 
  \begin{equation}\label{eq:gamma2-diag}
    \Gamma_2(DK)^2 = \inf\{\tr(D^2M): M \succeq 0, K \shinclu \sqrt{M} B_2^d\}.
  \end{equation}
  This identity follows trivially from Lemma~\ref{lm:posdef} when \(D\) is invertible, since then \(K
  \shinclu \sqrt{M} B_2^d\) if and only if \(DK \shinclu D\sqrt{M}B_2^d\). The
  general case follows by a continuity argument. In particular,
  consider \(D_\beta := ((1-\beta)D^2 + \beta I)^{1/2}\) for \(\beta \in
  [0,1]\). It follows easily from Theorem~\ref{thm:triangle} that
  \(\Gamma_2(D_\beta K)^2\) is continuous in \(\beta\). Moreover, for
  any positive semidefinite matrix \(M\), \(\tr(D^2M)\) is linear in
  \(D^2\), and, so, it follows from
  Corollary 7.5.1.~in~\cite{Rockafellar} that \(\inf\{\tr(D_\beta^2M): K \shinclu
  AB_2^d\}\)  is continuous from the right as \(\beta \to 0\). Since \(D_\beta\) is
  invertible for any \(\beta>0\), the general case of
  \eqref{eq:gamma2-diag} follows. 

  Using \eqref{eq:gamma2-diag}, we can write the right hand
  side of \eqref{eq:Gammap-via-Gamma2} as
  \[
    \left(\max_{D\in S} \inf_{A \in T} \tr(D^2M)\right)^{1/2},
  \]
  where \(S\) is the set of diagonal matrices \(D\) such that
  \(\trp{q}(D^2) \le 1\), and \(T\) is the set of matrices
  \(M\succeq 0\) such that \(K\shinclu \sqrt{M}B_2^d\). Let us reparameterize by
  taking \(C := D^2\) and rewrite the right hand
  side of \eqref{eq:Gammap-via-Gamma2}  again as
  \begin{equation}\label{eq:saddle}
    \left(\max_{C \in S'} \inf_{M \in T} \tr(CM)\right)^{1/2},
  \end{equation}
  where \(S'\) is the set of non-negative diagonal matrices \(C\)
  such that \(\trp{q}(C) \le 1\), and \(T\) is as above. Note that \(S'\) is convex and compact, since \(\trp{q}(C)\) is just the \(\ell_q\) norm
  of the diagonal of \(C\), and, therefore, is a convex function. 
  The
  following claim establishes that \(T\) is also convex.
  \begin{claim}
    The set \(T\) of positive semidefinite matrices \(M\) for which \(K \shinclu \sqrt{M}
    B_2^d\) is convex.
  \end{claim}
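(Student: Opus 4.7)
The plan is to prove convexity directly by exhibiting an explicit shift vector that witnesses containment. Given $M_1, M_2 \in T$ with shift vectors $v_1, v_2 \in \R^d$ satisfying $K + v_i \subseteq \sqrt{M_i}B_2^d$, and given $\alpha \in [0,1]$, I will argue that the convex combination $v := \alpha v_1 + (1-\alpha) v_2$ witnesses $K + v \subseteq \sqrt{M}B_2^d$ for $M := \alpha M_1 + (1-\alpha) M_2$. Since a convex combination of positive semidefinite matrices is positive semidefinite, this suffices to show $M \in T$.

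The first step is to translate containments into support function inequalities. Using the calculation $h_{\sqrt{M_i}B_2^d}(\theta) = \sqrt{\theta^T M_i \theta}$ from earlier in the section, together with $h_{K + v_i}(\theta) = h_K(\theta) + \ip{v_i}{\theta}$ and equation \eqref{eq:supp-contain} (which applies because $\sqrt{M_i}B_2^d$ is closed and convex), the hypothesis becomes
\[
h_K(\theta) + \ip{v_i}{\theta} \le \sqrt{\theta^T M_i \theta} \qquad \forall \theta \in \R^d, \ i \in \{1,2\}.
\]

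The second step is the key geometric observation: although $\sqrt{\theta^T M \theta}$ is not linear in $M$, its square is, and this makes $M \mapsto \sqrt{\theta^T M \theta}$ concave on positive semidefinite matrices. Concretely, for non-negative reals $a, b$, Jensen's inequality applied to the convex function $x \mapsto x^2$ gives $(\alpha a + (1-\alpha) b)^2 \le \alpha a^2 + (1-\alpha) b^2$, and hence $\alpha a + (1-\alpha) b \le \sqrt{\alpha a^2 + (1-\alpha) b^2}$. Setting $a := \sqrt{\theta^T M_1 \theta}$ and $b := \sqrt{\theta^T M_2 \theta}$ and using linearity of $\theta^T M \theta$ in $M$ yields
\[
\alpha \sqrt{\theta^T M_1 \theta} + (1-\alpha) \sqrt{\theta^T M_2 \theta} \le \sqrt{\theta^T M \theta}.
\]

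The final step combines the pieces: taking the $\alpha$-convex combination of the two support function inequalities gives
\[
h_K(\theta) + \ip{v}{\theta} \le \alpha \sqrt{\theta^T M_1 \theta} + (1-\alpha)\sqrt{\theta^T M_2 \theta} \le \sqrt{\theta^T M \theta} = h_{\sqrt{M}B_2^d}(\theta),
\]
valid for every $\theta \in \R^d$. Applying \eqref{eq:supp-contain} once more (in the reverse direction) concludes that $K + v \subseteq \sqrt{M}B_2^d$. There is no real obstacle; once the concavity of $M \mapsto \sqrt{\theta^T M \theta}$ is noted, the proof reduces to a short support function calculation.
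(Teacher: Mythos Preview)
Your proof is correct and follows essentially the same route as the paper: both take the convex combination $v = \alpha v_1 + (1-\alpha)v_2$ of the shift vectors and use Jensen's inequality for $x \mapsto x^2$ (equivalently, concavity of $\sqrt{\cdot}$) together with the support-function characterization of containment. The only difference is where Jensen is applied: the paper first squares the inequalities $h_{K+v_i}(\theta) \le \sqrt{\theta^T M_i \theta}$ (which requires a preliminary step showing $v_i$ can be chosen in $-\clconv K$ so that $h_{K+v_i}(\theta) \ge 0$), whereas you apply concavity directly on the right-hand side $\sqrt{\theta^T M_i \theta}$, which is always non-negative. Your ordering is slightly more economical since it avoids that preliminary step.
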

  \begin{proof}
    A standard argument, that we recalled in the proof of Lemma~\ref{lm:posdef}, shows that if \(K
    \shinclu \sqrt{M} B_2^d\), then there is some \(v \in -\clconv K\) such that
    \(K + v \subseteq \sqrt{M}B_2^d\). This implies that $0 \in \clconv(K+v)$,
    and, therefore
    \[
      h_{K+v}(\theta)  = h_{\clconv(K+v)}\ge 0.
    \]
    Then, \(K \shinclu \sqrt{M}
    B_2^d\) implies that there exists a $v \in \R^d$ such that for every \(\theta \in \R^d\),
    \[
      \theta^T M \theta = h_{\sqrt{M}B_2^d}(\theta)^2 \ge
      h_{K + v}(\theta)^2.
    \]
    Let us take two positive semidefinite \(M_1, M_2\) and vectors
    \(v_1, v_2\in \R^d\) satisfying these conditions, and some \(\lambda
    \in (0,1)\), and define \(M := \lambda M_1 + (1-\lambda)M_2)\) and
    \(v := \lambda v_1 + (1-\lambda)v_2\). By Jensen's inequality, we
    have that for any \(\theta \in \R^d\), 
    \begin{align*}
      \theta^T M \theta
      &= \lambda \theta^T M_1 \theta + (1-\lambda) \theta^T M_2 \theta\\
      &\ge \lambda h_{K+v_1}(\theta)^2 + (1-\lambda) h_{K+v_2}(\theta)^2\\
      &\ge (\lambda h_{K+v_1}(\theta) + (1-\lambda) h_{K+v_2}(\theta))^2
      = h_{K + v}(\theta)^2,
    \end{align*}
    which implies \(K \shinclu \sqrt{M} B_2^d\). This completes the
    proof of the claim
  \end{proof}
  Since \(S'\) is convex and compact, \(T\) is convex, and
  \(\tr(CM)\) is linear in \(C\) and in \(M\), by Theorem~\ref{thm:sion} we have
  \[
    \max_{C\in S'} \inf_{M \in T} \tr(CM) = \inf_{M\in T} \max_{C
      \in S'} \tr(CM).
  \]
  But now, by the equality case of H\"older's inequality, we
  have that \(\max_{C \in S'} \tr(CM) = \trp{p/2}(M)\) for any matrix \(M\)
  with non-negative diagonal,
  and, therefore, the right hand side of \eqref{eq:Gammap-via-Gamma2}
  can be written as
  \begin{equation*}
    \inf\left\{\sqrt{\trp{p/2}(M)}: M\succeq 0, K\shinclu
      \sqrt{M}B_2^d\right\}
    =
    \Gamma_p(K),
  \end{equation*}
  where the equality follows from Lemma~\ref{lm:posdef}.
\end{proof}

The proof of our dual characterization in Theorem~\ref{thm:duality}
follows from combining
Lemmas~\ref{lm:gamma2-duality}~and~\ref{lm:Gammap-via-Gamma2}. 

\begin{proof}[Proof of Theorem~\ref{thm:duality}]
  Note that both the left and the right hand side of \eqref{eq:gammap-duality} remains
  unchanged if we replace \(K\) with its closure \(\bar{K}\). For the left hand
  side, this follows from the definition of \(\Gamma_p(K)\), since
  \(AB_2^d\) is closed for any matrix \(A\). For the right hand side,
  this follows because any measure \(P \in \Delta(\bar{K})\) can be written as
  the weak* limit of measures in \(\Delta(K)\). We assume for the rest
  of the proof, therefore, that \(K\) is compact. 
  
  Lemmas~\ref{lm:gamma2-duality}~and~\ref{lm:Gammap-via-Gamma2} then imply
  that
  \[
    \Gamma_p(K) = \sup\{\tr(\cov(P)^{1/2}):D \text{ diagonal}, D \succeq 0, \trp{q}{(D^2)} = 1, P \in \Delta(DK)\}.
  \]
  Note that, for any \(P \in \Delta(DK)\), we can find a \(Q \in
  \Delta(K)\) such that, if \(Y\sim Q\), then \(X := DK\) is
  distributed according to \(P\). Since \(\cov(P) = D\cov(Q)D\), the
  formula above is the equivalent to \eqref{eq:gammap-duality}.

  To prove the claim after ``moreover'', we show that, if \(K\) is
  symmetric around \(0\), then the supremum in
  \eqref{eq:gammap-duality} can be taken over measures that are
  symmetric around \(0\) as well. Since such measures have mean \(0\),
  the claim follows. Let \(P \in \Delta\) be arbitrary,
  and let \(X\) be distributed according
  to \(P\). Set \(Y := sX\), where \(s\) is chosen uniformly at
  random in \(\{-1, +1\}\), and independently of \(X\). Let \(Q\) be
  the probability distribution of \(Y\). Then \(Q \in \Delta(K)\) by
  the symmetry assumption, and, since \(\E[Y] = 0\) and \(YY^T = s^2
  XX^T = XX^T\), 
  \[
    \cov(Q) = \E[YY^T] = \E[XX^T] \succeq \cov(P),
  \]
  where the final inequality holds because
  \(
  \E[XX^T] = \cov(P) + \E[X]\E[X]^T.
  \)
  This means that \(D\cov(Q) D \succeq D\cov(P)D\) for any diagonal matrix
  \(D\), and, therefore,
  \[
    \tr((D\E[YY^T]D)
    = \tr(D\cov(Q)D)
    \ge \tr(D\cov(P)D).
  \]
  This completes the proof of the claim.
\end{proof}

\section{Computing $\Gamma_p(K)$ using the Ellipsoid Method}
\label{app:opt}

Here we supply the missing details of the proof of
Theorem~\ref{thm:opt}. Recall that we need to show that, under the
assumptions of the theorem, \eqref{eq:cp-obj}--\eqref{eq:cp-psd} can
be solved approximately using the ellipsoid method.  First, we argue
that the set
\(\lambda S_{p/2} := \{M\succeq 0: \trp{p/2}(M) \le \lambda\}\) has a
separation oracle for any \(\lambda\). If \(M\) is not positive
semidefinite, then standard techniques allow us to compute some
positive semidefinite matrix \(X\) such that \(\ip{X}{M} < 0\),
whereas \(\ip{X}{M'} \ge 0\) for any \(M' \succ 0\). If
\(M \succeq 0\) but \(\trp{p/2}(M) > \lambda\), then convexity implies
that for any matrix \(M' \succeq 0\)
\begin{equation}\label{eq:sep-p}
  \trp{p/2}(M')^{p/2}
  \ge
  \trp{p/2}(M)^{p/2} + \ip{X}{M'-M},
\end{equation}
where \(X\) is the gradient of \(\trp{p/2}(M)^{p/2}\) with respect
to \(M\), i.e., a diagonal matrix with entries
\(X_{ii} = \frac{p}{2}M_{ii}^{(p-2)/2}\). Then, for
\(t := \ip{X}{M} + \lambda^{p/2} - \trp{p/2}(M)\), we have that
\[
  \ip{X}{M} = t + \trp{p/2}(M) - \lambda^{p/2} > t.
\]
On the other hand, \(\ip{X}{M'} \le t\) for all \(M' \in \lambda S_{p/2}\)
since, otherwise
\[    \trp{p/2}(M)^{p/2} + \ip{X}{M'-M} > \trp{p/2}(M)^{p/2} +t - \ip{X}{M}
  = \lambda^{p/2},
\]
and
\eqref{eq:sep-p} would imply \(\trp{p/2}(M') >  \lambda\) in
contradiction with \(M' \in \lambda S_{p/2}\).

Next we show that the existence of an approximate quadratic oracle
for \(K\) implies an approximate separation oracle for the set
\[
  S_K := \{(M,v): M \succ 0, v \in RB_2^d, (x + v)^T M^{-1} (x+v) \le 1 \ \
  \forall x \in K\}. 
\]
In particular, we show that, if \(K\) has an \(\alpha\)-approximate
quadratic oracle with running time \(T\), then there is an algorithm \(\mathcal{SO}\)
that takes as input a \(d\times d\) matrix \(M\) and \(v \in \R^d\),
runs in time polynomial in \(T,d\) and the bit description of \(M\)
and \(v\), and satisfies the conditions
\begin{itemize}
\item if \(M\) is not positive definite, then \(\mathcal{SO}(M,v)\)
  outputs a nonzero positive 
  semidefinite matrix \(X\) such that \(\ip{X}{M} \le 0\); note that
  \(\ip{X}{M'} > 0\) for all \(M\succ 0\);
\item if \(\|v\|_2 > R\), then \(\mathcal{SO}(M,v)\) outputs \(u:=
  \frac{1}{\|v\|_2} v \in B_2^d\); note that \(\ip{u}{v} > R\), but,
  by the Cauchy-Schwarz inequality, \(\ip{u}{v'} \le R\) for all \(v'
  \in RB_2^d\);
\item if \(M \succ 0\) and \(v \in RB_2^d\), but
  \((\alpha M, v) \not \in S_K\), then \(\mathcal{SO}(M,v)\) outputs
  a \(d\times d\) matrix \(X\), a vector \(u \in \R^d\), and a real
  number \(t\) such that \(\ip{X}{M} + \ip{u}{v} > t\) but
  \(\ip{X}{M'} + \ip{u}{v'} \le t\) for all \((M',v') \in S_K\);
\item if \((M,v) \in S_K\), then \(\mathcal{SO}(M,v)\) outputs ``Inside''.
\end{itemize}
Note that we make no requirements when $(M,v) \not \in S_K$ but
\((\alpha M, v) \in S_K\). An algorithm \(\mathcal{SO}\) with these
properties is an approximate inverse separation oracle in the sense
of~\cite{BLN21}, except that we only scale \(M\) by \(\alpha\), and
\(S_K\) is, in their terminology, inverse star-shaped with respect
to \(M\) but not \(v\).

The first two requirements we make on \(\mathcal{SO}(M,v)\), which amount to separating \(M\) from
the set of positive definite matrices, or \(v\) from \(RB_2^d\), can
be satisfied using standard techniques. Suppose, then, that
\(M \succ 0\), \(v \in RB_2^d\), but \((\alpha M,v) \not \in
K\). This means that, for some \(x \in K\), \((x + v)^T M^{-1} (x+v) >
\alpha\). Then the \(\alpha\)-approximate quadratic oracle, run with
\(M^{-1}\) and \(v\), would
find a \(\tilde{x} \in K\) such that
\(f_{\tilde{x}}(M,v) := (\tilde{x} + v)^T M^{-1} (\tilde{x}+v) >
1\). Since \(f_{\tilde{x}}(M,v)\) is jointly convex in \(M\) and
\(v\), for any \(d\times d\) matrix \(M'\) and any \(v' \in \R^d\)
we have
\begin{equation}\label{eq:sep-grad}
  f_{\tilde{x}}(M',v') \ge f_{\tilde{x}}(M,v) + \ip{X}{M'-M} + \ip{u}{v'-v},
\end{equation}
where  \(X:= -M^{-1}(\tilde{x}+v)(\tilde{x}+v)^TM^{-1}\) is the gradient
of \(f_{\tilde{x}}(M,v)\) with respect to \(M\), and \(u := 2M^{-1}
(\tilde{x}+v)\) is the gradient with respect to \(v\). We let
\(\mathcal{SO}(M,v)\) output \(X\) and \(u\) as just defined, and \(t:=
\ip{X}{M} + \ip{u}{v} + 1-f_{\tilde{x}}(M,v)\). We have
\[
  \ip{X}{M} + \ip{u}{v} = t + f_{\tilde{x}}(M,v) - 1 > t.
\]
Moreover, any \(M', v'\) for which
\[
  \ip{X}{M'} + \ip{u}{v'} > t
  \iff
  \ip{X}{M'-M} + \ip{u}{v'-v} > 1-f_{\tilde{x}}(M,v) 
\]
satisfy \(f_{\tilde{x}}(M',v') > 1\) by \eqref{eq:sep-grad}, so \((M',v')
\not \in S_K\). In the contrapositive, this means that \(S_K
\subseteq \{(M',v'): \ip{X}{M'} + \ip{u}{v'} \le t\}\), as
required. 

Finally, if, the approximate quadratic oracle called with \(M^{-1}\) and
\(v\)  returns some \(\tilde{x}\) such that \((\tilde{x} + v)^T
M^{-1} (\tilde{x}+v)\le 1\), \(\mathcal{SO}(M,v)\) returns
``Inside''. It is clear that this will be the case when \((M,v) \in S_K\).
This completes the description of \(\mathcal{SO}\). It is easy to
verify that the running time of the algorithm is polynomial in
\(T,d\) and the bit length of the description of \(M\) and \(n\).

The theorem now follows from (a slight modification of) Propositions 3.5 and 3.6
in~\cite{BLN21}. In particular, an argument analogous to Proposition
3.5 shows that, for any choice of \(\lambda\), running the ellipsoid method with
the separation oracles above for number of steps polynomial in
\(d\) and \(\log(R/\eta)\) either returns some \(M\) and \(v\) such
that
\(M \in \lambda S_{p/2}\)
and
\((\alpha M,v) \in S_K\)
or certifies that \(\lambda S_{p/2} \cap S_K\) does not contain a
Euclidean ball of radius \(\eta\). Here, \(\eta > 0\) is a parameter
we can choose. Then, to solve
\eqref{eq:cp-obj}--\eqref{eq:cp-psd} we do binary search on
\(\lambda\), starting with \(\lambda := d^{2/p}R^2\) (which is
feasible with \(R^2I \in \lambda S_{p/2}, (R^2I,0) \in S_K\)), and ending with
some \(\lambda^* \ge \beta\) such that
\begin{itemize}
\item there exist \(M\) and \(v\) for which \(M \in \lambda^*
  S_{p/2}\) and \((\alpha M, v) \in S_K\);
\item \((\lambda^* - \beta) S_{p/2} \cap S_K\) does not contain a Euclidean ball
  of radius \(\beta\).
\end{itemize}
Note that this binary search will terminate in \(O(\log(dR/\beta))\)
iterations. 
The \(M\) and \(v\) in the first item are such that \(\alpha M\) and
\(v\) are feasible for   \eqref{eq:cp-obj}--\eqref{eq:cp-psd} and
achieve objective value \(\alpha \lambda^*\).   
Let \(\mathrm{OPT}\) be the optimal value of
\eqref{eq:cp-obj}--\eqref{eq:cp-psd}, 
achieved by some \((M^*,v^*) \in
S_K\) such that \(\trp{p/2}(M) = \mathrm{OPT}\). It remains to
relate \(\lambda^*\) to \(\mathrm{OPT}\).

Let \(M' := (1+\beta)M^* +
\beta I.\) We will show that, for a small enough \(\eta > 0\), \(S_K\)
contains a Euclidean ball of radius \(\eta\) around \((M', v^*)\).
Since \((M^*,v^*) \in S_K\), and \(M' \succeq (1+\beta)M^*\), we
know that, for any \(x \in K\), \((x+v^*)^T (M')^{-1} (x+v^*) \le
\frac{1}{1+\beta}\). The result then follows from the inequality
\[
  \|(M'')^{-1} - (M')^{-1}\|_{2\to 2}
  \le
  \frac{\|(M')^{-1}\|^2_{2\to 2} \|M'-M''\|_{2\to 2}}{1-\|(M')^{-1}\|_{2\to 2}\|M'-M''\|_{2\to 2}}
  \le
  \frac{\|M'-M''\|_{2\to 2}}{\beta(\beta-\|M'-M''\|_{2\to 2})}.
\]
which holds for any positive semidefinite matrix \(M''\) such that
\(\|M'-M''\|_{2\to 2} < \frac{1}{\|(M')^{-1}\|_{2\to 2}}\). This
inequality can be verified using the formula, $(I + A)^{-1} = I - A +
A^2 - A^3 + \ldots$, valid whenever $\|A\|_{2\to 2}  < 1$.
Moreover, we only
need to take \(\eta\) so that \(\frac{1}{\eta}\) is polynomial in
\(d\), \(R\), and \(\frac{1}{\beta}\). We omit the tedious but
straightforward calculations.

Finally, we need to also show that \((\lambda - \beta)S_{p/2}\)
contains a Euclidean ball of radius \(\eta\) around \(M\) if
\(\lambda\) is large enough. This would then imply that \(\lambda^*
\le \lambda\). We have that,
for any \(M''\) such that \(\|M'' - M'\|_F \le \beta\), by the
triangle inequality,
\[
  \trp{p/2}(M'') \le \trp{p/2}(M')  + \trp{p/2}(M''-M')
  \le (1+\beta)\mathrm{OPT} + \beta d^{p/2} + \beta \max\{d^{2/p-1/2},1\},
\]
where we used the fact that for any \(x \in \R^d\), \(\|x\|_{p/2}
\le d^{2/p - 1/2}\|x\|_2\) if \(2 \le p \le 4\), and \(\|x\|_{p/2} \le
\|x\|_2\) otherwise. 
This means that, whenever \(\lambda \ge
(1+\beta)\mathrm{OPT} + \beta (d^{p/2} +  \max\{d^{2/p-1/2},1\} +
1)\), \((\lambda - \beta)S_{p/2}\) contains a ball of radius
\(\beta\) around \(M'\). Therefore,
\[
  \lambda^* \le (1+\beta)\mathrm{OPT} + \beta (d^{p/2} +  \max\{d^{2/p-1/2},1\} +
  1)
  \le \mathrm{OPT} + \beta (R^2d^{p/2} + d^{p/2} +  \max\{d^{2/p-1/2},1\} +   1)
\]
The theorem now follows by rescaling \(\beta\).

\end{document}